     \def\section{\@startsection{section}{1}%
     \z@{.7\linespacing\@plus\linespacing}{.5\linespacing}%
     {\bfseries%\normalfont\scshape
     \centering
     }}
     \def\@secnumfont{\bfseries}
\newtheorem{theorem}{Theorem}[section]
\newtheorem{lemma}[theorem]{Lemma}
\newtheorem{proposition}[theorem]{Proposition}
\newtheorem{corollary}[theorem]{Corollary}
\theoremstyle{definition}
\newtheorem{definition}[theorem]{Definition}
 \theoremstyle{remarks}
\newtheorem{remarks}[theorem]{{\bf  Remarks}}
\newtheorem{remark}[theorem]{{\bf  Remark}}
\numberwithin{equation}{section}
\font\sevenrm =cmr10 at  7  pt
\def\ddate {\sevenrm \ifcase\month\or January\or
February\or March\or April\or May\or June\or July\or
August\or September\or October\or November\or December\fi\! {\the\day}, \!{\sevenrm\the\year}}
\def \a{{\alpha}}
\def \b{{\beta}}
\def \D{{\Delta}}
\def \d{{\delta}}
\def \e{{\varepsilon}}
\def \g{{\gamma}}
\def \k{{\kappa}}
\def \l{{\lambda}}
\def \o{{\omega}}
\def \O{{\Omega}}
\def \p{{\varphi}}
\def \t{{\vartheta}}
\def \m{{\mu}}
\def \s{{\sigma}}
\def \noi{{\noindent}}
\def \qq{{\qquad}}
\def \dd{{\rm d}}
\def\F{{\mathcal F}}
\def\A{{\mathcal A}}
\def\P{{\mathcal P}}
\def\={\,=\,}
\def\E{{\mathbb E \,}}
\def\P{{\mathbb P}}
\def\R{{\mathbb R}}
\def\Z{{\mathbb Z}}
\def\Q{{\mathbb Q}}
\def\N{{\mathbb N}}
\def\C{{\mathbb C}}
 \def\beq{\begin{equation}}
\def\eeq{\end{equation}}
\def\ben{\begin{eqnarray}}
\def\een{\end{eqnarray}}\font\sevenrm= cmr10 at 9 pt
\def\ddate {\sevenrm \ifcase\month\or January\or
February\or March\or April\or May\or June\or July\or
August\or September\or October\or November\or December\fi\! {\the\day}, \!{\sevenrm\the\year}}
 \font\tf=cmbxsl10 at 9,5pt
 \font\tf=cmbxsl10 at 9pt
 \font\sevenrm= cmr10 at 7,3 pt
\begin{document}

\vskip 1.5cm

 \scrollmode

  \newcommand{\sm}{\vskip} \newcommand{\ens}{\enspace} \

 \title[Divisibility and primality in   random walks]
  {Divisibility and primality in random walks}
 
   \author{ Michel J.\,G.\, WEBER}
  \address{IRMA, UMR 7501, Universit\'e
Louis-Pasteur et C.N.R.S.,   7  rue Ren\'e Descartes, 67084
Strasbourg Cedex, France.
 \vskip 1 pt   E-mail:    {\tt  michel.weber@math.unistra.fr}}
\keywords{Divisibility,  primality, random walk, binomial random walk, Bernoulli random walk, Theta    function, value distribution, quasi-prime numbers, prime numbers, Rademacher random walk,   i.i.d. sums, Cram\'er model,    Riemann Hypothesis,   Farey series,  prime number theorem,  correlation  properties.\\ %\indent\sevenrm{DBRW.tex}
2010 \emph{Mathematics Subject Classification}: {Primary 11K65,    11N37, 60G50 ; Secondary 11A25, 60E05, 60F15.}}
 \begin{abstract}    In this paper we study the divisibility and primality properties of the Bernoulli random walk. We improve or extend some of our divisibility results  to wide classes of iid or independent non iid random walks. We also obtain new primality results       for  the Rademacher random walk. We study the value distribution of divisors of the random walk in the Cram\'er model, and obtain a general estimate of a similar kind to that of the Bernouilli model. 
 %to study similar divisibility or primality  properties in  the Rademacher model and the  . 
   Earlier   results on  divisors and quasi-prime numbers in the Bernoulli model are recorded, as well as  some other recent    in the Cram\'er random model,  based on  an estimate due to Selberg.
\end{abstract}
\maketitle

\tableofcontents
\section{Introduction.}\label{s1}

%In this paper we improve or extend some of our results concerning the Bernoulli model, to iid or independent non iid random walks. We also obtain new primality results        in  the Rademacher model. We study the value distribution of divisors in the Cram\'er model, and obtain a general estimate of a similar kind to that of the Bernouilli model. 
In this paper we study      divisibility and primality properties of the most  fundamental     integer lattice random walks: the      Bernoulli random walk and Rademacher random walk. This is continuing previous works in  \cite{W5}, \cite{W5a}, \cite{W1}, \cite{W4},     \cite{W7},  
  % \cite{W5b},
   \cite{W6}, where these questions
     have been thoroughly investigated, as well as other questions related to their  asymptotic structural independence. %   value distribution of the divisors of $B_n$ and their asymptotic independence  properties,  
 We next extend the study to  the Cram\'er random walk on which Cram\'er's     model built. This  is an important example of independent, non identically distributed random walk.  A     thorough   probabilistic study 
 %study of  the \lq prime   numbers\rq \,   in 
 of this   model was recently made in \cite{W8}.
 %, where we prove  a     sharp density 1 result,  using an   estimate of Selberg. 
     The case of a general square integrable lattice random walk was very recently  investigated. We prove in  \cite{W10},  \cite{W9} sharp divisibility results    valid for quite large classes of independent,   identically distributed  (iid) lattice   random walks, and obtain also divisibility results in independent   non identically distributed   case. The used tools are new in this context, and built on  a remarkable coupling method and a recent improvement of an important necessary condition of arithmetical type for the applicability of the local limit theorem. We   refer to Part IV of our 2009's book   \cite{W2}, the  Chapters 11    and  13, which  are  devoted to divisors, and their applications in Analysis, especially Fourier analysis. 

\vskip 2   pt  Throughout  let     $\rho\in ]0,1[$ and $\mathcal B(\rho)=\{B_n(\rho), n\ge 1\}$ denotes  the binomial random walk,   $B_n(\rho)=\sum_{k=1}^n \b_k(\rho)$, for each $n$, where  $\b(\rho)=
\{\b_i(\rho) , i\ge 1\}$ is a sequence of iid  binomial random variables ($\P\{\b_i(\rho)=0\} =1-\P\{\b_i(\rho)= 1\}=\rho $).  When $\rho =1/2$ we use the simplified notation $\mathcal B =\{B_n , n\ge 1\}$, $B_n =\sum_{k=1}^n \b_k $, for each $n$,    $\b =
\{\b_i  , i\ge 1\}$ 
  being iid Bernoulli random variables.
  %   ($\P\{ \b_i =  0\}=\P\{ \b_i =  1\} =1/2$).  
    Let also $\{R_n, n\ge 1\}$, $R_n=\sum_{k=1}^n \e_k$,  where  $\e=
\{\e_i , i\ge 1\}$ is  a sequence of iid 
Rademacher random variables ($\P\{ \e_i =\pm 1\} =1/2$). Further let   $\mathcal S= \{S_n, n\ge 3\}$, $S_n=\sum_{i=3}^n \xi_i$, be the random walk associated to the Cram\'er model,   $\xi_i$,  $i\ge 3$, being independent
  random variables defined by $\P\{\xi_i= 1\}=  1-\P\{\xi_i= 0\}= \frac1{\log i}$. The basic probability space on which these random variables are defined is noted $(\O,\A,\P)$, and the  corresponding expectation symbol $\E$.
  \smallskip\par
  %Part IV of our 2009's book   \cite{W2}, the  Chapters 11    and  13,   are  devoted to divisors, and their applications in Analysis, especially Fourier analysis.    We  record  earlier     results   on  divisors and quasi-prime numbers in the Bernoulli model,  and  recent   sharp density one results on  the \lq primes\rq\,  in the Cram\'er model,  using an   estimate of Selberg for the lower bound.
  
\smallskip\par 
%The class of   binomial  random walks  in which $\rho$ is varying with $i$, is  of first importance in random models in Number theory. See Freiman-Pitman \cite{FP} for another example arising the study of the number of partitions of a number into distinct parts. Its study is not considered here.

\smallskip\par
 
 %to study similar divisibility or primality  properties in  the Rademacher model and the  . 

\medskip\par
 {\it Notation and convention.}    
-- Throughout the paper,  let  
$p$   denotes an arbitrary prime number,   the letter $C$   a universal constant whose value may change at each occurence, and   $C_{\a, \b, \ldots}$     a constant depending only on the parameters ${\a, \b, \ldots}$. 
\vskip 2 pt 
\noi -- We agree that $\sum_{\emptyset}=0$, $\sup_{\emptyset}=0$.

\bigskip\par
\section{Preamble on summation methods.}\label{s2}

In background  of  the study of the divisibility/primality questions   related to these models are often are summation methods, which may be  of   particular efficiency, as they are linked between them through   Tauberian results. We first begin with some important facts. The uniform  model  (the uniform distribution on $\{0,1,2\ldots, n\}$, for each $n$), also called the   Kubilius model,  is related to Ces\`aro's summation method, whereas the binomial model is  to Euler
  summation method, and there exist Tauberian theorems 
  comparing   summation  methods, in particular these two methods.  Therefore   fine arithmetical results can be   directly used, for one summation  method (one random model), depending on the rate of convergence in this one, to answer a question concerning another random model.  
\begin{definition} A  set  
$A\subset 
\N$   has  Ces\`aro density $\l$ of order 1 ($C_1$--density $\l$),  if 
$$\lim_{n\to \infty}\frac{\#\{ j\le n: j\in A\}}{n}= \l.
$$
\end{definition}Often, the most readily available information about a set  of integers is that it has $C_1$-Ces\`aro density with a given rate of convergence. \begin{definition}  A subset $A$ of $  \N$ is said to have {\it Euler density  $\l$ 
with parameter
$\varrho $} (in short ${ E}_\varrho$ density $\l$) if 
\begin{equation*} 
\lim_{n\to \infty}\sum_{j\in A} {{n}\choose{j}} \varrho^j (1-\varrho)^{n-j}= \l.
\end{equation*} 
\end{definition}
Diaconis and Stein proved in \cite{DS}
(Theorem 1) the following fundamental result:
\begin{theorem}\label{ds}For any $A\subset \N$, and $\varrho\in ]0,1[$ the following assertions are equivalent:
 \begin{eqnarray*} \label{dschar} ({\rm i})  & &\qq \hbox{\it $A$ has  ${E}_\varrho$ density $\l$},
  \cr   ({\rm ii})  & &\qq\lim_{t\to \infty} e^{-t}\sum_{j\in A} \frac{t^j}{j!}=\l,
 \cr ({\rm iii})  & &\qq \hbox{\it  for all $\e>0$}, \quad    \lim_{n\to \infty} \frac{\#\{j\in A : n\le j< n+\e\sqrt n\} }{\e\sqrt n} 
 =\l . 
 \end{eqnarray*}
\end{theorem}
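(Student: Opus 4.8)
The plan is to read the three conditions probabilistically. Writing $a_j=\mathbf 1_A(j)\in\{0,1\}$, condition (i) is the statement $b_\varrho(n):=\sum_j a_j\binom{n}{j}\varrho^j(1-\varrho)^{n-j}=\P\{X_n\in A\}\to\l$ for $X_n$ Binomial$(n,\varrho)$, while (ii) is $p(t):=\sum_j a_j e^{-t}t^j/j!=\P\{Y_t\in A\}\to\l$ for $Y_t$ Poisson$(t)$. The first link I would record is the \emph{thinning identity} $\sum_{n\ge 0}\frac{e^{-T}T^n}{n!}\,b_\varrho(n)=p(\varrho T)$, obtained by interchanging the two (nonnegative) sums and resumming a shifted exponential series: thinning a Poisson$(T)$ flow with retention probability $\varrho$ yields a Poisson$(\varrho T)$ flow. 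Since $0\le b_\varrho(n)\le 1$ and Poisson averaging is a regular summation method on bounded sequences, (i) immediately gives $p(\varrho T)\to\l$ as $T\to\infty$, i.e. (ii); this en passant shows that the Euler density does not depend on $\varrho$ and is really a Borel/Poisson density.

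Then I would set up a common \emph{local} bridge via the local central limit theorem, which says that both weight families concentrate in a window of width of order $\sqrt n$ about their means ($t$ for Poisson, $n\varrho$ for Binomial) and are asymptotically Gaussian there. The natural device is the \emph{blow-up measure} $\nu_{c,\tau}:=\frac1\tau\sum_{j\in A}\d_{(j-c)/\tau}$ on $\R$, which has density in $[0,1]$; for $t=c+s\tau$ with $\tau=\sqrt c$ the local CLT gives $p(t)=(\nu_{c,\tau}*\phi)(s)+o(1)$ with $\phi$ the standard Gaussian density, and an analogous formula holds for the Binomial average with $c=n\varrho$, $\tau=\s=\sqrt{n\varrho(1-\varrho)}$. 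For the implication (iii)$\Rightarrow$(i): condition (iii), holding for every $\e$, says that all interval-averages of the blow-ups converge to $\l$, which forces each vague subsequential limit of $\nu_{n\varrho,\s}$ to be $\l$ times Lebesgue measure; integrating the asymptotically Gaussian Binomial weights against this limit yields $b_\varrho(n)\to\l$, which is (i). Tails beyond the $O(\sqrt n)$ bulk are discarded by standard concentration.

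The heart of the argument, and the step I expect to be the main obstacle, is the Tauberian converse (ii)$\Rightarrow$(iii). Here the Gaussian smoothing only resolves $A$ on the scale $\sqrt n$, i.e. at windows of width $\e=O(1)$, so recovering the density of $A$ in arbitrarily short windows $[n,n+\e\sqrt n)$ is not a direct estimate. The mechanism I would exploit is the \emph{injectivity of Gaussian convolution}. Arguing by contradiction, suppose (iii) fails: for some $\e_0$ there is a sequence $n_k\to\infty$ along which the window density tends to some $\l'\ne\l$. Passing to a vaguely convergent subsequence of the blow-ups $\nu_{n_k,\sqrt{n_k}}$, with limit $\nu$ (of density in $[0,1]$ and with $\nu([0,\e_0])/\e_0=\l'$), the local CLT together with (ii) gives $(\nu*\phi)(s)=\lim_k p(n_k+s\sqrt{n_k})=\l$ for every $s\in\R$. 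Since $\widehat\phi(\xi)=e^{-\xi^2/2}$ never vanishes, convolution by $\phi$ is injective on tempered distributions, so $\nu*\phi\equiv\l$ forces $\nu=\l\,\mathrm{Leb}$; but then $\nu([0,\e_0])/\e_0=\l\ne\l'$, a contradiction. This proves (ii)$\Rightarrow$(iii).

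Finally I would close the cycle (i)$\Rightarrow$(ii)$\Rightarrow$(iii)$\Rightarrow$(i), which yields the full equivalence. The technical care lies in three routine but not entirely trivial points: a local CLT with error uniform over the range $t=n+s\sqrt n$ used, tightness of the blow-ups and the legitimacy of passing $p(t)$ to the limit inside the convolution (both handled by the uniform bounds $0\le a_j\le 1$ and Gaussian tail decay), and the elementary fact that a measure of bounded density whose every interval-average equals $\l$ is $\l\,\mathrm{Leb}$. The one genuinely delicate ingredient remains the convolution-injectivity step, where the boundedness of $\mathbf 1_A$ serves as the Tauberian side-condition preventing fine-scale oscillation from hiding inside the $\sqrt n$-scale average.
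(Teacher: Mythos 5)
The paper offers no proof of this theorem to compare against: it is quoted as Theorem 1 of Diaconis and Stein \cite{DS}, and the remark that follows it records that the difficult implication (ii)$\Rightarrow$(iii) was first established by Jurkat \cite{J} and proved by Moh \cite{Mo} via an extension of Wiener's Tauberian theorem. So your proposal has to be judged against those cited arguments rather than against an in-paper proof, and on that basis it stands up: I find it correct. The exact thinning identity $\sum_{n\ge 0}e^{-T}T^n b_\varrho(n)/n!=p(\varrho T)$, together with regularity of Poisson averaging on bounded sequences, settles (i)$\Rightarrow$(ii); the window decomposition with the De Moivre--Laplace local limit theorem settles (iii)$\Rightarrow$(i); and your treatment of the genuinely Tauberian step (ii)$\Rightarrow$(iii) --- Helly compactness of the blow-up measures, the Poisson local limit theorem to convert (ii) into $\nu*\phi\equiv\l$ for every vague subsequential limit $\nu$, then injectivity of Gaussian convolution because $\widehat{\phi}$ never vanishes --- is sound, including the two verifications it requires: that vague limits are absolutely continuous with density at most $1$ (so $\nu$ has no atoms and $\nu([0,\e_0])=\e_0\l'$ survives the passage to the limit), and that the uniform bound $\nu_k([a,b])\le (b-a)+n_k^{-1/2}$ plus Gaussian tail decay justifies the limit inside the convolution.

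The comparison is then mostly one of packaging. Nonvanishing of $\widehat{\phi}$ is precisely Wiener's hypothesis, so your convolution-injectivity step is the Wiener--Tauberian mechanism of Moh's proof made concrete and self-contained: instead of invoking an abstract Tauberian theorem for a kernel family, you localize with blow-up measures and run a compactness-plus-Fourier-uniqueness argument (division by $e^{-\xi^2/2}$ against compactly supported test functions). What this buys is independence from summability theory and a proof that visibly isolates where the side condition $0\le\mathbf 1_A\le 1$ enters, namely as bounded density of the blow-ups, hence temperedness and tail control; what it costs is the need for local limit theorems with errors uniform over the windows $t=n+s\sqrt n$, which the abstract Tauberian route avoids. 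Either way the cycle (i)$\Rightarrow$(ii)$\Rightarrow$(iii)$\Rightarrow$(i) closes, and as you note it also recovers, in passing, the independence of the Euler density from $\varrho$ stated in \eqref{E.rho}. I see no gap.
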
 
 
%This  answers a question raised independently by Erd\"os and Gleason.
\begin{remark} \rm The implication (ii)$\Rightarrow$(iii) was in fact
 first established by Jurkat \cite{J}, and a
proof   using an extension of Wiener Tauberian theorem was given by
Moh in \cite{Mo}, not quoted in the paper. 
\end{remark}\vskip 2pt 
An important consequence presented by the authors as the principal  result of the paper,  is   that:
\begin{equation} \label{E.rho} \hbox{\it  The existence and value of ${  E}_\varrho$ density does not depend on
the value of $\varrho$}.  
\end{equation}  
%This is presented by the authors as the principal  result of the paper. 
A study of limit results in the present setting, for binomial random walks,  therefore reduces to the case of the Bernoulli random walk. 
As a consequence of  Theorem 3  in Diaconis and Stein  \cite{DS}, we have
\begin{theorem} If $A\subset \N$ has $E_\varrho$ density $\l$, then $A$ has $C_1$ density $\l$. Conversely the following implication  in which   $g$ is assumed to be regularly varying at infinity with exponent $\a$, $-1<\a\le -1/2$, holds true 
\begin{equation} \label{ 2}   
   \Big|\frac{\#\{ j\le n: j\in A\}}{n}- \l\Big|\le g(n)\qquad\!\!\!\!\!\Longrightarrow  \qquad\!\!\!\!\!  \Big|\P\{B_n(\varrho) \in A\} - \l\Big|\le K g(n)n^{1/2}. 
 \end{equation}
 \end{theorem}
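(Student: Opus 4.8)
The plan is to use that both quantities are linear averages of the indicator sequence $a_j=\mathbf 1_A(j)$. Writing $w_j=w_j(n)=\binom nj\varrho^j(1-\varrho)^{n-j}$ for the binomial weights, which form a probability law on $\{0,\dots,n\}$ with mean $\mu_n=\varrho n$ and variance $\sigma_n^2=\varrho(1-\varrho)n$, one has $\P\{B_n(\varrho)\in A\}=\sum_{j=0}^n a_jw_j$, whereas the $C_1$ density is the Ces\`aro mean $\frac1n\sum_{j\le n}a_j$. The first implication ($E_\varrho$ density $\l$ forces $C_1$ density $\l$) is exactly the Tauberian comparison quoted as Theorem 3 of \cite{DS}; alternatively it follows from Theorem \ref{ds}, since condition (iii) asserts that $A$ has density $\l$ on every window $[m,m+\e\sqrt m)$, and tiling $[m_0,n]$ by such windows and summing returns the Ces\`aro average $\l$. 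I would make this precise by fixing $\e$, discarding the $O(\sqrt{m_0})$ initial terms, and invoking the uniformity of the limit in (iii) for $m\ge m_0$.

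For the converse, since $\sum_jw_j=1$ it suffices to bound
\[
\Big|\P\{B_n(\varrho)\in A\}-\l\Big|=\Big|\sum_{j=0}^n (a_j-\l)\,w_j\Big|.
\]
Put $b_j=a_j-\l$ and $T_m=\sum_{j=0}^m b_j=\#\{j\le m:j\in A\}-\l(m+1)$. The hypothesis gives $|T_m|\le m\,g(m)+\l\le C\,m\,g(m)$ for $m$ large, because $m\,g(m)=m^{1+\a}L(m)\to\infty$ when $\a>-1$ ($L$ slowly varying). Summation by parts then yields
\[
\sum_{j=0}^n b_j w_j= T_n w_n+\sum_{j=0}^{n-1} T_j\,(w_j-w_{j+1}),
\]
and the endpoint $T_n w_n$ is negligible as $w_n=(1-\varrho)^n$ is exponentially small. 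Thus everything reduces to estimating $\sum_{j} |T_j|\,|w_j-w_{j+1}|\le C\sum_{j} j\,g(j)\,|w_j-w_{j+1}|$.

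Here I would exploit the concentration of the binomial weights. They are unimodal with mode at $\mu_n\asymp n$, so their total variation is $\sum_j|w_j-w_{j+1}|\asymp\max_j w_j\asymp\sigma_n^{-1}\asymp n^{-1/2}$, and essentially all of it sits on the peak window $|j-\mu_n|\lesssim\sqrt{n\log n}$. On that window the regular variation of $g$ makes $j\mapsto j\,g(j)=j^{1+\a}L(j)$ nearly constant: the window has relative width $O(\sqrt{(\log n)/n})$ and $1+\a>0$, so $j\,g(j)\sim\mu_n\,g(\mu_n)\sim\varrho^{1+\a}\,n\,g(n)=C\,n\,g(n)$ there. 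Outside the window both $w_j$ and $|w_j-w_{j+1}|$ decay super-exponentially (Hoeffding gives $\sum_{|j-\mu_n|>\sqrt{n\log n}}w_j\le 2n^{-2}$), which beats the polynomial factor $j\,g(j)$ and leaves a contribution $O(n^{\a-1}L(n))=o(g(n)n^{1/2})$. Collecting the two regimes,
\[
\sum_{j} j\,g(j)\,|w_j-w_{j+1}|\le C\,n\,g(n)\sum_{j}|w_j-w_{j+1}|+o\big(g(n)n^{1/2}\big)\le K\,g(n)\,n^{1/2},
\]
which is the claim. Note that $\a\le-1/2$ is precisely what keeps the right-hand side $g(n)n^{1/2}=n^{\a+1/2}L(n)$ bounded (indeed $o(1)$ for $\a<-1/2$), so that the estimate is informative.

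The step I expect to be the real obstacle is the weight estimate of the last paragraph: one must legitimately pull the factor $j\,g(j)$ out of the total-variation sum at the cost of only a constant. This needs (i) the near-constancy of $j\,g(j)$ across the $O(\sqrt n)$-wide peak, where regular variation and $\a>-1$ are used, and (ii) a quantitative tail bound in which the super-exponential decay of $|w_j-w_{j+1}|$ dominates the polynomial growth of $j\,g(j)$. The remaining points---the transfer $|T_m|\le Cmg(m)$ for small $m$ and the $+1$ shift in $T_m$, whose constant contribution is $O(n^{-1/2})=o(g(n)n^{1/2})$ since $\a>-1$---are routine.
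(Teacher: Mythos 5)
Your argument is correct, and in substance it is the route the paper takes, made self-contained. The paper itself offers no proof of this theorem: it quotes it as a consequence of Theorem 3 of Diaconis and Stein \cite{DS}, and then points out that the underlying mechanism is Lemma \ref{Eulerbound} --- Abel summation against the binomial weights combined with the Feller fact that their total variation is $O(n^{-1/2})$. That is exactly your skeleton: your summation by parts in $T_j$ together with $\sum_j|w_j-w_{j+1}|\le 2\max_j w_j\le Cn^{-1/2}$ reproduces Lemma \ref{Eulerbound}, applied to $a_h=\mathbf 1_A(h)-\l$. Where you diverge is in the bookkeeping that converts $|T_j|\le C\,j\,g(j)$ into the factor $n\,g(n)$: your peak-window/tail split (uniformity of regular variation on $|j-\mu_n|\lesssim\sqrt{n\log n}$, Hoeffding outside) is correct but heavier than necessary. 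Since $\ell\mapsto\ell g(\ell)$ is regularly varying with \emph{positive} exponent $1+\a$, Potter's bounds give $\max_{\ell\le n}\ell g(\ell)\le C\,n\,g(n)$ outright, whence $\sum_j|T_j|\,|w_j-w_{j+1}|\le C\,n\,g(n)\cdot n^{-1/2}=C\,g(n)\,n^{1/2}$ in one line, with no concentration argument at all; this is precisely how the paper's displayed implication following Lemma \ref{Eulerbound} (take $\e(\ell)=g(\ell)\sqrt{\ell}$, so that $\max_{1\le\ell\le n}\e(\ell)\sqrt{\ell}\asymp n\,g(n)$) yields the theorem. Your first part --- tiling $[m_0,n]$ by windows $[m,m+\e\sqrt m)$ and invoking (iii) of Theorem \ref{ds} --- is a valid, if sketched, alternative to citing \cite{DS} directly; the remaining slips (the exact value of the endpoint weight $w_n$, the $+1$ normalisation in $T_m$) are immaterial.
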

See also Hardy \cite{H1},    Theorem 149.  
 Let
$\{\a_k, k\ge 0\}$ be a  sequence of reals, $\l$ real and $\e(n)$ be positive reals. More generally, it follows from  Lemma \ref{Eulerbound} that 
 \begin{eqnarray*} \label{4.61c}\Big|\frac{1}{n}\sum_{j=0}^{n-1}\a_j- \l \Big|\le \frac{\e(n)}{n^{ 1/2} }
%\qq \qq \Big| \sum_{j=0}^{n-1}\a_j- n\l \Big|\le  \e(n) n^{ 1/2}  
\quad \Longrightarrow \quad  \Big| \sum_{h=0}^n   2^{-n}  
{{n}\choose{h}}\a_h-\l \Big|  \le   \frac{C}{ \sqrt n}\, \max_{1\le \ell\le n} (\e(\ell) \sqrt \ell)     .\end{eqnarray*}

Let us illustrate this by giving an application to {\it $k$-free} numbers in $\mathcal B(\rho)$, which is implicitly included in \cite{DS}, (Corollary 4).
%\subsection{A sharp estimate of $\P\big\{ B_n(\varrho)\ k\hbox{-{\it free}}\big\}$.}\label{s.1.3}
   
 \begin{definition}An integer $s$ is said {\it $k$-free} if and only if it is not divisible be the $k$-th power of any prime. 
 \end{definition}Leahey and Nymann proved in \cite{LN}  among other papers, by using a formula analogous to 
    \begin{equation*}
    %\label{Bnprime}
    1-\P\{ B_n(\varrho)\ \hbox{is prime}\}= - \sum_{2\le d\le n }  \m(d) \P\{ d|B_n(\varrho)\},  
\end{equation*}  $\m$ being M\"obius function, that
\begin{equation}\label{Bnprime}   
\lim_{n\to \infty}\P\big\{ B_n(\varrho)\ k\hbox{-{\rm free}}\big\}= \frac{1} {\zeta(k)},\end{equation}    where $\zeta$ is the Riemann zeta function.
       A much more precise result holds:
\begin{corollary}  {\it For $k\ge 2$ integer,}
 \begin{equation}\label{k.free.DS} \Big|\P\big\{ B_n(\varrho)\ k\hbox{-{\rm free}}\big\}  - \frac{1}{\zeta(k)} \Big|\le E(n) , 
\end{equation}
where
$$ E(n)\le C\left( n^{\frac1k-1}\cdot  e^{ -Ak^{-8/5}\frac{(\log 
n^{{3/5}}}{(\log\log  n) ^{1/5}}}\right).$$
%Note that $ \frac{1}{\zeta(2)}=\frac{6}{\pi^2}$.
\end{corollary}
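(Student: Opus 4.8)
The plan is to expand the indicator of $k$-freeness by M\"obius inversion and then to treat the two resulting pieces — an arithmetic main term and a Fourier error term — separately. Since an integer $m$ is $k$-free exactly when $\sum_{d^k\mid m}\mu(d)=1$ (and the sum vanishes otherwise), and since $0\le B_n(\varrho)\le n$ forces $d^k\le n$, one has
\[
\P\big\{B_n(\varrho)\ k\text{-free}\big\}=\sum_{d\le n^{1/k}}\mu(d)\,\P\{d^k\mid B_n(\varrho)\}.
\]
Writing $\P\{d^k\mid B_n(\varrho)\}=d^{-k}+\big(\P\{d^k\mid B_n(\varrho)\}-d^{-k}\big)$ splits this into a main term $\sum_{d\le n^{1/k}}\mu(d)d^{-k}$ and a remainder $\sum_{d\le n^{1/k}}\mu(d)\big(\P\{d^k\mid B_n(\varrho)\}-d^{-k}\big)$.

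For the main term I would complete the Dirichlet series, $\sum_{d\le n^{1/k}}\mu(d)d^{-k}=\zeta(k)^{-1}-\sum_{d>n^{1/k}}\mu(d)d^{-k}$, and bound the tail by partial summation against the Mertens sum $M(x)=\sum_{m\le x}\mu(m)$. Feeding in the Korobov--Vinogradov estimate $M(x)\ll x\exp\!\big(-c(\log x)^{3/5}(\log\log x)^{-1/5}\big)$ and taking $x=n^{1/k}$, so that $\log x=k^{-1}\log n$, yields a tail of size $n^{1/k-1}\exp\!\big(-A\,k^{-8/5}(\log n)^{3/5}(\log\log n)^{-1/5}\big)$; the negative power of $k$ in the exponent is produced precisely by $\log(n^{1/k})=k^{-1}\log n$ together with the weight $d^{-(k+1)}$ appearing after partial summation and the uniformization of $\log\log x$ by $\log\log n$. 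This already reproduces the shape of $E(n)$ in \eqref{k.free.DS}.

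The delicate part is the remainder, and here I expect the main obstacle. Using the finite Fourier transform (a theta-function identity) for the binomial step,
\[
\P\{d^k\mid B_n(\varrho)\}-\frac1{d^k}=\frac1{d^k}\sum_{j=1}^{d^k-1}\big(\varrho+(1-\varrho)e^{2\pi\mathrm i j/d^k}\big)^{n},
\]
together with $|\varrho+(1-\varrho)e^{\mathrm i\theta}|^{2}=1-2\varrho(1-\varrho)(1-\cos\theta)$, each summand is $\le\exp\!\big(-c\,\varrho(1-\varrho)\,n\,(j/d^k)^2\big)$. For $d^k$ up to about $\sqrt n$ this makes the remainder exponentially small and harmless, but for $d^k$ comparable to $n$ a term-by-term estimate loses a factor $n^{1/2}$ and is useless. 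To keep the power at $n^{1/k-1}$ one cannot discard the sign of $\mu(d)$ nor the oscillation in $j$: the contribution of the large moduli must be treated as a genuine exponential sum by the Vinogradov--Korobov method, exactly as in Walfisz's treatment of the count of $k$-free integers. This is the step that simultaneously controls the large-modulus range and regenerates the $3/5$--$1/5$ saving.

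Finally, as the statement indicates (``implicitly included in \cite{DS}''), the estimate can alternatively be read off the ordinary Ces\`aro density of the $k$-free integers. The same M\"obius computation gives $\big|Q_k(n)/n-\zeta(k)^{-1}\big|\le g(n)$ with $g(n)=C\,n^{1/k-1}\exp(-A k^{-8/5}(\log n)^{3/5}(\log\log n)^{-1/5})$, where $Q_k(n)=\#\{m\le n:\ m\ k\text{-free}\}$ (Walfisz). Since $g$ is regularly varying with exponent $1/k-1$, satisfying $-1<1/k-1\le-1/2$ for every $k\ge2$, one may transport this rate to the binomial model through Lemma \ref{Eulerbound} and the transfer implication displayed in Section \ref{s2}, the $\varrho$-independence \eqref{E.rho} showing that the binomial parameter is irrelevant in the limit. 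I would nonetheless rely on the direct Fourier route above to secure the sharp power $n^{1/k-1}$, since the Tauberian transfer carries an extra factor $n^{1/2}$ that the direct argument avoids.
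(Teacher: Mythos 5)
Your ``alternative'' route is, in substance, exactly the paper's proof: take $A={\mathcal Q}_k$, feed Walfisz's estimate \eqref{k.free} into the Ces\`aro density of $A$, observe that the resulting rate $g(n)=n^{1/k-1}\exp\{-Ak^{-8/5}(\log n)^{3/5}(\log\log n)^{-1/5}\}$ is regularly varying with exponent $1/k-1\in\,]-1,-1/2]$, and apply the Euler--Ces\`aro transfer implication of Section \ref{s2}. Moreover, your warning that this transfer ``carries an extra factor $n^{1/2}$'' is correct and is not a defect of your write-up alone: the transfer produces $\big|\P\{B_n(\varrho)\ k\hbox{-free}\}-\zeta(k)^{-1}\big|\le K\,g(n)\,n^{1/2}=K\,n^{1/k-1/2}\exp\{\cdots\}$, and that is all the paper's own argument yields as well. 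The exponent $1/k-1$ displayed in \eqref{k.free.DS} is the exponent of $g(n)$ itself, i.e.\ of the Ces\`aro rate \emph{before} transfer; as written, the corollary's bound does not follow from the proof given (for $k=2$ the provable bound has no power saving at all, only the exponential factor). So on this branch you reproduce the paper's proof and, in addition, correctly locate a real discrepancy between its statement and its proof.

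The genuine gap lies in the route you say you would ``rely on'' to secure the power $n^{1/k-1}$. For moduli $d$ with $d^k\asymp n$ you dispose of the remainder by asserting it ``must be treated as a genuine exponential sum by the Vinogradov--Korobov method, exactly as in Walfisz's treatment''. This step cannot be completed by repeating Walfisz. His method bounds the sharp-cutoff counting function ${\mathcal Q}_k(x)$ over the full range $[1,x]$; in your remainder each $\mu(d)$ is weighted by $\P\{d^k|B_n\}-d^{-k}$, and by the local limit theorem (equivalently \eqref{poisson.comp.theta.llt}) these weights localize the entire computation to counting $k$-free integers in a window of width $\asymp\sqrt{n\log n}$ around $n/2$. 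Obtaining Walfisz-quality --- indeed any comparable --- error terms for $k$-free numbers in intervals of length about $\sqrt n$ is an open problem (short-interval results of Filaseta--Trifonov type lose far more), precisely parallel to the paper's discussion of $\P\{B_n\ \hbox{prime}\}$, where binomial averaging cannot resolve scales finer than $\sqrt n$ and any improvement demands genuine short-interval information. So the step you lean on is not a routine citation but the whole difficulty; your plan as it stands does not prove the stated estimate, and the defensible conclusion is the bound with exponent $1/k-1/2$, which is what both your fallback route and the paper actually deliver.
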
 
\begin{proof} Let ${\mathcal Q}_k$,  ${\mathcal Q}_k(x)$ respectively denote the set of $k$-free integers, and the number of $k$-free integers $\le x$ ($k\ge 2$, integer).  By a result of Walfisz \cite{Wal},
  \begin{equation}\label{k.free} 
{\mathcal Q}_k (x)= 
\frac{x}{\zeta(k)} + {\mathcal  O} 
\big( x^{\frac1k}. e^{ -Ak^{-8/5}.\frac{(\log 
x^{{3/5}}}{(\log\log  x) ^{1/5}}}\big)
 . 
\end{equation}
Take $A= {\mathcal Q}_k$  and note that 
$$\Big|\frac{\#\{ j\le n: j\in A\}}{n}- \frac{1}{\zeta(k)} \Big|\le g(n)$$
with 
$$g(n)= n^{\frac1k-1}\cdot e^{ -Ak^{-8/5}.\frac{(\log 
n^{{3/5}}}{(\log\log  n) ^{1/5}}},$$
Clearly $g(n)$   is regularly varying at infinity with exponent $\a$, $-1<\a\le -1/2$. Thus \eqref{k.free.DS} follows.
\end{proof}
 \medskip\par
%\subsection{$0$-density sequences} 
  For $0$-density sequences, we have the following complementary result, which does not appear  in the literature.
% It follows from    Hardy \cite[ineq.\,3]{H}  that   the   result holds. 
\begin{proposition}
%\vskip 5 pt {\it
Let $A$ be a set of integers of (natural) density $0$. Then for every $p>1$,
 %}
\begin{equation}\label{hardy1} \lim_{N\to \infty}\frac{1}{N}\sum_{n=1}^N \P\{ B_n(\varrho)\in A\}^p =0.\end{equation} 
 \end{proposition}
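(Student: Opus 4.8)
The plan is to reduce immediately to the case $p=1$ and then prove that case by interchanging the order of summation and invoking an elementary ``expected number of visits'' identity for the binomial walk. Since $0\le \P\{B_n(\varrho)\in A\}\le 1$ and $p>1$, we have the pointwise bound $\P\{B_n(\varrho)\in A\}^p\le \P\{B_n(\varrho)\in A\}$, so it is enough to establish
$$\lim_{N\to\infty}\frac1N\sum_{n=1}^N \P\{B_n(\varrho)\in A\}=0,$$
and the assertion for every $p>1$ then follows at once by monotonicity of the Ces\`aro means.

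To treat the $p=1$ sum, I would write $\P\{B_n(\varrho)\in A\}=\sum_{j\in A}\P\{B_n(\varrho)=j\}$ and exchange the two nonnegative sums, obtaining
$$\frac1N\sum_{n=1}^N \P\{B_n(\varrho)\in A\}=\frac1N\sum_{j\in A}\sum_{n=1}^N\P\{B_n(\varrho)=j\}.$$
The heart of the matter is to evaluate the total mass accumulated at a fixed level $j\ge 1$. Using $\P\{B_n(\varrho)=j\}=\binom nj(1-\varrho)^j\varrho^{\,n-j}$ for $n\ge j$ and the negative binomial series $\sum_{m\ge 0}\binom{m+j}{j}x^m=(1-x)^{-(j+1)}$, one finds
$$\sum_{n=1}^\infty \P\{B_n(\varrho)=j\}=(1-\varrho)^j\sum_{m\ge 0}\binom{m+j}{j}\varrho^{\,m}=\frac{1}{1-\varrho},$$
a constant independent of $j$. (This is just the expected number of visits of the non-decreasing walk to level $j$, a geometric waiting time of parameter $1-\varrho$, the walk reaching every level almost surely since $1-\varrho>0$.)

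Since $\P\{B_n(\varrho)=j\}=0$ whenever $n<j$, only the indices $j\le N$ contribute to the inner sum, and each inner sum is dominated by $1/(1-\varrho)$. Hence
$$\frac1N\sum_{n=1}^N \P\{B_n(\varrho)\in A\}\le \frac{1}{1-\varrho}\cdot\frac{\#\{j\in A:\ j\le N\}}{N},$$
which tends to $0$ precisely because $A$ has natural (i.e.\ $C_1$) density $0$. I expect no genuine obstacle here: the only substantive step is recognizing the visit identity $\sum_n\P\{B_n(\varrho)=j\}=1/(1-\varrho)$, whose force is that summing the pointwise probabilities over $n$ decouples the level $j$ and yields a bound in which the density-zero hypothesis directly controls the average. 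Everything else is routine bookkeeping.
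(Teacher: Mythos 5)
Your proof is correct, but it follows a genuinely different route from the paper's. The paper deduces the proposition from Hardy's inequality for Euler (Hausdorff) means: for $p>1$ and $a_n\ge 0$,
$\sum_{n=1}^\infty\bigl(\sum_{m=0}^n\binom{n}{m}\varrho^m(1-\varrho)^{n-m}a_m\bigr)^p\le \varrho^{-1}\sum_{n=1}^\infty a_n^p$;
one applies this to the truncated indicator $a_m=\chi_{A\cap[0,N]}(m)$ (truncation is needed to make the right-hand side finite), observes that $B_n\le n$ makes the $n$-th Euler mean equal to $\P\{B_n(\varrho)\in A\}$ for all $n\le N$, and concludes $\sum_{n\le N}\P\{B_n(\varrho)\in A\}^p\le C\,\#(A\cap[0,N])=o(N)$. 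You instead dispose of the exponent at the outset via $\P\{B_n(\varrho)\in A\}^p\le\P\{B_n(\varrho)\in A\}$ (legitimate since probabilities lie in $[0,1]$ and $p>1$), and then prove the $p=1$ statement by Tonelli together with the sojourn-time identity $\sum_{n\ge 1}\P\{B_n(\varrho)=j\}=1/(1-\varrho)$, proved from the negative-binomial series; this identity is precisely the equality case $p=1$ of Hardy's inequality, so in effect you prove the one piece of that inequality you need rather than citing it. Both arguments terminate in the same quantitative bound $O\bigl(\#(A\cap[0,N])/N\bigr)$, so nothing is lost. Your route is more elementary and self-contained, and it yields the stronger conclusion at $p=1$, which the cited inequality (valid only for $p>1$) does not directly cover; what the paper's route buys is a one-line proof given the reference, plus a tool that applies to arbitrary nonnegative sequences $a_n$ rather than just indicators, which fits the surrounding section's theme of transferring estimates between summation methods.
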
 
 This follows from Hardy \cite[ineq.\,3]{H}: {\it Let $p>1$. Then for $0<\varrho <1$, $a_n\ge 0$,}
 \begin{equation}\label{hardy} \sum_{n=1}^\infty \left(\sum_{m=0}^n   {{n}\choose{m}}\varrho^m(1-\varrho)^{n-m}a_m\right)^p\le \varrho^{-1}  \sum_{n=1}^\infty a_n^p.
 \end{equation} 

% \begin{proof} We use  Hardy \cite[ineq.\,3]{H}: {\it Let $p>1$. Then for $0<\varrho <1$, $a_n\ge 0$,}
%\begin{equation}\label{hardy} \sum_{n=1}^\infty \Big(\sum_{m=0}^n C_n^m \varrho^m(1-\varrho)^{n-m}a_m\Big)^p\le \varrho^{-1}  \sum_{n=1}^\infty a_n^p.
%\end{equation} 
%By taking $a_m= \chi_{A\cap [1,N]}(m)$ and next letting $N\to \infty$, the claimed result follows.
%\end{proof}. 

  \bigskip\par
 We refer to Apostol \cite{A}, McCarthy \cite{MC}, Montgomery and Vaughan \cite{MV}, Tenenbaum \cite{Te1}   as  bibliographical sources on arithmetical functions, and for results on arithmetical properties of $r$-uples of integers, co-primality,  pairwise co-primality,   least common multiple, and associated arithmetical functions.   We cite  T\'oth's elaborated survey \cite{To} for   sharp limit results of this kind with  estimates of the remainder,  Fernandez and Fernandez \cite{FF}, % and references therein
 for  limiting moments and distribution,   for instance of  {gcd}$ (X^n_1, \ldots, X^n_n)$, {lcm}$(X^n_1, \ldots, X^n_n)$ of  arrays of independent uniformly distributed random variables. These specialized arithmetical functions %, although of interest, 
% are all -{\it in} probability type, and 
are not considered here.  In each 
case, however, what is investigated is often the behavior for $n$ large of $\E f(T_n)^r$, $r\ge 1$, where $f(n)$ is one of these  arithmetical functions, $f$ may be varying with $n$ (for arrays of random variables), $T_n= Y_1+\ldots+Y_{\nu(n)}$  (the expectation symbol $\E$ corresponding to the underlying probability space on which these random variables $Y_j$ are defined). 
%\vskip 2 pt   
Recall that any arithmetical function $f(n)$ can be represented in the form
$$ f(n) =\sum_{d|n} \Phi(d).$$
The function $\Phi(d)$ is defined by the M\"obius inversion formula
$$ \Phi(n)= \sum_{d|n}\mu\Big(\frac{n}{d}\Big) f(d)  .$$
For instance for the Bernoulli model,
$$\E f(n) =\sum_{d|n} \P\{d|B_n\}\,\Phi(d).$$
See also sub-section \ref{sub.bin.way.back}. Hence the interest of having at disposal sharp estimates of  $\P\{d|B_n\}$ when both $d$ and $n$ vary.
 \vskip 2 pt
When $\Phi(d)$ is such that the series $\sum \Phi(d)/d$ is absolutely convergent, we recall that by Wintner's Theorem
$$ \lim_{N\to\infty}\frac{1}{N}\sum_{n=1}^Nf(n)= \sum_{d=1}^\infty \frac{\Phi(d)}{d}.$$
See  Postnikov \cite{Po} p.\,138. 
For many arithmetical functions $\Phi(d)$ is simple. Here are some examples of mostly studied functions. Let  $\mathcal P$ be the set of prime numbers and let the letter $p$ always denote a prime number.  
       %Take$A$ to be the set of multiples of an integer$d$. Then (\cite{DS},\,Example 2),
% \begin{equation}\label{multiple} \Big|\P\big\{d|B_n(\varrho) \}- \frac{1}{d}\Big|\le 
 %e^{-        {
% 8n \varrho(1-\varrho)  /  d^2}  }.
%\end{equation} \bigskip\par
%can be either of the examples below 
  \begin{eqnarray*}
    \s_s(n) = \sum_{d|n} d^s &&\qq\hbox{\rm the sum of $s$-powers of divisors of $n$, $s\in \R$;}
  \cr   J_s(n)=\sum_{d|n} d^s \m\Big(\frac{n}{d} \Big) &&\qq\hbox{\rm the generalized Euler totient function, $s>0$;}
\cr    w(n)=\sum_{p|n}\frac{\log p}{p} &&\qq\hbox{\rm Davenport's function;}
\cr \Phi(n)=\sum_{d|n} \frac{\log d}{d} &&\qq\hbox{\rm Erd\"os and  Zaremba function;}
\cr 
 \omega(n) = \sum_{p|n,p\in \P} 1 &&\qq\hbox{\rm the prime divisor function;}
   \cr  \Omega(n) = \sum_{p|n,p\in \P} p &&\qq\hbox{\rm   the sum of  prime divisor function;}
 \cr\theta(n)= 2^{\o(n)} &&\qq\hbox{\rm    counting the number of prime divisors of  $n$.}\end{eqnarray*}   
   The classical Euler totient function $\p(n)$ counting the number of integers $k $ less than $n$ and such that $(k,n) =1$ is $J_1(n)$, and $\s_0(n)$ is the divisor function $d(n)$ counting the number of divisors of $n$. 
% For instance, we have 
%by Gronwall's estimates,
% \begin{equation} \label{gron}   \limsup_{n\to \infty} \frac{\log
%\big(\frac{\s_s(n)}{n^s}\big)}{\frac{(\log n)^{1-s}}{\log\log n}} =\frac{1}{1-s}, \qq (0<s<1)\qq  \limsup_{n%\to \infty}\frac{\s_{-1} (n)}{n\log\log n}= e^\g,
%\end{equation}
 %    where $\g$ is Euler's constant.

%\vskip 3 pt 
%The case of the divisor function,  corresponding to the first example with $s=0$, does not seem to be
% reduced to this approach.
\vskip 2 pt To the best of our knowledge, {\it all}     divisibility results for a random walk existing in the literature are {\it in}-probability results, except for \cite{W6}. It is a pending question to know whether it is possible to derive almost sure   counterparts, from existing {\it in}-probability results using almost sure convergence criteria, which is a far more complicated task.     These criteria are   not  familiar to number theorists. 
%Like the Bernoulli random walk, 
A random walk  often has   natural  structural independence properties. Therefore  a study of almost sure divisibility/primality  in a random walk is possible, at the price of many additional technicalities. That's make the research in this domain  attractive. In the case of the Bernoulli random walk, it is not complicated to show   that   the system 
 $$ \big\{ \chi\{d|B_n\},    \    2\le d\le n,\ n\ge 1\big\}$$
is a mixing system, namely        the   correlation
function   satisfies for each     $d$ and $\d$ greater than 2, 
 \begin{equation}
 \label{delta.def.lim}
  \lim_{m-n\to \infty }  \P\big\{ d|B_n  ,    \d|B_m\big\}-\P \{ d|B_n \}\P \{    
\d|B_m \}= 0.
\end{equation}
The estimation of the correlation 
 \begin{equation}
 \label{delta.def}
  {\Delta} \big( (d,n), (\d, m)\big)= \P\big\{ d|B_n  ,   \d|B_m\big\}-\P \{ d|B_n \}\P \{    
\d|B_m \}.
\end{equation}
  was made in \cite{W4},   and is substancially improved in    
  \cite{W14}. We have the following formulation
  \begin{eqnarray}\label{6.4}& & {\bf \Delta} \big( (d,n),  (\d, m)\big)  =
\cr & &\qq \quad{1\over d\d}  \sum_{j=1}^{d-1}\sum_{h=1}^{\d-1}  e^{ i\pi  (  {j\over d}n +   {h\over \d}m) }  \cos^{m-n}{ \pi   {h\over
\d} }\, \Big(\cos^n{ \pi  (  {j\over d} +  
{h\over \d})   }-\cos^{n} {
\pi j\over d}     \cos^{n} {
\pi h\over \d}\Big),
 \end{eqnarray}  
which   can be more simplified    using symmetries.

\medskip  \par

The paper is organized as follows. In Sections \ref{s2}-\ref{s6} we  study the divisors in the Bernoulli random walk. We also study examples. Hitherto, the   divisibility or primality properties of general   i.i.d.   or independent non i.i.d. lattice   random walks, were not much explored and the results we present are mostly the first. In Section \ref{s8}, we describe the value distribution of divisors in the Rademacher random walk and present some applications; we essentially study primality properties. In Section \ref{s9}, we study the value distribution of divisors in the Cram\'er random walk, and record some primality   results obtained by the author. Complements to previous sections are gather in Appendix.

\section{Divisibility in the Bernoulli random walk.}
 \label{s3}
We study  the probability
  $ \P \{d|B_n \}$ when both $d$ and $n$ vary.    We first prove a basic but  handy estimate. Next we state a sharper and nearly optimal estimate using Theta function, earlier established by the author in \cite{W1}. 
  Applications are given to: 
   $\E \s_{-1}(B_n)$, $\E d(B_n)$,     $\P\big\{  B_n\, , B_m \ \hbox{co-prime} \big\}$, $\P\{ B_n\, \hbox{$z$-quasi-prime} \}$, $\P\{ B_n\,\hbox{  prime} \}$. We will  also estimate the probability $\P\{D|B_nB_m\}$ for $n<m$. The study of the divisors of $B_nB_m$  is one piece of the estimation of the correlation appearing in \eqref{delta.def} when $n$ and $m$ are relatively near,  details are given in Section \ref{s6}.

\subsection{Basic estimates.}
% of ${ {\P} \{ d | B_n \}}$ and  ${ {\E}\, d(B_n)}$, %These are  derived from a slightly more precise formulation of   (\ref{multiple}).
 %and  provide some applications.
\begin{proposition}   \label{Lemma44}
  For any integers   
 $d\le n$,
  \begin{equation}\label{multiple1}\big|\P \{ d | B_n \} -\frac{1}{d}\big|
   \le      {2\over d}\sum_{ 1\le j<d/2}
e^{-        {
 2n j^2/  d^2}  } . 
\end{equation}
\end{proposition}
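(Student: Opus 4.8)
The plan is to compute $\P\{d\,|\,B_n\}$ exactly by the roots-of-unity filter and then peel off the main term. First I would write the divisibility indicator through orthogonality of the additive characters of $\Z/d\Z$,
\[
\chi\{d\,|\,B_n\}=\frac1d\sum_{j=0}^{d-1}e^{2\pi i jB_n/d},
\]
so that, taking expectations and using that $B_n=\sum_{k=1}^n\b_k$ is a sum of iid Bernoulli$(1/2)$ variables,
\[
\P\{d\,|\,B_n\}=\frac1d\sum_{j=0}^{d-1}\E\,e^{2\pi i jB_n/d}=\frac1d\sum_{j=0}^{d-1}\Big(\tfrac12\big(1+e^{2\pi i j/d}\big)\Big)^{n}.
\]
The half-angle identity $1+e^{2\pi i j/d}=2\cos(\pi j/d)\,e^{i\pi j/d}$ turns the $j$-th summand into $\cos^n(\pi j/d)\,e^{i\pi jn/d}$, recovering the kernel that already appears in \eqref{6.4}.

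The $j=0$ term equals $1/d$ exactly, and this is precisely the asserted main term. Subtracting it and applying the triangle inequality gives
\[
\Big|\P\{d\,|\,B_n\}-\frac1d\Big|\le\frac1d\sum_{j=1}^{d-1}\big|\cos(\pi j/d)\big|^{n}.
\]
Next I would use the symmetry $j\mapsto d-j$: since $\cos(\pi(d-j)/d)=-\cos(\pi j/d)$, the terms $j$ and $d-j$ carry equal absolute value, while the central term $j=d/2$ (present only when $d$ is even) vanishes because $\cos(\pi/2)=0$. Folding the sum therefore produces the factor $2$ and restricts the range to $1\le j<d/2$, on which $\cos(\pi j/d)>0$:
\[
\Big|\P\{d\,|\,B_n\}-\frac1d\Big|\le\frac2d\sum_{1\le j<d/2}\cos^{n}(\pi j/d).
\]

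It remains to dominate each $\cos^n(\pi j/d)$ by a Gaussian, and this is the only step where the constant needs care. For $1\le j<d/2$ one has $0<\pi j/d<\pi/2$, so Jordan's inequality $\sin x\ge 2x/\pi$ yields $\cos^2(\pi j/d)=1-\sin^2(\pi j/d)\le 1-4j^2/d^2$; combined with $1-u\le e^{-u}$ this gives
\[
\cos^{n}(\pi j/d)=\big(\cos^2(\pi j/d)\big)^{n/2}\le\big(1-4j^2/d^2\big)^{n/2}\le e^{-2nj^2/d^2},
\]
which is exactly the exponent claimed. Substituting into the folded sum finishes the proof. The computation is essentially routine; the one point to get right is that Jordan's inequality delivers precisely the constant $2$ in the exponent, whereas a cruder estimate such as $\cos x\le e^{-x^2/2}$ would produce the larger, less clean constant $\pi^2/2$.
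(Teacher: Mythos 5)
Your proof is correct and follows essentially the same route as the paper: the roots-of-unity filter yields the exact formula $\P\{d|B_n\}=\frac1d\sum_{j=0}^{d-1}e^{i\pi nj/d}\cos^n(\pi j/d)$, the sum is folded by the symmetry $j\mapsto d-j$ onto the range $1\le j<d/2$ with the factor $2/d$, and Jordan's inequality $\sin x\ge 2x/\pi$ produces the Gaussian bound $e^{-2nj^2/d^2}$ (the paper applies Jordan via the half-angle identity $\cos\theta=1-2\sin^2(\theta/2)$, you via $\cos^2=1-\sin^2$ after squaring, with identical outcome). Only your closing aside is backwards: $\cos x\le e^{-x^2/2}$ on $[0,\pi/2)$ is sharper, not cruder, than what Jordan delivers, and would give the exponent constant $\pi^2/2>2$, i.e.\ a smaller bound that still implies \eqref{multiple1}.
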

\begin{proof} From the formula 
\begin{equation}\label{basic}d\,\chi(d| B_n)=\sum_{j=0}^{d-1} e^{2i\pi  {j \over d}B_{n }},
\end{equation}
  we get by integration,
%properties of characteristic functions of sums of independent random variables,    we get  after integration
\begin{equation}\label{basic.i} \P\big\{ d | B_n\big\}={1\over d}\sum_{j=0}^{d-1} e^{i\pi n{j\over d}}\big(\cos { \pi j\over d}\big)^{n}.
  \end{equation}
 As $e^{i  n(\pi-x)} \cos^n    (\pi-x)  =(-1)^ne^{-i 
nx}  (-1)^n\cos^n   x= e^{-i 
nx}  \cos^n   x  $, we have in fact  by distinguishing the case $d$ even from the case $d$ odd,   the simplified form
\begin{equation}\label{Lemma41} \P\big\{ d | B_n\big\} ={1\over d}+ {2\over d}\sum_{ 1\le j<d/2}
(\cos{ \pi n{j\over d}})\big(\cos { \pi j\over d}\big)^{n}.
\end{equation}
%\leqno(2.3)
    Thus   
    $$\big|\P\big\{ d | B_n\big\} -\frac{1}{d}\big|
   \le      {2\over d}\sum_{ 1\le j<d/2}
e^{n\log (  1-2 \sin^2{ \pi j\over 2d})} .$$
     As $\sin x\ge (2/\pi )x$, $0\le x\le \pi/2 $, it follows that $\log (  1-2 \sin^2{ \pi j\over 2d})\le -2 \sin^2{ \pi j\over 2d}\le  -2  ({
  j\over  d}) ^2   $. Thus 
\begin{eqnarray*} \big|\P\big\{ d | B_n\big\} -\frac{1}{d}\big|
  &\le &    {2\over d}\sum_{ 1\le j<d/2}
e^{-        {
 2n j^2/  d^2}  }.
\end{eqnarray*}   
\end{proof} 
This   implies estimate in   Example 2 in Diaconis and Stein \cite{DS}    without using Taylor's expansion or Ramus' formula.  See also Remark \ref{rem.Ex.2}.
 %   Note    that Ramus' formula  is just what gives   \eqref{basic.i}.
Since $\sum_{j\ge 1} e^{- { 2n j^2/  d^2}}\le
Cd/\sqrt n $,   we  deduce  from Proposition \ref{Lemma44} the useful estimate below.
\begin{corollary}\label{c.useful.est}
\begin{equation*}   \sup_{1\le d\le n}\,\big|\P\big\{d|B_n\big\}- {1\over d}  \big| \, \le \,   {C \over\sqrt n} .
\end{equation*}
%\begin{eqnarray*}  \big|\P\big\{ d | B_n\big\} -\frac{1}{d}\big| \le   \frac{C}{\sqrt n},\end{eqnarray*} 
where $C$ is an absolute constant. 
\end{corollary}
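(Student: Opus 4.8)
The plan is to read off the conclusion directly from Proposition~\ref{Lemma44} and then bound the resulting Gaussian sum by a quantity proportional to $d$, so that the factor $1/d$ in front cancels and a bound independent of $d$ emerges. First I would recall that Proposition~\ref{Lemma44} gives, for every $d\le n$,
$$\big|\P\{ d|B_n\}-\tfrac1d\big|\ \le\ \frac2d\sum_{1\le j<d/2}e^{-2nj^2/d^2}\ \le\ \frac2d\sum_{j\ge 1}e^{-2nj^2/d^2},$$
where I have simply enlarged the range of summation. The whole matter then reduces to estimating the tail sum on the right.

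Next I would estimate the Gaussian sum by comparison with an integral. Writing $a=2n/d^2>0$, the function $x\mapsto e^{-ax^2}$ is decreasing on $[0,\infty)$, so each term $e^{-aj^2}$ is dominated by $\int_{j-1}^{j}e^{-ax^2}\,dx$, whence $\sum_{j\ge 1}e^{-aj^2}\le \int_0^\infty e^{-ax^2}\,dx=\tfrac12\sqrt{\pi/a}$. Substituting $a=2n/d^2$ gives
$$\sum_{j\ge 1}e^{-2nj^2/d^2}\ \le\ \frac{d}{2}\sqrt{\frac{\pi}{2n}}\ =\ C\,\frac{d}{\sqrt n},$$
which is precisely the inequality announced in the sentence preceding the corollary, with $C=\sqrt{\pi/8}$.

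The decisive observation is that the factor $d$ produced by this bound exactly cancels the prefactor $1/d$ inherited from Proposition~\ref{Lemma44}. Combining the two displays yields
$$\big|\P\{ d|B_n\}-\tfrac1d\big|\ \le\ \frac2d\cdot C\,\frac{d}{\sqrt n}\ =\ \frac{2C}{\sqrt n},$$
an estimate that no longer involves $d$. Since this holds for every $d$ with $1\le d\le n$, taking the supremum over $d$ leaves the right-hand side unchanged and establishes the claim with an absolute constant.

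There is no genuine obstacle here; the step requiring a little care is the monotone comparison of the sum $\sum_{j\ge 1}e^{-2nj^2/d^2}$ with $\int_0^\infty e^{-ax^2}\,dx$, together with the recognition that the linear-in-$d$ growth of this sum is exactly what is needed to absorb the $1/d$ weight and make the bound uniform in $d$. Everything else is bookkeeping.
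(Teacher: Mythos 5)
Your proof is correct and follows exactly the paper's route: the paper also deduces the corollary from Proposition~\ref{Lemma44} together with the bound $\sum_{j\ge 1} e^{-2nj^2/d^2}\le Cd/\sqrt n$, which it states without proof and which you justify by the standard integral comparison. The cancellation of the factor $d$ is precisely the intended mechanism, so your argument is the paper's proof with the omitted Gaussian-sum estimate filled in.
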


  \bigskip \par Proposition \ref{Lemma44}  is   already sharp enough  to
imply  the   useful bound below.
 \begin{proposition} \label{unifd}  $$\sup_{n\ge 1} \sum_{d<\sqrt n} \big|\P\big\{  d|   B_{ n}
\big\}-{1\over d} 
\big|<\infty.$$
\end{proposition}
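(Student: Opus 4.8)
The plan is to feed the pointwise bound \eqref{multiple1} of Proposition \ref{Lemma44} into the sum and check that nothing grows with $n$. The first step is to dispose of the inner sum over $j$. Because the range $d<\sqrt n$ forces $n/d^2>1$, and because $j^2\ge j$ for every $j\ge 1$, I would write
$$\sum_{1\le j<d/2}e^{-2nj^2/d^2}\le \sum_{j\ge 1}e^{-2nj/d^2}=\frac{e^{-2n/d^2}}{1-e^{-2n/d^2}}\le \frac{e^{-2n/d^2}}{1-e^{-2}},$$
the last inequality again using $n/d^2>1$. Inserting this into \eqref{multiple1} (which carries a factor $2/d$), the whole statement reduces to the single estimate
$$\sup_{n\ge 1}\,\sum_{d<\sqrt n}\frac1d\,e^{-2n/d^2}<\infty.$$

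To handle this remaining sum I would compare it with an integral. A direct differentiation shows that $x\mapsto x^{-1}e^{-2n/x^2}$ has derivative of the sign of $4n-x^2$, hence is increasing on the whole range $[1,\sqrt n]$; therefore, isolating the term $d=1$,
$$\sum_{d<\sqrt n}\frac1d\,e^{-2n/d^2}\le e^{-2n}+\int_1^{\sqrt n}\frac1x\,e^{-2n/x^2}\,\dd x .$$
The substitution $v=n/x^2$ converts the integral into $\tfrac12\int_1^n v^{-1}e^{-2v}\,\dd v$, which is bounded above by $\tfrac12\int_1^\infty e^{-2v}\,\dd v=\tfrac14 e^{-2}$, a constant free of $n$. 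Combining the three displays furnishes a universal bound and proves the proposition.

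The argument is entirely elementary, and I expect no genuine obstacle. The one place deserving attention is the uniformity of the geometric bound on the $j$-sum: it is precisely the hypothesis $d<\sqrt n$, i.e. $n/d^2>1$, that keeps the factor $1/(1-e^{-2n/d^2})$ bounded, and this is exactly why the summation must be truncated at $\sqrt n$ — summed over the full range $d\le n$ the same upper bound grows like $\log n$, so the cut-off is essential. The monotonicity check justifying the integral comparison and the change of variables are routine.
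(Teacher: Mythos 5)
Your reduction to $\sup_{n\ge1}\sum_{d<\sqrt n}d^{-1}e^{-2n/d^2}<\infty$ is correct: feeding \eqref{multiple1} with the geometric-series bound on the $j$-sum is legitimate, and it is indeed the hypothesis $d<\sqrt n$ that keeps the ratio $e^{-2n/d^2}$ below $e^{-2}$. Up to that point your route is also genuinely different from the paper's, which never collapses the $j$-sum: there, writing $n=\nu^2$, each term $\frac1d e^{-2(\nu j/d)^2}$ is bounded by $2\int_{\nu j/(d+1)}^{\nu j/d}e^{-2t^2}\,\frac{\dd t}{t}$, the order of summation in $(d,j)$ is interchanged, and for fixed $j$ the integration intervals for consecutive $d$ abut, so the whole double sum collapses onto the single integral $4\int_{1/2}^{\infty}e^{-2t^2}\,\dd t$.

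However, your final step fails as written. You correctly prove that $f(x)=x^{-1}e^{-2n/x^2}$ is \emph{increasing} on $[1,\sqrt n]$, but you then apply the sum-versus-integral comparison that is valid for \emph{decreasing} functions. For increasing $f$ one has $f(d)\ge\int_{d-1}^{d}f(x)\,\dd x$, so $\sum_{2\le d<\sqrt n}f(d)$ \emph{dominates} $\int_1^{N}f$ (with $N$ the largest integer $<\sqrt n$); it is not dominated by it. Your displayed inequality is in fact false: for $n=10$ the left side is $e^{-20}+\tfrac12 e^{-5}+\tfrac13 e^{-20/9}\approx 0.0395$, while the right side is $e^{-20}+\tfrac12\int_1^{10}v^{-1}e^{-2v}\,\dd v\approx 0.0245$. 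The failure is structural, not a boundary accident: the sum exceeds $\int_1^{N}f$ by roughly $\tfrac12 f(N)\asymp n^{-1/2}$, whereas the slack $\int_N^{\sqrt n}f$ available on the right can be as small as $O(1/n)$ (take $n=N^2+1$).

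The repair stays entirely inside your strategy: use the comparison appropriate to increasing functions, $f(d)\le\int_{d}^{d+1}f(x)\,\dd x$, which is valid on every interval $[d,d+1]\subset[1,\sqrt n+1]$ because $f$ increases up to $2\sqrt n\ge\sqrt n+1$. This gives
$$\sum_{d<\sqrt n}\frac1d\,e^{-2n/d^2}\ \le\ \int_1^{\sqrt n+1}\frac1x\,e^{-2n/x^2}\,\dd x\ =\ \frac12\int_{n/(\sqrt n+1)^2}^{n}\frac{e^{-2v}}{v}\,\dd v\ \le\ \frac12\int_{1/4}^{\infty}\frac{e^{-2v}}{v}\,\dd v,$$
a universal constant, since $n/(\sqrt n+1)^2\ge\frac14$ for all $n\ge1$. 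With this correction your argument is complete, and it remains more elementary than the paper's interchange-of-summation proof.
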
\begin{proof}
  Let $A $ and   $m$ be positive {\it reals}. Then for $d\ge 1$,
$$ \int_{m\over d+1}^{m\over d}e^{-At^2}{  d t\over t}\ge e^{-A({m\over d})^2}\int_{m\over d+1}^{m\over d} {  d t\over t}=e^{-A({m\over
d})^2}\log (1+{ 1\over d})\ge  { 1\over 2d}e^{-A({m\over
d})^2} , $$
since $\log 1+x\ge \frac{x}{2}$, if $0\le x\le 1$. Writing  $n=\nu^2$ and letting $m=\nu j$, $A=2$  gives
$${1\over d}\sum_{j=1}^d e^{-2({\nu j\over
d})^2}\le 2 \sum_{j=1}^d  \int_{\nu j\over d+1}^{\nu j\over d}e^{-2t^2}{  d t\over t}.$$
Let 
 $ H= \sum_{1\le d<\nu} \big\{{1\over d}\sum_{1\le j\le d} e^{-2({\nu j\over
d})^2}\big\}$.  
By permuting the order of summation, we get
\begin{eqnarray*}H&\le& 2\sum_{1\le d<\nu}\Big\{\sum_{1\le j\le d}   \int_{\nu j\over d+1}^{\nu j\over d}e^{-2t^2}{  d t\over t}\Big\}
=2\sum_{1\le j<\nu}\sum_{j\le d<\nu} \int_{\nu j\over d+1}^{\nu j\over d}e^{-2t^2}{  d t\over t}
\cr &\le &2
\sum_{1\le j<\nu} \int_{  {\nu j\over \nu+1}  }^{\nu  }e^{-2t^2}{  d t\over t}=  2 \int_{  {\nu +1 \over \nu }}^{\nu  }\Big(\sum_{j=1}^{\min(\nu ,
{\nu +1 \over \nu }t)}1\Big)  e^{-2t^2}{  d t\over t}  
\cr &\le & 4\int_{{ 1 \over 2 } }^\infty  e^{-2t^2}  
d t  = C. \end{eqnarray*}
Consequently, by using (\ref{multiple1}),
 \begin{equation*}  
  \sum_{d<\sqrt n} \big|\P\big\{  d|   B_{ n}
\big\}-{1\over d} 
\big|\le 2\sum_{d<\sqrt n} {1\over d}\sum_{ 1\le j<d/2}
e^{-        {
 2n j^2/  d^2}  }\le 2H\le C, \end{equation*}
as claimed.\end{proof}

{Some applications}:
\vskip 3 pt
\noi(1) {\it An estimate  of $\E d(   B_{ n})$.} Proposition \ref{unifd}  implies the following estimate.
\begin{proposition}[Weber \cite{W11},\,Prop.\,6] \label{dbn} For  some universal constant $C$, we have 
$$ \sup_{n\ge 2}\, \big| \E d(   B_{ n})- \log n \big| \le C   .$$  \end{proposition}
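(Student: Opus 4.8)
The plan is to compute $\E d(B_n) = \sum_{m=1}^n d(m)\,\P\{B_n = m\}$ (the event $B_n = 0$, of probability $2^{-n}$, contributing nothing under the usual convention) by means of the Dirichlet hyperbola identity
\[
d(m) \= 2\,\#\{e\ge 1: e\mid m,\ e^2\le m\}\ -\ \chi\{m\ \text{is a perfect square}\},
\]
which reorganises the divisor count around the involution $e\leftrightarrow m/e$. Interchanging the two sums gives
\[
\E d(B_n) \= 2\sum_{e\ge 1}\P\{e\mid B_n,\ e^2\le B_n\}\ -\ \P\{B_n\ \text{is a perfect square}\},
\]
and since $e^2\le B_n\le n$ forces $e\le\sqrt n$, only the divisors $e\le\sqrt n$ actually occur.

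The two boundary terms are harmless. The perfect-square term is $O(1)$: there are at most $\sqrt n$ squares in $[1,n]$, and each satisfies $\P\{B_n = j^2\}\le \max_m \P\{B_n=m\}\le C/\sqrt n$ (a pointwise bound on the binomial weights, consistent with Corollary \ref{c.useful.est}), so the product is bounded. For the main term I would split $\P\{e\mid B_n,\ e^2\le B_n\} = \P\{e\mid B_n\} - \P\{e\mid B_n,\ B_n< e^2\}$. The first piece produces the logarithm: by Proposition \ref{unifd},
\[
2\sum_{e\le\sqrt n}\P\{e\mid B_n\} \= 2\sum_{e\le\sqrt n}\frac1e \ +\ 2\sum_{e\le\sqrt n}\Big(\P\{e\mid B_n\}-\frac1e\Big) \= 2\Big(\tfrac12\log n + O(1)\Big)+O(1) \= \log n + O(1),
\]
using $\sum_{e\le\sqrt n}1/e = \tfrac12\log n + \gamma + o(1)$.

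The remaining piece $R_n := \sum_{e\le\sqrt n}\P\{e\mid B_n,\ B_n< e^2\}$ is the crux, and I would bound it by splitting the range of $e$ at $\sqrt n/2$. For $e\le\sqrt n/2$ the constraint $B_n< e^2\le n/4$ is a large deviation below the mean $\E B_n = n/2$, so a Chernoff--Hoeffding bound gives $\P\{B_n< e^2\}\le \P\{B_n\le n/4\}\le e^{-n/8}$, and the at most $\sqrt n$ such terms contribute $o(1)$. For $\sqrt n/2 < e\le\sqrt n$ the event $B_n< e^2$ is no longer rare, so instead I would use $\P\{e\mid B_n,\ B_n< e^2\}\le \P\{e\mid B_n\}$ and exploit that the interval is short: by Proposition \ref{unifd},
\[
\sum_{\sqrt n/2 < e\le\sqrt n}\P\{e\mid B_n\} \= \sum_{\sqrt n/2 < e\le\sqrt n}\frac1e \ +\ O(1) \= \log 2 + O(1) \= O(1).
\]
Hence $R_n = O(1)$, and combining the three contributions yields $\big|\E d(B_n)-\log n\big|\le C$ uniformly in $n\ge 2$.

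The main obstacle is precisely the control of $R_n$ near the top of the divisor range: for $e$ comparable to $\sqrt n$ one cannot invoke concentration, since the truncation $B_n< e^2$ then slices near the bulk of the distribution rather than its tail. The saving comes entirely from the shortness of the dyadic window $(\sqrt n/2,\sqrt n]$ combined with the uniform summability of $\P\{e\mid B_n\}-1/e$ furnished by Proposition \ref{unifd}; this is what allows the ``off-diagonal'' mass to be absorbed into the constant rather than contaminating the leading $\log n$.
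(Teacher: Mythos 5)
Your proof is correct: the hyperbola identity $d(m)=2\,\#\{e: e\mid m,\ e^2\le m\}-\chi\{m\ \text{square}\}$ reduces the whole computation to divisors $e\le\sqrt n$, exactly the range covered by Proposition \ref{unifd}, and the boundary terms are handled soundly (the perfect-square term by the standard local bound $\max_m\P\{B_n=m\}=\mathcal O(n^{-1/2})$, the truncation $B_n<e^2$ by Hoeffding for $e\le\sqrt n/2$ and by the shortness of $(\sqrt n/2,\sqrt n]$ otherwise). The paper does not reproduce a proof here — it only asserts that Proposition \ref{unifd} implies the estimate and defers to \cite{W11} — and your argument is a complete realization of that indicated route, supplying the essential reflection $e\leftrightarrow B_n/e$ without which \ref{unifd} (valid only for $d<\sqrt n$) could not control the sum $\sum_{d\le n}\P\{d\mid B_n\}$, whose term-by-term errors of size $n^{-1/2}$ over $\sim n$ large divisors would otherwise be unmanageable.
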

We refer to the cited article  for the proof.
\smallskip \par
\noi This estimate is certainly improvable. The same question arises with $\E d^2(   B_{ n})$.  We remark that \begin{eqnarray}\E d^2(B_n)&=&
%\E d (B_n)+ 
\sum_{   d \le n\atop    \d  \le n}\P\{ [d,\d]|B_n\}\, =\, 
%\E d (B_n)+
 \sum_{ [d,\d]\le n}\P\{ [d,\d]|B_n\}
%\cr &=& \sum_{ u\le n}R(u)\P\{ u|B_n\}
%\cr &=& \E d (B_n)+2\sum_{  \d \le n}\ \sum_{ \d< d \le n}{1\over [d,\d]}+\  2\sum_{  \d \le n}\ \sum_{ \d< d \le n}\big(\P\{ [d,\d]|B_n\}- {1\over [d,\d]}\big)
.
\end{eqnarray}
%By Lemma \ref{unifd}  $$\sup_{n\ge 1} \sum_{[d,\d]<\sqrt n} \big|\P\big\{  [d,\d]|   B_{ n}\big\}-{1\over [d,\d]} \big|<\infty.$$
 A relevant result is    \begin{theorem}[Shi-Weber \cite{Sh.We},\,Th.\,2.1] \label{shi.weber}
  {\rm (i)}   If $0<\s< 1$, then
 \begin{eqnarray*}
 \sum_{ [d,\d]\le N}  \frac{1}{[d,\d]^{\s }}
 &=&\frac 3{\pi^2} \cdot \frac 1{1-\s}N^{1-\s} (\log N)^2+\mathcal O\left( N^ {1-\s}(\log N)\right).
\end{eqnarray*}
 
{\rm (ii)} Further, if $\s =1$,
 \begin{eqnarray*}
 \sum_{ [d,\d]\le N}  \frac{1}{[d,\d]}
 &=&\frac 1{\pi^2} (\log N)^3+\mathcal O\left((\log N)^2\right).
\end{eqnarray*}
\end{theorem}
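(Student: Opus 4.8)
The plan is to reduce the least–common–multiple sum to a weighted divisor sum by separating out the greatest common divisor. Writing $g=(d,\d)$ and $d=ga$, $\d=gb$ with $(a,b)=1$, one has $[d,\d]=gab$, and as $(d,\d)$ runs over all ordered pairs the triple $(g,a,b)$ runs over all triples with $(a,b)=1$. Hence
\[
\sum_{[d,\d]\le N}\frac1{[d,\d]^{\s}}=\sum_{g\ge1}\frac1{g^{\s}}\sum_{\substack{(a,b)=1\\ ab\le N/g}}\frac1{(ab)^{\s}}.
\]
I would remove the coprimality condition through the M\"obius identity $\sum_{e\mid(a,b)}\m(e)=\mathbf 1_{(a,b)=1}$: setting $a=em$, $b=en$ gives $ab=e^2mn$, so the inner sum equals $\sum_{e}\frac{\m(e)}{e^{2\s}}\,T_{\s}(N/(ge^2))$, where $T_{\s}(X):=\sum_{mn\le X}(mn)^{-\s}=\sum_{k\le X}d(k)k^{-\s}$ is the divisor–weighted sum. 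Structurally this reflects the identity $\sum_{d,\d\ge1}[d,\d]^{-s}=\zeta(s)^3/\zeta(2s)$, whose pole of order three at $s=1$ produces a mean value of size $N(\log N)^2$ while the factor $1/\zeta(2s)$ contributes $1/\zeta(2)=6/\pi^2$; the elementary route, however, yields the error terms directly.

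Second, I would evaluate $T_{\s}$ by Abel summation from Dirichlet's estimate $\sum_{k\le X}d(k)=X\log X+(2\g-1)X+\mathcal O(\sqrt X)$. A short computation gives, for $0<\s<1$,
\[
T_{\s}(X)=\frac{1}{1-\s}\,X^{1-\s}\log X+\mathcal O\!\left(X^{1-\s}\right),
\]
and for $\s=1$, $T_{1}(X)=\tfrac12(\log X)^2+\mathcal O(\log X)$; in both cases the Dirichlet remainder $\mathcal O(\sqrt X)$ is dominated by the displayed error.

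Third, I would insert this into the double sum. The $e$–summation is absolutely convergent (truncated at $e\le\sqrt{N/g}$) and produces $\sum_e\m(e)e^{-2}=6/\pi^2$ together with the convergent constants $\sum_e\m(e)(\log e)^k e^{-2}$, which feed only lower–order terms. It then remains to sum over $g\le N$. For part (i) the main term is proportional to $\sum_{g\le N}\frac{\log N-\log g}{g}=\tfrac12(\log N)^2+\mathcal O(\log N)$, and assembling the constants $\frac1{1-\s}\cdot\frac{6}{\pi^2}\cdot\frac12=\frac{3}{\pi^2(1-\s)}$ yields the claimed asymptotic with remainder $\mathcal O(N^{1-\s}\log N)$. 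For part (ii) the main term is proportional to $\sum_{g\le N}\frac{(\log(N/g))^2}{g}=\tfrac13(\log N)^3+\mathcal O((\log N)^2)$, and $\frac12\cdot\frac{6}{\pi^2}\cdot\frac13=\frac1{\pi^2}$ gives the result with remainder $\mathcal O((\log N)^2)$.

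The main obstacle is the uniform control of the remainders through the nested summations. The asymptotics for $T_{\s}$ degenerate in the boundary region $ge^2\approx N$ (where $X=N/(ge^2)$ is bounded), so I would truncate the $e$–sum where $T_\s$ vanishes and bound the boundary contributions trivially, checking that the accumulated errors stay within $\mathcal O(N^{1-\s}\log N)$, respectively $\mathcal O((\log N)^2)$. The delicate case is $\s=1$: there is no power saving, every quantity is purely logarithmic, and one must track cancellations to exactly one log–power below the main term — in particular verifying that the secondary terms arising from $T_1$ and from the $e$– and $g$–summations all collapse into $\mathcal O((\log N)^2)$.
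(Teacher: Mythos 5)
Your proposal is correct, but there is nothing in this paper to compare it against: Theorem~\ref{shi.weber} is not proved here at all, it is imported verbatim from Shi--Weber \cite{Sh.We}, Th.~2.1, and only used afterwards (in the remark on large zeta values). So the comparison can only be with your argument on its own merits, and it holds up. The correspondence $d=ga$, $\delta=gb$, $g=(d,\delta)$, $(a,b)=1$, $[d,\delta]=gab$ is a genuine bijection between ordered pairs and such triples; the M\"obius step correctly produces $\sum_{e\le\sqrt{N/g}}\mu(e)e^{-2\sigma}\,T_\sigma\bigl(N/(ge^2)\bigr)$; the estimates $T_\sigma(X)=\frac{1}{1-\sigma}X^{1-\sigma}\log X+\mathcal O\bigl(X^{1-\sigma}\bigr)$ for $0<\sigma<1$ and $T_1(X)=\frac12(\log X)^2+\mathcal O(\log X)$ do follow from Dirichlet's divisor theorem by Abel summation, the $\mathcal O(\sqrt X)$ remainder being harmless in both regimes. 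The assembly is also sound: since $\sum_{e\le Y}\mu(e)e^{-2}=\frac{6}{\pi^2}+\mathcal O(1/Y)$ and $\sum_e\mu(e)(\log e)^k e^{-2}$ converges absolutely, the truncation of the $e$-sum and the $\log e$ cross-terms feed only the stated remainders, and the $g$-sums $\sum_{g\le N}\frac{\log(N/g)}{g}=\frac12(\log N)^2+\mathcal O(\log N)$ and $\sum_{g\le N}\frac{(\log(N/g))^2}{g}=\frac13(\log N)^3+\mathcal O((\log N)^2)$ give exactly the constants $\frac{1}{1-\sigma}\cdot\frac{6}{\pi^2}\cdot\frac12=\frac{3}{\pi^2(1-\sigma)}$ and $\frac12\cdot\frac{6}{\pi^2}\cdot\frac13=\frac{1}{\pi^2}$.

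One streamlining worth noting, which also disposes of the boundary issues you flag at the end ($ge^2\approx N$, and the purely logarithmic case $\sigma=1$): run your decomposition once on the \emph{unweighted} count to get
\begin{equation*}
\#\big\{(d,\delta):[d,\delta]\le N\big\}=\frac{3}{\pi^2}\,N(\log N)^2+\mathcal O\big(N\log N\big),
\end{equation*}
which is the elementary counterpart of the pole of order three of $\zeta(s)^3/\zeta(2s)$ that you mention, and then deduce both (i) and (ii) by a single Abel summation in the variable $n=[d,\delta]$. This confines all the nested-remainder bookkeeping to one uniform step instead of tracking it through the $g$- and $e$-summations separately.
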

\begin{remark}[Large zeta values imply large values of a very small selection of Dirichlet polynomials]\rm Combined with the arithmetic perturbation technic of Dirichlet polynomials     introduced in \cite{Sh.We},  Proposition 1.1, 
  large zeta values can be  compared with large values of a very small selection of Dirichlet polynomials. For any $N\ge1$,  and any  real numbers $\s,\e, t$,
  \begin{equation}\label{e2}\sum_{n=1}^N \frac{1}{n^{\s +it}}  =\sum_{[d,\d]\le N}   \frac{J_\e (d)J_\e (\d)}{[d,\d]^{{\s +2\e+it}}}
\sum_{m=1}^{ \frac{N}{[d,\d]}}            \frac{1} {m^{{\s+2\e +it}}}.
\end{equation}
%Proposition \ref{p1}-(ii) with Theorem \ref{l1}-(ii), we get as special case: 
    Let $\s_0>0$. %We have as special case (Proposition 1.1 and Theorem 2.1): 
    Thus by Theorem \ref{shi.weber}-(ii), for any   $\s\ge \s_0$,  $|t|\ge 1$, $\e> 0$,\begin{eqnarray*}\big|\zeta(1 + it) \big|&\le& C(\e)\, (\log |t|)^3  \max_{M\in\mathcal E({|t|})} \Big|\sum_{m=1}^{M} \frac{1}{m^{{1+2\e +it}}}\Big|+ C  |t|^{-\s},
\end{eqnarray*}
where $\mathcal E(|t|))\subset [1, |t|]$ and 
%  $\mathcal E(|t|)$ being the set of  integers of the form $\lfloor \frac{|t|}{d}\rfloor$, $d$ integer, $1\le d\le |t|$. Note that   
$\#(\mathcal E(|t|))=\mathcal O\big( \log |t|\big)$. Hence a much smaller selection than the one derived from Abel's summation. These pointwise implications extend to Dirichlet polynomials indexed on intervals, and have by integration standard in-measure counterparts. This is displayed elsewhere. These questions are relevant in the recent work of     Matom\"aki and  Ter\"av\"ainen \cite{MT}.
\end{remark}
\vskip 3 pt
\noi(2) {\it Infinite M\"obius inversion.}  The following application of Proposition \ref{dbn} to infinite M\"obius inversion is proved in \cite{W11}.
\begin{theorem}\label{t2} Let $g:\N\to \R^+$. Assume that  for some $0<\e<1$,
  % Let$ \{a_n , n\ge 1\} $ be a sequence of positive reals.
 %  Assume that 
\begin{equation}\label{locvar[g]}    \g_g(\e)= \sup_{n\ge 1} \sup_{\frac{|2m-n |}{\sqrt{n\log\log n}}\,\le 1+\e}\frac{ g(m) }{ g(n) }<\infty.
\end{equation}
Then the infinite M\"obius inversion 
\begin{equation}\label{HW} 
g(x) =\sum_{m=1}^\infty f(mx)\qq \Leftrightarrow \qq f(x) =\sum_{n=1}^\infty \m(n)g(nx),
\end{equation} holds for any $x$, as soon as
\begin{equation}\label{imi[g]} \sum_{n\ge 1} g(n)  \log n<\infty.\end{equation} 
% for any real $c>0$,
%\begin{equation}\label{slicereg1}  \g_1(c)=\sup_{   n\ge 1   }\sup_{ m\ge cn  }{a_{m}\over
%a_n}  <\infty.
%\end{equation}
%Then  \eqref{conv.}  implies \eqref{conv.concl}.
\end{theorem}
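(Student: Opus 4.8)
The plan is to reduce the biconditional \eqref{HW} to a single absolute-convergence estimate and then to extract that estimate from Proposition \ref{dbn}. Indeed, \eqref{HW} is the classical Möbius inversion, which for a pair $(f,g)$ is legitimate as soon as the double series $\sum_{m,n\ge1}g(mnx)$ converges: setting $f(x):=\sum_{n\ge1}\m(n)g(nx)$ (absolutely convergent because $\sum_n g(nx)\le\sum_n d(n)g(nx)$, $d$ the divisor function), Fubini permits the rearrangement
\[
\sum_{m\ge1}f(mx)=\sum_{m,n\ge1}\m(n)g(mnx)=\sum_{k\ge1}\Big(\sum_{n\mid k}\m(n)\Big)g(kx)=g(x),
\]
using $\sum_{n\mid k}\m(n)=\mathbf 1_{\{k=1\}}$; the same absolute convergence makes the rearrangement reversible, so the two relations in \eqref{HW} are equivalent. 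Now grouping the terms of the double series according to $k=mn$ gives $\sum_{m,n\ge1}g(mnx)=\sum_{k\ge1}d(k)g(kx)$, and since $d(k)\le d(kx)$ while $k\mapsto kx$ is injective, $\sum_{k}d(k)g(kx)\le\sum_{j}d(j)g(j)$. Thus it suffices to prove the single estimate $\sum_{k\ge1}d(k)g(k)<\infty$.

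For the crux I would exploit that $B_n$ concentrates near $n/2$ on the scale $\sqrt{n\log\log n}$, which is precisely the window appearing in \eqref{locvar[g]}. Writing $\E d(B_n)=\sum_{k\ge1}d(k)\P\{B_n=k\}$ and setting $I_n=\{k:\ |2k-n|\le(1+\e)\sqrt{n\log\log n}\}$, the local-variation hypothesis gives $g(k)\le\g_g(\e)\,g(n)$ for every $k\in I_n$, whence
\[
\sum_{k\in I_n}d(k)g(k)\,\P\{B_n=k\}\ \le\ \g_g(\e)\,g(n)\,\E d(B_n)\ \le\ \g_g(\e)\,g(n)\,(\log n+C),
\]
the last step by Proposition \ref{dbn}. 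Summing over $n\ge n_0$, the right-hand side is finite by \eqref{imi[g]} (which also forces $\sum_n g(n)<\infty$, since $\log n\ge1$ for $n\ge3$). Interchanging the two summations on the left rewrites it as $\sum_{k\ge1}d(k)g(k)\,\Sigma_k$, where $\Sigma_k=\sum_{n:\,k\in I_n}\P\{B_n=k\}=\sum_{n:\,|2k-n|\le(1+\e)\sqrt{n\log\log n}}\binom{n}{k}2^{-n}$.

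The main obstacle is the uniform lower bound $\Sigma_k\ge c>0$ for all large $k$, which is what finally yields $\sum_k d(k)g(k)\le c^{-1}\g_g(\e)\sum_n g(n)(\log n+C)<\infty$. To get it I would argue that, for fixed $k$, the weights $n\mapsto\binom{n}{k}2^{-n}$ form a unimodal mass of total size $\sum_{n\ge k}\binom{n}{k}2^{-n}=2$, concentrated around $n=2k$ with spread of order $\sqrt{k}$; a second-moment (Chebyshev) estimate for this distribution then shows that the portion carried by the window $|2k-n|\le(1+\e)\sqrt{n\log\log n}$, whose half-width is $\asymp\sqrt{k\log\log k}$ and hence exceeds the spread by the diverging factor $\sqrt{\log\log k}$, tends to the full mass $2$ as $k\to\infty$. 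Consequently $\Sigma_k\ge1$ for all $k\ge k_0$, the finitely many remaining terms being harmless.

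With $\sum_{k}d(k)g(k)<\infty$ established, and therefore $\sum_k d(k)g(kx)<\infty$ for every $x\in\N$, the Fubini computation of the first paragraph applies verbatim at each $x$, which proves \eqref{HW}. I expect the delicate point to be not the local-variation bound or Proposition \ref{dbn}, which enter as clean plug-ins, but the resummation over $n$: controlling $\Sigma_k$ from below requires the quantitative statement that the binomial masses $\binom{n}{k}2^{-n}$, spread over the $n$-window dictated by the law of the iterated logarithm, recover a fixed positive fraction of their total mass, uniformly in $k$.
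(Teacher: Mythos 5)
Your analytic core is correct, and it is evidently the intended use of Proposition \ref{dbn}: note that the paper itself contains no proof of Theorem \ref{t2} (it is quoted from \cite{W11}, described precisely as an application of Proposition \ref{dbn}), so your route can only be judged on its own terms. That route is sound: $g(k)\le \g_g(\e)\,g(n)$ for $k\in I_n$ by \eqref{locvar[g]}, hence $\sum_{k\in I_n}d(k)g(k)\P\{B_n=k\}\le \g_g(\e)\,g(n)(\log n+C)$ by Proposition \ref{dbn}; summing in $n$ against \eqref{imi[g]}, interchanging by Tonelli (nonnegative terms), and proving the uniform lower bound $\Sigma_k\ge 1$ for large $k$ -- the weights $\binom{n}{k}2^{-n}$ carry total mass $2$ with mean $\sim 2k$ and variance $\sim 2k$ (negative-binomial law), while your window has half-width $\asymp\sqrt{k\log\log k}$, so Chebyshev leaves only $o(1)$ mass outside -- does deliver $\sum_k d(k)g(k)<\infty$, hence $\sum_{m,n\ge1}g(mnx)<\infty$ for every $x$.

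The genuine gap is in your first paragraph, where you assert that this absolute convergence makes \eqref{HW} an equivalence. It proves only the implication $(\Leftarrow)$: if $f(x)=\sum_n\m(n)g(nx)$, then Fubini, with the bound $\sum_{m,n}|\m(n)|\,g(mnx)\le\sum_k d(k)g(kx)<\infty$, gives $\sum_m f(mx)=g(x)$. In the direction $(\Rightarrow)$ you are handed an \emph{arbitrary} $f$ for which $g(x)=\sum_m f(mx)$ converges, possibly only conditionally; the rearrangement $\sum_n\m(n)\sum_m f(mnx)=\sum_k\bigl(\sum_{n|k}\m(n)\bigr)f(kx)$ then requires $\sum_{m,n}|f(mnx)|<\infty$, a condition on $f$ that no hypothesis on $g$ can supply, so ``the same absolute convergence makes the rearrangement reversible'' is not a proof. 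Worse, that implication as you have set it up is false: $h(n)=\m(n)/n$ satisfies $\sum_m h(mx)=0$ for every $x\in\N$ with conditional convergence (the case $x=1$ is equivalent to the prime number theorem; non-squarefree $x$ give identically vanishing terms; squarefree $x>1$ reduce to $\sum_{(m,x)=1}\m(m)/m=0$), so whenever $f$ satisfies the left-hand relation of \eqref{HW}, so does $f+h$, and at most one of the two can coincide with the fixed function $\sum_n\m(n)g(n\,\cdot)$. This is exactly the delicacy the paper flags immediately after the theorem (the rectification, in \cite{W13}, of Theorem 270 of \cite{HW}). A correct write-up must therefore either prove only the $(\Leftarrow)$ half -- which, in view of that remark, is plausibly the theorem's actual content -- or state and use an additional hypothesis controlling $\sum_{m,n}|f(mnx)|$ before inverting in the other direction.
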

See also Weber \cite{W13}, Remark 2,  for a rectification  of  Theorem 270 in Hardy et   Wright \cite{HW}, more precisely, the assertion that {\it under} assumption $ {\sum_{m,n=1}^\infty} |f(mnx)|<\infty$,     the implication    $(\Longleftarrow)$  in \eqref{HW} holds,     is not clear.\medskip \par \noi
%\underline{Some applications}:
(3) {\it Estimates of $\E \s_{-1}(B_n)$ and $\P\big\{  B_n\,{\it and}\, B_m \ {\it coprime} \big\}$.}
    Recall that $\s_s(k)$ (section \ref{s2})  
 %     Let $\s_s(k) := \sum_{d|k} d^s$,  
 %$s\in \R$,\,  $s\not=0$ be the function sum of $s$-th powers of divisors. 
%This 
is a multiplicative function and  that $\s_{-s}(n)=n^{-s} \s_s(n)  $. \begin{corollary}
$$\Big|\E \s_{-1}(B_n)-   \zeta(2) \Big| \le\  C\,{  \log n\over\sqrt n} .$$
\end{corollary}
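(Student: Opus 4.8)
The plan is to expand the expectation as a Dirichlet-type sum over divisors and then compare it term by term with the series $\zeta(2)=\sum_{d\ge 1}d^{-2}$, the comparison being controlled by the uniform estimate of Corollary \ref{c.useful.est}. First I would use $\s_{-1}(k)=\sum_{d\mid k}1/d$ together with Fubini (all terms are nonnegative). Since $0\le B_n\le n$, only divisors $d\le n$ occur, and, adopting the convention that the event $\{B_n=0\}$ contributes nothing (i.e. $\s_{-1}(0):=0$, in the spirit of $\sum_{\emptyset}=0$), I would write
$$\E\,\s_{-1}(B_n)=\sum_{k=1}^n\Big(\sum_{d\mid k}\frac1d\Big)\P\{B_n=k\}=\sum_{d=1}^n\frac1d\sum_{1\le k\le n,\ d\mid k}\P\{B_n=k\}=\sum_{d=1}^n\frac1d\,\P\{d\mid B_n\}-\P\{B_n=0\}\sum_{d=1}^n\frac1d.$$
Here the last correction arises because $\P\{d\mid B_n\}$ counts the value $B_n=0$ (divisible by every $d$), and since $\P\{B_n=0\}=2^{-n}$ while $\sum_{d\le n}1/d=\mathcal O(\log n)$, this correction is $\mathcal O(2^{-n}\log n)$, negligible against the target bound.

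Next I would split the comparison with $\zeta(2)$ into a main part and a tail part:
$$\E\,\s_{-1}(B_n)-\zeta(2)=\sum_{d=1}^n\frac1d\Big(\P\{d\mid B_n\}-\frac1d\Big)-\sum_{d>n}\frac1{d^2}+\mathcal O\big(2^{-n}\log n\big).$$
The tail $\sum_{d>n}d^{-2}$ is $\mathcal O(1/n)$, hence harmless. For the main sum I would invoke Corollary \ref{c.useful.est}, which gives $\bigl|\P\{d\mid B_n\}-1/d\bigr|\le C/\sqrt n$ uniformly for $1\le d\le n$; therefore
$$\Big|\sum_{d=1}^n\frac1d\Big(\P\{d\mid B_n\}-\frac1d\Big)\Big|\le\frac{C}{\sqrt n}\sum_{d=1}^n\frac1d\le C\,\frac{\log n}{\sqrt n}.$$
Collecting the three contributions yields the asserted estimate $\bigl|\E\,\s_{-1}(B_n)-\zeta(2)\bigr|\le C\log n/\sqrt n$.

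The computation is essentially routine once Corollary \ref{c.useful.est} is available, so I do not expect a genuine analytic obstacle; the two points demanding care are purely bookkeeping. The first is the treatment of the $\{B_n=0\}$ term, where $\s_{-1}$ is ill-defined (or divisibility is universal) and which must be isolated as an exponentially small remainder. The second, and more substantive, is the choice of input estimate: the uniform sup-bound of Corollary \ref{c.useful.est} is exactly what is needed, since the summed bound of Proposition \ref{unifd}, after weighting by $1/d$, would only give $\mathcal O(1)$ over $d<\sqrt n$ and would not reach the sharp $\log n/\sqrt n$ rate. Using the uniform $\mathcal O(1/\sqrt n)$ bound valid up to $d=n$, paired with the harmonic sum $\sum_{d\le n}1/d=\mathcal O(\log n)$, is precisely what produces the logarithmic factor in the final rate.
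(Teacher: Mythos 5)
Your proof is correct and takes essentially the same route as the paper: both expand $\E \s_{-1}(B_n)$ as $\sum_{d\le n}\frac1d\,\P\{d|B_n\}$, invoke the uniform bound of Corollary \ref{c.useful.est}, and sum the harmonic series, the $\zeta(2)$-tail being $\mathcal O(1/n)$. The only difference is your explicit bookkeeping of the event $\{B_n=0\}$ (the paper's indicator-sum formula $\s_{-1}(B_n)=\sum_{d\le n}\frac1d\chi\{d|B_n\}$ silently absorbs it into $\P\{d|B_n\}$), which contributes only an exponentially small, harmless correction.
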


\begin{proof}Writing that $\s_{-1}(B_n)=\sum_{d\le n }\frac{1}{d}\chi\{d|B_n\}$ since $B_n\le n$, we have 
%$$  \s_{-1}(B_n)=\sum_{d| B_n}\frac{1}{d} =\sum_{d\le n\,,\,d|B_n}\frac{1}{d} =\sum_{d\le n }\frac{1}{d}\chi\{d|B_n\}  $$
$$  \E \s_{-1}(B_n)= \sum_{d\le n }\frac{1}{d}\,\P\{d|B_n\}.  $$
Now $$\E \s_{-1}(B_n)- \sum_{d\le n}{1\over d^2}=
\sum_{d\le n}{1\over d}\big( \P\big\{d|B_n\big\}  - {1\over d }\big). $$
Whence,
%By Corollary \ref{c.useful.est}, 
$$\big|\E \s_{-1}(B_n)- \sum_{d\le n}{1\over d^2}\big|\le
{C \over\sqrt n}\ \sum_{d\le n}{1\over d}\le\  C\,{  \log n\over\sqrt n} .$$
Consequently,
%$$  = \zeta(2) = \frac{\pi^2}{6},$$
  $$\Big|\E \s_{-1}(B_n)-   \zeta(2) \Big| \le\  C\,{  \log n\over\sqrt n} .$$
\end{proof}
\begin{corollary} We have for $n>m\ge 2$,
\begin{eqnarray*} \P\big\{  B_n\,{\it and}\, B_m \ {\it coprime} \big\}  &=&\sum_{d\le m\wedge (n-m)} {\m(d)\over d^2}\cr & &+\mathcal O\Big(   {\log m \over  \sqrt{m\wedge (n-m)}} +\Big({m\wedge (n-m)\over m\vee (n-m)}\Big)^{1/2}\Big).
\end{eqnarray*}
Moreover,
$$\lim_{m\to\infty\atop  {n }/{m} \to\infty}\P\big\{ (B_n,B_m)=1\big\}
 =\frac{1}{\zeta(2)}
 .$$
 \end{corollary}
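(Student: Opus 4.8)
The plan is to expand the coprimality indicator by Möbius inversion and then exploit the independent increments of the walk. Throughout write $a=m\wedge(n-m)$ and $b=m\vee(n-m)$, so that $m(n-m)=ab$. I would start from the pointwise identity $\chi\{(B_n,B_m)=1\}=\sum_{d\mid (B_n,B_m)}\m(d)$. On the event $\{B_m\ge 1\}$ every common divisor satisfies $d\le B_m\le m$, so the sum truncates and
\[
\chi\{(B_n,B_m)=1\}=\sum_{d\le m}\m(d)\,\chi\{d\mid B_n\}\,\chi\{d\mid B_m\},
\]
an identity one checks to hold for every value of $B_n\ge 0$ once $B_m\ge 1$ (using $(k,0)=k$). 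The complementary event $\{B_m=0\}$ has probability $2^{-m}$ and contributes $\mathcal O(m\,2^{-m})$, negligible against the announced remainder. Taking expectations gives
\[
\P\{(B_n,B_m)=1\}=\sum_{d\le m}\m(d)\,\P\{d\mid B_n,\ d\mid B_m\}+\mathcal O(m\,2^{-m}).
\]

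The crucial step is the factorization of $\P\{d\mid B_n,\ d\mid B_m\}$. Since $B_n-B_m=\sum_{k=m+1}^n\b_k$ is independent of $B_m$ and equidistributed with $B_{n-m}$, and since $d\mid B_m$ makes $d\mid B_n$ equivalent to $d\mid(B_n-B_m)$, the events $\{d\mid B_m\}$ and $\{d\mid(B_n-B_m)\}$ are independent, whence
\[
\P\{d\mid B_n,\ d\mid B_m\}=\P\{d\mid B_m\}\,\P\{d\mid B_{n-m}\}.
\]

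Next I would feed in Corollary \ref{c.useful.est}. For $d\le a$ both indices dominate $d$, so $\P\{d\mid B_m\}=\tfrac1d+\mathcal O(m^{-1/2})$ and $\P\{d\mid B_{n-m}\}=\tfrac1d+\mathcal O((n-m)^{-1/2})$; multiplying out and using $\sqrt m\wedge\sqrt{n-m}=\sqrt a$ yields $\P\{d\mid B_n,\ d\mid B_m\}=\tfrac1{d^2}+\mathcal O\big(\tfrac1{d\sqrt a}\big)+\mathcal O\big(\tfrac1{\sqrt{m(n-m)}}\big)$. Summing against $\m(d)$ over $d\le a$ produces the main term $\sum_{d\le a}\m(d)/d^2$, an error $\mathcal O\big(\sqrt a^{-1}\sum_{d\le a}d^{-1}\big)=\mathcal O(\log a/\sqrt a)=\mathcal O(\log m/\sqrt a)$ from the linear correction, and an error $\mathcal O\big(a/\sqrt{m(n-m)}\big)=\mathcal O\big(\sqrt{a/b}\big)=\mathcal O\big((a/b)^{1/2}\big)$ from the cross term — precisely the two pieces of the stated remainder. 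It then remains to discard the range $a<d\le m$, which is nonempty only when $b=m$; there $d>n-m\ge B_{n-m}$ forces $\P\{d\mid B_{n-m}\}=\P\{B_{n-m}=0\}=2^{-(n-m)}$, so this tail is bounded by $2^{-a}\sum_{d\le m}\P\{d\mid B_m\}=2^{-a}\,\E d(B_m)=\mathcal O(2^{-a}\log m)$ by Proposition \ref{dbn}, again negligible.

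The main obstacle — indeed the only genuine idea — is the independence factorization of the second step; once it is in place the rest is bookkeeping with Corollary \ref{c.useful.est}, the one subtlety being to keep the linear correction (summing to $\log m/\sqrt a$) separate from the quadratic cross term (summing to $(a/b)^{1/2}$). Finally, the limit assertion follows by letting $m\to\infty$ with $n/m\to\infty$: then eventually $n-m>m$, so $a=m\to\infty$ and $b=n-m$, both remainders $\log m/\sqrt m$ and $(m/(n-m))^{1/2}$ tend to $0$, and $\sum_{d\le a}\m(d)/d^2\to\sum_{d\ge 1}\m(d)/d^2=1/\zeta(2)$.
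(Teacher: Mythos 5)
Your proof is correct and follows essentially the same route as the paper's: M\"obius inversion, the factorization $\P\{d\mid B_n,\,d\mid B_m\}=\P\{d\mid B_m\}\,\P\{d\mid B_{n-m}\}$ via independent increments, the uniform estimate of Corollary \ref{c.useful.est}, and the same splitting of the error into the $\log m/\sqrt{m\wedge(n-m)}$ and $((m\wedge(n-m))/(m\vee(n-m)))^{1/2}$ pieces. The only difference is that you explicitly justify the truncation of the M\"obius sum (the $B_m=0$ event and the range $m\wedge(n-m)<d\le m$, handled via $\P\{B_{n-m}=0\}=2^{-(n-m)}$ and Proposition \ref{dbn}), details the paper absorbs silently into its opening identity.
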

% \frac{1}{\zeta(2)}=\frac{6}{\pi^2}
 \begin{proof} By using M\"obius inversion formula,
%  and again  Corollary \ref{useful.est},
$$\P\big\{ (B_n,B_m)=1\big\} =\sum_{d\le 
m\wedge (n-m)}\m(d)\P\{d|S_m\}\,\P\{d|S_{n-m}\}$$
$$=\sum_{d\le m\wedge (n-m)}\m(d)\Big(\P\big\{d|S_m\big\}-{1\over d}+{1\over d}\Big)\,\Big(\P\big\{d|S_{n-m}\big\}-{1\over d}+{1\over d}\Big) $$
$$=\sum_{d\le m\wedge (n-m)}\m(d)\Big(\P\big\{d|S_m\big\}-{1\over d}+{1\over d}\Big)\,\Big(\P\big\{d|S_{n-m}\big\}-{1\over d}+{1\over d}\Big) $$
$$=\sum_{d\le m\wedge (n-m)}\m(d)\Big( {1\over d}+\mathcal O\big({ 1\over\sqrt m}\big)\Big)\, \Big({1\over d}+\mathcal O\big({ 1\over\sqrt {n-m}}\big)\Big) $$
$$=\sum_{d\le m\wedge (n-m)} {\m(d)\over d^2}+\mathcal O\Big(\sum_{d\le m}
{1\over d }\big({1\over  \sqrt{ m}}+{1\over  \sqrt{ n-m}}\big)+\Big({m\wedge (n-m)\over \sqrt{m( n-m)}}\Big)\Big)$$
$$=\sum_{d\le m\wedge (n-m)} {\m(d)\over d^2}+\mathcal O\Big(   {\log m(\sqrt m+\sqrt{ n-m} )\over  \sqrt{ m(n-m)}} +\Big({m\wedge (n-m)\over m\vee (n-m)}\Big)^{1/2}\Big).$$
For instance: 
\vskip 2pt --- If  $(\log m)^2<n-m\le m$, 
\begin{eqnarray*} \Big|\P\big\{  B_n\,{\it and}\, B_m \ {\it coprime} \big\}  -\sum_{d\le  n-m } {\m(d)\over d^2}\Big|  & = & \mathcal O\Big(   {\log m \over  \sqrt{ n-m }} +\Big({ n-m\over m }\Big)^{1/2}\Big).
\end{eqnarray*}

--- If  $n\ge 2m$,  
\begin{eqnarray*} \Big|\P\big\{  B_n\,{\it and}\, B_m \ {\it coprime} \big\}  -\sum_{d\le m  } {\m(d)\over d^2}\Big|  & = & \mathcal O\Big(   {\log m \over  \sqrt{  m }} +\Big({  m\over n-m }\Big)^{1/2}\Big).
\end{eqnarray*}
This implies the second claim.\end{proof}
 
This suggests to study the following problem: Let $n_1<n_2<\ldots< n_k$ be positive integers such that 
$$\min\{ n_j/n_i, 1\le i<j \le k\}$$
is large. To estimate the probability that $B_{n_1}, B_{n_2},\ldots, B_{n_k}$ are mutually coprime, namely
$$\P\big\{ (B_{n_i}, B_{n_j})=1, \forall 1\le i<j \le k\big\}.$$ 
%This seems doable.
 That problem is related to a nice notion: for a subset of an integer lattice, to be {\it visible from the origin}, in this case the   set of random points   $(x,y)$ in $\N^2$, $x\neq y$, with coordinates in $\{B_{n_1}, B_{n_2},\ldots, B_{n_k}\}$. Referring to section 3.8 in Apostol \cite{A}, we say  that two lattice points $P$ and $Q$ are mutually visible  if the segment which join  them contains no lattice points other than the endpoints $P$ and $Q$. We also recall that two lattice points $(a,b)$ and $(m,n)$ are mutually visible if  and only if $(a-m, b-n)= 1$, or  equivalently  if and only if $(a-m,b-n)$ is visible from the origin. The standard case presented in \cite{A} has clear link with Farey series, and Dirichlet's easy   estimate on the number of corresponding visible  points is a particular case of a much more general result having a direct link with Riemann Hypothesis  (RH). See   \cite{W13} for more.
%and references therein.
\smallskip \par
 %As  an application, we prove  that    for any    $\eta>1$, any finite sequence of reals $\{c_k, k\in K\}$,
%$\sum_{k,\ell\in K} c_kc_\ell \, \frac{\gcd(k,\ell)^{2}}{k\ell} \le  C(\eta) \sum_{\nu\in K} c_\nu^2(\log\log\log \nu)^\eta \Psi(\nu)
%$,where $C(\eta)$ depends on $\eta$ only.  

 \begin{remark}\rm Let $X$ and $Y$ be independent random variables having geometric distribution with parameter $1-e^{-\b}$ with $\b>0$: $\P\{X=n\}=(1-e^{-\b})e^{-\b(n-1)}$ for all $n\in\N$. Bureaux and Enriquez \cite{BE} have shown that 
 that error term
 $$ \P\{(X,Y)=1\}= \frac{1}{\zeta(2)}+\frac{\b}{\zeta(2)}+\mathcal O(\b^{3/2-\e}), \quad \hbox{for each}\ \e>0\ \hbox{as}\ \b\downarrow 0, $$
 is equivalent to RH. 
 
 A natural question arises from this nice result. It seems possible (although we have not investigated it) to   estimate the probability $\P\{X\,\hbox{prime}\}$ with accuracy, up to some extend. Would the accuracy of the error term be also sensitive? 
 \end{remark}

\subsection{The local limit theorem for binomial sums}
The first local limit theorem    was  established  nearly  three centuries ago, in the binomial case. We refer to     Szewczak    and  Weber \cite{SW}, Th. 1.1 for the statement   under the  form below. 
 \begin{theorem}[{\rm De Moivre--Laplace} (1733)]
\label{moivre}  Let
$0<\rho<1$. Let $X$ be such that
${\mathbb P}\{X=1\}=p=1-{\mathbb P}\{X=0\}
$. Let
$X_1, X_2,\ldots$ be independent copies of
$X$ and let $S_n=X_1+\ldots +X_n$, $n\ge 2$. Let also  $0<\g<1$.
Then for all $k$ such that $|k-np|  \le \g n\rho(1-\rho)$ and $    n \ge \max(\rho/(1-\rho),(1-\rho)/\rho)$, letting
$ x= \frac{k-n\rho}{\sqrt{  n\rho(1-\rho)}}$,
we have
 \begin{equation*} {\mathbb P}\{B_n(\rho)=k\}  \ =\
 \frac{e^{-  \frac{x^2}{  2  } }}{\sqrt{2\pi n\rho(1-\rho)}} \   e^E ,
 \end{equation*}
with
$|E|\le   \frac{3|x|+ 2 |x|^3}{(1-\g)\sqrt{ n\rho(1-\rho)}} +  \frac{1}{ 4n\min(\rho,1-\rho )(1 -  \g      )}$.

\vskip   3 pt  If $x= o(n^{1/6})$,  then
\begin{equation*}\Big| {\mathbb P}\{B_n(\rho)=k\}- \frac{1 }{\sqrt{2\pi n\rho(1-\rho)}}\,  e^{-  \frac{(k-n\rho)^2}{  2 n\rho(1-\rho) }}\Big|
\ \le  \   \frac{1 }{\sqrt{2\pi n\rho(1-\rho)}}\ e^{-  \frac{(k-n\rho)^2}{  2 n\rho(1-\rho) }}\Big( \frac{c_1 |x|^3}{\sqrt{ n}}+ \frac{c_2|x|}{\sqrt n} +  \frac{c_3}{ n}\Big).
\end{equation*}
The constants $c_i$, $i=1,2,3$ are explicit and depend on $p$ only. \end{theorem}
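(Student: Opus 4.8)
The plan is to start from the exact binomial mass function $\P\{B_n(\rho)=k\}={{n}\choose{k}}\rho^k(1-\rho)^{n-k}$ and insert the quantitative Stirling formula $m!=\sqrt{2\pi m}\,(m/e)^m e^{r_m}$, with $\frac1{12m+1}<r_m<\frac1{12m}$, for each of $n!$, $k!$, $(n-k)!$. After cancelling the factors $(m/e)^m$ this produces the closed form
\begin{equation*}
\P\{B_n(\rho)=k\}=\frac{1}{\sqrt{2\pi}}\sqrt{\frac{n}{k(n-k)}}\,\Big(\frac{n\rho}{k}\Big)^{k}\Big(\frac{n(1-\rho)}{n-k}\Big)^{n-k}e^{\,r_n-r_k-r_{n-k}}.
\end{equation*}
The hypotheses $n\ge\max(\rho/(1-\rho),(1-\rho)/\rho)$ and $|k-n\rho|\le\g n\rho(1-\rho)$ keep both $k$ and $n-k$ of order $n$, so the Stirling remainder $r_n-r_k-r_{n-k}$ is $\mathcal O(1/n)$ and will account for the last summand $\frac{1}{4n\min(\rho,1-\rho)(1-\g)}$ of the bound on $|E|$.

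Next I would set $x=(k-n\rho)/\sqrt{n\rho(1-\rho)}$ and $\d=k/n-\rho=x\sqrt{\rho(1-\rho)/n}$, take logarithms, and rewrite the main exponent as
\begin{equation*}
-k\log\Big(1+\frac{\d}{\rho}\Big)-(n-k)\log\Big(1-\frac{\d}{1-\rho}\Big).
\end{equation*}
The constraint $|k-n\rho|\le\g n\rho(1-\rho)$ gives $|\d/\rho|\le\g(1-\rho)$ and $|\d/(1-\rho)|\le\g\rho$, both strictly below $\g<1$, so the expansions $\log(1\pm u)=\pm u-\tfrac{u^2}2\pm\tfrac{u^3}3-\cdots$ converge and their tails are dominated by the geometric factor $(1-\g)^{-1}$. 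Expanding, the terms linear in $\d$ cancel identically, the quadratic terms combine to $-\frac{n\d^2}{2\rho(1-\rho)}=-\frac{x^2}2$, which is exactly the Gaussian exponent, and the cubic-and-higher remainder supplies the $\frac{2|x|^3}{(1-\g)\sqrt{n\rho(1-\rho)}}$ part of $E$.

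It remains to absorb the algebraic prefactor: $\sqrt{n/(k(n-k))}$ equals $\frac{1}{\sqrt{n\rho(1-\rho)}}$ times $(1+\d/\rho)^{-1/2}(1-\d/(1-\rho))^{-1/2}$, whose logarithm, again Taylor expanded, contributes a leading term of size $\mathcal O(|\d|)=\mathcal O(|x|/\sqrt n)$, accounting for the $\frac{3|x|}{(1-\g)\sqrt{n\rho(1-\rho)}}$ summand. Collecting the cubic log remainder, this prefactor correction and the Stirling remainder into one quantity $E$, and bounding each piece by the explicit estimates above using $|x|\le\g\sqrt{n\rho(1-\rho)}$ and $n\min(\rho,1-\rho)\ge1$, yields the first displayed inequality. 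I expect the main obstacle to be purely bookkeeping: obtaining the clean constants $3$, $2$ and $\tfrac14$ requires estimating the Taylor tails uniformly in $k$ across the entire admissible window (not merely for $x=\mathcal O(1)$) and splitting the two logarithms carefully, since their cubic coefficients have opposite signs and partially cancel, so that no cross term is double counted.

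For the sharper additive statement under $x=o(n^{1/6})$, I would observe that then each summand of $E$ is $o(1)$, whence $|e^{E}-1|\le|E|\,e^{|E|}\le C|E|$. Multiplying through by the Gaussian density and re-expressing the three summands of $E$ in the form $c_1|x|^3/\sqrt n+c_2|x|/\sqrt n+c_3/n$, with the factors $\rho(1-\rho)$ absorbed into the $c_i$ (so that these depend on $\rho$, equivalently on $p$, only), gives the claimed bound. The hypothesis $x=o(n^{1/6})$ is precisely what forces the dominant cubic contribution $|x|^3/\sqrt n=o(1)$, legitimising the linearisation $e^{E}-1\approx E$.
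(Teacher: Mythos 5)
The paper itself contains no proof of this theorem: it is quoted from Szewczak and Weber [SW, Th.\,1.1], so there is no internal argument to compare against. Your Stirling-based derivation is the classical route to the local De Moivre--Laplace theorem and is sound in outline; in fact the very shape of the stated error bound confirms it is the intended derivation. Within the window $|k-n\rho|\le\gamma n\rho(1-\rho)$ one has $k\ge n\rho(1-\gamma)$ and $n-k\ge n(1-\rho)(1-\gamma)$, so the three Stirling remainders satisfy
$r_n+r_k+r_{n-k}\le \tfrac{1}{12n}\bigl(1+\tfrac{1}{\rho(1-\gamma)}+\tfrac{1}{(1-\rho)(1-\gamma)}\bigr)\le \tfrac{1}{4\,n\min(\rho,1-\rho)(1-\gamma)}$,
which is exactly the third summand of $|E|$, and the factor $(1-\gamma)^{-1}$ in the other two summands is precisely the geometric-series domination of the Taylor tails that you invoke. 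Your account of the Gaussian exponent is also correct, with one phrasing caveat: the linear terms of the two logarithms do not vanish on their own; multiplied by the $\delta$-parts of $k$ and $n-k$ they contribute $-n\delta^2/(\rho(1-\rho))$, and the quadratic log terms then restore $+n\delta^2/(2\rho(1-\rho))$, leaving $-x^2/2$ overall --- your computation reflects this, even if the phrase ``cancel identically'' slightly understates the mechanism. The passage to the second display via $|e^E-1|\le |E|e^{|E|}$ with $|E|=o(1)$ under $x=o(n^{1/6})$ (taking any fixed $\gamma$, say $\gamma=1/2$, absorbed into the $c_i$) is likewise correct. What remains implicit is only the constant-chasing that produces $3$ and $2$, which you flag honestly; that is bookkeeping, not a gap.
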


A useful  consequence is: {\it For any $b>0$, $n\ge 2$,}
\beq\label{binomial.llt.1}
  \sup_{k:|k-n\rho|\le \sqrt{b n\log n}}\Big|{\mathbb P}\{B_n(\rho)=k\}- \frac{1 }{\sqrt{2\pi n\rho(1-\rho)}}\,  e^{-  \frac{(k-n\rho)^2}{  2 n\rho(1-\rho) }}\Big|
\, \le  \,  C_b \Big( \frac{ \log^{3/2} n}{\sqrt n}\Big).  
\eeq 

\subsection{A concentration inequality   for binomial sums.}\label{sub2.2}
 The following  is Th.\,2.3   in MacDiarmid \cite{di}, which addresses a more general case.
\begin{lemma} \label{di.1}
Let $X_1, \dots, X_n$     be independent random variables, with $0 \le X_k \le 1$ for each $k$.
Let $S_n = \sum_{k=1}^n X_k$ and $\mu = \E S_n$. Then for any $\epsilon >0$,
 \begin{eqnarray*}
{\rm (a)}\quad  &&
 \P\big\{S_n \ge  (1+\epsilon)\mu\big\}
    \le  e^{- \frac{\epsilon^2\mu}{2(1+ \epsilon/3) } } ,
    \cr {\rm (b)} \quad & &\P\big\{S_n \le  (1-\epsilon)\mu\big\}\le    e^{- \frac{\epsilon^2\mu}{2}}.
 \end{eqnarray*}
\end{lemma}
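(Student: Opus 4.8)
The plan is to apply the exponential (Chernoff) method to each tail, producing sharp Bennett-type bounds, and then to reduce these to the stated Bernstein-type forms by elementary one-variable inequalities. Write $\mu_k=\E X_k$, so that $\mu=\sum_{k=1}^n\mu_k$.

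For the upper tail (a), I would fix $t>0$ and combine Markov's inequality with independence:
$$\P\{S_n\ge(1+\epsilon)\mu\}\le e^{-t(1+\epsilon)\mu}\,\E e^{tS_n}=e^{-t(1+\epsilon)\mu}\prod_{k=1}^n\E e^{tX_k}.$$
Since $0\le X_k\le 1$, convexity of $x\mapsto e^{tx}$ gives the pointwise bound $e^{tX_k}\le 1-X_k+X_k e^t$, whence $\E e^{tX_k}\le 1+\mu_k(e^t-1)\le e^{\mu_k(e^t-1)}$ using $1+y\le e^y$. Multiplying over $k$ yields $\E e^{tS_n}\le e^{\mu(e^t-1)}$, so $\P\{S_n\ge(1+\epsilon)\mu\}\le\exp\big(\mu(e^t-1)-t(1+\epsilon)\mu\big)$. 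Minimizing the exponent in $t$ gives the optimal $t=\log(1+\epsilon)$ and the Bennett bound $\P\{S_n\ge(1+\epsilon)\mu\}\le\big(e^{\epsilon}(1+\epsilon)^{-(1+\epsilon)}\big)^{\mu}$. For the lower tail (b) I would run the identical argument with parameter $-t$, bounding $\E e^{-tS_n}\le e^{\mu(e^{-t}-1)}$ and optimizing at $t=-\log(1-\epsilon)$, to obtain $\P\{S_n\le(1-\epsilon)\mu\}\le\big(e^{-\epsilon}(1-\epsilon)^{-(1-\epsilon)}\big)^{\mu}$.

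It then remains to pass from these exact Bennett exponents to the claimed expressions. The lower tail is clean: setting $g(\epsilon)=(1-\epsilon)\log(1-\epsilon)+\epsilon-\tfrac{\epsilon^2}{2}$ one checks $g(0)=0$ and $g'(\epsilon)=-\log(1-\epsilon)-\epsilon\ge 0$ on $[0,1)$, since $\log(1-\epsilon)\le-\epsilon$; hence $g\ge 0$, which is exactly $-\epsilon-(1-\epsilon)\log(1-\epsilon)\le-\tfrac{\epsilon^2}{2}$ and yields (b). (For $\epsilon\ge 1$ the event is contained in $\{S_n\le 0\}$ and the bound is trivial.)

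The main obstacle is the corresponding reduction for the upper tail, namely verifying
$$(1+\epsilon)\log(1+\epsilon)-\epsilon\ \ge\ \frac{\epsilon^2}{2(1+\epsilon/3)}=\frac{3\epsilon^2}{6+2\epsilon}\qquad(\epsilon>0).$$
This is the step that produces the characteristic factor $1+\epsilon/3$ by matching the two sides through third order in the Taylor expansion, and it is more delicate than the lower-tail estimate. I would prove it by setting $h(\epsilon)$ equal to the difference of the two sides, clearing the denominator to reduce to a polynomial-times-logarithm inequality, and showing $h(0)=0$ together with $h'(\epsilon)\ge0$ (differentiating once or twice until the sign becomes transparent, using $\log(1+\epsilon)\ge\epsilon-\epsilon^2/2$ type bounds). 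Once this elementary inequality is established, substituting it into the Bennett bound of the first paragraph gives $\P\{S_n\ge(1+\epsilon)\mu\}\le e^{-\epsilon^2\mu/(2(1+\epsilon/3))}$, completing (a).
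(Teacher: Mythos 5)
Your proof is correct, but there is no internal proof in the paper to compare it against: the lemma is quoted verbatim as Theorem 2.3 of MacDiarmid's survey \cite{di}, and the paper gives only that citation. Your argument --- Markov's inequality applied to $e^{tS_n}$, the convexity bound $\E e^{tX_k}\le 1+\mu_k(e^t-1)\le e^{\mu_k(e^t-1)}$ valid for $0\le X_k\le 1$, optimization at $t=\log(1+\epsilon)$ (resp.\ $t=-\log(1-\epsilon)$), and then comparison of the resulting Bennett exponents with the Bernstein-type exponents in the statement --- is the standard derivation and is essentially the proof found in the cited source, so in effect you have supplied the proof that the paper outsources. The one step you present as a plan rather than carry out, namely
\[
(1+\epsilon)\log(1+\epsilon)-\epsilon\ \ge\ \frac{3\epsilon^2}{6+2\epsilon}\qquad(\epsilon>0),
\]
does go through exactly as you predict: writing $h(\epsilon)$ for the difference of the two sides, one checks $h(0)=h'(0)=0$ and
\[
h''(\epsilon)=\frac{1}{1+\epsilon}-\frac{27}{(3+\epsilon)^3},
\]
and $h''\ge 0$ is equivalent to $(3+\epsilon)^3\ge 27(1+\epsilon)$, i.e.\ to $9\epsilon^2+\epsilon^3\ge 0$; integrating twice yields $h\ge 0$, which closes part (a). Your lower-tail reduction via $g'(\epsilon)=-\log(1-\epsilon)-\epsilon\ge 0$ is likewise complete, and the degenerate range $\epsilon\ge 1$ is handled correctly (for $\epsilon>1$ and $\mu>0$ the event is empty since $S_n\ge 0$; for $\epsilon=1$ letting $t\to\infty$ in the Chernoff bound gives $\P\{S_n\le 0\}\le e^{-\mu}\le e^{-\mu/2}$).
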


\subsection{Sharper uniform estimate and  Theta     function.}

  One might believe in view of Proposition \ref{unifd} that the central comparison term  is $\frac1d$. But this is {\it not} so. The right comparison term -{\it with a sharper remainder term\,}- turns up to be more complicated, and is equal to ${\Theta(d,n)\over d}$ where $\Theta(d,n)$ is the Theta     function, 
  \begin{equation} \label{Theta}\displaystyle{ \Theta (d,n)  =  \sum_{\ell\in \Z} e^{in\pi{\ell\over   d }-{n\pi^2\ell^2\over 2 d^2}},     }
   \end{equation}
  $d\ge 1,\ n\ge 1$ being integers.  
\medskip \par The following Theorem proved in Weber \cite{W1}, 
provides a uniform estimate on the whole range of  divisors.
%, with a sharper rate of approximation.   
%\begin{equation}\label{unifdiv} \sup_{2\le d\le n}\Big|\P\big\{ d | B_n\big\}-  {1\over d}\sum_{ 0\le |j|<d/2 }
%  e^{i\pi n{j\over d}  -n  
%{\pi^2j^2\over 2d^2}}\Big|= {\mathcal O}\big((\log n)^{5/2}n^{-3/2}\big)
%.\end{equation}
  \begin{theorem}[\cite{W1},\,Theorem II]\label{estPdlBn} 
 % ([We6]  Theorem II)   
{  \rm (1)}    We have the following uniform estimate: 
\begin{equation}\label{unif.est.theta}\sup_{2\le d\le n}\Big|\P\big\{d|B_n\big\}- {\Theta(d,n)\over d}  \Big|= {\mathcal O}\Big(\frac{(\log n)^{5/2}}{n^{3/2}}\Big).  
\end{equation}

%{  \rm (2)}  Further, 
%\vskip 1pt \begin{equation}\label{(1.2)}
  %\big|\P\big\{d|B_n\big\}- {1\over d}  \big|\le
 %   \begin{cases}  C\Big(\frac{(\log n)^{5/2}}{n^{3/2}}+ { 1\over d}
 %e^{ - {n \pi^2 \over 2d^2}} \Big)   & \  {\it if}\  d\le \sqrt n,\cr 
  %    {C \over\sqrt n}   & \  {\it if} \  \sqrt n\le d\le n.
%\end{cases}
%\end{equation}
%Thus $\lim_{n\to \infty}\P\big\{d|B_n\big\}= {1/ d}  $, and we have the bound
%\begin{equation}\label{(1.5)}  \big|\P\big\{d|B_n\big\}- {1\over d}  \big| \le C  \, {  d\over n},
%\qq \qquad {\rm if}\ 2\le d\le \sqrt n.
%\end{equation}  Whence also,
%\begin{equation}\label{useful.est}   \sup_{1\le d\le n}\,\big|\P\big\{d|B_n\big\}- {1\over d}  \big| \, \le \,   {C \over\sqrt n} .
% \end{equation}

{  \rm (2)} Further,
  for any $\a>0$,  
\begin{equation}\label{unif.est.theta.alpha} \sup_{d<  \pi   \sqrt{   n \over 2\a\log n}}\big|\P\big\{  d|   B_{ n} \big\}-{1\over d} 
\big|= {\mathcal O}_\e\big(n^{-\a+\e }\big),\qq \quad (\forall \e>0)  
\end{equation}   
 and for any $0<\rho<1 $, 
\begin{equation}\label{unif.est.theta.rho}  \sup_{d<  (\pi/\sqrt 2) n^{(1-\rho)/2} }\big|\P\big\{  d|   B_{ n} \big\}-{1\over d} 
\big|= {\mathcal O}_\e\big(e^{-(1-\e) n^\rho}\big),\qq \quad (\forall 0<\e<1). 
\end{equation} 
 \end{theorem}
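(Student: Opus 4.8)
The plan is to prove part (1) by comparing the exact Fourier expansion of $\P\{d|B_n\}$ term by term with the theta series $\Theta(d,n)$, the whole discrepancy being controlled by how well the Gaussian $e^{-n\pi^2 j^2/(2d^2)}$ approximates $\cos^n(\pi j/d)$. Starting from \eqref{basic.i}, namely $d\,\P\{d|B_n\}=\sum_{j=0}^{d-1}e^{i\pi n j/d}\cos^n(\pi j/d)$, I first note that the summand $F(j)=e^{i\pi n j/d}\cos^n(\pi j/d)$ is $d$-periodic in $j$ (replacing $j$ by $j+d$ multiplies it by $(-1)^{2n}=1$), so the sum may be taken over the symmetric residue block $-d/2<j\le d/2$, where $\pi|j|/d\le\pi/2$ and hence $\cos(\pi j/d)\ge 0$. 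On the other side, $\Theta(d,n)=\sum_{\ell\in\Z}e^{i\pi n\ell/d-n\pi^2\ell^2/(2d^2)}$ splits into the same block $|\ell|\le d/2$ plus a tail; since $d\le n$, each tail term obeys $e^{-n\pi^2\ell^2/(2d^2)}\le e^{-n\pi^2/8}$, so the tail contributes at most $Ce^{-cn}$. That $\Theta(d,n)/d$ is the right main term is explained heuristically by Poisson summation: applied to the local limit approximation $\P\{B_n=k\}\approx(\pi n/2)^{-1/2}e^{-2(k-n/2)^2/n}$ of Theorem \ref{moivre}, summed over the progression $k\in d\Z$, it reproduces exactly $\Theta(d,n)/d$.

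The heart of the matter is the pointwise comparison on the block $|j|\le d/2$. Writing $x=\pi j/d\in[0,\pi/2]$ and using $\log\cos x=-x^2/2-r(x)$ with $r(x)=x^4/12+\mathcal{O}(x^6)\ge 0$ (the inequality $\log\cos x\le -x^2/2$ following from $\tan x\ge x$), I get $\cos^n(\pi j/d)=e^{-n\pi^2 j^2/(2d^2)}e^{-nr(x)}$, whence
\[
\bigl|\cos^n(\pi j/d)-e^{-n\pi^2 j^2/(2d^2)}\bigr|\le e^{-n\pi^2 j^2/(2d^2)}\bigl|e^{-nr(x)}-1\bigr|.
\]
On the bulk range $|x|\lesssim n^{-1/4}$ one has $nr(x)\lesssim 1$ and $|e^{-nr}-1|\le nr(x)\le Cn(\pi j/d)^4$, so the error is $\le Ce^{-n\pi^2 j^2/(2d^2)}\,n(\pi j/d)^4$; summing over $j$ (comparing with $\int_0^\infty t^4 e^{-n\pi^2 t^2/(2d^2)}\,dt$) gives $\mathcal{O}(d/n^{3/2})$. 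Outside the bulk, $|x|\gtrsim n^{-1/4}$ forces $n\pi^2 j^2/(2d^2)\gtrsim n^{1/2}$, so both quantities are $\le e^{-cn^{1/2}}$ and the at most $d\le n$ such terms contribute a negligible $ne^{-cn^{1/2}}$. Collecting, $|d\,\P\{d|B_n\}-\Theta(d,n)|=\mathcal{O}(d/n^{3/2})$, and division by $d$ yields \eqref{unif.est.theta}; the logarithmic factor $(\log n)^{5/2}$ is comfortable slack absorbing the sum-to-integral and higher-order Taylor remainders.

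For parts (2) the comparison term is the plain $1/d$, and here the sharp constant $\pi^2/2$ is essential. From \eqref{Lemma41}, $|\P\{d|B_n\}-1/d|\le\frac{2}{d}\sum_{1\le j<d/2}\cos^n(\pi j/d)$, and now I use the sharp bound $\cos^n(\pi j/d)\le e^{-n\pi^2 j^2/(2d^2)}$ (again from $\log\cos x\le -x^2/2$), which improves on the factor $e^{-2nj^2/d^2}$ behind Proposition \ref{Lemma44}. If $d<\pi\sqrt{n/(2\alpha\log n)}$ then $n\pi^2/(2d^2)\ge\alpha\log n$, so $e^{-n\pi^2 j^2/(2d^2)}\le n^{-\alpha j^2}$ and $\sum_{j\ge 1}n^{-\alpha j^2}\le n^{-\alpha}(1+o(1))$, giving \eqref{unif.est.theta.alpha}. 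Likewise, if $d<(\pi/\sqrt2)n^{(1-\rho)/2}$ then $n\pi^2/(2d^2)\ge n^{\rho}$, so the $j=1$ term is $\le e^{-n^\rho}$ and the geometric-type sum yields \eqref{unif.est.theta.rho}.

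The main obstacle is the uniformity of part (1) over the entire range $2\le d\le n$. The clean $\mathcal{O}(d/n^{3/2})$ bound rests on the sum-to-integral step, which is transparent only when the effective number of contributing terms $d/\sqrt n$ is large, i.e. $d\gg\sqrt n$; in the transition regime $d\asymp\sqrt n$ only $\mathcal{O}(1)$ terms matter and the discrete sum must be bounded directly, while for $d\lesssim\sqrt n$ all terms beyond $j=0$ are already exponentially small and must be shown not to accumulate. Controlling the remainder $r(x)$ uniformly up to $x=\pi/2$ (where $\cos^n$ is no longer close to any Gaussian) and tracking the constants through these three regimes is the delicate part, and it is exactly this bookkeeping for which the logarithmic factor in \eqref{unif.est.theta} leaves ample room.
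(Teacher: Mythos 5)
Your proposal is correct in substance, but note that for part (1) there is no in-paper proof to compare against: Theorem \ref{estPdlBn} is quoted from \cite{W1}, and the paper itself records only the identity \eqref{basic.i}--\eqref{Lemma41}, the cruder bound of Proposition \ref{Lemma44}, and Remark \ref{unif.est.theta.alpha.rem} saying the proof of \cite{W1} is ``intrinsic''. Your term-by-term comparison of $\cos^n(\pi j/d)$ with $e^{-n\pi^2 j^2/(2d^2)}$ over one period, exploiting that the phases $e^{i\pi n j/d}$ match exactly, is precisely such an intrinsic argument, and your estimates do close. Writing $t=n/d^2$, your pointwise bound gives for $j=1$ an error at most $e^{-\pi^2 t/2}\min\bigl(1,Ct^2/n\bigr)$, and after division by $d=\sqrt{n/t}$ this is at most $C\,t^{5/2}e^{-\pi^2 t/2}\,n^{-3/2}$; even discarding the exponential factor this is $O\bigl((\log n)^{5/2}n^{-3/2}\bigr)$ in the transition window $t\lesssim \log n$, while for larger $t$ the exponential makes it superpolynomially small. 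So the exponent $5/2$ in \eqref{unif.est.theta} is not ``comfortable slack'' absorbing loose ends --- it is exactly what your own bookkeeping produces, which confirms the postponed regime analysis does go through. The one step you should write out is the sum-to-integral comparison when $d\lesssim\sqrt n$: the naive bound ``integral plus maximum'' leaves a numerator term of order $1/n$ coming from the global maximum of $s\mapsto n(\pi s/d)^4e^{-n\pi^2 s^2/(2d^2)}$, and $1/(dn)$ is \emph{not} $O((\log n)^{5/2}n^{-3/2})$ for bounded $d$; the fix (implicit in your own fallback remark) is that for $t\ge 1$ the peak sits at $s^*=2d/(\pi\sqrt n)<1$, so the integer points $j\ge 1$ only see the decreasing part and the discretization error is $g(1)\le C d/n^{3/2}$, not $C/n$.

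For part (2) your derivation is complete, and it is actually sharper than the route the paper sketches in Remarks \ref{rem.Ex.2}, which rests on the Diaconis--Stein bound \eqref{multiple} with exponent $8\varrho(1-\varrho)=2$ at $\varrho=1/2$. Your use of $\log\cos x\le -x^2/2$ yields the exponent $\pi^2/2$, and this sharper constant is what produces the ranges $d<\pi\sqrt{n/(2\a\log n)}$ and $d<(\pi/\sqrt 2)\,n^{(1-\rho)/2}$ with the factor $\pi$ as stated in \eqref{unif.est.theta.alpha} and \eqref{unif.est.theta.rho}; with the constant $2$ those ranges would have to be shrunk. Your remark that the sharp Gaussian constant is essential here is therefore exactly right.
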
 
\vskip 2 pt The uniform estimate \eqref{unif.est.theta} covers the   critical region of divisors $d\sim \sqrt n$, and   we don't know  any  other   estimate of this kind covering this case.   This estimate  also admits a version  for binomial sums, which follows by simply copying the proof made in the Bernoulli case.
 
 \begin{remark}\label{unif.est.theta.alpha.rem}\rm  The proof we give is intrinsic, in particular we don't see how to  obtain this estimate from an approach using the uniform model.
 \end{remark}
\begin{remarks} \label{rem.Ex.2}\rm  --- Note  the
considerable variation of the error term between \eqref{unif.est.theta.alpha} and \eqref{unif.est.theta.rho}. 
These latter estimates are easy to prove and are true for binomial sums.  
In fact (\cite{DS},\,Example 2),
 \begin{equation}\label{multiple} \Big|\P\big\{d|B_n(\varrho) \}- \frac{1}{d}\Big|\le 
 e^{-        {
 8n \varrho(1-\varrho)  /  d^2}  }.
\end{equation} 
\vskip 3 pt
 \noi ---  Lemma 2.1 in      Cilleruelo,   Fern\'andez and Fern\'andez \cite{CFF} is only a weak  form of it. 
\vskip 3 pt
\noi  --- Letting next $h(x)$ be increasing to infinity with $x$ and such that $h(x)\le  \sqrt x $, it follows that
 \begin{equation}\label{multiple.h} \sup_{d< \sqrt{  n/ h(n)}}\Big|\P\big\{d|B_n(\varrho) \}- \frac{1}{d}\Big|\le 
 e^{-    8  
   \varrho(1-\varrho) h(n)  }.
\end{equation} 
   For instance,  
for any $\a>0$,   
 \begin{equation}\label{unifdivalpha}  \sup_{d< \sqrt{  n/ (\a \log n)}}\Big|\P\big\{d|B_n(\varrho) \}- \frac{1}{d}\Big|\le 
 n^{-    8  \a
   \varrho(1-\varrho)  },  
\end{equation} and  for any $0<\b<1 $,   
\begin{equation}\label{unifdivro} \sup_{d<   n^{(1-\b)/2} }\big|\P\big\{  d|   B_{ n} \big\}-{1\over d} 
\big|\le e^{- 8   
   \varrho(1-\varrho) n^\b}. 
\end{equation}
\end{remarks} 

   \begin{remark} \rm    
Poisson summation formula: for   $x\in \R,\  0\le
\d\le 1
$, 
\begin{equation}\label{poisson}\sum_{\ell\in \Z} e^{-(\ell+\d)^2\pi x^{-1}}=x^{1/2} \sum_{\ell\in \Z}  e^{2i\pi \ell\d -\ell^2\pi x},
 \end{equation}  
 implies that   \begin{equation}\label{poisson.comp.theta.llt}
 \frac{\Theta(d,n)}{ d}  =\sqrt{  { 2\over \pi
n}}  \sum_{ 
z\equiv 0\, (d)} e^{-{ (2z-n)^2\over 2 n}}.
 \end{equation} 
 \end{remark}
 \begin{remark} \rm   Formulas \eqref{unif.est.theta}, \eqref{poisson.comp.theta.llt}
show that the general problem of estimating $\P\big\{d|B_n\big\}$  reduces to the one of   estimating the $d$ series
\begin{equation}\label{vd.div.series}
\sum_{j }e^{-\frac{(jd-\rho)^2}{2n}},
\end{equation}
the summation being either over   even rational numbers $j$, or over   odd rational numbers $j$,   $\rho$ denoting   the residue class modulo $d$ of $n$.

An indication of the behavior of these series when $d$ and $n$ {\it simultaneously} vary, is given by  the already sharp estimate, 
\begin{equation}\label{Theta.dmu} 
 \big|{\Theta (d,n)\over d}-{1\over d}\big| \ \le \ \begin{cases}   { C\over d}
 e^{ - {n \pi^2 \over 2d^2}} & \qq \text{   if}\quad d\le \sqrt n,\cr        {C \over\sqrt n}   & \qq \text{  if}\quad \sqrt n\le d\le n.
 \end{cases}
 \end{equation}

We also quote the classical bound in the study of   inclusion theorems related to  summation methods. 
\begin{lemma} [\cite{H},\,Th.\,140] \label{exp.square}
\begin{equation}\sum_{k\in \Z} e^{-ck^2/n} =\sqrt{\Big(\frac{n\pi}{c}\Big)} +\mathcal O(1)
\end{equation}
when $n\to \infty$, uniformly in any interval of positive $c$.
%$$\pi x^{-1}=c /n\qq x=\frac{\pi n }{c}$$
\end{lemma}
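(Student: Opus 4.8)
The plan is to prove the estimate by comparing the sum directly with its associated Gaussian integral, a comparison that produces an $\mathcal{O}(1)$ remainder whose implied constant is independent of both $n$ and $c$; this independence is precisely the asserted uniformity. First I would set $f(t)=e^{-ct^2/n}$ and record its elementary features: $f$ is even, everywhere positive, and strictly decreasing on $[0,\infty)$, with maximal value $f(0)=1$. For a monotone function the standard block comparison of a sum with an integral gives
\[
\int_0^\infty f(t)\,\dd t \;\le\; \sum_{k=0}^\infty f(k)\;\le\; f(0)+\int_0^\infty f(t)\,\dd t,
\]
so that $0\le \sum_{k\ge 0} f(k)-\int_0^\infty f(t)\,\dd t\le 1$, with this last bound holding for every $c>0$ and every $n$.

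Next I would fold this one-sided estimate back onto the two-sided sum using the evenness of $f$. Since $\sum_{k\in\Z} f(k)=2\sum_{k\ge 0}f(k)-f(0)$ and $\int_{-\infty}^\infty f=2\int_0^\infty f$, subtracting and using the block comparison yields
\[
\Big|\sum_{k\in\Z} e^{-ck^2/n}-\int_{-\infty}^\infty e^{-ct^2/n}\,\dd t\Big|\;\le\; f(0)=1 .
\]
Evaluating the Gaussian integral $\int_{-\infty}^\infty e^{-ct^2/n}\,\dd t=\sqrt{n\pi/c}$ then gives exactly the claimed identity $\sum_{k\in\Z}e^{-ck^2/n}=\sqrt{n\pi/c}+\mathcal{O}(1)$, the remainder being bounded by $1$ uniformly in $c>0$ and $n$, hence in particular uniformly on any interval of positive $c$.

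There is no serious obstacle here; the only points requiring care are the bookkeeping of the $k=0$ term when passing from the one-sided to the two-sided sum, and confirming that the constant in the block comparison is $f(0)=1$ and therefore free of $c$ and $n$. As a remark I would note the sharper alternative route via the Poisson summation formula \eqref{poisson}: taking $\d=0$ and $x=\pi n/c$ there gives $\sum_{k\in\Z}e^{-ck^2/n}=\sqrt{n\pi/c}\,\bigl(1+2\sum_{\ell\ge 1}e^{-\pi^2 n\ell^2/c}\bigr)=\sqrt{n\pi/c}+\mathcal{O}\bigl(\sqrt n\,e^{-\pi^2 n/c}\bigr)$, an exponentially small remainder once $c$ is restricted to a fixed interval $[c_1,c_2]$. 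I would nonetheless keep the monotone comparison as the main argument, since it delivers the uniform $\mathcal{O}(1)$ bound with no restriction on $c$ at all.
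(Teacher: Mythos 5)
Your proof is correct, but your main argument takes a genuinely different route from the paper's: the paper proves the lemma precisely by the method you relegate to a closing remark. It applies the Poisson summation formula \eqref{poisson} with $\d=0$ and $x=\pi n/c$, which identifies the sum exactly with $\sqrt{n\pi/c}\,\sum_{\ell\in\Z}e^{-\pi^2 n\ell^2/c}$, and then controls the tail $\sum_{\ell\ge 1}$ by the elementary estimate \eqref{(2.17)} for $\sum_{H\ge 1}e^{-aH^2}$; since $a\ge 1$ once $n$ is large and $c$ stays in a fixed interval, this gives $\sqrt{n\pi/c}+\mathcal O(e^{-n/c})$, i.e.\ an exponentially small remainder, which the paper explicitly points out is much better than the stated $\mathcal O(1)$ (your remark computes this more carefully than the paper's own display, which drops the $\pi^2$ and the prefactor in the tail term). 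Your monotone block comparison, by contrast, is more elementary --- no Fourier input at all --- and buys the strongest possible uniformity: the remainder is bounded by $f(0)=1$ for every $c>0$ and every $n$, with no restriction to intervals and no largeness assumption on $n$; your bookkeeping in the folding step (the difference equals $2\bigl(\sum_{k\ge 0}f(k)-\int_0^\infty f\bigr)-f(0)$, hence lies in $[-1,1]$) is right. What your method gives up is precision: $\mathcal O(1)$ is the best it can produce, whereas the exponentially small error from Poisson summation is what the paper's surrounding discussion actually exploits when comparing with the bound \eqref{Theta.dmu}, where $d$ and $n$ vary simultaneously and even the exponential remainder is judged too weak.
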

Indeed by applying Poisson summation formula    with $x=\frac{\pi n }{c}$ and $\d=0$, \begin{equation} \sum_{k\in \Z} e^{-k^2\pi x^{-1}}=x^{1/2}   \sum_{k\in \Z}e^{-k^2\pi x}=\sqrt{\Big(\frac{n\pi}{c}\Big)} +\sum_{|k|\ge 1 } e^{-k^2n /c}  
    \end{equation} 
 We next use the  estimate below. Let $a$ be any positive real. Then,  
\begin{equation}\label{(2.17)} \sum_{H=1}^\infty e^{-aH^2}\le \ \ \begin{cases}   3e^{-a} & \qq {\rm  if}\quad \  a\ge 1,\cr
       {3 /\sqrt a}   & \qq {\rm  if}\quad  \ 0<a\le 1 .
       \end{cases}
       \end{equation}
Here $a=n/c\ge 1 $ for $n$ large, in which case
\begin{equation}\sum_{k\in \Z} e^{-ck^2/n}=\sqrt{\Big(\frac{n\pi}{c}\Big)} +\mathcal O(  e^{-n/c}), 
    \end{equation} 
 thereby implying the claimed estimate with a much better remainder.
 This   is however too weak to imply estimate \eqref{Theta.dmu} in which  $d$   and $n$ may vary simultaneously.
As to \eqref{(2.17)}, this is elementary since \begin{eqnarray*}
\sum_{H=1}^k e^{-aH^2}&\le &e^{-a } +\sum_{H=2}^k e^{-a H^2 }  \le  e^{-a } + \int_{1}^\infty
  e^{-au^2}du
 \cr 
   (u=v/\sqrt{2a})\qq &  =  & e^{-a } + {1\over \sqrt{ 2a}}\int_{\sqrt{ 2a}}^\infty
  e^{-v^2/2}dv \cr &\le  &e^{-a }\Big(1 + \sqrt{{\pi\over   2a }} \Big),  \end{eqnarray*}
 where   we used the Mill's ratio bound $ e^{ x^2/2} \int_x^\infty e^{-t^2/2}\, dt  \le \sqrt{{\pi\over 2}}     $,
 valid for all $x\ge 0$. Thus $\sum_{H=1}^\infty e^{-aH^2}\le e^{-a } (1 + \sqrt{{\pi/  2  }}  )\le    3e^{-a}$   if  $a\ge 1$, and
$\sum_{H=1}^\infty e^{-aH^2}\le   (1/ \sqrt a )\big(1 + \sqrt{{\pi/   2  }} \big)\le      3/ \sqrt a  $,  if  $a\le 1$.  
\end{remark} 

\vskip 4 pt
\begin{remark}\rm
A study of the special case   of square-free divisors  is made in Shi-Weber \cite{Sh.We1}.
 \end{remark} 

%\begin{corollary} \begin{equation}\label{Bn.prime.intro}\P\{ B_n\,\hbox{ prime} \} \,=\,  \sum_{d\le n\,,\,  P^+(d )\le \sqrt n}{  \m(d)\over d}\Theta(d,n)+ {\mathcal O}\Big({ \log^{5/2} n  \over n}\Big),\end{equation} 
%where $P^+(m )$  denotes the largest prime factor of $m$.
%\end{corollary}
\medskip \par
  In the same 2007's paper \cite{W1}, the distribution of the smallest prime divisor of $B_n$ was further studied.  Some notation: let $(a,b)$ denote the greatest common divisor of the integers $a$ and $b$. Recall that    $P^-(m )$ denotes    the smallest
prime divisor  of the integer $m>1$ (by convention $P^-(1)=+\infty$). We refer to   Tenenbaum \cite{Te}, III.6, for a study of integers free of   small divisors, and to the deep and instructive survey of Hildebrand and Tenenbaum \cite{HT}, and III.5 in the above, concerning    integers free of large divisors. 

More precisely, we    derived   from Theorem \ref{estPdlBn}  and elementary Erathost\`ene's sieve,   sharp estimates of  the probability $\P \{ P^-( B_n ) > \zeta  \}$,  
providing   the right central comparison term  together with an already   satisfactory and exploitable error term. %As a matter of interest, this also serves as an illustration of the application of this method. 
  \begin{theorem}[\cite{W1}, Th.\,I]\label{P-Bn}
  There exist a positive real $c>0$ and   constants $C_0$, $\zeta_0$ such that for $n$ large
enough,   we have the
 following estimate
$$
\Big|\hskip1pt\P\big\{ P^-( B_n ) > \zeta \big\}-     { e^{-\gamma} \over  \log
\zeta }\,\Big|  \le  { C_0 \over   \log^2 \zeta } \qq \qquad (\,\zeta_0\le  \zeta\le n^{c/\log\log n}\,) $$
 where    $\g$ is  Euler's constant.
\end{theorem}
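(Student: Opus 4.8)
The plan is to combine Eratosthenes' inclusion--exclusion, in its truncated (Brun) form, with the sharp divisor estimate \eqref{unif.est.theta.alpha}. Writing $P(\zeta)=\prod_{p\le\zeta}p$ for the primorial and using the Legendre identity $\mathbf 1\{P^-(m)>\zeta\}=\sum_{d\,|\,(m,P(\zeta))}\mu(d)$ for $m\ge1$ (the event $\{B_n=0\}$ having probability $2^{-n}$, absorbable into the error), taking expectations gives the sieve identity
\begin{equation*}
\P\{P^-(B_n)>\zeta\}=\sum_{d\,|\,P(\zeta)}\mu(d)\,\P\{d|B_n\}.
\end{equation*}
The full sum runs over divisors as large as $e^{(1+o(1))\zeta}\gg n$, far outside the range where $\P\{d|B_n\}\approx 1/d$ is usable, so I would not exploit it directly; instead I truncate the inclusion--exclusion at squarefree $d$ with $\omega(d)\le m$. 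By the Bonferroni inequalities,
\begin{equation*}
\Big|\P\{P^-(B_n)>\zeta\}-\sum_{\substack{d\,|\,P(\zeta)\\ \omega(d)\le m}}\mu(d)\,\P\{d|B_n\}\Big|\le \sum_{\substack{d\,|\,P(\zeta)\\ \omega(d)=m+1}}\P\{d|B_n\}.
\end{equation*}

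Next I fix the truncation level $m=\lceil A\log\log\zeta\rceil$ for a suitable absolute constant $A$ (any $A$ with $A(\log A-1)\ge 2$, e.g.\ $A=5$), so that Stirling forces the relevant Poisson-type tails below $1/\log^2\zeta$. Every surviving divisor then satisfies $d\le\zeta^{m}=\zeta^{O(\log\log\zeta)}$, and here the hypothesis $\zeta\le n^{c/\log\log n}$ enters: since $\log(\zeta^m)\le A\log\log\zeta\cdot\log\zeta\le Ac\log n$, choosing $c\le 1/(2A)$ forces $\zeta^{m}\le\sqrt{n/\log n}$, placing all relevant divisors inside the range of \eqref{unif.est.theta.alpha} (indeed of \eqref{unif.est.theta.rho}). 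For such $d$ I write $\P\{d|B_n\}=\frac1d+E_d$ with $|E_d|\le e^{-n^{\delta}}$ for some $\delta>0$, so that $\sum_{d\le\zeta^m}|E_d|\le \zeta^m e^{-n^{\delta}}=o(1)$ is utterly negligible. The Bonferroni error is likewise $\sum_{\omega(d)=m+1}\P\{d|B_n\}=\sum_{\omega(d)=m+1}1/d+o(1)$, and the leading sum is bounded by the elementary-symmetric estimate $\frac{1}{(m+1)!}\big(\sum_{p\le\zeta}1/p\big)^{m+1}\le\frac{(\log\log\zeta+O(1))^{m+1}}{(m+1)!}$, which my choice of $m$ renders $\le C/\log^2\zeta$.

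It then remains to pass from the truncated sum $\sum_{\omega(d)\le m}\mu(d)/d$ to the full product. Since $\sum_{d\,|\,P(\zeta)}\mu(d)/d=\prod_{p\le\zeta}(1-1/p)$, the discarded layers contribute $\big|\sum_{\omega(d)>m}\mu(d)/d\big|\le\sum_{j>m}\frac{1}{j!}\big(\sum_{p\le\zeta}1/p\big)^{j}$, again a Poisson tail controlled by $1/\log^2\zeta$ for the same $m$. Finally Mertens' third theorem gives $\prod_{p\le\zeta}(1-1/p)=\frac{e^{-\gamma}}{\log\zeta}\big(1+O(1/\log\zeta)\big)=\frac{e^{-\gamma}}{\log\zeta}+O\big(1/\log^2\zeta\big)$, which supplies both the main term $e^{-\gamma}/\log\zeta$ and an error consistent with the claimed $C_0/\log^2\zeta$; collecting all contributions yields the estimate for $\zeta_0\le\zeta\le n^{c/\log\log n}$, with $\zeta_0$ chosen large enough for the Mertens asymptotics to apply.

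The main obstacle is the simultaneous calibration of the three parameters: the sieve level $m$, the admissible divisor size $\zeta^m$, and the range/quality trade-off in \eqref{unif.est.theta.alpha}. One must take $m\asymp\log\log\zeta$ large enough that both Poisson-type tails (the Bonferroni error and the $\mu(d)/d$ remainder) fall below $1/\log^2\zeta$, while keeping $\zeta^m\le\sqrt{n/\log n}$ so that the divisor estimate applies with negligible remainder; this balance is precisely what pins down the upper cutoff $\zeta\le n^{c/\log\log n}$ and the constant $c$. Everything else---the negligibility of $\sum|E_d|$ and of the $\{B_n=0\}$ term, and the invocation of Mertens---is routine once this calibration is fixed.
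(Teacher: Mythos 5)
Your proposal is correct (up to one harmless calibration slip noted below), and it implements precisely the strategy the paper ascribes to \cite{W1}: Eratosthenes' sieve combined with divisor estimates for $\P\{d|B_n\}$ and Mertens' theorem. The paper itself does not reproduce the proof of this theorem, but its displayed treatment of the neighbouring Proposition \ref{p.eta(n;y)B} proceeds differently: there the Legendre identity is kept \emph{untruncated}, $\P\{P^-(B_n)>y\}=\sum_{P^+(d)\le y}\mu(d)\,\P\{d|B_n\}$, and the accumulated error is controlled by the uniform Theta estimate \eqref{unif.est.theta} --- the hard part of Theorem \ref{estPdlBn}, valid over the whole range $2\le d\le n$ including the critical region $d\sim\sqrt n$ --- together with the smooth-number count $\Psi(n,y)$. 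You instead truncate by Bonferroni at level $m\asymp\log\log\zeta$, which confines every surviving modulus to $d\le \zeta^{m+1}\le n^{1/2-\epsilon}$; in that range the elementary estimates \eqref{unif.est.theta.rho} (or \eqref{multiple}) suffice, and two Poisson-tail bounds plus Mertens' third theorem finish the job. What each route buys: yours never needs the deep uniform Theta estimate, and it makes transparent where the admissible range comes from --- the cutoff $\zeta\le n^{c/\log\log n}$ is exactly the constraint $\zeta^{O(\log\log\zeta)}\le n^{1/2-\epsilon}$; the paper's untruncated route yields the structurally sharper conclusion, a main term $\sum_{P^+(d)\le y}\mu(d)\Theta(d,n)/d$ with power-saving error valid for \emph{all} $y$, from which the Mertens form must then still be extracted in the restricted range. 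The one slip: with $m=\lceil A\log\log\zeta\rceil$, taking $c=1/(2A)$ gives only $\zeta^{m}\le n^{1/2}\,n^{1/(2A\log\log n)}$, and the second factor exceeds any power of $\log n$, so the claimed bound $\zeta^{m}\le\sqrt{n/\log n}$ fails marginally; choosing $c$ strictly smaller, say $c=1/(4A)$, repairs this, and is harmless since the theorem asserts only the existence of some $c>0$.
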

\begin{remark}\rm Sharpenings of Theorem \ref{P-Bn} are the object of a separate work.
\end{remark} 
We illustrate this on
% by giving 
some examples.
% of applications.

%[[[
%$$\frac{\Theta(d,n)}{ d}  =\sqrt{  { 2\over \pi
%n}}  \sum_{ 
%z\equiv 0\, (d)} e^{-{ (2z-n)^2\over 2 n}}$$
 %$$\P\{ B_n\,\hbox{  prime} \} \,=\, \sqrt{  { 2\over \pi
%n}}   \sum_{d\le n\,,\,  P^+(d )\le \sqrt n}   \m(d)\sum_{ 
%z\equiv 0\, (d)} e^{-{ (2z-n)^2\over 2 n}}+ {\mathcal O}\Big({ \log^{5/2} n  \over n}\Big)$$
% ]]]
 
 \vskip 3 pt  \noi (1) {\it Estimate of $\P\{ B_n\,\hbox{  prime} \}$.}
We begin  with clarifying   the link between this probability and the Prime Number Theorem (PNT).
% Let  $2=p_1<p_2<\ldots  $ be the sequence of consecutive prime numbers. It is easy to show that if $\P\{ B_n\,\hbox{  prime} \}\ge (\log n)^{-a}$, $a>0$, then $g(k)=p_{k+1}-p_k$  verifies $g(k)= {\mathcal O}\big( \sqrt{k\log k \log\log k} \big)  $.According to a well-known result of   Cram\'er \cite{C},  the validity of RH would  imply that  $g(k)= {\mathcal O}\big(  k^{1/2}    (\log k)^{3/2}\big) $. Besides, the Cram\'er conjecture states that  $$\limsup_{k\to \infty}{g(k)\over (\log p_k)^2}=1 .$$  But in the actual state of knowledge on the PNT, this turns up of no peculiar interest.
%,  since  that probability cannot be estimated. This   is   easy to explain. 
We first remark that
$$\P\Big\{ \big|B_n-\frac{n}{2}\big|>\frac12 \sqrt{3n\log n}\Big\}\ll \frac{1}{n}.$$
 Next by the local limit theorem
 there exists a numerical constant $C_0$ such that for all positive $n$
 \begin{eqnarray*}  \sup_{k}\, \Big|  {\mathbb P}\big\{\mathcal B_n=k\}
 -\sqrt{\frac{2}{\pi n}} e^{-{ (2k-n)^2\over
2 n}}\Big|\ \le \
  \  \frac{C_0}{n^{3/2}}  .
  \end{eqnarray*}
 Accordingly, 
 $$\P\{ B_n\,\hbox{  prime} \}=\sqrt{\frac{2}{\pi n}} \sum_{p\in\mathcal P\cap [\frac{n}{2}-\frac12 \sqrt{3n\log n},\frac{n}{2}+\frac12 \sqrt{3n\log n}]}   e^{-{ (2p-n)^2\over
2 n}}+\mathcal O\Big(\frac{1}{n}\Big) .$$
  The summation index is of type $\mathcal P\cap [x, x+y]$, $y\asymp \sqrt{x\log x}$; thus its cardinality relies upon extensions of the PNT  and cannot in turn be estimated.
  The best   unconditional estimate (slightly improved by Heath-Brown) is due to Huxley who showed in \cite{Hu}    that the PNT extends to intervals of the type $[x,x+x^\t]$,   $x^{\frac{7}{12} }  \le \t\le \frac{x}{(\log x)^4}$, namely
\begin{equation*}
%\label{huxley} 
\#\{[x,x+x^\t]\cap \mathcal P\} \sim  {x^\t}/{ \log x }.
\end{equation*} 
 %,  even assuming the RH   where it is required that  $y\ge 2\sqrt{x }\log x$, see Heath-Brown \cite{HB}. 
  Assuming   RH, the best result known to us states as follows,
\begin{equation}\label{RH.pi}\pi(x)-\pi(x-y)\, =\, \int_{x-y}^x \frac{\dd t}{\log t}+ 
\mathcal O\Big( x^{\frac{1}{2}}\log \Big\{ \frac{y}{x^{\frac{1}{2}}\log x}\Big\}\Big)
\end{equation}
for $y$ in the range 
$ 2x^{\frac{1}{2}}\log x \le y\le x$. Hence for $M\ge 2$ fixed,
\begin{equation}\pi(x)-\pi(x-Mx^{\frac{1}{2}}\log x)
\sim\,  x^{\frac{1}{2}} \big\{M+ 
\mathcal O\big(  \log M\big)\big\}.\end{equation}
%Thus for $M\ge 2$ fixed,
%\begin{equation}\pi(x)-\pi(x-Mx^{\frac{1}{2}}\log x)
%\sim\,  x^{\frac{1}{2}} \big\{M+ 
%\mathcal O\big(  \log M\big)\big\}.\end{equation}

See Heath-Brown \cite{HB}.
%,  see also the recent paper \cite{HB1} and the references therein.
The interested reader will fruitfully refer to Montgomery and Soundararajan \cite{MS},\,p.\,595, also to Montgomery and Vaughan \cite{MV}, a model of   workbook. Accordingly any  result based on an assumption on the size's order of $\P\{ B_n\,\hbox{  prime} \}$ like for instance $\ge ( \log n)^{-\a}$, $\a>0$  -which has an easy interpretation on the gap between consecutive primes- is of no peculiar interest.
\vskip 3pt On the other hand, it follows from Remark \ref{rem.Snprime.M}   that  for some constant $C>0$,
\begin{equation}\label{bd1}(\log n) \, \P\{ B_n\,\hbox{prime} \}\ge C,
\end{equation}
for all   integers $n$ in a  set  of density 1.  \bigskip \par

 By using Theorem \ref{P-Bn}, we get  the following remarkable compact formula with sharp rate of convergence. 
\begin{theorem}\label{eta(n;y)Bprime}
 \begin{equation*}\P\{ B_n\,\hbox{\rm prime} \} \,=\,  \sum_{d\le n\,,\,  P^+(d )\le \sqrt n}{  \m(d)\over d}\Theta(d,n)+ {\mathcal O}\Big({ \log^{5/2} n  \over n}\Big).\end{equation*} 
 Further
\begin{equation*}\P\{ B_n\,\hbox{\rm prime} \} \,=\, \sqrt{  { 2\over \pi
n}}  \sum_{ k\in\Z}  \Big( \sum_{d\le n\atop  P^+(d )\le \sqrt n}   \m(d) \,e^{-{ (2kd-n)^2\over 2 n}}\Big)+ {\mathcal O}\Big({ \log^{5/2} n  \over n}\Big).\end{equation*} 
\end{theorem}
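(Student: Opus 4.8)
The plan is to reduce primality to the absence of small prime factors, apply the Legendre--Eratosthenes sieve, insert the uniform Theta estimate of Theorem \ref{estPdlBn}, and finally pass to the Gaussian form by Poisson summation. First I would observe that since $B_n\le n$ surely, an integer $m\le n$ having no prime factor $\le\sqrt n$ is either $1$ or a prime $>\sqrt n$, because a product of two primes each exceeding $\sqrt n$ already exceeds $n$. Consequently, apart from the value $B_n=1$ and the finitely many primes $\le\sqrt n$, the events $\{B_n\ \text{prime}\}$ and $\{P^-(B_n)>\sqrt n\}$ coincide. Both $\P\{B_n=1\}$ and $\P\{B_n\le\sqrt n\}$ are $\mathcal O(e^{-cn})$ by the concentration inequality of Lemma \ref{di.1}(b), applied with $\mu=\E B_n=n/2$, so these discrepancies are absorbed in the announced error.

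Next, writing the indicator of $\{P^-(B_n)>\sqrt n\}$ through the Möbius function, $\chi\{P^-(m)>\sqrt n\}=\sum_{d\mid m,\ P^+(d)\le\sqrt n}\mu(d)$, and using that $\chi\{d\mid B_n\}=0$ once $d>n$, I obtain the exact sieve identity
$$\P\{P^-(B_n)>\sqrt n\}=\sum_{d\le n,\ P^+(d)\le\sqrt n}\mu(d)\,\P\{d\mid B_n\}.$$
I would then substitute the uniform estimate \eqref{unif.est.theta}, namely $\P\{d\mid B_n\}=\Theta(d,n)/d+\mathcal O((\log n)^{5/2}/n^{3/2})$ valid on the whole range $2\le d\le n$, which immediately produces the main term $\sum_{d\le n,\,P^+(d)\le\sqrt n}(\mu(d)/d)\,\Theta(d,n)$ together with an accumulated remainder.

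The control of that remainder is the main obstacle. The number of $\sqrt n$-smooth squarefree $d\le n$ is $\asymp\Psi(n,\sqrt n)\asymp n$, so the crude bound ``number of terms $\times$ uniform remainder'' yields only $\mathcal O((\log n)^{5/2}/\sqrt n)$, which is far weaker than the claimed $\mathcal O((\log n)^{5/2}/n)$. To reach the sharp exponent I would split the divisor range. For $d$ below a threshold of order $\sqrt{n/\log n}$, the bounds \eqref{unif.est.theta.alpha}, \eqref{unif.est.theta.rho} and \eqref{Theta.dmu} show that both $\P\{d\mid B_n\}$ and $\Theta(d,n)/d$ differ from $1/d$ by super-polynomially small quantities, so their difference contributes negligibly. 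For the remaining, much shorter effective range near and above $\sqrt n$, I would exploit the decay of $\Theta(d,n)/d$ furnished by \eqref{Theta.dmu} together with the cancellation carried by $\mu(d)$, following the refined analysis underlying Theorem \ref{P-Bn}. This range-splitting, and not the sieve identity itself, is where the genuine work lies.

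Finally, the second (Gaussian) formula follows from the first with no further error. Applying the Poisson-summation identity \eqref{poisson.comp.theta.llt}, $\Theta(d,n)/d=\sqrt{2/(\pi n)}\sum_{k\in\Z}e^{-(2kd-n)^2/(2n)}$, to each term of the main sum and interchanging the (absolutely convergent) summations over $d$ and $k$ gives exactly $\sqrt{2/(\pi n)}\sum_{k\in\Z}\big(\sum_{d\le n,\,P^+(d)\le\sqrt n}\mu(d)\,e^{-(2kd-n)^2/(2n)}\big)$, which completes the argument.
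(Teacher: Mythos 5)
Your overall route coincides with the paper's: reduce primality of $B_n$ to the event $P^-(B_n)>\sqrt n$ (your treatment of the exceptional events $\{B_n=1\}$ and $\{B_n\le \sqrt n\}$ via Lemma \ref{di.1} is in fact more careful than the paper's one-line assertion of the equivalence), expand by the M\"obius--Legendre identity of Lemma \ref{l.eta(n;y)B}, insert the uniform estimate \eqref{unif.est.theta} of Theorem \ref{estPdlBn}, and pass to the Gaussian form by Poisson summation \eqref{poisson.comp.theta.llt}; that last step is correct and is exactly what the paper does. The genuine gap is at the step you yourself single out as the crux, and your sketch for it does not work. Discarding $d<\sqrt{n/(\a\log n)}$ is legitimate (there both $\P\{d|B_n\}$ and $\Theta(d,n)/d$ are within $\mathcal O(n^{-A})$ of $1/d$, by \eqref{unif.est.theta.alpha} and \eqref{Theta.dmu}), but the complementary range $\sqrt{n/(\a\log n)}\le d\le n$ is not \emph{short}: it still contains $\gg n$ squarefree $\sqrt n$-smooth integers, so applying the uniform bound \eqref{unif.est.theta} termwise there again accumulates to $\mathcal O((\log n)^{5/2}/\sqrt n)$. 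Moreover, \eqref{Theta.dmu} controls the \emph{main} term $\Theta(d,n)/d$, not the difference $\P\{d|B_n\}-\Theta(d,n)/d$, and no cancellation ``carried by $\mu(d)$'' can be extracted from a uniform bound on the absolute values of the individual errors. As written, your argument proves the two formulas only with remainder $\mathcal O((\log n)^{5/2}/\sqrt n)$, not the stated $\mathcal O((\log n)^{5/2}/n)$.

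For comparison, the paper closes this step without any range splitting: in Proposition \ref{p.eta(n;y)B} it bounds the number of error terms by the count of squarefree $y$-smooth integers, $\sum_{d\le n,\,P^+(d)\le y}|\mu(d)|\le \Psi(n,y)$, and quotes Tenenbaum's smooth-number theorem in the form $\Psi(n,y)\ll y\,e^{-(\log n/\log y)/2}$, which at $y=\sqrt n$ produces exactly the factor needed for $\mathcal O((\log n)^{5/2}/n)$. Note that your (correct) count $\Psi(n,\sqrt n)\asymp n$ is in direct conflict with that quoted bound --- the standard statement of the theorem in \cite{Te1} is $\Psi(x,y)\ll x\,e^{-u/2}$ with $u=\log x/\log y$, not $y\,e^{-u/2}$ --- so you have in fact put your finger on the precise point where the paper's own argument is most fragile: with the correctly quoted bound, the paper's computation also yields only $\mathcal O((\log n)^{5/2}/\sqrt n)$. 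But diagnosing the obstacle is not the same as overcoming it: to obtain the theorem as stated one needs either a valid quantitative substitute for the smooth-number count or genuine cancellation among the errors $\P\{d|B_n\}-\Theta(d,n)/d$, and your proposal supplies neither.
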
  
%\begin{equation}\label{poisson.comp.theta.llt}
 %\frac{\Theta(d,n)}{ d}  =\sqrt{  { 2\over \pi
%n}}  \sum_{ 
%z\equiv 0\, (d)} e^{-{ (2z-n)^2\over 2 n}}.
% \end{equation}
 %It is implicit in the right-term that $d\le n$. 
We first prove
\begin{proposition} \label{p.eta(n;y)B}  For any positive integer $n$ and any positive real $y$,
 we have 
  \begin{equation}\label{eta(n;y)B.est}\P\{P^-(B_n )> y\} \,=\,  \sum_{ d\le n\,,\, P^+(d )\le y}{  \m(d)\over d}\Theta(d,n)+ {\mathcal O}\Big( { y\,\log^{5/2} n  \over n^{ 3/2}}\, 
e^{-\frac{\log n}{\log y}/2 }
\Big).
\end{equation} 
%Also for $n\ge y \ge 2$,
%\begin{eqnarray*} \P\{P^-(B_n )> y\}   &=& \frac{e^{-\g}}{\log y}\Big\{1+\mathcal O\Big(\frac{1}{\log y}\Big)\Big\} + \mathcal O\Big(  \frac{y}{\sqrt n}\, e^{-\frac12 \frac{\log n}{\log y}}\Big)   .
%\end{eqnarray*} 
%If $y= e^{\log^\b n}$,$\frac{\log n}{\log y}=  \log^{1-\b} n$.
  \end{proposition}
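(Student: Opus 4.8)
The plan is to combine the Eratosthenes--Legendre sieve with the Theta approximation of Theorem \ref{estPdlBn}, and then to use the Poisson identity \eqref{poisson.comp.theta.llt} to recast the whole difference as a single sifted sum of local--limit errors. First I would record the sieve identity. Writing $P(y)=\prod_{p\le y}p$, for any integer $m\ge 1$ one has $\chi\{P^-(m)>y\}=\sum_{d\mid (m,P(y))}\mu(d)=\sum_{P^+(d)\le y}\mu(d)\,\chi\{d\mid m\}$, the last sum being finite since only squarefree $y$-smooth $d$ contribute. Taking $m=B_n$ and expectations gives the \emph{exact} relation $\P\{P^-(B_n)>y\}=\sum_{P^+(d)\le y}\mu(d)\,\P\{d\mid B_n\}$. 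Since $0\le B_n\le n$, for $d>n$ we have $\P\{d\mid B_n\}=\P\{B_n=0\}=2^{-n}$, so the tail $\sum_{d>n}$ contributes at most $2^{\pi(y)}2^{-n}$, which is exponentially small for $y\le n$ and is absorbed in the error. It then remains to treat $\sum_{d\le n,\,P^+(d)\le y}\mu(d)\P\{d\mid B_n\}$, the term $d=1$ giving $\P\{1\mid B_n\}=1=\Theta(1,n)+O(e^{-cn})$.

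Next I would substitute $\P\{d\mid B_n\}=\Theta(d,n)/d+R_d$, with $|R_d|=O\big((\log n)^{5/2}/n^{3/2}\big)$ uniformly for $2\le d\le n$ by \eqref{unif.est.theta}. This produces exactly the asserted main term $\sum_{d\le n,\,P^+(d)\le y}\frac{\mu(d)}{d}\Theta(d,n)$ together with the error $E=\sum_{d\le n,\,P^+(d)\le y}\mu(d)R_d$. The decisive point is that $E$ must not be estimated term by term. Setting $\delta_k=\P\{B_n=k\}-\sqrt{2/(\pi n)}\,e^{-(2k-n)^2/(2n)}$ and invoking \eqref{poisson.comp.theta.llt} in the form $\Theta(d,n)/d=\sqrt{2/(\pi n)}\sum_{j\in\Z}e^{-(2jd-n)^2/(2n)}$, one checks that $R_d=\sum_{0\le jd\le n}\delta_{jd}$ up to an exponentially small term. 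Interchanging the order of summation and using that $d\mid k$ with $k\le n$ automatically forces $d\le n$, the inner Möbius sum collapses by the same sieve identity, giving the clean reduction
$$E=\sum_{1\le k\le n,\,P^-(k)>y}\delta_k+O(e^{-cn}).$$

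The main obstacle is precisely to bound this sifted sum by $\frac{y(\log n)^{5/2}}{n^{3/2}}\,e^{-\log n/(2\log y)}$. Applying the uniform local--limit bound $|\delta_k|=O(n^{-3/2})$ of Theorem \ref{moivre} and merely counting the $k\le n$ with $P^-(k)>y$ yields a bound of order $\#\{k\le n:P^-(k)>y\}\cdot n^{-3/2}$, which is too lossy by a factor of order $\sqrt n$ when $y\approx\sqrt n$; indeed the count of such $k$ is of order $n/\log n$ there. Hence the estimate cannot come from $|\delta_k|$ but must exploit cancellation: since $\sum_{k\in\Z}\delta_k=O(e^{-cn})$ (both $\P\{B_n=\cdot\}$ and the Gaussian profile sum to $1$ up to exponentially small error), the signed errors $\delta_k$ very nearly cancel, and the sifted sum has to be controlled through the sieve weights $\mu(d)$ together with the refined, Theta--level form of the local limit theorem encoded in \eqref{unif.est.theta}, \eqref{Theta.dmu}, rather than through absolute values.

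In this last step I expect the factor $e^{-\log n/(2\log y)}$ to emerge from a Rankin/smooth--number tail estimate over the admissible range of $d$ (equivalently, from controlling how the sifted set of $k$'s with $P^-(k)>y$ samples the oscillating local--limit error), and this quantitative cancellation is the heart of the argument. By contrast, the three preparatory steps -- the sieve identity, the Theta substitution, and the Poisson interchange -- are purely formal, and the negligible pieces ($d>n$, $k=0$, and the far tails of the Gaussian) are all $O(e^{-cn})$ and thus dominated by the stated error term.
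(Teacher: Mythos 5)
Your first two steps coincide exactly with the paper's proof: the sieve identity is Lemma \ref{l.eta(n;y)B} (obtained there from the M\"obius expansion \eqref{eta(n;y)} of the multiplicative indicator $\eta(\cdot;y)$ and integration), and the substitution $\P\{d|B_n\}=\Theta(d,n)/d+R_d$ with $\sup_{2\le d\le n}|R_d|=\mathcal O\big((\log n)^{5/2}n^{-3/2}\big)$ is the same appeal to \eqref{unif.est.theta} of Theorem \ref{estPdlBn}. The gap is in your third step. Having written $E=\sum_{d\le n,\,P^+(d)\le y}\m(d)R_d$, you declare that $E$ ``must not be estimated term by term,'' pass through \eqref{poisson.comp.theta.llt} to the sifted sum $\sum_{1\le k\le n,\,P^-(k)>y}\delta_k$, and then stop: you only ``expect'' a Rankin-type estimate to produce the factor $e^{-\frac{\log n}{2\log y}}$ and call this unproved step the heart of the argument. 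Note moreover that your reduction is essentially circular: $\sum_{k\le n,\,P^-(k)>y}\delta_k$ is precisely the difference between $\P\{P^-(B_n)>y\}$ and its Gaussian-profile prediction, i.e.\ a restatement of \eqref{eta(n;y)B.est} itself, so nothing has been gained by the interchange, and the entire content of the proposition is left unestablished.

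The paper's proof is the term-by-term estimate you dismissed, carried out in the $d$-variable, and the exponential factor is a \emph{counting} input, not a cancellation phenomenon. Since $\m(d)$ kills non-squarefree $d$, the sum $E$ has at most $\#\{d\le n:\,P^+(d)\le y\}=\Psi(n,y)$ terms, so $|E|\le C\,(\log n)^{5/2}n^{-3/2}\,\Psi(n,y)$; the paper then quotes the smooth-number estimate of Tenenbaum (Theorem 1, p.~359, i.e.\ Rankin's method), in the form $\Psi(n,y)\ll y\,e^{-\frac{\log n}{2\log y}}$ for $n\ge y\ge 2$, and \eqref{eta(n;y)B.est} follows immediately. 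This is exactly the asymmetry your Poisson interchange destroys: the index set of the $d$-sum is the set of $y$-smooth squarefree integers, whose cardinality decays like $e^{-\frac{\log n}{2\log y}}$, whereas the sifted set $\{k\le n:\,P^-(k)>y\}$ you switched to has cardinality of order $n/\log y$ and carries no such decay --- which is precisely why your term-by-term bound in $k$ became lossy. The Rankin/smooth-number estimate you gesture at in your last paragraph applies to the smooth $d$'s, not to the sifted $k$'s; had you stayed with the $d$-sum and invoked it there, you would have reproduced the paper's two-line conclusion.
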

 
We need a Lemma.
 \begin{lemma} \label{l.eta(n;y)B}For any positive integer $n$ and any positive real $y$,
 we have
 \begin{equation*} \P\{P^-(B_n )> y\} =\sum_{  P^+(d )\le y} \m(d)\P\{ d|B_n \}
.
\end{equation*}
  %\label{eta(n;y)B}
 \end{lemma}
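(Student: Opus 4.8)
The plan is to recognize this as the probabilistic incarnation of the classical Legendre (sieve of Eratosthenes) identity, obtained by taking expectations in a pointwise inclusion–exclusion formula for the indicator of the event $\{P^-(B_n)>y\}$. The starting point is the observation that $P^-(B_n)>y$ holds if and only if no prime $p\le y$ divides $B_n$. Hence the indicator of this event factorizes over the finitely many primes not exceeding $y$:
$$
\chi\{P^-(B_n)>y\}=\prod_{p\le y}\bigl(1-\chi\{p\,|\,B_n\}\bigr).
$$

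First I would expand this product by distributivity. Each resulting term corresponds to choosing a subset $S$ of the primes $\le y$ and equals $(-1)^{|S|}\prod_{p\in S}\chi\{p\,|\,B_n\}$. Since the primes in $S$ are pairwise distinct, the event that every $p\in S$ divides $B_n$ coincides with the event that their product $d=\prod_{p\in S}p$ divides $B_n$, so $\prod_{p\in S}\chi\{p\,|\,B_n\}=\chi\{d\,|\,B_n\}$. As $S$ ranges over all subsets, $d$ ranges over the squarefree integers with $P^+(d)\le y$, and the sign $(-1)^{|S|}$ is exactly $\m(d)$. Because $\m$ vanishes on non-squarefree integers, I may harmlessly extend the summation to all $d$ with $P^+(d)\le y$, obtaining the pointwise identity $\chi\{P^-(B_n)>y\}=\sum_{P^+(d)\le y}\m(d)\,\chi\{d\,|\,B_n\}$. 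Only the $2^{\pi(y)}$ squarefree divisors contribute, so the sum is finite; taking expectations and using linearity over this finite sum then yields the stated formula, upon replacing $\E\,\chi\{d\,|\,B_n\}$ by $\P\{d\,|\,B_n\}$.

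The computation is essentially bookkeeping, so the only point deserving care — and the closest thing to an obstacle — is justifying the initial product representation of the indicator together with its attendant conventions. I would check the boundary cases explicitly: when $B_n=1$ the empty product equals $1$, matching $P^-(1)=+\infty>y$; when $B_n=0$ every prime divides $B_n$, so each factor $1-\chi\{p\,|\,B_n\}$ vanishes, matching $P^-(0)\le y$; and when $y<2$ the product over an empty set of primes equals $1$, matching the fact that $P^-(m)\ge 2>y$ for every $m>1$. Once this identity is secured, the passage to expectations is immediate because the sum is finite, and no convergence issue arises.
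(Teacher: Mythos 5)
Your proof is correct and takes essentially the same route as the paper: both rest on the pointwise sieve identity $\chi\{P^-(m)>y\}=\sum_{P^+(d)\le y}\mu(d)\,\chi\{d\,|\,m\}$, of which only the finitely many squarefree $y$-smooth $d$ contribute, followed by taking expectations. The only difference is that you prove this identity directly by expanding the Eratosthenes product $\prod_{p\le y}\bigl(1-\chi\{p\,|\,m\}\bigr)$, whereas the paper invokes the multiplicativity of $\eta(\cdot\,;y)$ and M\"obius inversion with a citation to Tenenbaum; your explicit check of the boundary case $B_n=0$ (which occurs with probability $2^{-n}$, while the cited identity is stated for integers $\ge 1$) is a small but genuine point of extra care.
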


\begin{proof} Recall that the characteristic function of the set of integers $n$ such that $P^-(n )> y$,
\begin{equation*}\eta(n;y) =\begin{cases} 1 \quad & \ \hbox{ if $P^-(n )> y$} \cr
0 \quad & \ \hbox{ if $P^-(n )\le  y$}.
\end{cases}
\end{equation*}
is multiplicative. Thus by the   M\"obius inversion formula
\begin{equation}\label{eta(n;y)}\eta(n;y) =\sum_{d|n\atop P^+(d )\le y} \m(d) \qq \qq (n\ge 1).
\end{equation}
 See  Tenenbaum \cite[p.\,395-396]{Te1}. 
%-396
 %Montgomery and Vaughan, \cite{MV},\, th.\, 2.1, Apostol \cite{A},\,p.\,28.
  Therefore by integration the Lemma follows.   \end{proof}

\begin{proof}[Proof of Proposition \ref{p.eta(n;y)B}] A direct consequence of  Lemma \ref{l.eta(n;y)B} and Estimate  \eqref{unif.est.theta} of  Theorem \ref{estPdlBn} is  that for $n\ge y\ge 2$,
 \begin{equation*} \P\{P^-(B_n )> y\} \,=\,  \sum_{ d\le n\,,\, P^+(d )\le y}{  \m(d)\over d}\Theta(d,n)+ {\mathcal O}\Big(\sum_{ d\le n\,,\, P^+(d )\le y}{ | \m(d)|\over d}\,{ \log^{5/2} n  \over n^{ 3/2}}\, 
  \Big).
\end{equation*} 
\vskip 3 pt Now let $\Psi(x,y)= \#\{m\le x:P^+(m)\le y\}$. By Theorem 1 p.\,359 in \cite{Te1}, we have 
\begin{eqnarray*}
\sum_{ d\le n\,,\, P^+(d )\le y} |\m(d)|\,\le \, \Psi (n,y) \,\ll \, y\,e^{-(\frac{\log n}{\log y})/2}\qq \quad (n\ge y\ge 2).\end{eqnarray*}
Therefore 
%(zzz verifier car avant $- \frac{\log n}{\log y}$)
\begin{equation*} \P\{P^-(B_n )> y\} \,=\,  \sum_{ d\le n\,,\, P^+(d )\le y}{  \m(d)\over d}\Theta(d,n)+ {\mathcal O}\Big(y\,{ \log^{5/2} n  \over n^{ 3/2}}\, 
e^{-\frac{\log n}{\log y}/2 }
\Big).
\end{equation*} 
\end{proof}
%\begin{equation}\label{Theta.d.} 
 %\big|{\Theta (d,n)\over d}-{1\over d}\big| \ \le \ \begin{cases}   { C\over d}
 %e^{ - {n \pi^2 \over 2d^2}} & \qq {\rm  if}\quad d\le \sqrt n,\cr&\cr
  %     {C \over\sqrt n}   & \qq {\rm  if}\quad \sqrt n\le d\le n.
 %\end{cases}
% \end{equation}

% $$ rajout$$
%\begin{eqnarray*}
%& &\sum_{ d\le n\,,\, P^+(d )\le y}\m(d)\,{ \Theta(d,n)\over d}\,=\,\sum_{ d\le \sqrt n\,,\, P^+(d )\le y} \,{ \m(d)\over d} 
%\cr & & \quad +\mathcal O\Big(   \sum_{ d\le \sqrt n\,,\,  P^+(d )\le y} {\,e^{ - {n \pi^2 \over 2d^2}}\over d}
 %  \Big)+ \mathcal O\Big(  {1\over \sqrt n} \sum_{ d> \sqrt n\,,\, P^+(d )\le y}  1
 %  \Big)+.\end{eqnarray*}
 %$$ rajout$$

%By using Proposition \ref{uniftheta}, we get 
%\begin{equation}\label{eta(n;y)B}\P\{P^-(B_n )> y\} \,=\,  \sum_{ d\le n\,,\, P^+(d )\le y}{  \m(d)\over d}\Theta(d,n)+ {\mathcal O}\Big({ \log^{5/2} n  \over n^{ 3/2}}\sum_{ d\le n\,,\, P^+(d )\le y} |\m(d)|\Big),
%\end{equation}
\begin{proof}[Proof of Theorem \ref{eta(n;y)Bprime}]It follows from Proposition \ref{p.eta(n;y)B} by taking $y=\sqrt n$, and  since $B_n$ is prime  if and only if $P^-(B_n )> \sqrt n$, that 
\begin{equation*}
%\label{eta(n;y)B.est}
\P\{ B_n\,\hbox{  prime} \} \,=\,  \sum_{ d\le n\,,\, P^+(d )\le \sqrt n}{  \m(d)\over d}\Theta(d,n)+ {\mathcal O}\Big( {  \log^{5/2} n  \over n }  
 \Big).
\end{equation*} 
   The proof is completed by  applying \eqref{poisson.comp.theta.llt}.
\end{proof}
 
%Let $\Pi_y$ denote the product of the prime numbers less of equal to $y$. We note that
%\begin{equation*}\#\{ d\le n\,;\, \m(d)=1 :P^+(d )\le y\}
%=\#\{ d\le n  :d|\Pi_y\}\le d(\Pi_y)=2^{\pi(y)},\end{equation*} 
%which is interesting for small values of $y$.

\vskip 9 pt

  Let $a$ be real and  $\s_{a}(n)= \sum_{d|k} d^a  $.  In a similar way we get for $a=-1$,%\begin{equation*}
%\label{poisson.comp.theta.llt}
%\frac{\Theta(d,n)}{ d}  =\sqrt{  { 2\over \pin}}  \sum_{ z\equiv 0\, (d)} e^{-{ (2z-n)^2\over 2 n}}. \end{equation*} }
  \begin{theorem}\label{sigma-1.Bn}
\begin{equation*}\E \s_{-1}( B_n) \,=\,   \sqrt{  { 2\over \pi
n}} \sum_{d\le n}  \frac1d \sum_{ 
z\equiv 0\, (d)} e^{-{ (2z-n)^2\over 2 n}} 
% \sum_{d\le n}{ \Theta(d,n)\over d^2} 
+ {\mathcal O}\Big({ \log^{3} n   \over n^{3/2}}\Big) .\end{equation*} 
%[[[$$\frac{\Theta(d,n)}{ d}  =\sqrt{  { 2\over \pi
%n}}  \sum_{ 
%z\equiv 0\, (d)} e^{-{ (2z-n)^2\over 2 n}}$$$${\color{blue}
%\sum_{d\le n}{ \Theta(d,n)\over d^2} =  \sqrt{  { 2\over \pi
%n}} \sum_{d\le n}  \frac1d \sum_{ 
%z\equiv 0\, (d)} e^{-{ (2z-n)^2\over 2 n}} \le e^\gamma \log\log n}$$]]]
\end{theorem}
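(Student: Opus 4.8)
First I would reduce the expectation to a weighted divisor-probability sum. Since $0\le B_n\le n$, every divisor of $B_n$ is at most $n$, so
$$\s_{-1}(B_n)=\sum_{d\le n}\frac1d\,\chi\{d|B_n\},$$
the exceptional event $\{B_n=0\}$ (of probability $2^{-n}$) contributing only $O(2^{-n}\log n)$ and thus being harmless. Taking expectations gives the exact identity
$$\E\s_{-1}(B_n)=\sum_{d\le n}\frac1d\,\P\{d|B_n\},$$
which is the starting point of the argument.

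Next I would insert the Theta approximation and pass to Gaussians. For $2\le d\le n$, Theorem \ref{estPdlBn}(1) yields $\P\{d|B_n\}=\Theta(d,n)/d+O(\log^{5/2}n/n^{3/2})$ uniformly in $d$, while the term $d=1$ is handled directly: $\P\{1|B_n\}=1=\Theta(1,n)+O(e^{-n\pi^2/2})$, since only the $\ell=0$ term of $\Theta(1,n)$ survives to leading order. Substituting and then rewriting each $\Theta(d,n)/d$ through the Poisson identity \eqref{poisson.comp.theta.llt}, namely $\Theta(d,n)/d=\sqrt{2/(\pi n)}\sum_{z\equiv 0\,(d)}e^{-(2z-n)^2/(2n)}$, reproduces precisely the main term displayed in the statement.

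It remains to control the remainder $R_n=\sum_{d\le n}\frac1d\,\bigl|\P\{d|B_n\}-\Theta(d,n)/d\bigr|$, and this is where the real work lies. Comparing instead to the cruder term $1/d$ is useless here, since Corollary \ref{c.useful.est} only gives $O(\log n/\sqrt n)$ and destroys the $n^{-3/2}$ precision; the sharp $\Theta$-comparison of Theorem \ref{estPdlBn} is therefore essential. Bounding $R_n$ bluntly by the uniform remainder times $\sum_{d\le n}1/d\ll\log n$ already gives $R_n=O(\log^{7/2}n/n^{3/2})$. To reach the sharper exponent $\log^3 n$ stated, I would split at $d=\sqrt n$: for $d\le\sqrt n$ both $\P\{d|B_n\}$ and $\Theta(d,n)/d$ lie within $O\bigl(d^{-1}e^{-cn/d^2}\bigr)$ of $1/d$ by Proposition \ref{Lemma44} and \eqref{Theta.dmu}, so after discarding $d\le\sqrt n/\log n$ (whose contribution is super-polynomially small) the weighted mass is pushed onto the transition band $d\gtrsim\sqrt n$. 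On that band the blunt uniform bound must be replaced by the $d$-dependent form of the remainder furnished by the proof of Theorem \ref{estPdlBn} in \cite{W1}. The main obstacle is exactly this bookkeeping across the critical scale $d\asymp\sqrt n$, where the Gaussian/Theta approximation is least accurate: tracking the genuine $d$-decay of the error there, and verifying that the harmonic weight $1/d$ costs less than the full logarithmic factor, is what separates the stated $\log^3 n$ from the crude $\log^{7/2}n$.
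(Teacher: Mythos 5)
Your derivation of the main term is precisely the paper's own (implicit) argument: Theorem \ref{sigma-1.Bn} is stated with no proof beyond the phrase ``in a similar way'', which refers to the template used for Theorem \ref{eta(n;y)Bprime} and the corollary on $\E\Psi(B_n)$ --- write $\E\s_{-1}(B_n)=\sum_{d\le n}d^{-1}\P\{d|B_n\}$, insert the uniform estimate \eqref{unif.est.theta} of Theorem \ref{estPdlBn}, and rewrite $\Theta(d,n)/d$ via the Poisson identity \eqref{poisson.comp.theta.llt}. Up to that point your proposal and the paper agree, including the harmless handling of $d=1$ and of the event $\{B_n=0\}$.

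The gap is in the remainder, and your proposed repair does not close it. As you note, the blunt bound gives $\sum_{d\le n}d^{-1}\cdot O(\log^{5/2}n/n^{3/2})=O(\log^{7/2}n/n^{3/2})$. Your split at $\sqrt n$ correctly disposes of $d\le\sqrt n/\log n$ (exponentially small by Proposition \ref{Lemma44} and \eqref{Theta.dmu}), but you have mislocated the obstruction: the band $\sqrt n/\log n<d\le\sqrt n$ carries harmonic mass only $O(\log\log n)$, so it is harmless even with the uniform bound. What ruins the exponent is the entire range $\sqrt n<d\le n$, whose harmonic mass is $\tfrac12\log n+O(1)$; since the only estimate available there in this paper is the uniform $O(\log^{5/2}n/n^{3/2})$, that range contributes $\asymp\log^{7/2}n/n^{3/2}$ again, and no dyadic bookkeeping can recover the missing factor $\log^{1/2}n$. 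Your appeal to an unspecified ``$d$-dependent form of the remainder'' from the proof in \cite{W1} is a hope, not an argument; no such refinement is stated or usable here. In fairness, the paper is in the same position: the identical bookkeeping applied to the neighboring corollary for $\E\Psi(B_n)$ yields $\log^{9/2}n\,(\log\log n)/n^{3/2}$ rather than the stated $\log^{7/2}n\,(\log\log n)/n^{3/2}$, so the exponent $3$ in Theorem \ref{sigma-1.Bn} appears to be a bookkeeping slip of the same kind rather than something the quoted tools can deliver. What your argument actually establishes --- and what ``the similar way'' actually establishes --- is the stated formula with remainder $O(\log^{7/2}n/n^{3/2})$; reaching $\log^{3}n$ would require a genuinely finer, $d$-dependent error analysis of $\P\{d|B_n\}-\Theta(d,n)/d$ for $d>\sqrt n$ that neither you nor the paper provides.
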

%We don't know another way to prove this.
%Robin showed that the 
%RH  is true if and only if $$  \s_{-1}(n)  < e^\gamma\log\log n,$$
%for $n>5040$, where   $\gamma$ is Euler's constant. Here we have an estimate of $\E \s_{-1}( B_n)$. Can we find a path from it to $\s_{-1}(n)$? Yes, at least theoretically. The answer is given in the next sub-section.

 \vskip 4 pt \noi (2) {\it Estimates related to Erd\"os and  Zaremba function.}
Let $\Phi(n)=\sum_{d|n} \frac{\log d}{d}$.  This function appears in the study of good lattice points in numerical integration. Erd\"os and  Zaremba  \cite{EZ} showed that
$$ \limsup_{n\to \infty} \frac{\Phi(n)}{(\log\log n)^2}=e^\g,$$ $\g$ being Euler's constant. The proof
   is based on the identity
 \begin{eqnarray}\label{formule}\Phi(n)
=\sum_{i=1}^r \sum_{\nu_i=1}^{\a_i}\frac{\log p_i^{\nu_i}}{p_i^{\nu_i}}\sum_{\d|n p_i^{-\a_i}}\frac{1}{\d} , \qq\qq ( n= p_1^{\a_1}\ldots p_r^{\a_r}),
\end {eqnarray}

 Consider  the function 
 $$\Psi(n)= \sum_{d|n} \frac{(\log d )(\log\log d)}{d} .$$  In this case  a  formula similar to \eqref{formule}   no longer holds, the "log-linearity" being  lost due to the extra factor $\log\log d$. 
We showed in the recent work  \cite{W11}
that
  \begin{eqnarray*}  \limsup_{n\to \infty}\, \frac{\Psi(n)}{(\log \log n)^2(\log\log\log n)}\,=\, e^\g.\end{eqnarray*}
%The study of $\Psi(n)$ much involves Davenport's function  $w(n)=\sum_{p|n}\frac{\log p}{p}$.
  
%devising
This required  a new approach, and the proof is considerably more complicated. 
  \begin{corollary}
 $$  \E \Psi(B_n)= \sum_{d\le n} \frac{(\log d )(\log\log d)\Theta(d,n)}{d^2} + {\mathcal O}\Big(\frac{(\log n)^{7/2}(\log\log n)}{n^{ 3/2}}
   \Big)$$
  \end{corollary}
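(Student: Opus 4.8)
The plan is to turn the expectation into a weighted divisor sum and then feed in the sharp Theta-approximation of Theorem \ref{estPdlBn}. Since $0\le B_n\le n$ almost surely, every divisor $d$ of $B_n$ satisfies $d\le n$, so on the event $\{B_n\ge 1\}$ one has the identity $\Psi(B_n)=\sum_{d\le n}\frac{(\log d)(\log\log d)}{d}\,\chi\{d\,|\,B_n\}$. The single exceptional value $B_n=0$ carries probability $2^{-n}$ and contributes at most $2^{-n}\sum_{d\le n}\frac{(\log d)(\log\log d)}{d}=\mathcal O\big(2^{-n}(\log n)^2\log\log n\big)$ to the mean, far below the target remainder. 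Interchanging the finite sum with $\E$ and using $\E\,\chi\{d\,|\,B_n\}=\P\{d\,|\,B_n\}$, I would start from
\[
\E\,\Psi(B_n)=\sum_{d\le n}\frac{(\log d)(\log\log d)}{d}\,\P\{d\,|\,B_n\}+\mathcal O\big(2^{-n}(\log n)^2\log\log n\big).
\]

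Next I would insert the uniform estimate \eqref{unif.est.theta} of Theorem \ref{estPdlBn}, writing $\P\{d\,|\,B_n\}=\frac{\Theta(d,n)}{d}+\mathcal O\big((\log n)^{5/2}n^{-3/2}\big)$ uniformly for $2\le d\le n$. This produces exactly the announced principal term $\sum_{d\le n}\frac{(\log d)(\log\log d)\,\Theta(d,n)}{d^2}$ and reduces everything to estimating
\[
R_n=\sum_{d\le n}\frac{(\log d)(\log\log d)}{d}\Big(\P\{d\,|\,B_n\}-\frac{\Theta(d,n)}{d}\Big).
\]
To control $R_n$ I would split the summation. For small divisors, say $d<n^{(1-\rho)/2}$ with $\rho\in(0,1)$ fixed and small, both $\P\{d\,|\,B_n\}$ and $\Theta(d,n)/d$ differ from $1/d$ by at most $e^{-c n^{\rho}}$ — the former by \eqref{unif.est.theta.rho}, the latter by \eqref{Theta.dmu} — so their difference is super-polynomially small and, weighted by $\sum_{d}\frac{(\log d)(\log\log d)}{d}=\mathcal O\big((\log n)^2\log\log n\big)$, this range contributes a negligible $\mathcal O\big(e^{-cn^{\rho}}(\log n)^2\log\log n\big)$. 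For the complementary range I would apply the uniform bound $\mathcal O\big((\log n)^{5/2}n^{-3/2}\big)$ and majorise the weight by $\log\log n\cdot\sum_{d\le n}\frac{\log d}{d}$.

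The main obstacle is precisely the bookkeeping of powers of $\log n$ in this last step. The weight is dominated by the large divisors $d\asymp n$, where only the uniform estimate of \eqref{unif.est.theta} is available, and there $\sum_{d\le n}\frac{\log d}{d}\sim\tfrac12(\log n)^2$, so the naive term-by-term majorisation yields only $R_n=\mathcal O\big((\log n)^{9/2}\log\log n\,n^{-3/2}\big)$, one logarithmic factor short of the stated remainder. Reaching the announced $\mathcal O\big((\log n)^{7/2}\log\log n\,n^{-3/2}\big)$ therefore requires squeezing out an extra factor $\log n$ from the large-divisor regime — either through a divisor-dependent sharpening of the Theta remainder for $d\asymp n$, or by summing by parts against the slowly varying weight rather than bounding termwise — and this quantitative refinement, inherited from the treatment of the Erd\"os--Zaremba function in \cite{W11}, is where the real work lies. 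Once that saving is in hand the identification of the main term is immediate and the corollary follows.
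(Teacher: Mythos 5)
Your reduction and identification of the main term coincide exactly with the paper's proof: the paper writes
$\E \Psi(B_n)=\sum_{d\le n}\frac{(\log d)(\log\log d)}{d}\big(\P\{d|B_n\}-\frac{\Theta(d,n)}{d}+\frac{\Theta(d,n)}{d}\big)$
and then simply invokes Theorem \ref{estPdlBn} termwise --- nothing more. There is no splitting into ranges of $d$, no summation by parts, and no divisor-dependent sharpening of the Theta remainder anywhere in the paper's argument; the exponent $7/2$ is just asserted. So the extra factor $\log n$ you were trying to squeeze out of the large-divisor regime is not obtained in the paper either, and you should not conclude that you are missing a device that the paper possesses.

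Your bookkeeping is in fact the correct one: since $\sum_{d\le n}\frac{(\log d)(\log\log d)}{d}\asymp (\log n)^2\log\log n$, with the mass concentrated at $d$ close to $n$ exactly as you observe, the termwise use of the uniform estimate \eqref{unif.est.theta} yields a remainder $\mathcal O\big((\log n)^{9/2}(\log\log n)\,n^{-3/2}\big)$, and your small/large-divisor splitting cannot improve this because in the dominant range only the uniform estimate is available. In other words, the gap you flag lies in the Corollary as stated rather than in your argument: either $7/2$ is a slip for $9/2$, or the claimed remainder needs an input absent from the paper. The neighbouring Theorem \ref{sigma-1.Bn} exhibits the same phenomenon: there the identical termwise argument gives $(\log n)^{7/2}n^{-3/2}$, while $(\log n)^{3}n^{-3/2}$ is claimed. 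Your proposal establishes the Corollary with $9/2$ in place of $7/2$, which is everything the paper's own two-line proof actually delivers; beyond that, your handling of the event $B_n=0$ and of the small divisors is more careful than the paper's and is harmless.
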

  \begin{proof}Writing $\sum_{d| B_n} =\sum_{d\le n\,,\,d|B_n}$ we get,
$$\E \Psi(B_n)=\sum_{d\le n}  \frac{(\log d )(\log\log d)}{d}\big(\P\big\{d|B_n\big\}-{\Theta(d,n)\over d} +{\Theta(d,n)\over d}\big).$$
 Whence by Theorem \ref{estPdlBn},$$  \E \Psi(B_n)= \sum_{d\le n} \frac{(\log d )(\log\log d)\Theta(d,n)}{d^2} + {\mathcal O}\Big(\frac{(\log n)^{7/2}(\log\log n)}{n^{ 3/2}}
 %\sum_{d\le n} \frac{(\log d )(\log\log d)}{  d  } 
 \Big)
  . $$
 
\end{proof}

%Recall a well-known device.
%\begin{lemma}[Kronecker's lemma]\label{kl}
%Let $a_n\uparrow \infty$ and the series $\sum_{n=1}^\infty x_n/a_n$ converges. Then $(1/a_n)\sum_{k=1}^n  x_k\to 0$.
%\end{lemma}
%Apply it with the choice  $x_n=\chi_{\{P^-(B_n)>\zeta_n\}}-\P\{P^-(B_n)>\zeta_n\}$, $a_n=\sum_{k\le n}\P\{P^-(B_k)>\zeta_k\}$, $n$ running in a partial index $\mathcal N$. If $\sum_{k\ge 1}\P\{P^-(B_k)>\zeta_k\}=\infty$, then
 %$$\sum_{n=1}^\infty \frac{\chi_{\{P^-(B_n)>\zeta_n\}}-\P\{P^-(B_n)>\zeta_n\}}{\sum_{k\le n}\P\{P^-(B_k)>\zeta_k\}}  $$
 
%$$\P\{P^-(B_k )> \zeta_k\} =\sum_{  P^+(d )\le \zeta_k} \m(d)\P\{ d|B_k \}
%$$
\subsection{A  way back along Pascal's matrices.}\label{sub.bin.way.back}
 Let $f(j)$ be an arithmetical function. Consider the column vector ${\boldsymbol \nu}=\big(f(1),f(2),\ldots, f(n)\big)$. Let $P_n$ be the $n$-size   lower triangular Pascal matrix:
\begin{eqnarray*}P_n={ \left[\begin{matrix}
1        \cr
1   &1       \cr
1        &2            &1    \cr
1       &3       &3 &1       \cr
 \vdots         &   \vdots                  &\vdots   &\vdots   &\ddots    
 \cr {{n}\choose{1}}& {{n}\choose{2}}&{{n}\choose{3}}&{{n}\choose{4}}  &\cdots  &{{n}\choose{n}}\end{matrix} \right]}
 %\qq j=1,\ldots, n.
\end{eqnarray*}
Then $P_n{\boldsymbol \nu}=  \,{\boldsymbol w}$ where ${\boldsymbol w}=\big(\E f(B_1),2\E f(B_2),\ldots, 2^n\E f(B_n)\big)$. Call and Velleman (\cite{CV},\,Th.\,1)  
 identified $P^{-1}_n$, which is $DP_nD^{-1}$ with $D$ as indicated:
\begin{eqnarray*}P^{-1}_n={ \left[\begin{matrix}
1        \cr
-1  & 1       \cr
1        &-2            &1    \cr
-1       &3       &-3 &1       \cr
 \vdots         &   \vdots                  &\vdots   &\vdots   &\ddots    
 \cr {{n}\choose{1}}& {{n}\choose{2}}&{{n}\choose{3}}&{{n}\choose{4}}  &\cdots  &{{n}\choose{n}}\end{matrix} \right]}
 %\qq j=1,\ldots, n.
\end{eqnarray*}
\begin{eqnarray*}D={ \left[\begin{matrix}
1        \cr
  &-1       \cr
       &{}      &1    \cr
  &{}          &{}         &-1       \cr
 &{}          &{}   &{}          &{}  &\ddots    
 \cr &{}          &{}   &{}          &{}    &   &(-1)^{n+1}\end{matrix} \right]}
 %\qq j=1,\ldots, n.
\end{eqnarray*}
 So that knowing ${\boldsymbol w}$  we have -at least theoretically- a way back to know ${\boldsymbol \nu}$. In  condensed form     
$P_n^{-1}=Q_n$,  where  
$Q_n$ is the $n\times n$ matrix defined by   \begin{equation}Q_n(i,j)=\begin{cases} (-1)^{i-j}  {{i-1} \choose{j-1}} \quad&\hbox{if $i\ge j$,}\cr
 0 \quad&\hbox{otherwise.}
 \end{cases}
 \end{equation}  
   \begin{proposition}\label{way.back}Let $f(j)$ be an arithmetical function. Then
\beq\label{} f(i)=\sum_{j=1}^i  (-1)^{i-j}2^j{{i-1}\choose{j-1}}\E f(B_j),\qq\quad i\ge 2.
\eeq \end{proposition}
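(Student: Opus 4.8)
The plan is to obtain the formula by inverting the linear system already set up in this subsection, so that the only substantive input is the Pascal-matrix inversion of Call and Velleman. Write $\boldsymbol\nu=(f(1),\dots,f(n))^{\top}$ and let $w_j$ denote the $j$-th coordinate of $\boldsymbol w$, namely $w_j=2^{j}\,\E f(B_j)$; the relation $P_n\boldsymbol\nu=\boldsymbol w$ recorded above is exactly the assertion that the partial binomial averages defining $\E f(B_j)$ are produced by the lower-triangular Pascal matrix. First I would invoke $P_n^{-1}=Q_n$ from \cite{CV}, with $Q_n(i,j)=(-1)^{i-j}\binom{i-1}{j-1}$ for $i\ge j$ and $Q_n(i,j)=0$ otherwise; this is the one non-elementary ingredient and it is cited, so no work is needed there.

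Applying $P_n^{-1}$ to both sides of $P_n\boldsymbol\nu=\boldsymbol w$ gives $\boldsymbol\nu=Q_n\boldsymbol w$. Extracting the $i$-th coordinate and inserting $w_j=2^{j}\,\E f(B_j)$ then yields
\[
f(i)=\sum_{j=1}^{i}Q_n(i,j)\,w_j=\sum_{j=1}^{i}(-1)^{i-j}\binom{i-1}{j-1}\,2^{j}\,\E f(B_j),
\]
which is the claim. Since $Q_n$ is lower triangular the sum automatically truncates at $j=i$ and the right-hand side does not involve any coordinate of index $>i$; this is what lets one state the identity uniformly for each fixed $i\ge 2$ rather than for a fixed ambient size $n$.

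The step I would scrutinise most carefully is the forward relation itself, which is the only place where the probabilistic content enters: one must check that the $j$-th entry of $\boldsymbol w$ is precisely $2^{j}\,\E f(B_j)$, i.e. that the powers of two and the binomial indices match. Concretely this is the bookkeeping that reconciles the Pascal entries $\binom{i-1}{j-1}$ appearing in $P_n$ with the law of $B_j$, whose atoms carry weights $\binom{j}{m}2^{-j}$; a shift in either index or an off-by-one in the exponent of $2$ would propagate through the inversion. To guard against this I would re-derive $P_n\boldsymbol\nu=\boldsymbol w$ from the definition of $\E f(B_j)$ and verify the resulting identity directly for $i=2,3$ by hand before trusting the matrix form. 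Everything beyond this normalization check is formal.
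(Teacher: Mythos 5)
Your route is the same one the paper takes (the proposition is stated as an immediate consequence of the Pascal-matrix discussion, with no further argument), but there is a genuine gap, and it sits exactly at the step you singled out for scrutiny and then postponed: the forward relation $P_n\boldsymbol{\nu}=\boldsymbol{w}$ with $w_j=2^{j}\,\E f(B_j)$ is false. Row $j$ of $P_n$ has entries $\binom{j-1}{m-1}$, so
\begin{equation*}
(P_n\boldsymbol{\nu})_j=\sum_{m=1}^{j}\binom{j-1}{m-1}f(m)=2^{\,j-1}\,\E f(1+B_{j-1}),
\qquad\text{whereas}\qquad
2^{j}\,\E f(B_j)=\sum_{m=0}^{j}\binom{j}{m}f(m),
\end{equation*}
and by Pascal's rule these differ by $2^{\,j-1}\E f(B_{j-1})$: the law of $B_j$ carries the weights $\binom{j}{m}$, including an atom at $m=0$, and no choice of the power of $2$ reconciles them with the Pascal row $\binom{j-1}{m-1}$. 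Consequently applying $Q_n$ to $\boldsymbol{w}$ does not return $\boldsymbol{\nu}$; since $\boldsymbol{w}=P_n(\boldsymbol{\nu}+\boldsymbol{\nu}^{-})$ with $\boldsymbol{\nu}^{-}=(f(0),f(1),\dots,f(n-1))$, one gets $(Q_n\boldsymbol{w})_i=f(i)+f(i-1)$. So the displayed identity actually computes $f(i)+f(i-1)$, not $f(i)$, for every $i\ge 2$. The $i=2$ check you prescribed but did not carry out detects this at once:
\begin{equation*}
-2\,\E f(B_1)+4\,\E f(B_2)=-\bigl(f(0)+f(1)\bigr)+\bigl(f(0)+2f(1)+f(2)\bigr)=f(1)+f(2)\neq f(2).
\end{equation*}

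The repair is to invert a relation that is actually true. Either invert $(P_n\boldsymbol{\nu})_j=2^{\,j-1}\E f(1+B_{j-1})$, which gives
\begin{equation*}
f(i)=\sum_{j=1}^{i}(-1)^{i-j}\,2^{\,j-1}\binom{i-1}{j-1}\,\E f(1+B_{j-1})\qquad (B_0\equiv 0,\ i\ge 1),
\end{equation*}
a form that never evaluates $f$ at $0$; or enlarge the system to the $(n+1)$-dimensional vectors $\bigl(f(0),f(1),\dots,f(n)\bigr)$ and $\bigl(2^{j}\E f(B_j)\bigr)_{0\le j\le n}$ with the $(n+1)\times(n+1)$ Pascal matrix $\binom{i}{j}$ (indices from $0$), for which the Call--Velleman inversion legitimately yields
\begin{equation*}
f(i)=\sum_{j=0}^{i}(-1)^{i-j}\,2^{j}\binom{i}{j}\,\E f(B_j)\qquad(i\ge 1),
\end{equation*}
where the total coefficient of $f(0)$ is $\sum_{j}(-1)^{i-j}\binom{i}{j}=0$, so any convention for $f(0)$ works. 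Neither corrected identity coincides with the stated one: the statement mixes the binomials $\binom{i-1}{j-1}$ of the first with the terms $2^{j}\E f(B_j)$ of the second, and the restriction $i\ge2$ (which only kills the $f(0)$-coefficient) does not cure the mismatch. In short, your proposal faithfully reproduces the paper's reasoning and therefore inherits its defect; the normalization check you deferred as a formality is precisely where the proof, and the identity in this form, breaks down.
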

  
%\beq\label{} f(i)=\sum_{j=1}^i  (-1)^{i-j}2^j{{i-1}\choose{j-1}}\E f(B_j).
%\eeq
%\beq\label{} \s_{-1}(i)=\sum_{j=1}^i  (-1)^{i-j}2^j{{i-1}\choose{j-1}}\E \s_{-1}(B_j).
%\eeq

   %%%%%%%%%%%%%%%%%%%%%%%%%%%%%%%%%%%%%%%%%%%%%%%%%%%%%%%%%%%%%%%%%%%%%%%%%%%%%%%%%%%%%%%%%%%%%%%%%%%%%%%%%%%%%%%%

 %%%%%%%%%%%%%%%%%%%%%%%
%%%%%%%%%%%%%%%%%%%%%%%
 \subsection{Extension to all residue classes mod\,($d$).}  
     Theorem \ref{estPdlBn} was recently  extended to all residue classes mod\,($d$) in   Weber \cite{W9} with application to divisors in iid square integrable random walks.
    % This amounts  to study the  value distribution of divisors of $B_{ n}+u$,   $u $ being any non-negative integer, is necessary. We recently established  the validity of such extension.
   The  estimate   obtained  is   uniform over  all residue classes. 
     %We begin with a preliminary  observation. The restriction $d\le n$ in the estimate of Theorem \ref{estPdlBn} is superfluous. That  estimate is in turn also uniform over all $d\ge 2$.  
% If $d>n$, as the first term is 0,   \eqref{unif.est.theta} provides an   estimate  of   the central term ${\Theta(d,n)/ d} $, for $d>n$. 
 
%This point   has some  degree of importance.   The proof of the Theorem below  is   similar to the one of Theorem \ref{estPdlBn}, which corresponds to the case $u=0$. We refer to \cite{W9}, for the proof. %This term  in fact  equals (see proof of  Theorem II in \cite{W1}) to   $\sum_{ 1\le j<d/2}(\cos{ \pi n{j\over d}})e^{ -n   {\pi^2j^2/2d^2}} $, up to the error term   $Ce^{- {\pi^2n / 72}}$,  $C$ being absolute, uniformly for all $d\ge 2$.   
 \begin{theorem}[\cite{W9}, Th.\,3.1]\label{estPdlBn.u} 
%For any integer $u$, 
There exist  two absolute constants $C$ and $n_0$ such that for all $n\ge n_0$,
\begin{equation*}
 \sup_{u\ge 0}\,\sup_{d\ge 2}\Big|\P\big\{  d|   B_{ n}+u \big\}-  {1\over d}\sum_{ 0\le |j|< d }
e^{i\pi (2un){j\over d}}\  e^{  -n  
{\pi^2j^2\over 2d^2}}\Big|\le C\, (\log n)^{5/2}n^{-3/2}.
\end{equation*}

\end{theorem}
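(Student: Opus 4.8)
The plan is to mirror the proof of Theorem~\ref{estPdlBn}, the only genuinely new ingredients being the carrying of an extra unimodular phase and the removal of the constraint $d\le n$. First I would record the exact character-sum formula. Starting from the indicator identity \eqref{basic} applied to $B_n+u$, namely $d\,\chi\{d\mid B_n+u\}=\sum_{j=0}^{d-1}e^{2i\pi\frac{j}{d}(B_n+u)}$, integrating and using $\E\,e^{2i\pi\frac{j}{d}B_n}=\big(\tfrac{1+e^{2i\pi j/d}}{2}\big)^n=e^{i\pi nj/d}\cos^n\frac{\pi j}{d}$, I obtain
\[
\P\{d\mid B_n+u\}=\frac1d\sum_{j=0}^{d-1}e^{i\pi(2u+n)\frac{j}{d}}\cos^n\frac{\pi j}{d}.
\]
Since $u$ is an integer, $2u$ is even, so under the reflection $j\mapsto j-d$ the two factors $(-1)^n$ coming from $e^{i\pi(2u+n)}$ and from $\cos(\pi-\cdot)=-\cos(\cdot)$ cancel, and each upper-half summand is carried unchanged to a negative index. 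Folding the residue sum onto the symmetric range (the extreme term vanishes because $\cos\frac{\pi}{2}=0$ when $d$ is even) gives the clean form
\[
\P\{d\mid B_n+u\}=\frac1d\sum_{|j|<d/2}e^{i\pi(2u+n)\frac{j}{d}}\cos^n\frac{\pi j}{d}.
\]

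Next I would subtract the comparison term and isolate a $u$-free kernel. Writing $R_j=\cos^n\frac{\pi j}{d}-e^{-n\pi^2 j^2/(2d^2)}$, which does not involve $u$, the difference between $\P\{d\mid B_n+u\}$ and $\frac1d\sum_{0\le|j|<d}e^{i\pi(2u+n)\frac{j}{d}}e^{-n\pi^2 j^2/(2d^2)}$ splits into $\frac1d\sum_{|j|<d/2}e^{i\pi(2u+n)\frac{j}{d}}R_j$ plus the Gaussian tail over $d/2\le|j|<d$. Because every phase has modulus $1$, the triangle inequality removes all dependence on $u$: the first piece is bounded by $\frac1d\sum_{|j|<d/2}|R_j|$, and the tail by $\frac1d\cdot d\cdot e^{-n\pi^2/8}=e^{-n\pi^2/8}$, which is negligible for $n\ge n_0$. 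This is the crucial point making the estimate uniform in $u\ge0$: the phases never enter any magnitude bound.

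It remains to bound $\frac1d\sum_{|j|<d/2}|R_j|$ by $C(\log n)^{5/2}n^{-3/2}$ uniformly in $d\ge2$. For $2\le d\le n$ this is exactly the content of \eqref{unif.est.theta}: the proof of Theorem~\ref{estPdlBn} establishes that estimate by a termwise comparison of $\cos^n\frac{\pi\ell}{d}$ with its Gaussian surrogate $e^{-n\pi^2\ell^2/(2d^2)}$, a comparison insensitive to the unimodular weights $e^{i\pi n\ell/d}$ occurring in $\Theta(d,n)$, so the same bound applies to $\sum|R_j|$ verbatim. For the remaining range $d>n$ I would rerun this comparison directly, writing $\cos^n\frac{\pi j}{d}=\exp\big(-\tfrac{n}{2}(\tfrac{\pi j}{d})^2-n\sum_{k\ge2}c_k(\tfrac{\pi j}{d})^{2k}\big)$ and substituting $s=j\sqrt n/d$; the leading discrepancy behaves like $e^{-\pi^2 s^2/2}\,\pi^4 s^4/(12n)$, whose weighted sum integrates to $\mathcal O(n^{-3/2})$, while the $(\log n)^{5/2}$ factor is produced in the transition regime $|j|\asymp d(\log n/n)^{1/4}$ where $n(\pi j/d)^4$ ceases to be small. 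The hard part is precisely this last step: obtaining the sharp $n^{-3/2}$ rate together with the correct $(\log n)^{5/2}$ power for the Gaussian approximation of $\cos^n$ \emph{uniformly in the now-unbounded} $d$, which is the analytic heart of \cite{W1} and carries over here unchanged once the phase has been shown to be harmless.
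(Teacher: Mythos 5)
Your proposal is correct and takes essentially the same route as the paper: your exact folded character-sum identity $\P\{d\mid B_n+u\}=\frac1d\sum_{|j|<d/2}e^{i\pi(2u+n)j/d}\cos^n\frac{\pi j}{d}$ is precisely the paper's Lemma~\ref{reduction.sym}, which is exactly what the paper states as the basis of the proof of Theorem~\ref{estPdlBn.u} (deferring the analytic core to \cite{W9}). The remaining steps you give---bounding the unimodular phases by $1$, discarding the Gaussian terms with $d/2\le|j|<d$ at cost $e^{-n\pi^2/8}$, and reusing the termwise $\cos^n$-versus-Gaussian comparison underlying Theorem~\ref{estPdlBn}---mirror how the $u$-dependence is eliminated there, so the two arguments coincide in substance.
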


  Put 
 \begin{equation}\label{theta.u.}
\Theta_u(d,n)  =  \sum_{\ell\in \Z}  e^{i\pi (2u+n){j\over d}}\  e^{  -n  
{\pi^2j^2\over 2d^2}}.
 \end{equation}
 Note that $\Theta_0(d,n)=\Theta(d,n)$.
 %Noticing that $$   \sum_{ \ell\in\Z}
%e^{i\pi (2u+n){\ell\over d}}\  e^{  -n  
%{\pi^2\ell^2\over 2d^2}} =e^{   u  
%{\pi^2\ell^2\over  d^2}}\Theta (d,2u+n),    $$
As a corollary we get,
\begin{corollary}\label{cor.estPdlBn.u} 
For some absolute constant $C$, we have  
\begin{equation*} \sup_{u\ge 0}\, \sup_{d\ge 2}\Big|\P\big\{  d|   B_{ n}+u \big\}-{\Theta_u(d,n)\over d} \Big| \le C \,(\log
n)^{5/2}n^{-3/2} ,
\end{equation*}
for all $n\ge n_0$.
%\begin{equation*}
%\sup_{u\ge 0} \sup_{2\le d\le n}\Big|\P\big\{  d|   B_{ n}+u \big\}- {1\over d}-{2 \over d}\,\Theta_+ (d,(2u+n))\Big|= {\mathcal O}\big((\log n)^{5/2}n^{-3/2}\big).
%\end{equation*}
\end{corollary}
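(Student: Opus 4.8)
The plan is to deduce the corollary from Theorem~\ref{estPdlBn.u} by observing that the only difference between the two statements lies in the range of summation: Theorem~\ref{estPdlBn.u} compares $\P\{d|B_n+u\}$ with the \emph{finite} trigonometric sum $\frac1d\sum_{0\le|j|<d}e^{i\pi(2u+n)\frac{j}{d}}e^{-n\pi^2j^2/(2d^2)}$, whereas the corollary compares it with $\frac{\Theta_u(d,n)}{d}$, the full bilateral theta series \eqref{theta.u.} obtained by letting the index run over all of $\Z$. By the triangle inequality it therefore suffices to bound, uniformly in $u\ge0$ and $d\ge2$, the discarded tail
\begin{equation*}
R_u(d,n)=\frac1d\sum_{|j|\ge d}e^{i\pi(2u+n)\frac{j}{d}}\,e^{-n\pi^2j^2/(2d^2)}
\end{equation*}
by a quantity of order $(\log n)^{5/2}n^{-3/2}$; indeed I expect this tail to be far smaller, namely super-exponentially small in $n$.

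First I would discard the phases: since $|e^{i\pi(2u+n)j/d}|=1$ and $j^2$ is even in $j$, we have $|R_u(d,n)|\le\frac2d\sum_{j\ge d}e^{-aj^2}$ with $a=n\pi^2/(2d^2)$. For $j\ge d$ the elementary inequality $j^2\ge d^2+2d(j-d)$ gives $e^{-aj^2}\le e^{-ad^2}e^{-2ad(j-d)}$, so summing the resulting geometric series yields
\begin{equation*}
\sum_{j\ge d}e^{-aj^2}\le\frac{e^{-ad^2}}{1-e^{-2ad}}=\frac{e^{-n\pi^2/2}}{1-e^{-n\pi^2/d}},
\end{equation*}
where I have used $ad^2=n\pi^2/2$ and $2ad=n\pi^2/d$. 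The one point meriting care is uniformity in $d$, because the factor $1/d$ in front must defeat the possibly large factor $(1-e^{-n\pi^2/d})^{-1}$ when $d$ is very large. This I would resolve by the bound $\frac{1-e^{-x}}{x}\ge\frac12$, valid for $0<x\le1$: writing $x=n\pi^2/d$ one gets $d(1-e^{-n\pi^2/d})=n\pi^2\frac{1-e^{-x}}{x}$, which is $\ge\frac12 n\pi^2$ when $d\ge n\pi^2$ and $\ge(1-e^{-1})d\ge 2(1-e^{-1})$ when $d<n\pi^2$; in all cases it is bounded below by an absolute constant.

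Combining these two observations gives $|R_u(d,n)|\le C\,e^{-n\pi^2/2}$ uniformly in $u\ge0$ and $d\ge2$. Since $e^{-n\pi^2/2}$ is super-exponentially smaller than $(\log n)^{5/2}n^{-3/2}$, it is absorbed into the latter as soon as $n\ge n_0$ (enlarging $n_0$ if necessary). A final application of the triangle inequality,
\begin{equation*}
\Big|\P\{d|B_n+u\}-\tfrac{\Theta_u(d,n)}{d}\Big|\le\Big|\P\{d|B_n+u\}-\tfrac1d\!\!\sum_{0\le|j|<d}\!\!e^{i\pi(2u+n)\frac{j}{d}}e^{-n\pi^2j^2/(2d^2)}\Big|+|R_u(d,n)|,
\end{equation*}
together with Theorem~\ref{estPdlBn.u}, then yields the claimed uniform bound $C(\log n)^{5/2}n^{-3/2}$. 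The main analytic difficulty—the genuinely delicate comparison of the divisibility probability with the trigonometric sum—has already been carried out in Theorem~\ref{estPdlBn.u}; the present step is elementary, the only mild subtlety being the uniformity in $d$ handled above. I note in passing that consistency of the phases between the statement of Theorem~\ref{estPdlBn.u} and the definition \eqref{theta.u.} of $\Theta_u$—both reading $2u+n$, so that $\Theta_0=\Theta$—is what makes the two main terms coincide exactly.
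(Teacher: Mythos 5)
Your proof is correct and takes the route the paper intends: the corollary is stated as an immediate consequence of Theorem \ref{estPdlBn.u}, the only missing ingredient being exactly the tail estimate $\frac1d\sum_{|j|\ge d}e^{-n\pi^2j^2/(2d^2)}=\mathcal O\big(e^{-n\pi^2/2}\big)$ uniformly in $u\ge 0$ and $d\ge 2$, which you supply correctly (including the needed care about uniformity in $d$ via the lower bound on $d(1-e^{-n\pi^2/d})$). Nothing further is required.
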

   % The role of these symmetry properties is important.
    The proof is based on the lemma below. 
    %the proof being elementary, however it is necessary to display calculations, and in order to spare it to the reader  we   included a proof. Otherwise the proof of Corollary \ref{cor.estPdlBn.u}  
% the proof   is similar to that of Theorem II in \cite{W1}.
    % but  We first operate a reduction due to symmetries.% in order to make it rigorous,  there is no other way  than detailing the calculations.

 \begin{lemma}\label{reduction.sym} For any integers $d\ge 2$, $n\ge 2$ and $u\ge 0$, 
 \begin{eqnarray*}
  \P\big\{ d| B_n+u\big\}&=& {1\over d} + {2\over d}\sum_{1\le j< d/2} \cos\big(\pi (2u+n){j\over d} \big)\big(\cos { \pi j\over d}\big)^{n}
.
\end{eqnarray*}
 \end{lemma}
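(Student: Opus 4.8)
The plan is to mimic the proof of formula \eqref{Lemma41} in Proposition \ref{Lemma44}, now carrying the shift $u$ through the computation. First I would start from the elementary detection identity, the shifted analogue of \eqref{basic},
\begin{equation*}
d\,\chi(d\,|\,B_n+u)=\sum_{j=0}^{d-1} e^{2i\pi\frac{j}{d}(B_n+u)},
\end{equation*}
valid for any integer $u\ge 0$. Taking expectations and using that $B_n=\sum_{k=1}^n\b_k$ is a sum of iid Bernoulli variables, so that $\E\,e^{2i\pi\frac{j}{d}\b_k}=\frac12(1+e^{2i\pi j/d})=e^{i\pi j/d}\cos\frac{\pi j}{d}$, I would obtain
\begin{equation*}
d\,\P\{d\,|\,B_n+u\}=\sum_{j=0}^{d-1} e^{i\pi(2u+n)\frac{j}{d}}\Big(\cos\frac{\pi j}{d}\Big)^{n}.
\end{equation*}
The index $j=0$ contributes exactly $1$, and it remains to reorganize the sum over $1\le j\le d-1$.

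Next I would exploit the involution $j\mapsto d-j$ on those indices. The two facts needed are $\cos\frac{\pi(d-j)}{d}=-\cos\frac{\pi j}{d}$, whence $\big(\cos\frac{\pi(d-j)}{d}\big)^n=(-1)^n\big(\cos\frac{\pi j}{d}\big)^n$, and the phase identity $e^{i\pi(2u+n)}=(-1)^{2u+n}=(-1)^n$, which holds precisely because $2u$ is even for integer $u$. Combining these shows that the term of index $d-j$ equals $e^{-i\pi(2u+n)\frac{j}{d}}\big(\cos\frac{\pi j}{d}\big)^n$, i.e. the conjugate phase of the term of index $j$. Pairing $j$ with $d-j$ for $1\le j<d/2$ therefore collapses each pair into $2\cos\big(\pi(2u+n)\frac{j}{d}\big)\big(\cos\frac{\pi j}{d}\big)^n$.

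The only point requiring a case distinction is the parity of $d$, which is exactly why the earlier argument split into $d$ even and $d$ odd. When $d$ is odd every index $1\le j\le d-1$ is matched by $d-j$ with no fixed point, so the pairing is complete. When $d$ is even the index $j=d/2$ is fixed by the involution, but there $\cos\frac{\pi j}{d}=\cos\frac{\pi}{2}=0$, so this middle term vanishes for $n\ge 1$ and contributes nothing. In both cases one arrives at
\begin{equation*}
d\,\P\{d\,|\,B_n+u\}=1+2\sum_{1\le j<d/2}\cos\Big(\pi(2u+n)\frac{j}{d}\Big)\Big(\cos\frac{\pi j}{d}\Big)^n,
\end{equation*}
and dividing by $d$ yields the claim. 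I do not anticipate any genuine obstacle: the computation is the $u$-shifted copy of the one behind \eqref{Lemma41}, and the single idea that keeps the shift harmless is the observation $e^{2i\pi u}=1$, so that the extra factor $e^{2i\pi\frac{j}{d}u}$ is compatible with the $j\mapsto d-j$ symmetry.
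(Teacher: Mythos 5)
Your proof is correct, and it follows essentially the same route as the paper: the paper states Lemma \ref{reduction.sym} without proof (citing \cite{W9}), but its derivation of the unshifted formula \eqref{Lemma41} in Proposition \ref{Lemma44} uses exactly your argument --- the detection identity \eqref{basic}, integration to get \eqref{basic.i}, and then the $j\mapsto d-j$ symmetry with the even/odd distinction on $d$ --- and your computation is precisely that argument with the harmless extra phase $e^{2i\pi ju/d}$ carried through. The key observation that $(-1)^{2u+n}=(-1)^n$ keeps the pairing intact, and your treatment of the fixed point $j=d/2$ when $d$ is even is exactly what the paper's parity distinction handles.
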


\section{Divisibility properties in  i.i.d.   random walks.} \label{s4}
    Theorem \ref{estPdlBn} extends to     
i.i.d.  square integrable random walks.  \begin{theorem}[Weber \cite{W9},\,Th.\,2.1]\label{uniform.semi.local.limit.theorem} Let $X  $  be  a square integrable  random variable
   %  with basic probability space $(\O, \A, \P)$,
      taking values in the  lattice $\mathcal L(v_0,1)=\big\{v_k=v_0+ k,k\in \Z\big\}$, and let $f(k)=\P\{X=v_k\}$, for each $k$.   
Assume that 
   \beq \label{bp}\t_X   :=\sum_{k\in \Z}\min(f(k), f(k+1))>0.
   \eeq  
   Let $  X_i$, $i\ge 1 $  be  independent, identically distributed random variables having same law than $X$, and let $S_n=\sum_{j=1}^nX_j$, for each $n$.  
\vskip 2 pt   Let $ \bar \m= \{ \m_k, k\in \Z\}$ be a   sequence of non-negative reals such that  $0<\m_k < f(k) $, if $f(k)>0$,   $\m_k=0$ if $f(k)=0$,    let  $ \m= \sum_{k\in \Z}\m_k$, and assume that $1- \m< \t_X$.   
   Let also non-negative reals $\bar \tau=  \{ \tau_k, k\in \Z\}$ be  solutions of the (solvable) equation:   
 $ {\tau_{k-1}+\tau_k\over 2} =f(k)  - \m_k,
 $ for all $k \in \Z$.  
Let   $\t = \sum_{k\in \Z}  \tau_k = 1- \m.$ Further let $s(t) =\sum_{k\in \Z} \m_k\, e^{ 2i \pi   v_kt}$ and    $\rho$ be such  that  $1-\t<\rho<1$.   
  \vskip 1 pt      Let $\widetilde X$ be the   random variable associated  to $X$  and $\bar \m$, and defined by the relation
 $ \P\{ \widetilde X =v_k\}= {\tau_k}/{\t}$, for all  $k\in \Z   $.
 %Let $\mathcal F=\{F_{d},   d\ge 2\}$ be  class where $F_{d}= d\N$.
 Then the following results hold:
\vskip 2 pt {\rm (i)}
There exists  
$\theta=\theta(\rho,\t)$ with  $ 0< \theta <\t$,  
$C$  and $N$ 
 such that for $  n \ge N$,
\begin{align*}   \sup_{u\ge 0}\,\sup_{d\ge 2} \Big| \P \{ d|S_n+u  \}   -  {1\over d}-  {1\over d}\sum_{ 0< |\ell|<d }& \Big( e^{ (i\pi {\ell\over   d }-{ \pi^2\ell^2\over 2 d^2}) } \t\,
 \E \,e^{2i \pi {\ell\over   d }\widetilde X  }   +s\big(  {\ell\over   d }\big)\Big)^n \Big|
  \cr  &\le    \frac{C }{ \theta ^{3/2}}\   \frac{(\log  n)^{5/2}}{ n^{3/2}}+2\rho^n.
\end{align*}
 \vskip 2 pt   {\rm (ii)} Let   $\mathcal D$ be a test set of divisors $\ge 2$,   $\mathcal D_\p$ be the section of $\mathcal D$ at height $\p$  and $|\mathcal D_\p|$ denote its   cardinality. Then,
    \begin{eqnarray*}
    %\label{est.1.}
      \sum_{n=N}^\infty \   \sup_{u\ge 0} \, \sup_{\p\ge 2}\, {1\over |\mathcal D_\p |} \sum_{d\in \mathcal D_\p } \,\Big| \P \{d|S_n+u  \}   - {1\over d}\Big|
    & \le  &    \frac{C_1}{\t}
 \,  
         +  \frac{C_2 }{ \theta ^{3/2}} +\frac{2\rho^2}{1-\rho},
\end{eqnarray*}
where $C_1=\frac{2e^{     { \pi^2/ 4 }  }}
     {(1-e^{   -    {  \pi^2/ 16} }   ) }$, $C_2= C\,  \sum_{n=N}^\infty  \frac{(\log {n})^{5/2} }{ ({n})^{3/2}}$\,.
     \end{theorem}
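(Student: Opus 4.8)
The plan is to run the Fourier/coupling machinery underlying Theorem \ref{estPdlBn} and Corollary \ref{cor.estPdlBn.u}, now at the level of the characteristic function of a general lattice variable. First I would record the exact orthogonality formula: from $\chi(d\mid m)=\frac1d\sum_{\ell=0}^{d-1}e^{2i\pi\ell m/d}$ applied to $m=S_n+u$ and the independence of the $X_i$,
\[
\P\{d\mid S_n+u\}=\frac1d\sum_{\ell=0}^{d-1}e^{2i\pi u\ell/d}\,\widehat f(\ell/d)^n,\qquad \widehat f(t)=\E\,e^{2i\pi tX},
\]
the term $\ell=0$ producing the leading $\frac1d$. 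The decisive step is the coupling identity for $\widehat f$. Splitting $f(k)=\m_k+\big(f(k)-\m_k\big)$ with $f(k)-\m_k=\tfrac{\tau_{k-1}+\tau_k}2$, resumming with $v_k=v_{k-1}+1$, and using $\tfrac{1+e^{2i\pi t}}2=e^{i\pi t}\cos(\pi t)$ together with $\P\{\widetilde X=v_k\}=\tau_k/\t$, one gets
\[
\widehat f(t)=s(t)+\t\,e^{i\pi t}\cos(\pi t)\,\E\,e^{2i\pi t\widetilde X}.
\]
The factor $\cos(\pi t)$ is exactly the characteristic function of the two–point uniform smoothing hidden in the average $\tfrac{\tau_{k-1}+\tau_k}2$; it is the arithmetic smoothing whose availability is guaranteed by the overlap hypothesis \eqref{bp}, and it is what manufactures the Gaussian (local–limit) behaviour.

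For part (i) I would substitute the coupling identity into the $\ell$–sum and compare it with the stated main term, whose only difference is that $\cos(\pi\ell/d)$ is replaced by its Gaussian surrogate $e^{-\pi^2\ell^2/2d^2}$ (the power $(e^{i\pi\ell/d})^n$ supplies the phase $e^{i\pi n\ell/d}$, matching $\Theta_u$ in \eqref{theta.u.}). Then I would split the frequencies at a threshold fixed by $\theta=\theta(\rho,\t)$. On low frequencies I would use $|a^n-b^n|\le n|a-b|\max(|a|,|b|)^{n-1}$ together with $|\cos(\pi\ell/d)-e^{-\pi^2\ell^2/2d^2}|=\mathcal O((\ell/d)^4)$; since $\max(|a|,|b|)^{n-1}$ is itself a Gaussian weight, the resulting $\ell$–series is summable and reproduces the $(\log n)^{5/2}n^{-3/2}$ rate of \eqref{unif.est.theta}, the non–degeneracy parameter $\theta$ entering through $\theta^{-3/2}$. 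On high frequencies, where $|\cos\pi\ell/d|$ is bounded away from $1$, the overlap bound $|\widehat f(t)|\le(1-\t)+\t|\cos\pi t|\le\rho$ (and the identical bound for the approximate base) forces both the true and the approximate $n$–th powers to be $\le\rho^n$, contributing the $2\rho^n$. Uniformity in $u$ costs nothing, since $u$ enters only through the outer phase $e^{2i\pi u\ell/d}$ of modulus one.

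For part (ii) I would integrate the pointwise estimate of (i) over $n\ge N$ and average over the section $\mathcal D_\p$. The exponential term sums geometrically, $\sum_{n\ge N}2\rho^n\le \frac{2\rho^2}{1-\rho}$; the polynomial error sums to $C_2/\theta^{3/2}$ with $C_2=C\sum_{n\ge N}(\log n)^{5/2}n^{-3/2}$. The remaining, and genuinely delicate, contribution is the main Gaussian term $\frac1d\sum_{0<|\ell|<d}|(\cdots)_\ell|^n$. Here I would bound $|(\cdots)_\ell|\le \t\,e^{-\pi^2\ell^2/2d^2}+(1-\t)$, split the range of $n$ according to whether $d\le\sqrt n$ or $d>\sqrt n$ as in \eqref{Theta.dmu}, and control the Gaussian sums by the elementary majorants \eqref{(2.17)} and Lemma \ref{exp.square}; summing the resulting geometric–type series in $\ell^2$, with ratio $e^{-\pi^2/16}$ and overall scale $e^{\pi^2/4}/\t$, produces the explicit constant $C_1/\t=\frac{2e^{\pi^2/4}}{\t(1-e^{-\pi^2/16})}$.

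I expect this last step to be the main obstacle. Taken crudely, $\sum_n|\P\{d\mid S_n+u\}-\frac1d|$ grows with $d$, because the strong cancellation present in $\sum_n(\P-\frac1d)$ — one checks directly in the Bernoulli case that $\sum_{n\ge1}\cos(\pi n\ell/d)\cos^n(\pi\ell/d)=\mathrm{Re}\,\big(i\cot(\pi\ell/d)\big)=0$ — is destroyed once the modulus is taken. A bound uniform over the test set therefore cannot rest on the trivial modulus estimate alone; it must exploit the averaging over $\mathcal D_\p$ together with the precise Gaussian profile of the main term, and pinning down the constants $e^{\pi^2/4}$, $e^{-\pi^2/16}$ forces one to follow the geometry of the transition $d\sim\sqrt n$ rather than discard it. The low–frequency analysis in (i) is comparatively routine, since the sharp $(\log n)^{5/2}n^{-3/2}$ rate is already furnished, in the Bernoulli case, by Theorem \ref{estPdlBn} and its uniform extension Corollary \ref{cor.estPdlBn.u}.
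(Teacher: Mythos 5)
Your part (i) is, in substance, the method the paper attributes to \cite{W9}: the identity $\widehat f(t)=s(t)+\t\,e^{i\pi t}\cos(\pi t)\,\E e^{2i\pi t\widetilde X}$ that you derive is exactly the Fourier transform of the Bernoulli-part decomposition $Z=V+\e L\buildrel{\mathcal D}\over{=}X$ of Lemma \ref{bpr}, and replacing $\cos(\pi\ell/d)$ by $e^{-\pi^2\ell^2/(2d^2)}$ inside the $n$-th power is the analytic counterpart of applying the uniform Bernoulli estimate (Theorem \ref{estPdlBn.u}) conditionally on the coupling variables $(V_j,\e_j)$ of \eqref{ve}; in the source paper this is done by conditioning, with the concentration inequality of Lemma \ref{di.1} controlling the random number of active Bernoulli summands, which is where $\theta<\t$ and the $2\rho^n$ term originate. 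One step of your sketch needs repair, however: your frequency dichotomy is not exhaustive. In the orthogonality sum the frequencies are $\ell/d$, $\ell=1,\dots,d-1$, and $|\cos(\pi\ell/d)|$ is close to $1$ not only for small $\ell$ but also for $\ell$ near $d$, where the surrogate $e^{-\pi^2\ell^2/(2d^2)}\approx e^{-\pi^2/2}$ is nowhere near $\cos(\pi\ell/d)$ and the desired estimate $(1-\t)+\t|\cos(\pi\ell/d)|\le\rho$ fails. These frequencies must be folded onto negative indices near $0$ using the periodicity $\widehat f(t+1)=e^{2i\pi v_0}\widehat f(t)$ (valid because the lattice has span $1$); this is precisely why the main term is a two-sided sum over $0<|\ell|<d$ while the orthogonality sum has only $d-1$ nonzero frequencies, it is the analogue of the symmetrization in Lemma \ref{reduction.sym}, and the leftover surrogate indices with $d/2<|\ell|<d$ must then be shown to contribute $O(\rho^n)$.

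The genuine gap is part (ii): it is not proved. You correctly diagnose that the crude route fails --- for fixed $d$, $\sum_n\big|\P\{d|S_n+u\}-\frac1d\big|$ grows linearly in $d$ (already for the Bernoulli walk: for $n\ll d^2$ and most such $n$ there is no multiple of $d$ within $O(\sqrt n)$ of $n/2$, so each term is of order $1/d$ and there are of order $d^2$ of them) --- so the stated bound, with constants $C_1/\t$, $C_2/\theta^{3/2}$, $2\rho^2/(1-\rho)$ independent of the test set, can only emerge from the interplay of the normalized average over $\mathcal D_\p$, the supremum in $\p$ and the summation in $n$. Your proposal stops exactly at the point where this interplay would have to be exploited; identifying the obstacle is not the same as overcoming it, and the derivation of $C_1=\frac{2e^{\pi^2/4}}{1-e^{-\pi^2/16}}$, i.e.\ the summation of the Gaussian main term across the transition $d\sim\sqrt n$ after averaging, is the entire content of (ii). Note also that the present paper cannot help you here: it states the result as \cite{W9}, Th.\,2.1 and supplies only the coupling construction \eqref{basber0}--\eqref{ve} and Lemma \ref{bpr}, so there is no argument in this text that your sketch could borrow to close the gap.
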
 
    Condition \eqref{bp}  defines an already very large class of random variables. A random variable $X$ is said to have a {\it Bernoulli part} if  condition \eqref{bp} is satisfied.  That notion is due to  McDonald \cite{M}, who introduced an ingenious coupling method  in the study of a famous limit theorem: the local limit theorem  which has  interface  with Number Theory.
    %the applicability of the local limit theorem  to this class.
    This method is particularly relevant in our proof. Concretely, let $  \{ \tau_k, k\in \Z\}$     of   non-negative reals such that
\begin{equation}\label{basber0}  \tau_{k-1}+\tau_k\le 2f(k), \qq  \qq\sum_{k\in \Z}  \tau_k =\t.
\end{equation}
For instance  
 $\tau_k=  \frac{\t}{\nu_X} \, \min(f(k), f(k+1))  $ is suitable.
   Define   a pair of random variables $(V,\e)$   as follows:
  \begin{eqnarray}\label{ve} \qq\qq\begin{cases} {\mathbb P}\{ (V,\e)=( v_k,1)\}=\tau_k,      \cr
 {\mathbb P}\{ (V,\e)=( v_k,0)\}=f(k) -{\tau_{k-1}+\tau_k\over
2}    .  \end{cases}\qq (\forall k\in \Z)
\end{eqnarray}
 Then we have the following decomposition which iterates well to iid sums $S_n$.    \begin{lemma} \label{bpr} Let $L$
be a Bernoulli random variable    which is independent of  $(V,\e)$, and put
 $Z= V+ \e  L$.
We have $Z\buildrel{\mathcal D}\over{ =}X$.
\end{lemma}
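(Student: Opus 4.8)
The plan is to verify the claimed distributional identity $Z \buildrel{\mathcal D}\over{=} X$ by directly computing the law of $Z = V + \e L$ at each lattice point $v_k$ and checking it equals $f(k)$. Since everything lives on the lattice $\mathcal{L}(v_0,1)$, it suffices to compute $\P\{Z = v_k\}$ for each $k \in \Z$ and match it against $f(k) = \P\{X = v_k\}$.

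First I would condition on the pair $(V,\e)$, using the independence of the Bernoulli random variable $L$ from $(V,\e)$. Recall $L$ takes values in $\{0,1\}$ with $\P\{L=0\}=\P\{L=1\}=1/2$. When $\e = 0$ we have $Z = V$, so $(V,\e)=(v_k,0)$ contributes $\P\{(V,\e)=(v_k,0)\} = f(k) - \tfrac{\tau_{k-1}+\tau_k}{2}$ to the event $\{Z=v_k\}$ regardless of $L$. When $\e = 1$ we have $Z = V + L$, so the event $\{Z = v_k\}$ is hit either from $V = v_k$ with $L=0$, or from $V = v_{k-1}$ with $L=1$ (since $v_{k-1}+1 = v_k$ on this lattice). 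Each of these occurs with probability $\tfrac12 \P\{(V,\e)=(\cdot,1)\} = \tfrac12 \tau_{\cdot}$ by independence of $L$.

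Collecting the three contributions gives
\begin{equation*}
\P\{Z = v_k\} = \Big(f(k) - \frac{\tau_{k-1}+\tau_k}{2}\Big) + \frac12 \tau_k + \frac12 \tau_{k-1} = f(k),
\end{equation*}
where the $\tau$-terms cancel exactly. This holds for every $k \in \Z$, and since both $Z$ and $X$ are supported on $\mathcal{L}(v_0,1)$, the two laws coincide, proving $Z \buildrel{\mathcal D}\over{=}X$.

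There is no real obstacle here; the statement is an algebraic bookkeeping identity engineered precisely so that the off-diagonal mass $\tfrac12(\tau_{k-1}+\tau_k)$ split by the coupling exactly replenishes the mass $\tfrac{\tau_{k-1}+\tau_k}{2}$ removed on the diagonal. The only point demanding care is the indexing under the shift by $L$: one must confirm that $V = v_{k-1}$ together with $\e = 1$ and $L = 1$ produces $v_{k-1} + 1 = v_k$, which uses the spacing-one structure of the lattice, and that the constraint $\tau_{k-1}+\tau_k \le 2 f(k)$ from \eqref{basber0} keeps all the probabilities in $[0,1]$ so that $(V,\e)$ is a bona fide random pair.
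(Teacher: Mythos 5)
Your proof is correct and follows essentially the same route as the paper's own argument: a direct computation of $\P\{Z=v_k\}$ by splitting on the value of $\e$, using the independence and the $\tfrac12$-$\tfrac12$ law of $L$ to write the $\e=1$ contribution as $\tfrac12(\tau_{k-1}+\tau_k)$, which exactly cancels the mass subtracted in the definition of $\P\{(V,\e)=(v_k,0)\}$. Nothing to add.
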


\begin{remark}[{\tf Reduction of binomial case   to  standard Bernoulli case.}] \rm Obviously binomial random variables have a Bernoulli part.  For  binomial random variables, the above decomposition is   direct. 
Let  $\b $  be a binomial random variable with  $\P\{\b  =1\}= \a = 1-\P\{\b  =0\}$. Let also $\varsigma$  be standard Bernoulli random variable  ($
\P\{\varsigma  =1\}= 1/2=\P\{\varsigma  =0\}$). 

Assume $0<\a<1/2$.  Let $\e $    be such that    $\P\{\e  =1\}= 2\a = 1-\P\{\e  =0\}$ and $\b, \e, \varsigma$ are independent. Trivially  $\e\varsigma \buildrel{\mathcal L}\over{=}\b$. 

 If now $1/2<a<1$, let $\tau_0 $
be  verifying
$0<  \tau_0< 2(1-\a)=2\min (\a,1-\a)$. 
Define a pair of random variables
$(V,\e)$   as follows.  
\begin{eqnarray*}   \begin{cases} \P\{ (V,\e)=(1,1)\}=0  
      \cr   \P\{ (V,\e)=(1,0)\}=\a -{\tau_0 \over
2}    . \end{cases}  \qq \begin{cases}   \P\{ (V,\e)=(0,1)\}=\tau_{0 } 
      \cr 
 \P\{ (V,\e)=( 0,0)\}=1-\a -{ \tau_0\over
2}    . \end{cases} 
\end{eqnarray*}
  Let $ \varsigma$ be  independent from $(V,\e)$.     Then 
$V+\e  \varsigma\buildrel{\mathcal L}\over{=}\b$.  

\end{remark}
  
   Consequently:     
    {\it      The study of the  limit properties (CLT,\, LLT,\,...) of the sequence
$S_n= k_1\b_1+
\ldots + k_n\b_n$, $n\ge 1$,  where 
$\b_j$ are independent binomial random variables, and   $\{k_j, j\ge 1\}$   a given increasing sequence of positive integers,   reduces to the case when these random variables are   i.i.d      standard Bernoulli.}

     \section{Divisibility properties of  non i.i.d. square integrable random walks.} \label{s5}
      Let $X=\{X_i , i\ge 1\}$ be  a sequence of  independent  variables taking values in $\Z$, 
  %a common lattice $\mathcal L(v_{0},D )$, namely defined by the
 %sequence $v_{ k}=v_{ 0}+D k$, $k\in \Z$, where $v_{0} $ and $D >0$ are   integers. 
and let $S_n=\sum_{k=1}^n X_k$, for each $n$. 
To our knowledge there is no study of divisors, in particular of their value distribution, in general independent   random walks. However divisibility is present in the study   the local limit theorem.  The sequence $X$ is  \emph{asymptotically uniformly distributed}, in short a.u.d., if   for any $d\ge 2$ and $m = 0,1,\ldots,d-1$, we
have
 \begin{equation} \label{aud1}\lim_{n\to \infty}\ \P\{S_n \equiv m \,{\rm (mod)}\,d\}=\frac1d.
  \end{equation}
 
  Let us assume   that the random variables $X_i$ take values in a common integer lattice $\mathcal L(v_{0},D )$, namely defined by the
 sequence $v_{ k}=v_{ 0}+D k$, $k\in \Z$,   $D >0$, and are  square integrable,  
and let     
\begin{equation} \label{MnBn}M_n= {\mathbb E\,} S_n , \qq B_n^2={\rm Var}(S_n)\to \infty.
\end{equation}
 
  We say  that the local limit theorem (in the usual form) is \emph{applicable} to   $X$    if
 \begin{equation}\label{def.llt.indep}    \sup_{N=v_0n+Dk }\Big|B_n\, {\mathbb P}\{S_n=N\}-{D\over  \sqrt{ 2\pi } }e^{-
{(N-M_n)^2\over  2 B_n^2} }\Big| = o(1), \qq \quad n\to\infty.
\end{equation} %We refer to our joint monograph   with    Szewczak.
%   \section{Divisibility and local limit theorem.} \label{s5}
%The following notion plays an important role in the study of the local limit theorem.   %an important notion in the  study  of the local limit theorem, in particular in the well-known Rozanov's necessary condition.    The
 % The sequence  $X$
%$\{S_n, n\ge 1\}$
 %is said to be {asymptotically uniformly distributed with respect
%to lattices of span $d$}, in short a.u.d.($d$), if for $m = 0,1,\ldots,d-1$, we
%have
 %\begin{equation} \label{aud1}\lim_{n\to \infty}\ \P\{S_n \equiv m \,{\rm (mod)}\,d\}=\frac1d.
  %\end{equation}
%Equivalenty  for $m = 0,1,\ldots,d-1$, we
%have 
% \begin{equation}\label{uad.lim1}\lim_{n\to \infty}\ \P\{d|S_n-m\}=\frac1d.
 %\end{equation} 

%As it will be clear along the next lines, this notion plays an important role in the theory of the local limit theorem. 
%Dvoretsky and Wolfowitz \cite{DW} proved the following characterization. Assume that $X$ is composed with independent random variables taking   only the values 
%$$ 0, 1,\ldots, h-1.$$
 % In order that the partial sums $\{S_n, n\ge 1\}$  be a.u.d.($h$), it is necessary and sufficient that
%\begin{equation} \label{aud.dw.ns} \prod_{k=1}^\infty\bigg( \sum_{m=0}^{h-1}\P\{X_k=m\}\,e^{\frac{2i\pi }{h}rm}\bigg) \,=\, 0, \qq \quad (r=1,\ldots, h-1).
% \end{equation}
%Equivalently,
%\begin{equation} \label{aud.dw.ns.} \prod_{k=1}^\infty\big(\E  e^{\frac{2i\pi }{h}rX_k}\big) \,=\, \lim_{N\to \infty}  \big(\E  e^{\frac{2i\pi }{h}rS_N}\big) \,=\, 0, \qq \quad (r=1,\ldots, h-1).
%\end{equation}

By   well-known Rozanov's result \cite{Ro}, 
%for a sequence $X=\{X_i , i\ge 1\}$ of  independent  variables taking values in $\Z$,  and    $S_n=\sum_{k=1}^n X_k$, for each $n$, 
the local limit theorem is applicable to   $X$  {\it only} if $X$ satisfies the a.u.d. property.

%By definition a local limit theorem is applicable to the sequence $X$, if
 %It is well-known that the local limit theorem is applicable to $X$   only if  
 
 %That well-known limit theorem hides a curious fact  and in the same time raises some question. 
%When the random variables $X_i$ are identically distributed,    \eqref{def.llt.indep} reduces to   \begin{equation}\label{llt.iid}    \sup_{N=v_0n+Dk }\Big|  \s \sqrt{n}\, {\mathbb P}\{S_n=N\}-{D\over  \sqrt{ 2\pi } }e^{-
%{(N-n\m)^2\over  2 n\s^2} }\Big| = o(1),
%\end{equation}
%where  $\m={\mathbb E\,} X_1$, $\s^2={\rm Var}(
%X_1)$. By Gnedenko's Theorem \cite{G}, see also \cite{P}, p.\,187,  \cite{SW}, Th.\,1.4, \eqref{llt.iid}   holds
%if and only if the span $D$ is maximal (there are no  other real numbers
%$v'_{0}
%$ and
%$D' >D$ for which
%${\mathbb P}\{X
%\in\mathcal L(v'_0,D')\}=1$).

%Note that  the transformation
%\begin{equation}\label{llt.transf.}
% X'_j= \frac{X_j-v_0}{D},
 % \end{equation}
%allows one to reduce  to the case  $v_0=0$, $D=1$.
   \vskip 10pt     In a recent work     based on   Poisson summation formula, 
the stronger result    is proved,  establishing an explicit link between the local limit theorem and  the a.u.d. property, through a     quantitative   estimate of  the difference ${\mathbb P} \{ S_n\equiv\,  m\! \hbox{\rm{ (mod $h$)}} \}- {1}/{h}$.

\begin{theorem}[Weber \cite{W10},\,Th.\,1.4] \label{l1a}    Let $X=\{X_i , i\ge 1\}$ be  a sequence of  independent  variables taking values in $\Z$,  
and let $S_n=\sum_{k=1}^n X_k$, for each $n$. Assume that, 
\vskip 3 pt {\rm ($\mathcal A$) \it For some function $1\le \phi(t)\uparrow \infty $ as $t\to \infty$, and some constant $C$, we have for all $n$} 
 \begin{equation}\label{phi.cond}
\sup_{m\in \Z}\Big|B_n\P\big\{ S_n=m\big\}- {1\over   \sqrt{ 2\pi } }\ e^{-
{(m-M_n)^2\over  2 B_n^2} }\Big|\,\le \,  {C\over \,\phi(B_n)}.
 \end{equation}
%  The local limit theorem is applicable to   $\{S_n, n\ge 1\}$  only if the a.u.d. property is satisfied. 
\vskip 3 pt   Then there exists a numerical constant $C_1$, such that for all $0<\e \le 1$, all  $n $ such that $B_n\ge 6$, and all $h\ge 2$, 
\begin{align*}
\sup_{\m=0,1,\ldots, h-1} \,&\Big|{\mathbb P}\big\{ S_n\equiv\, \m\ \hbox{\rm{ (mod $h$)}}\big\}- \frac{1}{h}\Big|
\cr   &\le   {1\over \sqrt{2\pi}\,  B_n   }+\frac{2C}{h\,\sqrt{\e}\,\phi(B_n)}
+  {\mathbb P}\Big\{   \frac{|S_n -M_n |}{B_n}> \frac{1}{\sqrt \e}\Big\}+C_1 \,e^{-1/(16\e)}.
\end{align*}
\end{theorem}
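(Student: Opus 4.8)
The plan is to estimate $\P\{S_n\equiv\mu\ (\mathrm{mod}\ h)\}=\sum_{m\equiv\mu\,(h)}\P\{S_n=m\}$ by splitting the arithmetic progression according to a window centred at the mean. Fix $\mu,h,\e,n$ and set the threshold $T=B_n/\sqrt\e$. I would write the sum as a near part, over $m\equiv\mu\,(h)$ with $|m-M_n|\le T$, plus a far part, over $|m-M_n|>T$. The far part is controlled at once by $\sum_{|m-M_n|>T}\P\{S_n=m\}=\P\{|S_n-M_n|/B_n>1/\sqrt\e\}$, which is exactly the third term on the right-hand side; since it is displayed verbatim, no further work is needed there.

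For the near part I would invoke hypothesis $(\mathcal A)$ in the form $\P\{S_n=m\}=\frac1{B_n}\big(\frac1{\sqrt{2\pi}}e^{-(m-M_n)^2/(2B_n^2)}+r_m\big)$ with $|r_m|\le C/\phi(B_n)$, treating the Gaussian principal part and the remainders separately. The remainders contribute $\sum_m|r_m|/B_n$, where the number of integers $m\equiv\mu\,(h)$ inside a window of width $2T$ is at most $2T/h+1=2B_n/(h\sqrt\e)+1$; multiplying by $C/(B_n\phi(B_n))$ yields $\frac{2C}{h\sqrt\e\,\phi(B_n)}+\frac{C}{B_n\phi(B_n)}$, the first summand being the second term of the statement and the second, since $\phi\ge1$, being absorbed into the $1/B_n$ term.

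It remains to compare the Gaussian principal sum $G=\frac1{B_n\sqrt{2\pi}}\sum_{m\equiv\mu\,(h),\ |m-M_n|\le T}e^{-(m-M_n)^2/(2B_n^2)}$ with $1/h$. I would read $hG$ as a mesh-$h$ rectangle-rule approximation of $\int_{|x-M_n|\le T}g\,\dd x$, where $g(x)=\frac1{B_n\sqrt{2\pi}}e^{-(x-M_n)^2/(2B_n^2)}$. Since $g$ is unimodal, its total variation equals $2\max g=\frac2{B_n\sqrt{2\pi}}$, so the rule's error obeys $|hG-\int_{|x-M_n|\le T}g\,\dd x|\le\frac{2h}{B_n\sqrt{2\pi}}$, while the substitution $u=(x-M_n)/B_n$ gives $\int_{|x-M_n|\le T}g\,\dd x=1-\P\{|Z|>1/\sqrt\e\}$ for a standard normal $Z$. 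Hence $|G-1/h|\le\frac1h\P\{|Z|>1/\sqrt\e\}+\frac2{B_n\sqrt{2\pi}}$; bounding the tail crudely by $C_1e^{-1/(16\e)}$ and using $1/h\le1$ furnishes the fourth term, while the $1/B_n$ contributions coalesce, after tightening constants, into the leading term $\frac1{\sqrt{2\pi}B_n}$. Equivalently, the $G$-to-$1/h$ comparison may be read off the Poisson summation formula, which expresses $\sum_{m\equiv\mu\,(h)}g(m)$ as $\frac1h$ plus the Fourier corrections $\frac1h\sum_{j\ne0}e^{-2\pi^2B_n^2j^2/h^2}e^{2\pi ij(\mu-M_n)/h}$; this is transparent when $h\lesssim B_n$, whereas the total-variation estimate has the merit of being uniform in $h$.

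The main obstacle is precisely this last comparison carried out uniformly in $h$: when $h$ is comparable to or larger than $B_n$ the progression $\{m\equiv\mu\}$ is too sparse for the naive Riemann heuristic (and for the Poisson tail), and the window $|m-M_n|\le T$ may contain at most one progression point, so one must verify that the total-variation bound, being proportional to the mesh, still degrades only like $1/B_n$ rather than blowing up. The delicate bookkeeping is in tracking the numerical constants so that the Gaussian tail lands at $e^{-1/(16\e)}$ and the leading coefficient is exactly $\frac1{\sqrt{2\pi}}$. All the remaining steps — the tail truncation and the accumulation of the local-limit remainders — are routine once hypothesis $(\mathcal A)$ is in hand.
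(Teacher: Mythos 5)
Your decomposition (truncation at $T=B_n/\sqrt{\e}$, hypothesis $(\mathcal A)$ inside the window, the tail probability outside, and a lattice-sum-versus-$1/h$ comparison for the Gaussian part) is the natural skeleton, and the far part and the main window term $\frac{2C}{h\sqrt{\e}\,\phi(B_n)}$ are obtained correctly. It is, however, not the paper's route: the paper attributes the proof of Theorem \ref{l1a} to \cite{W10} and states explicitly that it rests on the Poisson summation formula, i.e.\ the comparison of $\sum_{m\equiv\mu\,(h)}g(m)$ with $1/h$ is carried out through the dual theta series $\frac1h\sum_{j\neq 0}e^{2\pi i j(\mu-M_n)/h}e^{-2\pi^2 j^2B_n^2/h^2}$, and the paper notes that this proof yields the numerical value $C_1=2e\sqrt{\pi}$.

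The two points you dismiss as bookkeeping are genuine gaps, and both fail precisely in the sparse regime you yourself single out ($h$ comparable to or larger than $B_n$). First, the leftover $C/(B_n\phi(B_n))$ produced by the ``$+1$'' in the count of progression points cannot be ``absorbed into the $1/B_n$ term'': the first term of the statement carries the universal constant $1/\sqrt{2\pi}$, whereas $C$ is the arbitrary constant of hypothesis $(\mathcal A)$ and $\phi$ is only assumed $\ge 1$, so $C/(B_n\phi(B_n))$ may exceed $1/(\sqrt{2\pi}B_n)$ by any prescribed factor; it is not dominated by $2C/(h\sqrt{\e}\,\phi(B_n))$ once $h\sqrt{\e}>2B_n$, nor by $C_1e^{-1/(16\e)}$, which is independent of $C$ and of $n$ and tends to $0$ as $\e\to 0$. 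As written, your argument therefore proves the inequality with an extra term $C/(B_n\phi(B_n))$ on the right, a strictly weaker statement than the one claimed. Second, your total-variation/rectangle-rule estimate gives $|hG-\int g|\le h\cdot 2\max g$, i.e.\ a contribution $2/(\sqrt{2\pi}B_n)$: the factor $2$ (one $\max g$ per monotone branch) is intrinsic to that argument, and ``tightening constants'' down to $1/\sqrt{2\pi}$ is exactly the step that is missing, not a routine one. To reach the stated constant one needs either the sharper sum-versus-integral inequality for a unimodal density applied to the \emph{untruncated} progression sum, $\big|\sum_{m\equiv\mu\,(h)}g(m)-\frac1h\big|\le\max g$ (in the cell-matching argument only the single cell containing the mode is counted twice), or the Poisson-summation computation of \cite{W10}; in either case the Gaussian tail must then be split off and estimated separately, which is where the $e^{-1/(16\e)}$ term belongs.
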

    It follows from the proof that $C_1=2e\sqrt{\pi}$ is suitable.
    Choosing $\e= \phi(B_n)^{-2/3}$ and using Tchebycheff's inequality, we get the following
 \begin{corollary}\label{}For all   $n $ such that $B_n\ge 6$, and all $h\ge 2$, 
we have 
\begin{align}\label{eps.phi}
\sup_{\m=0,1,\ldots, h-1} \,  \Big|{\mathbb P}\big\{ S_n\equiv\,  \m\ \hbox{\rm{ (mod $h$)}}\big\}- \frac{1}{h}\Big|
 \le H_n ,
\end{align}
with
\begin{align}\label{eps.phi.Hn}
   H_n=  {1\over \sqrt{2\pi}\,  B_n   }+\frac{1+ 2 {C}/{h}
}{ \phi(B_n)^{2/3} } 
+  C_1 \,e^{-(1/ 16 )\phi(B_n)^{2/3}}.
\end{align}
\end{corollary}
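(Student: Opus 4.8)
The plan is to specialize the four-term bound of Theorem~\ref{l1a} at the announced value $\e=\phi(B_n)^{-2/3}$ and then to control the surviving tail probability by Tchebycheff's inequality. First I would verify admissibility of this choice: assumption $(\mathcal A)$ postulates $1\le\phi(t)\uparrow\infty$, so $\phi(B_n)\ge 1$ and hence $0<\phi(B_n)^{-2/3}\le 1$, which means the constraint $0<\e\le 1$ of Theorem~\ref{l1a} is met for every $n$ with $B_n\ge 6$.

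Next I would carry out the substitution term by term. The first summand $\frac{1}{\sqrt{2\pi}\,B_n}$ is left untouched. In the second, $\sqrt{\e}=\phi(B_n)^{-1/3}$ turns $\frac{2C}{h\sqrt{\e}\,\phi(B_n)}$ into $\frac{2C}{h\,\phi(B_n)^{2/3}}$; in the exponential remainder $1/\e=\phi(B_n)^{2/3}$ gives at once $C_1e^{-1/(16\e)}=C_1e^{-(1/16)\phi(B_n)^{2/3}}$.

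The only genuine estimate concerns the probabilistic term $\P\{|S_n-M_n|/B_n>\e^{-1/2}\}$. Since $\E S_n=M_n$ and $\mathrm{Var}(S_n)=B_n^2$ by \eqref{MnBn}, Tchebycheff's inequality yields $\P\{|S_n-M_n|>tB_n\}\le t^{-2}$ for every $t>0$; applied with $t=\e^{-1/2}=\phi(B_n)^{1/3}$ it bounds this term by $\e=\phi(B_n)^{-2/3}$. Merging the two $\phi(B_n)^{-2/3}$ contributions—one from the second summand, one from Tchebycheff—into the single coefficient $1+2C/h$ then reproduces exactly the displayed $H_n$. I expect no real obstacle here: the argument is a direct substitution, the only points to watch being the cancellation of $B_n^2$ in Tchebycheff (the variance $B_n^2$ against the $B_n^2$ coming from the threshold $tB_n$) and the elementary check that $\e$ lies in $(0,1]$.
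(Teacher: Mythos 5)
Your proposal is correct and follows exactly the paper's own route: the paper derives the corollary in one line by choosing $\e=\phi(B_n)^{-2/3}$ in Theorem \ref{l1a} and bounding the tail term $\P\{|S_n-M_n|/B_n>1/\sqrt{\e}\}$ by $\e$ via Tchebycheff's inequality, using $\mathrm{Var}(S_n)=B_n^2$ from \eqref{MnBn}. Your additional check that assumption $(\mathcal A)$'s requirement $\phi\ge 1$ guarantees $0<\e\le 1$ is the right admissibility verification, and your bookkeeping of the two $\phi(B_n)^{-2/3}$ contributions into the coefficient $1+2C/h$ reproduces $H_n$ exactly as in the paper.
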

 Theorem \ref{l1a}  implies Rozanov's result,  since by  definition such a function $\phi$ exists   if the local limit theorem is applicable to   $X$.          
 
\bigskip\par
  Instead of assumption ($\mathcal A$), consider now the following assumption  
  \vskip 5 pt ($\mathcal B$)    {\it $\t_{X_j} =\displaystyle{\sum_{k\in \Z}}{\mathbb P}\{X_j= k\}\wedge{\mathbb P}\{X_j= k+1 \} >0$,  for each $j$, and 
 $\nu_n =\displaystyle{\sum_{j=1}^n} \t_j\uparrow \infty$}.
 
   \vskip 3 pt We have the following sharp results.
 \begin{theorem}[\cite{W10},\,Th.\,4.3]\label{saud1} Let $D=1$. Suppose that assumption ($\mathcal B$) is fulfilled.   Let $\a\!>\!\a'\!>\!0$, $0\!<\!\e\!<\!1$. Then for   each $n$ such that 
 %${\sin x\over
%x}\ge (\a^\prime/\a)^{1/2}$
 %$  \tau_n\ge (\a^\prime/\a)^{1/2}$ where
% by \eqref{phi.tau}
%if 
$$|x|\le\frac12 \sqrt{  \frac{ 2\a\log (1-\epsilon)\nu_n}{ (1-\epsilon)\nu_n }}\qq \Rightarrow \qq{\sin x\over
x}\ge (\a^\prime/\a)^{1/2},$$
%then ${\sin x\over
%x}\ge (\a^\prime/\a)^{1/2}$,
%  \begin{equation*} \p_n=   \big( {2\a\log B_n \over B_n}\big)^{1/2},
%\qquad\qquad \tau_n= {\sin\p_n/2\over
%\p_n /2},
%\leqno(2.4)
%\end{equation*}
   we have
 \begin{eqnarray*} \sup_{u\ge 0}\,\sup_{d<  \pi  \sqrt{   (1-\epsilon)\nu_n \over 2\a\log (1-\epsilon)\nu_n}} \ \Big| \P \{d|S_n+u  \}   -  {1\over d} \Big|
    &\le &2 \,e^{- \frac{\epsilon^2 }{2}\nu_n}+
 \,\big( (1-\epsilon)\nu_n\big)^{-\a'}  .
\end{eqnarray*}
\end{theorem}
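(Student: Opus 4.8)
The plan is to reduce, via the Bernoulli-part coupling of Lemma \ref{bpr}, the \emph{conditional} divisibility problem to a genuine Bernoulli walk of random length, then to integrate the resulting cosine estimate of Lemma \ref{reduction.sym} against the distribution of that length, whose concentration is controlled by Lemma \ref{di.1}.

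First I would realize $S_n$ through the coupling. Under assumption $(\mathcal B)$ each $X_j$ admits, by Lemma \ref{bpr}, a representation $X_j \buildrel{\mathcal D}\over{=} V_j + \e_j L_j$ in which the triples $(V_j,\e_j,L_j)$ are independent in $j$ with $L_j$ independent of $(V_j,\e_j)$, $L_j$ is a standard Bernoulli variable, and $\e_j$ is the Bernoulli-part indicator of \eqref{ve} with $\E\,\e_j = \t_j$ (here $\e_j$ denotes the coupling indicator, not the parameter $\epsilon$ of the statement). Since $D=1$ everything stays on $\Z$ with unit steps, so $S_n \buildrel{\mathcal D}\over{=} W_n + \sum_{j=1}^n \e_j L_j$ with $W_n = \sum_{j=1}^n V_j$. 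I then condition on the environment $\mathcal G = \sigma(V_j,\e_j : 1\le j\le n)$: given $\mathcal G$, $W_n$ is a fixed integer, $N := \sum_{j=1}^n \e_j$ is a fixed integer, and $\sum_{j=1}^n \e_j L_j = \sum_{j:\e_j=1}L_j$ is distributed exactly as the Bernoulli walk $B_N$. Hence $\P\{d\mid S_n+u \mid \mathcal G\} = \P\{d\mid B_N + (W_n+u)\}$, and Lemma \ref{reduction.sym} gives, uniformly over $u\ge0$ and over the residue $W_n+u$,
\begin{equation*}
\Big|\P\{d\mid S_n+u \mid \mathcal G\} - \frac1d\Big| \;\le\; \frac2d\sum_{1\le j<d/2}\Big(\cos\frac{\pi j}{d}\Big)^{N} \;=:\; R_d(N),
\end{equation*}
a quantity that is at most $1$ and nonincreasing in $N$ (each $\cos\frac{\pi j}{d}\in[0,1)$ for $1\le j<d/2$).

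Next I would integrate over $\mathcal G$ and split on the concentration event. Since $\P\{d\mid S_n+u\}-\frac1d = \E\big[\P\{d\mid S_n+u\mid\mathcal G\}-\frac1d\big]$, the triangle inequality gives $\big|\P\{d\mid S_n+u\}-\frac1d\big| \le \E\, R_d(N)$. Writing $M=(1-\epsilon)\nu_n$ and splitting at $G=\{N\ge M\}$, I bound $R_d\le1$ on $G^c$ and apply Lemma \ref{di.1}(b) to the sum of independent $\e_j$, whose mean is $\nu_n$: $\P(G^c)=\P\{N\le(1-\epsilon)\nu_n\}\le e^{-\epsilon^2\nu_n/2}$. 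On $G$ monotonicity gives $R_d(N)\le R_d(M)$, so it remains to prove $R_d(M)\le M^{-\a'}$ for every $d$ in the stated range.

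The main work is this last estimate, and it is exactly where the hypothesis on $\frac{\sin x}{x}$ is consumed. Using $\cos\frac{\pi j}{d}=1-2\sin^2\frac{\pi j}{2d}\le e^{-2\sin^2(\pi j/2d)}$, the $j$-th term of $R_d(M)$ is at most $e^{-2M\sin^2(\pi j/2d)}$. For $j=1$ the argument $x=\frac{\pi}{2d}$ satisfies $x\le\frac12\sqrt{2\a\log M/M}$ precisely because $d<\pi\sqrt{M/(2\a\log M)}$, so the hypothesis applies and $\sin^2\frac{\pi}{2d}\ge(\a'/\a)\big(\frac{\pi}{2d}\big)^2$; the same upper bound on $d$ then pushes the exponent $(\a'/\a)M\pi^2/(2d^2)$ past $\a'\log M$, giving a $j=1$ term strictly below $M^{-\a'}$. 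The terms $j\ge2$ decay much faster and, together with the prefactor $\frac2d\le1$, are absorbed using the strict gap $\a'<\a$. Collecting the two events yields $\big|\P\{d\mid S_n+u\}-\frac1d\big|\le M^{-\a'}+e^{-\epsilon^2\nu_n/2}$, uniformly in $u\ge0$ and $d$, which is the announced bound (the displayed factor $2$ merely absorbs the crude bound $R_d\le1$ on $G^c$). The delicate point throughout is the exact calibration of the divisor range against the $\frac{\sin x}{x}$ threshold so that the dominant $j=1$ cosine power produces precisely the power $M^{-\a'}$, while the concentration of $N$ around $\nu_n$ supplies the exponentially small complementary term.
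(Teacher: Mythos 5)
Your overall architecture is exactly the right one, and it is surely the intended one: the Bernoulli-part coupling of Lemma \ref{bpr} applied coordinatewise under assumption ($\mathcal B$), conditioning on $\mathcal G=\sigma(V_j,\e_j:j\le n)$ so that $S_n+u$ becomes a Bernoulli walk $B_N$ of length $N=\sum_{j\le n}\e_j$ plus a constant shift, the uniform cosine bound of Lemma \ref{reduction.sym}, monotonicity of $R_d(\cdot)$, and MacDiarmid's inequality (Lemma \ref{di.1}(b)) with $\E N=\nu_n$ to produce $\P\{N\le(1-\epsilon)\nu_n\}\le e^{-\epsilon^2\nu_n/2}$. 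All of this is correct, including your observation that the value of the shift $W_n+u$ is irrelevant because the cosine bound is uniform in the residue.

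The gap is in the step you yourself call the main work, and it is not a typo: the inequality you invoke runs in the wrong direction. For every $d$ in the stated range $d<\pi\sqrt{M/(2\alpha\log M)}$, with $M=(1-\epsilon)\nu_n$, one has $\frac{\pi}{2d}>\frac12\sqrt{2\alpha\log M/M}=:x_0$, not $\le x_0$; indeed the divisor range is precisely the set of $d$ whose smallest half-angle $\pi/(2d)$ lies strictly \emph{above} the threshold $x_0$. Consequently the hypothesis ``$|x|\le x_0\Rightarrow\sin x/x\ge(\alpha'/\alpha)^{1/2}$'' applies to none of the frequencies $x_j=\pi j/(2d)$, $1\le j<d/2$, so your bound on the $j=1$ term is unjustified as written, and the claim that the terms $j\ge2$ ``decay much faster and are absorbed using the strict gap'' inherits the same defect in amplified form. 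The repair is to transfer the hypothesis upward by monotonicity of the sine: every $x_j$ lies in $(x_0,\pi/4)\subset(0,\pi/2)$, where $\sin$ is increasing, so $\sin x_j\ge\sin x_0\ge(\alpha'/\alpha)^{1/2}x_0$, and hence, uniformly in $j$,
\begin{equation*}
\Big(\cos\frac{\pi j}{d}\Big)^M\le e^{-2M\sin^2 x_j}\le e^{-2M(\alpha'/\alpha)x_0^2}=e^{-\alpha'\log M}=M^{-\alpha'}.
\end{equation*}
Since there are at most $d/2$ values of $j$ and the prefactor is $2/d$, this yields $R_d(M)\le M^{-\alpha'}$ at once, with no need for extra decay at $j\ge2$: the strict inequality $\alpha'<\alpha$ is consumed only through the hypothesis at $x_0$. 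With this single correction your proof is complete (your bound even carries the constant $1$ in front of the exponential term, where the theorem allows $2$).
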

   
 \begin{theorem}[\cite{W10},\,Th.\,4.6]\label{saud2} Let $D=1$. Suppose that assumption ($\mathcal B$) is fulfilled. Let $0<\rho<1 $ and   $0<\e<1$. Then for   each $n$ such that 
$$|x|\le\frac12 \,\sqrt{  \frac{ 2 }{ ((1-\epsilon)\nu_n)^{1-\rho} }}\qq \Rightarrow \qq{\sin x\over
x}\ge \sqrt{1-\e}$$
%For each $n$ such that $\widetilde\tau_n\ge \sqrt{1-\e}$, where
%$$  \widetilde\tau_n= {\sin\psi_n/2\over
%\psi_n /2}\qq \qq \psi_n= \big({2B_n^\rho \over B_n}\big)^{1/2},$$  
we have\begin{eqnarray*} 
\sup_{u\ge 0}\,\sup_{d<  (\pi/\sqrt 2) ((1-\e)\nu_n)^{(1-\rho)/2} }\ \big| \P \{d|S_n+u  \}   -  {1\over d} \big|  
   &\le  &  2 e^{- \frac{\epsilon^2 }{2}\nu_n}+e^{- ( (1-\epsilon)\nu_n)^\rho}
  .
\end{eqnarray*}
\end{theorem}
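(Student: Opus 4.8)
The plan is to follow the coupling strategy behind Theorem~\ref{saud1}. McDonald's decomposition reduces $S_n$ to a standard Bernoulli walk of \emph{random} length $N_n$, whose mean is $\nu_n$; on the event that $N_n$ is not much below $\nu_n$ the sharp i.i.d.\ Bernoulli estimate \eqref{unifdivro} applies verbatim, and the complementary event is controlled by the concentration inequality of Lemma~\ref{di.1}. The two contributions produce the two terms of the announced bound.

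First I would apply Lemma~\ref{bpr} to each summand. Under $(\mathcal B)$ each $X_j$ has $\t_{X_j}>0$, hence a Bernoulli part, so $X_j\buildrel{\mathcal D}\over{=}V_j+\e_j L_j$ with $L_j$ a standard Bernoulli variable independent of $(V_j,\e_j)$ and $\P\{\e_j=1\}=\t_j$. Taking independent copies across $j$, $S_n\buildrel{\mathcal D}\over{=}W+\sum_{j:\,\e_j=1}L_j$ with $W=\sum_{j=1}^n V_j$ and $N_n:=\sum_{j=1}^n\e_j$ the number of active parts. The crucial point is that, conditionally on the pairs $(V_j,\e_j)_{j\le n}$, the integer $W$ is frozen and $\sum_{j:\,\e_j=1}L_j$ is \emph{exactly} a standard Bernoulli walk $B_{N_n}$ of length $N_n$; thus the conditional probability of $\{d|S_n+u\}$ equals $\P\{d|B_{N_n}+(W+u)\}$, and the sharp Bernoulli machinery becomes available.

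Applying the exact formula of Lemma~\ref{reduction.sym} and bounding each cosine by its modulus gives, for every integer length $m$ and every integer shift $u'$, the shift-free bound $\big|\P\{d|B_m+u'\}-\tfrac1d\big|\le\frac2d\sum_{1\le j<d/2}(\cos\tfrac{\pi j}{d})^{m}$. Averaging over the conditioning and discarding the harmless shift $W+u$ yields
$$\Big|\P\{d|S_n+u\}-\frac1d\Big|\ \le\ \frac2d\,\E\sum_{1\le j<d/2}\Big(\cos\frac{\pi j}{d}\Big)^{N_n}.$$
I then split on $A=\{N_n\ge(1-\e)\nu_n\}$. Since $N_n$ is a sum of independent $[0,1]$-valued variables with mean $\nu_n$, Lemma~\ref{di.1}(b) gives $\P(A^c)\le e^{-\e^2\nu_n/2}$; bounding the cosine sum by $1$ on $A^c$ contributes the term $2e^{-\e^2\nu_n/2}$ (the extra factor absorbing a lower-order remainder). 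On $A$ I use $0<\cos(\pi j/d)<1$ and $N_n\ge m_0:=(1-\e)\nu_n$ to replace the exponent $N_n$ by $m_0$.

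It remains to estimate $\frac2d\sum_{1\le j<d/2}(\cos\pi j/d)^{m_0}$ on the stated range $d<(\pi/\sqrt2)m_0^{(1-\rho)/2}$, which is exactly \eqref{unifdivro} with $n$ replaced by $m_0$. Using the elementary inequality $\cos\theta\le e^{-\theta^2/2}$ for $0\le\theta<\pi/2$ (immediate, as $\theta\mapsto\log\cos\theta+\theta^2/2$ is decreasing) with $\theta=\pi j/d$, each term satisfies $(\cos\pi j/d)^{m_0}\le e^{-m_0\pi^2 j^2/2d^2}$, and inserting $d^2<(\pi^2/2)m_0^{1-\rho}$ turns this into $e^{-m_0^{\rho}j^2}$; hence
$$\frac2d\sum_{1\le j<d/2}\Big(\cos\frac{\pi j}{d}\Big)^{m_0}\ \le\ \frac2d\sum_{j\ge1}e^{-m_0^{\rho}j^2}\ \le\ e^{-m_0^{\rho}},$$
the series being dominated by its first term for $m_0$ large while $2/d\le1$. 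The angle hypothesis $\sin x/x\ge\sqrt{1-\e}$ on $[0,\tfrac12\sqrt{2/m_0^{1-\rho}}]$ is simply the packaging of this step used in \cite{W10}: the threshold is precisely the largest relevant half-angle $\pi/2d$ at the endpoint $d=(\pi/\sqrt2)m_0^{(1-\rho)/2}$. Summing the contributions from $A$ and $A^c$ gives the claim. The main obstacle is ensuring the conditional reduction is genuinely exact and uniform in $(u,d)$ — i.e.\ that after conditioning $S_n$ is a standard Bernoulli walk of length $N_n$ so that the shift drops out and \eqref{unifdivro} transfers without loss — and then calibrating the constant $\pi/\sqrt2$ so that the length concentration, the admissible range, and the target exponent $((1-\e)\nu_n)^\rho$ all close simultaneously.
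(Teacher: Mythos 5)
Your argument cannot be compared with a proof in this paper, because the paper contains none: Theorem \ref{saud2} is quoted from \cite{W10}, and the text explicitly refers the reader there. Measured against the toolkit the paper assembles, however, your proof is correct and is exactly the intended method: the Bernoulli-part coupling of Lemma \ref{bpr}, under which, conditionally on $(V_j,\e_j)_{j\le n}$, $S_n$ is an integer shift of a standard Bernoulli walk of length $N_n$ with $\E N_n=\nu_n$; MacDiarmid's lower tail (Lemma \ref{di.1}\,(b)) giving $\P\{N_n<(1-\e)\nu_n\}\le e^{-\e^2\nu_n/2}$; and the shift-free cosine-sum bound from Lemma \ref{reduction.sym}, estimated as in \eqref{unifdivro}. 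Two points deserve record. First, your route is genuinely a bit different from (and cleaner than) what the hypothesis suggests: using the unconditional inequality $\cos\theta\le e^{-\theta^2/2}$ on $[0,\pi/2)$ you never invoke the sine hypothesis at all --- that hypothesis is the shape in which one linearizes $\cos\theta=1-2\sin^2(\theta/2)$ --- and, incidentally, your parenthetical reading of the threshold $\frac12\sqrt{2/((1-\e)\nu_n)^{1-\rho}}$ as ``the largest relevant half-angle'' is backwards: it is the smallest admissible half-angle $\pi/(2d)$, attained as $d$ approaches the endpoint of its range; nothing in your argument depends on this. Second, two steps should be tightened, both routine: (a) apply the cosine bound only on $A=\{N_n\ge(1-\e)\nu_n\}$, where $N_n\ge 1$, and bound the difference crudely by $1$ on $A^c$; as written, your averaged inequality quotes the shift-free bound also at length $m=0$, where it can fail for even $d$ because of the $j=d/2$ term; (b) the concluding estimate $\frac2d\sum_{1\le j<d/2}e^{-m_0^{\rho}j^2}\le e^{-m_0^{\rho}}$, with $m_0=(1-\e)\nu_n$, does not require ``$m_0$ large'' as a separate assumption: for $d=2$ the sum is empty, and any admissible $d\ge 3$ forces $m_0^{(1-\rho)/2}>3\sqrt2/\pi$, hence $m_0^{\rho}>1$, so the tail $j\ge2$ contributes at most $\frac{e^{-3}}{1-e^{-3}}\,e^{-m_0^{\rho}}$ and $\frac2d\bigl(1+\frac{e^{-3}}{1-e^{-3}}\bigr)<1$. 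With these repairs the proof is complete and uniform in $u$ and $d$, and in fact yields $e^{-\e^2\nu_n/2}+e^{-((1-\e)\nu_n)^{\rho}}$, slightly better than the stated bound.
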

 \vskip 2 pt We refer the reader to \cite{W10} for the proofs of these results.
 We also refer to our  recent monograph   with    Szewczak \cite{SW} for more.

\section{Distribution of   divisors of ${ B_nB_m}$.}\label{s6}
This task is more complicated.
%The study of the divisors of $B_nB_m$ is   a  more complicated  enterprise  than the one of the divisors of   $B_n$.  
 A key function is  the  (multiplicative) function
\begin{eqnarray}\label{rho}\varrho_k(D):=\#\big\{1\le y\le D: \  D| y^2+ky \big\} ,\qq   k=0,1,\ldots
\end{eqnarray} 
 which  it is not so surprising in view of the elementary decomposition
$B_nB_m= B_n^2 +B_n(B_m-B_n)$. 
\vskip 2 pt
As  mentioned in section \ref{s3}, that study is used in   
  \cite{W14} in the  control of   the correlation function \eqref{delta.def}, more precisely in the case      $n<m\le n+n^c$, $c>0$, $c$ being small, where the following intermediate bound appears (hence also $\varrho_k(.)$),
\begin{eqnarray*}\P \{ 
d|B_n\, ,\, \d|B_m \}
& =&\P \{  d|B_n\, ,\, \d|B_m\, ,\,B_m-B_n=0 \}
 +\P \{ 
d|B_n\, ,\, \d|B_m\, ,\,B_m-B_n>0 \}
\cr & =&2^{-(m-n)}\,\P \{  d|B_n\, ,\, \d|B_n \}  
 +\P \{ 
d|B_n\, ,\, \d|B_m\, ,\,B_m-B_n>0 \}
\cr &\le& 2^{-(m-n)}\,\P \{ [d, \d]|B_n  \}
+\P \{ 
d\d|B_n B_m\, ,\,B_m-B_n>0 \}.
%\cr &= &2^{-(m-n)}\,\P \{ [d, \d]|B_n  \}
% \cr & & \qquad +{1\over 2^n2^{m-n}}\sum_{k=1}^{m-n} {{m-n}\choose{k}} {1\over
%d\d}\sum_{j=0}^{d\d-1}\sum_{h=0}^n  {{n}\choose{h}}   e^{2i\pi{j\over d\d}  (h^2+kh)},
\end{eqnarray*} 
Note that $\P \{ 
d|B_n\, ,\, \d|B_m \}>0$, only if $m-n\ge (d,\d)$. 
 \vskip 3 pt  
  By using independence,  
  the characteristic function of $B_nB_m$ expresses as follows
$$\E e^{i\upsilon B_nB_m}={1\over 2^n2^{m-n}}\sum_{k=0}^{m-n} {{m-n}\choose {k}}\sum_{h=0}^n   {{n}\choose{h}} e^{i\upsilon (h^2+kh)}.$$
And so, we begin with 
\begin{eqnarray}\label{basicnm}\qq   \P \{ D|B_nB_m \} &=& \E\Big({1\over D} \sum_{j=0}^{D-1} e^{2i\pi  {j \over D}B_nB_m}\Big) 
\cr &=&  {1\over
2^{m-n}}\sum_{k=0}^{m-n}{{m-n}\choose {k}}{1\over D}\sum_{j=0}^{D-1}\Big({1\over
2^n}\sum_{h=0}^n   {{n}\choose{h}} e^{2i\pi{j\over D}  (h^2+kh)}\Big). 
\end{eqnarray}  

 \begin{theorem}\label{p4.5} For any $\e>0$, there exists a constant $C_\e$ depending on $\e$ only, such that  for any
 positive  integers
 $n,m, D$ with 
 $m>n$
\begin{equation}\label{4.11} \Big|\P \{ D|B_nB_m \}-  {1\over D  2^{ m-n}} \sum_{k=0}^{m-n} { {{m-n}\choose {k}}     } \varrho_k(D) 
\Big|
\le C_\e
\left({     D  ^{1 + \e}\over   n }\right)^{1/2 }. 
\end{equation} 
\end{theorem}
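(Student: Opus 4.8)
The plan is to reduce everything to a \emph{uniform} residue-class estimate for $B_n$ modulo $D$, obtained by conditioning on the increment $B_m-B_n$, and then to control the resulting error by an elementary arithmetic bound on $\varrho_k(D)$.

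First I would recast the starting identity \eqref{basicnm}. Interchanging the sums over $h$ and $j$ and using the orthogonality relation $\frac1D\sum_{j=0}^{D-1}e^{2i\pi\frac jD(h^2+kh)}=\chi\{D\mid h^2+kh\}$, the inner double sum collapses and one gets
$$\P\{D|B_nB_m\}=\frac1{2^{m-n}}\sum_{k=0}^{m-n}\binom{m-n}{k}\,\P\{D\mid B_n^2+kB_n\}.$$
Probabilistically this is nothing but conditioning on $B_m-B_n=k$, which is independent of $B_n$ and distributed as $B_{m-n}$. Writing $R_k=\{y:0\le y<D,\ D\mid y^2+ky\}$, so that $|R_k|=\varrho_k(D)$, and partitioning according to the residue of $B_n$ modulo $D$, I obtain $\P\{D\mid B_n^2+kB_n\}=\sum_{y\in R_k}\P\{B_n\equiv y\,(D)\}$. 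Replacing each probability by $1/D$ produces exactly the claimed main term $\varrho_k(D)/D$, so the whole problem reduces to estimating
$$E_k:=\P\{D\mid B_n^2+kB_n\}-\frac{\varrho_k(D)}{D}=\sum_{y\in R_k}\Bigl(\P\{B_n\equiv y\,(D)\}-\tfrac1D\Bigr).$$

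The key uniform input is Lemma \ref{reduction.sym}. Since $\P\{B_n\equiv y\,(D)\}=\P\{D\mid B_n+(D-y)\}$, that lemma gives, for every $y$,
$$\Bigl|\P\{B_n\equiv y\,(D)\}-\tfrac1D\Bigr|\le\frac2D\sum_{1\le j<D/2}\bigl(\cos\tfrac{\pi j}{D}\bigr)^n\le\frac2D\sum_{j\ge1}e^{-2nj^2/D^2}\le\frac{C}{\sqrt n},$$
where the last step uses the Gaussian bound $\sum_{j\ge1}e^{-2nj^2/D^2}\le CD/\sqrt n$ recalled in connection with Corollary \ref{c.useful.est}, together with $\cos\frac{\pi j}{D}\le e^{-2j^2/D^2}$ as in the proof of Proposition \ref{Lemma44}. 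Crucially this bound is uniform in both $y$ and $D$. Summing over $R_k$ by the triangle inequality yields $|E_k|\le C\,\varrho_k(D)/\sqrt n$, uniformly in $k$.

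It then remains to bound $\varrho_k(D)$, and here I would exploit its multiplicativity, estimating each local factor: for $p\nmid k$ one has $\varrho_k(p^a)=2$ (only the roots $y\equiv0$ and $y\equiv-k$), while for $p\mid k$ a short $p$-adic valuation analysis of $y(y+k)\equiv0\,(p^a)$ gives $\varrho_k(p^a)\le C_0\,p^{a/2}$ with $C_0$ absolute. Multiplying over $p\mid D$ gives $\varrho_k(D)\le C_0^{\omega(D)}D^{1/2}\ll_\e D^{1/2+\e}$, uniformly in $k$. Hence $|E_k|\ll_\e D^{1/2+\e}/\sqrt n$, and since this bound is independent of $k$, averaging against the binomial weights preserves it, so that $\bigl|\P\{D|B_nB_m\}-\frac{1}{D2^{m-n}}\sum_k\binom{m-n}{k}\varrho_k(D)\bigr|\le\max_k|E_k|\ll_\e(D^{1+\e}/n)^{1/2}$ after renaming $\e$. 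The main obstacle is precisely this arithmetic bound $\varrho_k(D)\ll_\e D^{1/2+\e}$ uniform in $k$: the genuine factor $\sqrt D$ (already attained by $\varrho_0$ on perfect squares, since $\varrho_0(M^2)=M$) is what forces the exponent $1/2$ in the statement, and the factor $2^{\omega(D)}$ coming from the primes $p\nmid k$ is exactly what produces the unavoidable loss $D^\e$.
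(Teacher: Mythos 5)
Your proof is correct, and it takes a genuinely different route from the paper's. The paper never leaves the exponential--sum representation \eqref{basicnm}: for each harmonic $j$ it compares the Euler (binomial) mean of $h\mapsto e^{2i\pi\frac{j}{D}(h^2+kh)}$ with its complete-period average (which produces $\varrho_k(D)/D$ through \eqref{rho.}), using the divergent-series bound of Lemma \ref{Eulerbound} together with Lemma \ref{basicnm2}; the error is thereby reduced to maxima of incomplete quadratic Weyl sums, which are controlled by S\'ark\"ozy's differencing estimate \eqref{sarko}, whose term $\sqrt{D'\log D'}$ is the source of the loss $D^{1/2+\e}$. You instead condition on $B_m-B_n=k$ at the outset, rewrite $\P\{D\mid B_n^2+kB_n\}$ as a sum of $\varrho_k(D)$ residue-class probabilities of $B_n$ modulo $D$, and invoke only the \emph{linear} equidistribution estimate $|\P\{B_n\equiv y\,(D)\}-1/D|\le C/\sqrt n$ (Lemma \ref{reduction.sym} plus the Gaussian bounds from the proof of Proposition \ref{Lemma44}); your loss $D^{1/2+\e}$ then comes entirely from the arithmetic bound $\varrho_k(D)\le 2^{\omega(D)}\sqrt{D}\ll_\e D^{1/2+\e}$, uniform in $k$. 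Both arguments give the same error term. What yours buys: it avoids quadratic Weyl sums and S\'ark\"ozy's lemma altogether, and it cleanly separates the analytic input (uniform $n^{-1/2}$-equidistribution of $B_n$ in residue classes) from the arithmetic input (the size of $\varrho_k$), making the origin of the exponent $1/2$ and of the $\e$ transparent; the price is that you must actually establish $\varrho_k(p^a)\le 2p^{a/2}$ for $p\mid k$, which your $p$-adic valuation sketch does give (split on $v_p(y)$ versus $v_p(k)$), and which is confirmed by the case analysis in the Appendix. What the paper's route buys: it needs no information at all about the size of $\varrho_k(D)$, whose evaluation (Theorem \ref{romult1}) is deferred to the Appendix and used only later, in Theorem \ref{bnmbound}. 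One remark in your favor: your bookkeeping of the factor $2^{\omega(D)}$ --- one factor $2$ for every $p\mid D$ with $v_p(k)<v_p(D)/2$, \emph{including} the primes $p\nmid k$ --- is the correct one; the condensed formula stated in Theorem \ref{romult1}, whose exponent is restricted to primes with $1\le v_p(k)$, drops the primes $p\nmid k$ and is inconsistent with the paper's own observation that $\varrho_k(p^r)=2$ when $p\nmid k$ (test $D=3$, $k=1$: the true value is $2$, the condensed formula gives $1$); only the upper bound matters for your argument, so your proof is unaffected.
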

  The function    $\varrho_k(D)$ will be  made  explicit    in Theorem \ref{romult1}. A combination of   both results   will lead to  a useful bound of $\P \{ D|B_nB_m
\}$  stated in   Theorem \ref{bnmbound}. 
\vskip 2 pt
 We first prove Theorem \ref{p4.5}.  The main idea of the proof consists with   controlling the difference
$$\D=  {1\over
2^n }  \sum_{h=0}^n   {{n}\choose{h}} e^{2i\pi{j\over D}  (h^2+kh)}- {1\over n}\sum_{h=0}^{n-1}  e^{2i\pi{j\over D} 
(h^2+kh)},$$
 in which the first average   is the  last Euler sum   in (\ref{basicnm}), and  the second one,  its Ces\`aro means counterpart.  Let 
  $0\le
\ell
\le n$ be some integer,   
$\ell=LD+\m$,
$0\le \m< D$. Obviously,
\begin{eqnarray}\label{sumS} S  := {1\over \ell}\sum_{h=0}^{\ell-1}  e^{2i\pi{j\over D} 
(h^2+kh)}  & =& {1\over D}\sum_{h= 0}^{D-1}e^{2i\pi{j\over D} 
(h^2+kh)}  +   {1\over \ell}\sum_{h= 0}^{\m}e^{2i\pi{j\over D}  
(h^2+kh)} 
\cr & &  +L\big({1\over \ell}-{1\over LD}\big)\sum_{h= 0}^{D-1}e^{2i\pi{j\over D} 
(h^2+kh)}.
 \end{eqnarray}

It follows from (\ref{sumS}),    that 
  for   $j=1,\ldots, D$, 
\begin{eqnarray*} \Big|S-{1\over D}\sum_{h= 0}^{D-1}e^{2i\pi{j\over D} 
(h^2+kh)} \Big|  &\le  &  {1\over  \ell} \Big(\big|\sum_{h= 0}^{D-1}e^{2i\pi{j\over D} 
(h^2+kh)} \big|  +  \big|\sum_{h= 0}^{\m}e^{2i\pi{j\over D} 
(h^2+kh)}\big|\Big).
\end{eqnarray*} 
   We   rewrite this for   later use under the following form.
  \begin{lemma} \label{basicnm2}Let
$\ell\ge 1$, $\ell\equiv \m \, {\rm mod}(D)$ and
$0\le \m< D$.  For   $j=1,\ldots, D$, 
\begin{eqnarray*} \Big| \sum_{h=0}^{\ell-1}  e^{2i\pi{j\over D} 
(h^2+kh)}-{\ell\over D}\sum_{h= 0}^{D-1}e^{2i\pi{j\over D} 
(h^2+kh)} \Big|   
  & \le   &  \big|\sum_{h= 0}^{D-1}e^{2i\pi{j\over D} 
(h^2+kh)} \big|    +  \big|\sum_{h= 0}^{\m}e^{2i\pi{j\over D} 
(h^2+kh)}\big| .
\end{eqnarray*}  \end{lemma}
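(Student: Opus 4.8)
The statement is purely about the exponential sum of the quadratic phase $g(h):=e^{2i\pi{j\over D}(h^2+kh)}$, and the whole point is that this phase is periodic modulo $D$. The plan is therefore to exploit that periodicity, reducing the sum over $\{0,1,\dots,\ell-1\}$ to a whole number of complete periods plus a short terminal remainder, and then to bound only that remainder.

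First I would record the arithmetic fact that drives everything. Since $(h+D)^2+k(h+D)-(h^2+kh)=D(2h+D+k)\equiv 0\pmod{D}$, the integer $h^2+kh$ is periodic in $h$ with period $D$, and hence $g(h+D)=g(h)$ for every $h$. Consequently the sum of $g$ over any $D$ consecutive integers equals the full-period sum $T:=\sum_{h=0}^{D-1}g(h)=\sum_{h=0}^{D-1}e^{2i\pi{j\over D}(h^2+kh)}$.

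Next I would carry out the block decomposition. Writing $\ell=LD+\m$ with $0\le\m<D$, I split $\sum_{h=0}^{\ell-1}g(h)$ into the $L$ complete blocks covering $[0,LD)$, each contributing $T$, together with the terminal block indexed by $h\in[LD,\ell)$; periodicity translates this terminal block back to $[0,\m)$, so that $\sum_{h=0}^{\ell-1}g(h)=LT+\sum_{h=0}^{\m-1}g(h)$. Because the comparison term splits as ${\ell\over D}T=LT+{\m\over D}T$, the two copies of $LT$ cancel, leaving the clean identity $\sum_{h=0}^{\ell-1}g(h)-{\ell\over D}T=\sum_{h=0}^{\m-1}g(h)-{\m\over D}T$.

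Finally, the triangle inequality together with the bound $0\le\m/D<1$ gives that the modulus of the right-hand side is at most $\big|\sum_{h=0}^{\m-1}g(h)\big|+{\m\over D}|T|\le\big|\sum_{h=0}^{\m-1}g(h)\big|+|T|$, which is exactly the asserted estimate (writing the tail as $\sum_{h=0}^{\m}$ in the statement differs by the single harmless term $g(\m)$, irrelevant for the later use, where only an $\mathcal{O}(\sqrt D)$ control of the tail is needed). I expect no genuine obstacle here: the only point demanding care is the verification of the mod-$D$ periodicity of the quadratic phase, after which the argument is pure bookkeeping with the block decomposition. Equivalently, one obtains the claim at once by multiplying the displayed inequality for $S$ that precedes the lemma through by $\ell$.
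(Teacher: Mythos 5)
Your proof is correct and follows essentially the same route as the paper: both exploit the mod-$D$ periodicity of the quadratic phase to split the sum into $L$ complete periods plus a short remainder, cancel the $L$ full-period contributions against ${\ell\over D}T$, and finish with the triangle inequality and $\m/D<1$ (the paper merely phrases this through the normalized average $S$ and then multiplies by $\ell$). Your remark about the off-by-one between $\sum_{h=0}^{\m-1}$ and $\sum_{h=0}^{\m}$ is apt — the same harmless discrepancy already sits in the paper's display for $S$, and it is immaterial in the later application, where the tail is bounded uniformly in $\m$.
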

  Thus the study of $\D$ reduces to the one of the difference
$$ \D'= {1\over
2^n }  \sum_{h=0}^n   {{n}\choose{h}} e^{2i\pi{j\over D}  (h^2+kh)}- {1\over D}\sum_{h=0}^{D-1}e^{2i\pi{j\over D} 
(h^2+kh)} .
$$ 

 By  linearity of Euler summation, it suffices to have at disposal a bound for Euler sums, and we have a  classical tool from the theory of divergent series.  
\begin{lemma}  \label{Eulerbound}Let
$\{a_k, k\ge 0\}$ be a  sequence of reals. Put $A_\ell= \sum_{k=0}^\ell a_k$,  $\ell\ge 0$.
 There exists an   absolute constant $C$ such that for every positive integer $n$
\begin{equation} \label{4.6}\big| \sum_{h=0}^n   2^{-n}{{n}\choose{h}} a_h\big|\le {C\over \sqrt n}\max_{\ell=0}^n\big|A_\ell\big|
.\end{equation}
   \end{lemma}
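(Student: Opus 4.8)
The plan is to prove the Euler–sum bound \eqref{4.6} by the classical strategy of summation by parts combined with a sharp estimate of the oscillation of the binomial coefficients. First I would write $a_h = A_h - A_{h-1}$ (with the convention $A_{-1}=0$) and perform an Abel summation on the sum $\sum_{h=0}^n 2^{-n}\binom{n}{h}a_h$. This transforms it into $\sum_{h=0}^{n-1} A_h\,2^{-n}\bigl(\binom{n}{h}-\binom{n}{h+1}\bigr) + A_n 2^{-n}\binom{n}{n}$, so that the problem reduces to controlling $\sum_{h=0}^{n-1}\bigl|\,2^{-n}\bigl(\binom{n}{h}-\binom{n}{h+1}\bigr)\bigr|$ times $\max_{\ell}|A_\ell|$. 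Thus the key quantity to bound is the total variation $V_n := 2^{-n}\sum_{h=0}^{n-1}\bigl|\binom{n}{h}-\binom{n}{h+1}\bigr|$, and the claim \eqref{4.6} follows as soon as I show $V_n \le C/\sqrt n$.

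The main work is estimating $V_n$. The ratio $\binom{n}{h+1}/\binom{n}{h} = (n-h)/(h+1)$ crosses $1$ near $h \approx n/2$, so the successive differences $\binom{n}{h}-\binom{n}{h+1}$ are nonnegative for $h \ge (n-1)/2$ and nonpositive for $h \le (n-1)/2$. Splitting the sum at this index, each of the two pieces telescopes: for example $\sum_{h\ge (n-1)/2}\bigl(\binom{n}{h}-\binom{n}{h+1}\bigr)$ collapses to essentially the central binomial coefficient $\binom{n}{\lfloor n/2\rfloor}$, and likewise on the other side. Hence $V_n$ is comparable to $2^{-n}\binom{n}{\lfloor n/2\rfloor}$, which by Stirling's formula is of order $1/\sqrt n$. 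This gives exactly the required $V_n \le C/\sqrt n$ with an absolute constant $C$.

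The step I expect to be the main obstacle is making the monotonicity/splitting argument fully rigorous in the parity cases, since the exact location of the sign change of $\binom{n}{h}-\binom{n}{h+1}$ depends on whether $n$ is even or odd, and one must check that the telescoping bound collapses to a single central coefficient (up to a bounded factor) in both cases rather than accumulating an extra term. Once that is handled, the asymptotic $2^{-n}\binom{n}{\lfloor n/2\rfloor}\sim \sqrt{2/(\pi n)}$ makes the constant explicit if desired, though for the stated lemma only the order $1/\sqrt n$ is needed. Combining the Abel identity with $|A_n|2^{-n}\le \max_\ell|A_\ell|$ and the bound on $V_n$ yields \eqref{4.6} with an absolute constant, completing the proof.
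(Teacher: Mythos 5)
Your proposal is correct and follows exactly the route the paper itself takes: the paper's proof is the one-line remark that the lemma ``is easily seen by using Abel summation and well-known property of the binomial distribution (Feller, p.~151)'', which is precisely your combination of summation by parts with the unimodality of $\binom{n}{h}$, the telescoping of the two monotone pieces to the central coefficient, and Stirling's estimate $2^{-n}\binom{n}{\lfloor n/2\rfloor}\sim\sqrt{2/(\pi n)}$. You have simply written out in full the details the paper delegates to the reference, including the parity check at the sign change, so there is nothing to correct.
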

  This is easily seen by using Abel summation and well-known property of the binomial distribution (Feller \cite{F}, p.\,151). (A  consequence is   Theorem 149 in Hardy \cite{H}.)
\vskip 4 pt
Applying   it  with the choice $$a_h=  e^{2i\pi{j\over D} (h^2+kh)}-
{1\over D}\sum_{y= 0}^{D-1}e^{2i\pi{j\over D}  (y^2+ky)}  \qq h=0,1,\ldots,n,$$
  we get in view of   Lemma \ref{basicnm2},
 \begin{eqnarray}
\D'&\le & { C\over \sqrt n}\max_{\m=0}^{D-1}  \big|\sum_{h= 0}^{\m}e^{2i\pi{j\over D} 
(h^2+kh)}\big|  . 
\end{eqnarray}
 
 Whence the   intermediate estimate.
 \begin{corollary} For some absolute constant $C$, 
 \begin{align}  \label{4.81}   \Big|\sum_{h=0}^n   2^{-n}{{n}\choose{h}} e^{2i\pi{j\over D} (h^2+kh)}-& {1\over D}\sum_{h=
0}^{D-1}e^{2i\pi{j\over D}  (h^2+kh)} \Big|
\cr &\le  {C\over \sqrt n}\bigg(\max_{\m=0}^{D'-1} \big|\sum_{h=0}^{\m} e^{2i\pi{j'\over D'} 
(h^2+kh)} 
\big| \bigg), 
\end{align} where  
$j'=  j/(j,D)
$,
$D'=D/(j,D)$.
 \end{corollary}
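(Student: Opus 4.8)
The goal is to prove Corollary after Lemma \ref{Eulerbound}, namely the intermediate estimate \eqref{4.81}, which bounds the difference between a weighted binomial (Euler) average of the quadratic exponential sum $e^{2i\pi\frac{j}{D}(h^2+kh)}$ and its averaged value $\frac{1}{D}\sum_{h=0}^{D-1}e^{2i\pi\frac{j}{D}(h^2+kh)}$ over a full period. The plan is to apply Lemma \ref{Eulerbound} directly to the centered sequence and then invoke Lemma \ref{basicnm2} to control the resulting partial sums, finally reducing the modulus $D$ to $D'=D/(j,D)$ via a standard change of variable.

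First I would set $a_h = e^{2i\pi\frac{j}{D}(h^2+kh)} - \frac{1}{D}\sum_{y=0}^{D-1}e^{2i\pi\frac{j}{D}(y^2+ky)}$ for $h=0,1,\ldots,n$, so that the left-hand side of \eqref{4.81} equals $\big|\sum_{h=0}^n 2^{-n}\binom{n}{h}a_h\big|$ exactly (the constant term factors out of the binomial average since $\sum_h 2^{-n}\binom{n}{h}=1$). By Lemma \ref{Eulerbound}, this is at most $\frac{C}{\sqrt n}\max_{0\le \ell\le n}|A_\ell|$, where $A_\ell=\sum_{h=0}^{\ell}a_h$. The partial sum $A_\ell$ is precisely $\sum_{h=0}^{\ell}e^{2i\pi\frac{j}{D}(h^2+kh)} - \frac{\ell+1}{D}\sum_{y=0}^{D-1}e^{2i\pi\frac{j}{D}(y^2+ky)}$, which is the quantity controlled by Lemma \ref{basicnm2} (up to the harmless shift between $\ell$ and $\ell+1$ in the indexing). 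Applying that lemma yields $|A_\ell|\le |\sum_{h=0}^{D-1}e^{2i\pi\frac{j}{D}(h^2+kh)}| + \max_{0\le\m<D}|\sum_{h=0}^{\m}e^{2i\pi\frac{j}{D}(h^2+kh)}|$.

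Next I would absorb the full-period sum $\sum_{h=0}^{D-1}e^{2i\pi\frac{j}{D}(h^2+kh)}$ into the partial-sum maximum, since the former is itself a partial sum of length $D$ and hence bounded by the same maximum over $0\le \m\le D-1$ (or differs only by a bounded factor). This gives $\max_\ell |A_\ell|\le C\max_{0\le\m<D}|\sum_{h=0}^{\m}e^{2i\pi\frac{j}{D}(h^2+kh)}|$. The final reduction is arithmetic: writing $g=(j,D)$, $j=gj'$, $D=gD'$ with $(j',D')=1$, one checks that $e^{2i\pi\frac{j}{D}(h^2+kh)}=e^{2i\pi\frac{j'}{D'}(h^2+kh)}$, so the exponential sum with modulus $D$ coincides term-by-term with the one with the reduced modulus $D'$; the partial sums over $0\le\m<D'$ already capture the full range of distinct values, so the maximum can be taken over $0\le\m<D'$ with the reduced fraction $j'/D'$.

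The main obstacle, though it is more of a bookkeeping subtlety than a genuine difficulty, is matching the exact indexing conventions between Lemma \ref{Eulerbound} (where $A_\ell=\sum_{k=0}^\ell a_k$) and Lemma \ref{basicnm2} (stated for sums $\sum_{h=0}^{\ell-1}$ with $\ell\equiv\m\,\mathrm{mod}\,D$): one must be careful that the off-by-one in the upper summation limit and the precise definition of the residue $\m$ do not introduce spurious terms, and that the coefficient $\frac{\ell+1}{D}$ arising from summing the constant exactly matches the $\frac{\ell}{D}$ weight in Lemma \ref{basicnm2} after reindexing. Since all the pieces are already in place, the estimate \eqref{4.81} follows by assembling these bounds, with the claimed reduction to $j'$ and $D'$ coming from the elementary identity for the quadratic Gauss-type exponential under the common factor $g=(j,D)$.
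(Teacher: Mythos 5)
Your first three steps coincide with the paper's argument: center the summand, apply Lemma \ref{Eulerbound}, and control the partial sums $A_\ell$ via Lemma \ref{basicnm2}, absorbing the full-period sum into the maximum (the off-by-one bookkeeping you flag is indeed harmless). The genuine gap is in your final ``arithmetic'' reduction from modulus $D$ to $D'=D/(j,D)$. It is true that $e^{2i\pi\frac{j}{D}(h^2+kh)}=e^{2i\pi\frac{j'}{D'}(h^2+kh)}$ term by term, but it is false that ``the partial sums over $0\le\m<D'$ already capture the full range of distinct values.'' For $D'\le \m<D$, writing $\m+1=LD'+r$ with $0\le r<D'$, periodicity mod $D'$ gives
$$\sum_{h=0}^{\m}e^{2i\pi\frac{j'}{D'}(h^2+kh)}=L\sum_{h=0}^{D'-1}e^{2i\pi\frac{j'}{D'}(h^2+kh)}+\sum_{h=0}^{r-1}e^{2i\pi\frac{j'}{D'}(h^2+kh)},$$
and, unlike the linear case, the complete quadratic sum over a period does not vanish: for $k=0$, $(j',D')=1$ and $D'$ odd it is a Gauss sum of modulus exactly $\sqrt{D'}$. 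Taking $\m=D-1$ shows $\max_{\m<D}\big|\sum_{h=0}^{\m}\big|$ can be of size $(j,D)\sqrt{D'}$, whereas the Corollary's right-hand side involves $\max_{\m<D'}\big|\sum_{h=0}^{\m}\big|=\mathcal O\big(\sqrt{D'\log D'}\,\big)$; the ratio is of order $(j,D)/\sqrt{\log D'}$, which is unbounded. So your step with the modulus-$D$ maximum does not imply the stated bound with an absolute constant, and the lost factor $(j,D)$ (which can be as large as $D/2$) would also ruin the application in Theorem \ref{p4.5}.

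The repair is to perform the reduction \emph{before} invoking the two lemmas, not after. Since $\frac{j}{D}=\frac{j'}{D'}$ as rational numbers and the summand has period $D'$ in $h$, one also has $\frac{1}{D}\sum_{h=0}^{D-1}e^{2i\pi\frac{j}{D}(h^2+kh)}=\frac{1}{D'}\sum_{h=0}^{D'-1}e^{2i\pi\frac{j'}{D'}(h^2+kh)}$, so the entire left-hand side of \eqref{4.81} can be rewritten with $j'/D'$ from the outset. Now run your steps with $D'$ in place of $D$: Lemma \ref{basicnm2} applies verbatim with modulus $D'$ (its proof uses only periodicity), every partial sum that appears has length at most $D'$, the full-period sum is itself the partial sum with $\m=D'-1$, and the claimed estimate follows with constant $2C$. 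This is what the paper's proof does tacitly, stating its intermediate bound and then the Corollary directly in the reduced variables.
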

 The Lemma below  is just a slight reformulation of Sark\"osy's estimate \cite{Sa}. 
\begin{lemma}  Let  $\a$ be a real number and    $a,q$ be positive integers such that $(a,q)=1$ and
$|\a-a/q|<1/q^2$. Then, for any positive integer  $M$, 
\begin{eqnarray}\label{sarko}
\sup_{k\ge 0}\,\big|\sum_{x=1}^M e^{2i\pi
\a (x^2 +kx)}\big|^2&\le &\sum_{u=1-M}^{M-1}\Big|\sum_{y=\max(1-u,1) }^{\min(M,M-u)}e^{4i\pi \a uy}\Big|
\cr &\le &49
\Big\{{M^2\over  q}+(M\log q) + q\log q \Big\}.
\end{eqnarray} 
\end{lemma}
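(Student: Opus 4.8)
The plan is to establish the two inequalities separately: the first is a Weyl differencing (squaring) identity, and the second reduces to a classical minorant sum governed by the Diophantine approximation to $\a$. First I fix $k\ge0$ and set $f(x)=x^2+kx$. Squaring the modulus and writing $f(x)-f(x')=(x-x')(x+x'+k)$, I substitute $u=x-x'$, $y=x'$, so that $f(x)-f(x')=2uy+u(u+k)$ and, for each fixed $u\in\{1-M,\dots,M-1\}$, the index $y$ ranges over $\max(1,1-u)\le y\le\min(M,M-u)$. Pulling the unimodular factor $e^{2i\pi\a u(u+k)}$ out of the inner sum and applying the triangle inequality over $u$ gives
\[
\Big|\sum_{x=1}^M e^{2i\pi\a f(x)}\Big|^2\le\sum_{u=1-M}^{M-1}\Big|\sum_{y=\max(1,1-u)}^{\min(M,M-u)}e^{4i\pi\a uy}\Big|.
\]
Since the right-hand side does not involve $k$, taking the supremum over $k\ge0$ yields the first inequality.

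For the second inequality I bound each inner sum as a geometric progression of at most $M$ terms with ratio $e^{4i\pi\a u}$. Using $|\sin\pi t|\ge2\|t\|$, where $\|t\|$ is the distance from $t$ to the nearest integer, I obtain $\big|\sum_y e^{4i\pi\a uy}\big|\le\min\!\big(M,\tfrac{1}{2\|2\a u\|}\big)$. The term $u=0$ contributes at most $M$, and the symmetry $u\mapsto-u$ (which leaves $\|2\a u\|$ unchanged) reduces the remaining sum to $\sum_{u=1}^{M-1}\min\!\big(2M,\tfrac1{\|2\a u\|}\big)$. Writing $v=2u$, the even multiples are dominated by the full range, so the whole expression is at most $M+\sum_{v=1}^{2M}\min\!\big(2M,\tfrac1{\|\a v\|}\big)$.

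It remains to estimate this last minorant sum using the hypothesis $|\a-a/q|<1/q^2$ with $(a,q)=1$. The plan is to split $\{1,\dots,2M\}$ into $O(M/q+1)$ consecutive blocks of length $q$; within one block the quantities $\|\a v\|$ are, up to an error below $1/q$ coming from $|q\a-a|<1/q$, a rearrangement of $\{0,1/q,\dots,(q-1)/q\}$, so that $\sum_{\mathrm{block}}\min(2M,1/\|\a v\|)\ll M+q\log q$. Summing over the blocks produces a bound of the shape $M^2/q+M\log q+q\log q$, and tracking the absolute constants through the geometric-sum estimate and the block count gives the stated factor $49$. Equivalently, once the differencing identity above is in hand, the second inequality is precisely Sark\"osy's estimate \cite{Sa}, which may simply be quoted.

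The routine part is the differencing step; the one delicate point is the minorant estimate, where I must keep the constant explicit and handle the even multiples $v=2u$ so that the sharp approximation to $\a$ (rather than to $2\a$, whose reduced denominator could be only $q/2$) is the one being used, while extracting the $q\log q$ term from each length-$q$ block without loss.
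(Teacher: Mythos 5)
Your proposal is correct and follows essentially the same route as the paper: the first inequality comes from the identical Weyl differencing step (substituting $u=x-x'$, pulling out the unimodular factor $e^{2i\pi\a u(u+k)}$, and applying the triangle inequality to the inner sum over $y$), while the second inequality is Sark\"osy's estimate, which the paper simply quotes from the proof of Lemma 4, p.\,128 of \cite{Sa}. Your extra sketch of the geometric-series bound and the block-of-length-$q$ minorant argument is sound, but, as you yourself observe, that part can be (and in the paper is) cited directly.
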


%It implies for instance by taking $\a=1/N$, $a=1$, $q=N$, $M=m$ that
%$$ \Big|\sum_{k=1}^{m} e^{2i\pi \frac{k^2}{N}  } \Big| \le  7
%\Big\{{m \over  \sqrt N}+\sqrt{m\log N} + \sqrt{N\log N} \Big\},$$
%which for $m\le N$ is comparable to Lehmer's bound, up to a $\sqrt{\log N}$ factor. 
\begin{proof}  The last inequality in \eqref{sarko} is precisely what is established in the proof of Lemma 4 p.\,128 in \cite{Sa}.
Let $T=\sum_{x=1}^M e^{2i\pi
\a (x^2 +kx)}$. On expanding $|T|^2$, we get
\begin{eqnarray*}|T|^2  &=&
\sum_{y=1}^M\sum_{u=1-y}^{M-y} e^{2i\pi \a u(u+2y+  k )} =\sum_{u=1-M}^{M-1}\sum_{y=\max(1-u,1) }^{\min(M,M-u)}  e^{2i\pi \a u(u+2y+  k )}
\cr &\le &
\sum_{u=1-M}^{M-1}\Big|\sum_{y=\max(1-u,1) }^{\min(M,M-u)}  e^{4i\pi \a   uy }\Big|.
\end{eqnarray*}So that,
$$\sup_{k\ge 0}\Big|\sum_{x=1}^M\sum_{y=1}^M e^{2i\pi \a (x^2-y^2 +k(x-y))}\Big|^2\le 49
\Big\{{M^2\over  q}+(M\log q) + q\log q \Big\},$$
or
$$\sup_{k\ge 0}\Big|\sum_{x=1}^M e^{2i\pi
\a (x^2 +kx)}\Big| \le 7
\Big\{{M \over  \sqrt q}+\sqrt{ M\log q} + \sqrt{q\log q} \Big\}.$$
\end{proof}

 We can now pass to the proof of Theorem \ref{4.11}.
 \begin{proof}[Proof of Theorem \ref{4.11}]
Returning to \eqref{4.81}, we get  
\begin{eqnarray*}\sup_{k\ge 0}\big|\sum_{y=1}^{\m} e^{2i\pi{j'\over D'} 
(y^2+ky)} 
\big|&\le & 7\Big\{{\m \over  \sqrt{ D'}}+\sqrt{ \m\log D'} + \sqrt{D'\log D'} \Big\}
 \cr (\m \le
D')\quad & \le & 7  \Big\{ \sqrt{ D'}+2\sqrt{ D'\log D'} \Big\}
\cr   & \le& C_\e  (D')^{1/2+ \e}\le C_\e  (D )^{1/2+ \e}, 
\end{eqnarray*}
 where  
$j'=  j/(j,D)
$,
$D'=D/(j,D)$.
Thus
\begin{align}  \label{4.81a}   \Big|\sum_{h=0}^n   2^{-n}{{n}\choose{h}} e^{2i\pi{j\over D} (h^2+kh)}-& {1\over D}\sum_{h=
0}^{D-1}e^{2i\pi{j\over D}  (h^2+kh)} \Big|
\cr &\le  {C\over \sqrt n}\bigg(2+\max_{\m=1}^{D'-1} \big|\sum_{h=1}^{\m} e^{2i\pi{j'\over D'} 
(h^2+kh)} 
\big| \bigg)
  \le  {C_\e  (D )^{1/2+ \e}\over \sqrt n}.
\end{align} 
 
 Whence, 
\begin{equation}\label{4.9} \sup_{ 0\le j< D }\sup_{0\le k\le m-n}\Big|\sum_{h=0}^n   2^{-n}{{n}\choose{h}} e^{2i\pi{j\over D} (h^2+kh)}- {1\over D}\sum_{h=
0}^{D-1}e^{ i\pi{2j\over D}  (h^2+kh)} \Big|\le C_\e \left({     D  ^{1 + \e}\over   n }\right)^{1/2 } \! \!.
\end{equation}  
As
\begin{equation}\label{rho.} {1\over  D^2}\sum_{j=0}^{D-1} \sum_{h= 0}^{D-1}e^{ i\pi{2j\over D} 
(h^2+kh)}={\#\big\{1\le y\le D: D|y^2+ky\big\}\over D }
={\varrho_k(D)\over D } \!, 
\end{equation} 
it follows  
\begin{equation}\label{4.10}  \sup_{0\le k\le m-n} \bigg|{1\over D}\sum_{   j=0}^{D-1}\sum_{h=0}^n   2^{-n}{{n}\choose{h}} e^{2i\pi{j\over D} (h^2+kh)}
   -{\varrho_k(D)\over D } \Bigg|  \le C_\e \Big({     D  ^{1 + \e}\over   n }\Big)^{1/2 }\!\!
.
\end{equation} 
By combining now (\ref{4.10}) with (\ref{basicnm}), we obtain
$$ \Big|\P \{ D|B_nB_m \}-   \sum_{k=0}^{m-n} { {{m-n}\choose {k}} \over 2^{ (m-n)}  } {\varrho_k(D)\over D } 
\Big|
\le C_\e\,
\Big({     D  ^{1 + \e}\over   n }\Big)^{1/2 }. 
$$
\end{proof}

     \medskip \par

 It remains to compute   $\varrho_k(D)$.
  Introduce some   notation. Let   $v_p(d)$ denote the $p$-adic valuation of $d$, $p$ being a prime number, namely $p^{v_p(d)} || d$,   also \begin{eqnarray}\label{D.demi}D_{1/2} \,:=\,\prod_{p|D} p^{\lfloor {v_p(D)\over 2}\rfloor }. 
\end{eqnarray}

   %We shall prove the following exact formula. 
\begin{theorem} \label{romult1}   We have  
  \begin{equation*} \varrho_k(D)=\begin{cases}  \  \,
%\prod_{p|D} p^{\lfloor {v_p(D)\over 2}\rfloor }} 
D_{1/2}\qq & \hbox{ if \ $k=0$,}\cr
%\displaystyle{   \prod_{ v_p(k)<{v_p(D)/2}}(2p^{v_p(k)})\cdot\prod_{   v_p(k)\ge {v_p(D)/2} }  p^{\lfloor{v_p(D)\over2}\rfloor}}\qq &\hbox{ if $k\ge 1$.}
\  \, 2^{\#\{p\,:\, 1\le v_p(k)<v_p(D)/2 \}}\, (k, D_{1/2})\qq &\hbox{ if \ $k\ge 1$.}
\end{cases}
\end{equation*}
     \end{theorem}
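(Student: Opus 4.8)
The plan is to use that $\varrho_k$ is multiplicative in $D$ for fixed $k$ (as recorded right after \eqref{rho}), so that it suffices to evaluate the local factor $\varrho_k(p^{a})$ at each prime power $p^{a}\,\|\,D$, $a=v_p(D)$, and then take the product over $p\mid D$. Since $v_p\big(y(y+k)\big)=v_p(y)+v_p(y+k)$, the quantity $\varrho_k(p^{a})$ is exactly the number of residues $y\bmod p^{a}$ with $v_p(y)+v_p(y+k)\ge a$. The case $k=0$ is immediate: the condition becomes $p^{a}\mid y^{2}$, i.e. $v_p(y)\ge\lceil a/2\rceil$, which has $p^{a-\lceil a/2\rceil}=p^{\lfloor a/2\rfloor}$ solutions, and multiplying over $p\mid D$ returns $\varrho_0(D)=\prod_{p\mid D}p^{\lfloor v_p(D)/2\rfloor}=D_{1/2}$ in the notation of \eqref{D.demi}.

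For $k\ge 1$ the substance is the local count. The congruence $y(y+k)\equiv 0\ (\mathrm{mod}\ p^{a})$ has the two roots $y\equiv 0$ and $y\equiv -k$, and I would split the count according to $j=v_p(y)$, using the clean additivity $v_p(y+k)=\min(j,b)$ whenever $j\ne b=v_p(k)$ and treating the stratum $j=b$ separately (there, writing $y=p^{b}y_1$, $k=p^{b}k'$ with $p\nmid y_1k'$, one has $v_p(y+k)=b+v_p(y_1+k')$). The outcome is governed by whether the two roots $0$ and $-k$ remain distinct modulo $p^{\lceil a/2\rceil}$, i.e. by whether $v_p(k)<v_p(D)/2$: if $b\ge a/2$ the roots merge and the solutions form a single block of size $p^{\lfloor a/2\rfloor}$, whereas if $b<a/2$ the roots are separated and the solution set splits into two disjoint blocks, each of size $p^{\min(b,\lfloor a/2\rfloor)}=p^{b}$, for a total of $2p^{b}$. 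Thus the local factor equals $p^{\min(b,\lfloor a/2\rfloor)}$, doubled precisely at the primes recorded in the exponent $\#\{p:\,1\le v_p(k)<v_p(D)/2\}$ of Theorem \ref{romult1}; taking the product over $p\mid D$ and recognising $\prod_{p\mid D}p^{\min(v_p(k),\lfloor v_p(D)/2\rfloor)}=(k,D_{1/2})$ then assembles into the asserted identity.

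The main obstacle is the boundary stratum $j=v_p(y)=v_p(k)$, the single place where the additivity $v_p(y+k)=\min(v_p(y),v_p(k))$ breaks down: here $v_p(y+k)$ can be arbitrarily large, and the count is controlled by the auxiliary congruence $y_1\equiv -k'\ (\mathrm{mod}\ p^{\,a-2b})$, which is vacuous when $b\ge a/2$ and cuts the block down to $p^{b}$ when $b<a/2$. It is exactly this stratum that decides whether the two roots of $y(y+k)$ stay separated or coalesce, hence whether the extra factor $2$ occurs, so pinning the threshold at $v_p(k)=v_p(D)/2$ and the correct power of $p$ in each block is the delicate point of the argument.
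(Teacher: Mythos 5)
Your strategy coincides with the paper's: multiplicativity of $\varrho_k$ (Lemma \ref{romult}) reduces everything to the local count modulo $p^{a}$, $a=v_p(D)$, which you then perform by stratifying on $j=v_p(y)$. Your execution is in fact tighter than the paper's: the ultrametric identity $v_p(y+k)=\min(j,b)$ for $j\ne b$ (with $b=v_p(k)$), together with the reduced congruence $y_1\equiv -k' \pmod{p^{\,a-2b}}$ on the diagonal stratum $j=b$, replaces in one stroke the paper's separate hand computations for $r=1,2,3$, its parity split $r=2r'$ versus $r=2r'+1$, and the three ranges $1\le s<r'$, $s=r'$, $r'<s<r$ for $y=p^sY$, $(Y,p)=1$. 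Your stratum counts are correct: for $b\ge a/2$ the solution set is exactly $\{y: v_p(y)\ge\lceil a/2\rceil\}$, of size $p^{\lfloor a/2\rfloor}$, while for $b<a/2$ it is the disjoint union of the two blocks $y\equiv 0$ and $y\equiv -k \pmod{p^{\,a-b}}$, each of size $p^{b}$. This reproduces the paper's intermediate identity \eqref{valro} exactly, including its preliminary observation that $\varrho_k(p^r)=2$ whenever $p\nmid k$.

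The one genuine defect is your final assembly step. By your own count the local factor is doubled at \emph{every} $p\mid D$ with $v_p(k)<v_p(D)/2$ --- in particular at every $p\mid D$ with $p\nmid k$, where $b=0$ and your two blocks give $2p^{0}=2$ --- yet you assert that the doubling occurs ``precisely'' at the primes with $1\le v_p(k)<v_p(D)/2$, a set that excludes all the $b=0$ primes. As literally written the resulting identity fails: for $D=3$, $k=1$ one has $\varrho_1(3)=2$ (the solutions are $y=2$ and $y=3$), whereas $2^{\#\{p\,:\,1\le v_p(1)<v_p(3)/2\}}\,(1,D_{1/2})=2^{0}\cdot(1,1)=1$. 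What your strata actually prove is $\varrho_k(D)=2^{\#\{p\,:\,v_p(k)<v_p(D)/2\}}(k,D_{1/2})$, where the condition is vacuous for $p\nmid D$, so the exponent counts the primes dividing $D$ but not $k$ together with those satisfying $1\le v_p(k)<v_p(D)/2$. You have thereby reproduced a slip already present in the paper's own condensation of \eqref{valro} into \eqref{valro1}; nevertheless, as submitted, your last sentence contradicts your (correct) block count, and the fix is simply to drop the constraint $1\le v_p(k)$ from the exponent set.
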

  
The proof is given in Appendix. 
%Note  that $ (k, D_{1/2})= \sum_{d|k\,|\,d|D_{1/2}}\p(d)$. 
We deduce
\begin{corollary}\label{rokest} For any positive integers  $D,k$,   
$$\varrho_k(D)\le 2^{\o ((k, D_{1/2}))}\, (k, D_{1/2})
%2^{\o(D) }(k\wedge  \sqrt D) 
 \qq
  {\rm and}\qq
\varrho_0(D)\le  
\sqrt D.
$$   
\end{corollary}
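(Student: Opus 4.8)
The plan is to deduce both inequalities directly from the explicit formula for $\varrho_k(D)$ given in Theorem \ref{romult1}, so the corollary is essentially a matter of bounding the quantities appearing there.

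First I would treat the bound on $\varrho_0(D)$. By Theorem \ref{romult1} we have $\varrho_0(D)=D_{1/2}$, and from the definition \eqref{D.demi},
\begin{equation*}
D_{1/2}=\prod_{p|D}p^{\lfloor v_p(D)/2\rfloor}\le \prod_{p|D}p^{v_p(D)/2}=\Big(\prod_{p|D}p^{v_p(D)}\Big)^{1/2}=\sqrt D,
\end{equation*}
since $\lfloor v_p(D)/2\rfloor\le v_p(D)/2$ for every prime $p$. This gives $\varrho_0(D)\le\sqrt D$ immediately.

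Next I would handle the case $k\ge 1$. Theorem \ref{romult1} gives
\begin{equation*}
\varrho_k(D)=2^{\#\{p\,:\,1\le v_p(k)<v_p(D)/2\}}\,(k,D_{1/2}).
\end{equation*}
The key observation is that every prime $p$ counted in the exponent, i.e. every $p$ with $1\le v_p(k)<v_p(D)/2$, satisfies in particular $v_p(k)\ge 1$ and $\lfloor v_p(D)/2\rfloor\ge 1$, hence $p$ divides $(k,D_{1/2})$. Therefore the set $\{p:1\le v_p(k)<v_p(D)/2\}$ is contained in the set of prime divisors of $(k,D_{1/2})$, so its cardinality is at most $\omega\big((k,D_{1/2})\big)$. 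Consequently
\begin{equation*}
\varrho_k(D)\le 2^{\omega((k,D_{1/2}))}\,(k,D_{1/2}),
\end{equation*}
which is the first claimed inequality.

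The argument is entirely elementary once Theorem \ref{romult1} is granted; the only point requiring slight care is the inclusion of prime sets used to pass from the counting exponent to $\omega\big((k,D_{1/2})\big)$, and verifying that each relevant prime indeed divides both $k$ and $D_{1/2}$. I expect no genuine obstacle here, since all the arithmetic content has been placed in the (deferred) proof of Theorem \ref{romult1}; the corollary simply records two convenient majorizations that will be used in combination with Theorem \ref{p4.5} to produce the final bound on $\P\{D|B_nB_m\}$ in Theorem \ref{bnmbound}.
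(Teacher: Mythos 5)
Your proof is correct and follows exactly the route the paper intends: the paper's own proof of Corollary \ref{rokest} is simply the word ``Immediate,'' since both bounds follow from the explicit formula of Theorem \ref{romult1} by noting $\lfloor v_p(D)/2\rfloor\le v_p(D)/2$ and that every prime $p$ with $1\le v_p(k)<v_p(D)/2$ divides $(k,D_{1/2})$. You have merely written out the details that the paper leaves tacit, so there is nothing to criticize.
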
  
%[[]]Further, by Corollary \ref{rokest} applied with $D=d\d$,
%\begin{eqnarray*}{1\over  2^{m-n}}\sum_{k=1}^{m-n}C^k_{m-n}{\varrho_k(d\d)\over d\d
%}&\le& {1\over  2^{m-n} }\sum_{k=1}^{m-n} C^k_{m-n} 2^{\o ((k, (d\d)_{1/2}))}\, (k, (d\d)_{1/2}) \cr 
%&= &\E  2^{\o ((B_{m-n}, (d\d)_{1/2}))}\, (B_{m-n}, (d\d)_{1/2}) .
%\end{eqnarray*}

\begin{proof}Immediate. 
\end{proof}
\begin{theorem} \label{bnmbound} We have for any positive integers $n,m,D$,  
\begin{eqnarray*}   \P \{ D|B_nB_m \} 
 &\le &{1  \over D }\sum_{d| D_{1/2} } 2^{\o (d)}d
 \Big(\sum_{u|  (D_{1/2}/d )}\m(u)\, \P\{ud|B_{m-n}\}\Big)
\cr &  &   +{1\over 2^{  m-n }\sqrt D } +C_\e
\left({     D  ^{1 + \e}\over   n }\right)^{1/2 }. 
\end{eqnarray*} 
%$$  \P \{ D|B_nB_m \}    \le   {1\over 2^{  m-n }\sqrt D}+    {2^{\o(D)}( m-n)  \over D } + C_\e {     D  ^{{1/ 2} + \e}\over   \sqrt n }  .    $$
   \end{theorem}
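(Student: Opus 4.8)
The plan is to read off the main term from Theorem \ref{p4.5}, insert the explicit estimates for $\varrho_k(D)$ supplied by Corollary \ref{rokest}, and then recognize the resulting binomial-weighted sums as probabilities $\P\{q|B_{m-n}\}$ through a M\"obius inversion on the gcd condition. First I would invoke \eqref{4.11}, which already isolates the error $C_\e(D^{1+\e}/n)^{1/2}$ and reduces everything to the main term
$$\frac{1}{D2^{m-n}}\sum_{k=0}^{m-n}\binom{m-n}{k}\varrho_k(D).$$
I would peel off the summand $k=0$. Since $\binom{m-n}{0}=1$ and $\varrho_0(D)\le\sqrt D$ by Corollary \ref{rokest}, this contributes at most $\varrho_0(D)/(D2^{m-n})\le 1/(2^{m-n}\sqrt D)$, which is precisely the second term of the claimed bound.

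For the range $k\ge 1$ I would substitute the bound $\varrho_k(D)\le 2^{\o((k,D_{1/2}))}(k,D_{1/2})$ of Corollary \ref{rokest}, legitimate because the binomial weights are nonnegative, and then reorganize the sum according to the value $d=(k,D_{1/2})$, which ranges over the divisors of $D_{1/2}$. Writing $k=d\ell$ with $(\ell,D_{1/2}/d)=1$, applying the identity $\mathbf{1}[(\ell,M)=1]=\sum_{u\mid(\ell,M)}\mu(u)$ with $M=D_{1/2}/d$, exchanging the order of summation over $u\mid M$ and over $s$ (with $\ell=us$), and using $\sum_{s\ge 0}\binom{m-n}{dus}=2^{m-n}\P\{ud|B_{m-n}\}$ together with $\sum_{u\mid M}\mu(u)=\mathbf{1}[d=D_{1/2}]$, I would arrive at the exact identity
$$\frac{1}{D2^{m-n}}\sum_{k=1}^{m-n}\binom{m-n}{k}2^{\o((k,D_{1/2}))}(k,D_{1/2}) = \frac{1}{D}\sum_{d\mid D_{1/2}}2^{\o(d)}d\sum_{u\mid(D_{1/2}/d)}\mu(u)\P\{ud|B_{m-n}\} - \frac{2^{\o(D_{1/2})}D_{1/2}}{D2^{m-n}}.$$
The first term is exactly the leading term of the assertion, and the subtracted correction is nonnegative, so dropping it preserves the upper bound. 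Assembling the $k=0$ contribution, this identity, and the error term from \eqref{4.11} then yields the stated inequality.

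The main obstacle I anticipate is purely bookkeeping rather than conceptual: one must keep the reorganization by $d=(k,D_{1/2})$ consistent with the M\"obius inversion of the coprimality indicator, and in particular track the boundary term coming from $s=0$ (equivalently the $\ell=0$ omission), since it is exactly this peeling that produces the harmless nonnegative correction $2^{\o(D_{1/2})}D_{1/2}/(D2^{m-n})$ whose sign guarantees the inequality points in the favorable direction. No new analytic estimate is needed beyond what Theorem \ref{p4.5} and Corollary \ref{rokest} already provide.
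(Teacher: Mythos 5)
Your proposal is correct and follows essentially the same route as the paper's proof: reduce via Theorem \ref{p4.5} to the main term, bound $\varrho_0(D)\le\sqrt D$ and $\varrho_k(D)\le 2^{\o((k,D_{1/2}))}(k,D_{1/2})$ by Corollary \ref{rokest}, then group the $k\ge 1$ sum by $d=(k,D_{1/2})$ and apply M\"obius inversion on the coprimality condition to recover the probabilities $\P\{ud|B_{m-n}\}$. Your bookkeeping of the dropped nonnegative correction, namely $2^{\o(D_{1/2})}D_{1/2}/(D2^{m-n})$ rather than the $1/(D2^{m-n})$ recorded in the paper's intermediate display, is in fact slightly more precise than the paper's, and in either case the correction is nonnegative so dropping it preserves the claimed upper bound.
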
  
\begin{proof} By Theorem \ref{p4.5},
 for any $\e>0$, there exists a constant $C_\e$ depending on $\e$ only, such that  for any
 positive  integers
 $n,m, D$ with 
 $m>n$
\begin{equation}\label{4.11a} \Big|\P \{ D|B_nB_m \}-  {1\over D  2^{ m-n}} \sum_{k=0}^{m-n} { {{m-n}\choose {k}}     } \varrho_k(D) 
\Big|
\le C_\e
\left({     D  ^{1 + \e}\over   n }\right)^{1/2 }. 
\end{equation} 
 By Corollary \ref{rokest}, 
 %and since 
 %$ (k, D_{1/2})= \sum_{d|k\,|\,d|D_{1/2}}\p(d)$,
\begin{eqnarray}\label{323}
& & {1\over D  2^{ m-n}}  \sum_{k=0}^{m-n}   {{m-n}\choose {k}} \,  \varrho_k(D)
\ = \  {\varrho_0(D)\over 2^{ 
m-n }D }+{1\over D  2^{ m-n}} 
\sum_{k=1}^{m-n}   {{m-n}\choose {k}}\,
 \varrho_k(D)  
\cr &  \le  &  {1\over 2^{  m-n }\sqrt D }+  {1  \over D 2^{  m-n }}  \sum_{k=1}^{m-n}   {{m-n}\choose {k}} 2^{\o ((k, D_{1/2}))}\, (k, D_{1/2})
\cr & =  &  {1\over 2^{  m-n }\sqrt D }+ 
 {1  \over D 2^{  m-n }}\sum_{d| D_{1/2}} 2^{\o (d)}d
 \sum_{k=1\atop (k, D_{1/2})=d}^{m-n}   {{m-n}\choose {k}}  
%\cr &  =  &  {1\over 2^{  m-n }\sqrt D }+  {1  \over D 2^{  m-n }} \sum_{ d|D_{1/2}} \, \p(d)\sum_{k=1\atop d|k}^{m-n}   C^k_{m-n} 2^{\o ((k, D_{1/2}))}%\cr &  \le  &  {1\over 2^{  m-n }\sqrt D }+  {2^{\o(D)}  \over D 2^{  m-n }}  \sum_{k=1}^{m-n}   C^k_{m-n}k
 %\cr &  \le   &{1\over 2^{  m-n }\sqrt D}+   .... 
 .\end{eqnarray}
Let $\d$ be the arithmetical function defined by
\begin{eqnarray} \label{Delta}   \d(n)=\begin{cases}1\quad & {\rm if}\ n=1,\cr
0\quad & {\rm if}\ n \not=1. \end{cases} \end{eqnarray} Recall that 
\begin{eqnarray} \label{sp} \sum_{d|n}\m(d)=\d(n) . \end{eqnarray} 
 
 Consider first the sub-sum corresponding to $d=1$. We have  
\begin{eqnarray*}
 {1  \over  2^{  m-n }}
 \sum_{k=1\atop (k, D_{1/2})=1}^{m-n}   {{m-n}\choose {k}}
 &=&
 {1  \over   2^{  m-n }}
 \sum_{k=1}^{m-n}   {{m-n}\choose {k}}\d\big((k, D_{1/2})\big)
 \cr&=& 
 {1  \over D 2^{  m-n }}
 \sum_{k=1}^{m-n}   {{m-n}\choose {k}}\Big(\sum_{u|(k, D_{1/2} )}\m(u)\Big)
\cr&=& 
 \sum_{u|  D_{1/2} }\m(u)
 {1  \over   2^{  m-n }} \sum_{k=1\atop u|k}^{m-n}   {{m-n}\choose {k}}\cr&=& 
 \sum_{u|  D_{1/2} }\m(u)\,\big(\P\{u|B_{m-n}\}-{1  \over  2^{  m-n }}
\big)\end{eqnarray*}
Now let $(k, D_{1/2})=d>1$. Let $k=\k d$.
%, $D_{1/2}=\d d$
Then $(k,D_{1/2}/d)=1$, and so
\begin{eqnarray*}
& & {1  \over   2^{  m-n }}
 \sum_{k=1\atop (k, D_{1/2})=d}^{m-n}   {{m-n}\choose {k}}
\ = \ 
 {1  \over   2^{  m-n }}
 \sum_{1\le \k\le (m-n)/d\atop 
 (\k,D_{1/2}/d)=1}     {{m-n}\choose {\k d}}
\cr&=&
 {1  \over   2^{  m-n }}
 \sum_{1\le \k\le (m-n)/d}   {{m-n}\choose {\k d}}\d\big((\k, D_{1/2}/d)\big)
 \cr&=&{1  \over   2^{  m-n }}
 \sum_{1\le \k\le (m-n)/d}   {{m-n}\choose {\k d}}\Big(\sum_{u|(\k, D_{1/2}/d) }\m(u)\Big)
 \cr&=&\sum_{u|  D_{1/2}/d }\m(u)
 {1  \over   2^{  m-n }} \sum_{1\le \k\le (m-n)/d\atop u|\k}   {{m-n}\choose {\k d}}
    \cr &\buildrel{(\k = u\tau)}\over{=}& 
 \sum_{u|  D_{1/2} /d}\m(u)
 {1  \over   2^{  m-n }} \sum_{1\le ud\tau \le  m-n}     {{m-n}\choose {ud\tau}}
 \cr &=&
 \sum_{u|  (D_{1/2}/d ) }\m(u)
 {1  \over   2^{  m-n }} \sum_{1\le x \le  m-n\atop 
 ud|x}    {{m-n}\choose {x}}\cr&=& 
  \sum_{u|  (D_{1/2}/d )}\m(u)\,\big(\P\{ud|B_{m-n}\}-{1  \over  2^{  m-n }}
\big)
  \cr&=&
  \sum_{u|  (D_{1/2}/d )}\m(u) \P\{ud|B_{m-n}\}-{\d(D_{1/2}/d ) \over  2^{  m-n }}.\end{eqnarray*} 
Consequently,
 \begin{eqnarray*}
& &{1  \over  2^{  m-n }}\sum_{d| D_{1/2}} 2^{\o (d)}d
 \sum_{k=1\atop (k, D_{1/2})=d}^{m-n}   {{m-n}\choose {k}}  
\cr &=&
 \sum_{d| D_{1/2}} 2^{\o (d)}d
 \Big(\sum_{u|  (D_{1/2}/d )}\m(u)\,\big(\P\{ud|B_{m-n}\}-{\d(D_{1/2}/d )  \over  2^{  m-n }}\Big)
 \cr &=&
 \sum_{d| D_{1/2} } 2^{\o (d)}d
 \Big(\sum_{u|  (D_{1/2}/d )}\m(u)\, \P\{ud|B_{m-n}\}\Big)-{1  \over  2^{  m-n }} .
 \end{eqnarray*}
We observe   that $\d(D_{1/2}/d ) =0$ as long as $d< D_{1/2}$, and if $d=D_{1/2}$, then the corresponding summand   produces the contribution $2^{\o (D_{1/2})}D_{1/2}   \P\{ D_{1/2}|B_{m-n}\}-{1  \over  2^{  m-n }}$  (recalling that $\m(1)=1$), whence the last line. 
\vskip 3 pt
By reporting this in \eqref{323} we get
\begin{eqnarray}\label{323a} 
 {1\over D2^{ m-n }  } \sum_{k=0}^{m-n}  {{m-n}\choose {k}} \, \varrho_k(D) 
 & =  & {1  \over D 2^{  m-n }}\sum_{d| D_{1/2}} 2^{\o (d)}d
 \sum_{k=1\atop (k, D_{1/2})=d}^{m-n}   {{m-n}\choose {k}}  +{1\over 2^{  m-n }\sqrt D }
\cr & =  &  {1  \over D }\sum_{d| D_{1/2} } 2^{\o (d)}d
 \Big(\sum_{u|  (D_{1/2}/d )}\m(u)\, \P\{ud|B_{m-n}\}\Big)
 \cr & &  +{1\over 2^{  m-n }\sqrt D }-{1  \over  D2^{  m-n }} 
   .\end{eqnarray}
By inserting now this into  \eqref{4.11a}, we finally get
\begin{eqnarray}\label{4.11b}  \P \{ D|B_nB_m \} 
&\le &{1  \over D }\sum_{d| D_{1/2} } 2^{\o (d)}d
 \Big(\sum_{u|  (D_{1/2}/d )}\m(u)\, \P\{ud|B_{m-n}\}\Big)
\cr &  &   +{1\over 2^{  m-n }\sqrt D }-{1  \over D 2^{  m-n }}+C_\e
\left({     D  ^{1 + \e}\over   n }\right)^{1/2 }
\cr &\le &{1  \over D }\sum_{d| D_{1/2} } 2^{\o (d)}d
 \Big(\sum_{u|  (D_{1/2}/d )}\m(u)\, \P\{ud|B_{m-n}\}\Big)
\cr &  &   +{1\over 2^{  m-n }\sqrt D } +C_\e
\left({     D  ^{1 + \e}\over   n }\right)^{1/2 }. 
\end{eqnarray} 
This achieves the proof of Theorem \ref{bnmbound}.
\end{proof}

\vskip 30 pt

\section{Divisibility and primality in the Rademacher random walk}\label{s8}
 %%%%%
%%%%%
The primality properties in that case  are more studied than divisibility. The necessary value distribution results  of divisors     are taken from  \cite{W7a}, which contains an application to the functional equation of the
 Zeta-Riemann function. An extremal divisor case is thoroughly investigated in \cite{W6}. The proof deeply relies on mixing properties of the system $\big\{ \chi\{d|R_n\},    \    2\le d\le n,\ n\ge 1\big\}$.
 
 \subsection{Divisibility.}Put, 
\begin{eqnarray}\label{radem(2.7)}\qq \qq  
\Theta_1(\d,M) & =& 2\ \displaystyle{\sum_{\ell\in \Z} e^{-{2M\pi^2\ell^2\over \d^2}},} \cr   
\Theta_2(\d,M)&=&2\ \displaystyle{\sum_{\ell=0}^\infty \big\{e^{-{ M\pi^2(2\ell )^2/ 2\d^2}}-e^{-{
M\pi^2(2\ell+1)^2/ 2\d^2}}\big\}-1.}
 \end{eqnarray}
%\eqno(2.7)
We note that
\begin{equation}\Theta_1(\d,M)=2\,\d /\sqrt{  {2M\pi } } +\mathcal O(1)
\end{equation}
when $\d\to \infty$, uniformly in any interval of   $M$.

 \vskip 3 pt  The  Theorem below provides precise estimates of $\P\big\{ \d | R_{M}\big\}$.
\begin{theorem}[\cite{W7}, Th.\,2.1]\label{radem.t2.1}
% Theorem 2.1.}
 Let $\a >\a^\prime>3/2$. There exist constants
$C$, $\d_0$  and  
$M_0$, depending on $\a,\a^{\prime}$ only, such that for any $M\ge M_0$, $M\ge \d\ge \d_0$:
\vskip 3 pt  
--- if $\d$ and $M$ are even,
\begin{eqnarray} \Big|\P\big\{ \d | R_{M}\big\} -{ 2 + 4 \cos^M {2\pi  \over \d}  \over \d}\Big| \le  M^{- \a^\prime },& \qquad 
\hbox{if} \ \  \d <
2\pi 
\sqrt{M\over {2\a\log M}} ,\cr
 \Big|  \P\big\{ \d | R_{M}\big\}-  {1\over \d}\Theta_1(\d,M)  \Big|\le C  {
\log^{5/2} M 
\over M^{3/2}},& \qquad \hbox{if} \ \d \ge  2\pi  \sqrt{M\over {2\a\log M}}. 
\end{eqnarray}
\vskip 3 pt   
--- if $\d$ and $M$ are odd,
\begin{eqnarray}  \Big|\P\{ \d|R_M\}-{1+ 2 (\cos {2\pi  \over \d})^M-2(\cos{\pi  \over \d})^M \over \d}\Big| 
\le M^{- \a^\prime }, & &  \hbox{if} \ \ \d <
2\pi 
\sqrt{M\over {2\a\log M}} ,\cr
  \Big|\P\{ \d|R_M\} - { \Theta_2(\d,M) \over \d}\Big|\le C   {\log^{5/2} M \over M^{3/2}},  & & \hbox{if} \ \d \ \ge  2\pi  \sqrt{M\over {2\a\log
M}}.
\end{eqnarray} 
\end{theorem}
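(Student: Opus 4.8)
The plan is to start from an exact character-sum formula for $\P\{\d|R_M\}$ and then analyse the resulting trigonometric sum by isolating its peaks. From the orthogonality identity $\d\,\chi(\d|R_M)=\sum_{j=0}^{\d-1}e^{2i\pi(j/\d)R_M}$, independence of the $\e_k$, and $\E\,e^{2i\pi(j/\d)\e_k}=\cos(2\pi j/\d)$, I would obtain
\begin{equation*}
\P\{\d|R_M\}=\frac1\d\sum_{j=0}^{\d-1}\cos^M\Big(\frac{2\pi j}{\d}\Big).
\end{equation*}
Using the symmetry $j\leftrightarrow\d-j$ together with $\cos(\pi-x)=-\cos x$, I would organise this sum around its two possible peaks: $j=0$, where $\cos=1$, and (when $\d$ is even) $j=\d/2$, where $\cos=-1$ and the sign $(-1)^M$ appears. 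The parity of $M$ thus enters only through $(-1)^M$ at the second peak and its neighbours, and this is exactly what produces the four parity cases of the statement.

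In the small-divisor regime $\d<2\pi\sqrt{M/(2\a\log M)}$ I would keep the peak terms together with their immediate neighbours $j=\pm1$ about each peak. When $\d,M$ are both even this retains $j\in\{0,1,\d-1,\tfrac\d2-1,\tfrac\d2,\tfrac\d2+1\}$, and since $(-1)^M=1$ it yields the main term $(2+4\cos^M(2\pi/\d))/\d$; when $\d,M$ are both odd there is only the peak at $j=0$, while the two terms $j=(\d\pm1)/2$ each contribute $-\cos^M(\pi/\d)$ (here $(-1)^M=-1$) and $j=1,\d-1$ each contribute $\cos^M(2\pi/\d)$, giving $(1+2\cos^M(2\pi/\d)-2\cos^M(\pi/\d))/\d$. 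The discarded terms are controlled by $|\cos x|\le e^{-x^2/2}$ and $|\cos(\pi-x)|=|\cos x|$; the largest of them is $\cos^M(4\pi/\d)\le e^{-8M\pi^2/\d^2}$, and the hypothesis on $\d$ forces $e^{-8M\pi^2/\d^2}\le M^{-4\a}$, so that $\frac1\d\sum_{\text{discarded}}|\cos^M(2\pi j/\d)|\le M^{-\a'}$, the gap $\a>\a'$ leaving ample room to absorb the factor $\d$ and the summation of the tail.

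In the large-divisor regime $\d\ge 2\pi\sqrt{M/(2\a\log M)}$ no bounded set of terms dominates, so instead I would compare the whole sum to a Gaussian theta series. Writing
\begin{equation*}
\cos^M\Big(\frac{2\pi j}{\d}\Big)=e^{-2M\pi^2 j^2/\d^2}\,\exp\Big(M\big[\log\cos\big(\tfrac{2\pi j}{\d}\big)+\tfrac{2\pi^2 j^2}{\d^2}\big]\Big),
\end{equation*}
I would control the correction through the expansion $\log\cos x=-x^2/2-x^4/12-\cdots$, whose net effect is $O(M(j/\d)^4)=O((\log M)^2/M)$ uniformly over the range where the Gaussian is non-negligible and exponentially small outside it. Summing over $j$ and dividing by $\d$, Poisson summation then identifies the limiting object: in the case $\d,M$ even the admissible residues are all multiples of $\d$ and the sum collapses to $\Theta_1(\d,M)/\d=\frac2\d\sum_{\ell\in\Z}e^{-2M\pi^2\ell^2/\d^2}$, whereas in the case $\d,M$ odd only odd multiples survive, producing the alternating series $\Theta_2(\d,M)=\sum_{n\in\Z}(-1)^n e^{-M\pi^2 n^2/(2\d^2)}$ after the half-integer shift in Poisson's formula. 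The accumulated error is of order $(\log M)^{5/2}/M^{3/2}$.

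The main obstacle is the large-divisor case: obtaining the sharp $M^{-3/2}$ error forces the term-by-term Gaussian comparison above rather than a crude summation of the local limit theorem, and it requires honest control of the remainder of $\log\cos$ uniform in both $\d$ and $M$ across the near-critical scale $\d\sim\sqrt{M/\log M}$, followed by correct bookkeeping of Poisson summation (with the half-integer shift) to land on exactly $\Theta_1$ or $\Theta_2$. This mirrors the intrinsic proof of the Bernoulli estimate in Theorem~\ref{estPdlBn} from \cite{W1}, the doubled argument $2\pi j/\d$ in place of $\pi j/d$ being precisely what creates the second peak and hence the parity dichotomy.
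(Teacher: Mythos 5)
The paper never proves Theorem \ref{radem.t2.1} internally: it is imported verbatim from \cite{W7}, so the only point of comparison inside this paper is the parallel Bernoulli machinery (the exact formula \eqref{basic.i}, its folded form \eqref{Lemma41}, and the theta-function estimate of Theorem \ref{estPdlBn}). Your sketch is correct and follows exactly that route transposed to the Rademacher walk. The identity $\P\{\d|R_M\}=\frac1\d\sum_{j=0}^{\d-1}\cos^M(2\pi j/\d)$ is right (the characteristic function is real here, which is why no phase factor appears); your peak bookkeeping reproduces the four parity-dependent main terms exactly: for $\d,M$ even the peaks $j=0,\d/2$ and their neighbours give $(2+4\cos^M(2\pi/\d))/\d$, and for $\d,M$ odd the half-integer offsets $j=(\d\pm1)/2$ give the term $-2\cos^M(\pi/\d)$, which is genuinely needed in the main term since it is only of size $\approx M^{-\a/4}$ near the threshold and so cannot be absorbed into $M^{-\a'}$. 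The discarded-term bound $e^{-8M\pi^2/\d^2}\le M^{-4\a}\le M^{-\a'}$ is correct, and your error accounting in the large-divisor regime (relative correction $O(M(j/\d)^4)=O((\log M)^2/M)$ per term, times $\asymp\d\sqrt{\log M/M}$ effective terms, divided by $\d$) lands precisely on $(\log M)^{5/2}/M^{3/2}$, uniformly in $\d$ because the factor $\d$ cancels.

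Two small corrections to the write-up. First, Poisson summation is not needed where you invoke it: the term-by-term Gaussian comparison on the character-sum side already produces $\Theta_1$ and $\Theta_2$ in the form \eqref{radem(2.7)} (two Gaussian trains centred at $j\equiv 0$ and $j\equiv \d/2$ in the even case; the alternating series $\sum_{n\in\Z}(-1)^n e^{-M\pi^2n^2/(2\d^2)}$ coming from integer versus half-integer offsets in the odd case). Poisson summation only enters if one wants to pass to the spatial form, as in \eqref{poisson.comp.theta.llt}; your invocation of it is harmless but redundant. Second, in the odd small-divisor case the largest discarded term is $\cos^M(3\pi/\d)\le M^{-9\a/4}$, not $\cos^M(4\pi/\d)$; the same hypothesis on $\d$ absorbs it, but the worst case should be stated correctly.
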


\bigskip\par 
\subsection{An extremal divisor case.} Put  successively, 
$$ N_1 =1, \qquad N_k =\inf\{\, N>N_{k-1}  \, :\, N\   {\rm even \  and} \  \, N | R_{N^2}\, \},\qquad(k>1). $$ 
The fact that this random sequence  is well defined,  can be deduced from our result below. 
%Before stating it, it is  worth observing that for no
%sequence of positive reals
%$\{a_k, k\ge 1\}$ tending to infinity, the ratios 
%$${N_k\over a_k}$$
%can  converge in probability to a random variable that is positive almost surely. Indeed,
%according to a classical result, this would imply the validity of a randomly selected  central limit theorem, namely
%(see \cite{Bi}, Th.\,17.2 p.\,147) 
%$$ {R_{N_k}\over\sqrt{N_k}} \quad \buildrel{{\mathcal D}}\over{\Longrightarrow}\quad {\mathcal N}(0,1), $$
%which is naturally impossible. 

\vskip 3 pt The following fine asymptotic result    exhibits an exponential growth of the sequence
$(N_k)_k$. 

\begin{theorem}[Weber \cite{W6}]\label{NkRn} Put $s= 2 \sum_{j\in \Z}  e^{-2\pi^2j^2}$. For any $\tau>7/8$,
$$ \     \log N_k = {k\over s} + {\mathcal O}\big(  k^{\tau} \big) \qquad\qquad  \hbox{\it almost surely.} $$
\end{theorem}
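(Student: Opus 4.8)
The plan is to read $N_k$ as the position of the $k$-th success in the family of divisibility events $Y_N:=\chi\{N\,|\,R_{N^2}\}$ indexed by even $N$, and to pin down the induced counting function. The first step is to evaluate the success probability through Theorem~\ref{radem.t2.1}. Taking $\d=N$ and $M=N^2$ (both even), the threshold $\d\ge 2\pi\sqrt{M/(2\a\log M)}$ reduces to $\log N\ge \pi^2/\a$ and so holds for all large $N$; we are thus in the Theta regime, and since $M/\d^2=1$ the Theta factor collapses to the constant of the statement,
$$\Theta_1(N,N^2)=2\sum_{\ell\in\Z}e^{-2\pi^2\ell^2}=s.$$
Hence $p_N:=\P\{N\,|\,R_{N^2}\}=\frac{s}{N}+\mathcal{O}\big(\log^{5/2}N\,/\,N^{3}\big)$ for even $N$. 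In particular $\sum_{N\ \mathrm{even}}p_N=\infty$, which, together with the weak-dependence bound described below and a (second) Borel--Cantelli argument, forces infinitely many successes almost surely; this is what makes the sequence $(N_k)$ well defined.

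The second step introduces the counting function $S(n)=\sum_{2\le N\le n,\ N\ \mathrm{even}}Y_N$, so that $S(N_k)=k-1$ and $\{N_k>n\}=\{S(n)<k-1\}$. Summing the success probabilities gives a mean of the form $\E S(n)=c\log n+\mathcal{O}(1)$, the constant $c$ being read off from $\sum_{N\le n,\ N\ \mathrm{even}}s/N$; the convergent error $\sum\log^{5/2}N/N^{3}$ is harmless. Inverting the relation $S(N_k)=k-1$ against this mean produces the leading term of $\log N_k$, linear in $k$, and transfers the entire difficulty to showing that $S(n)$ concentrates about $\E S(n)$ with an almost sure error that, after inversion, is $\mathcal{O}\big((\log n)^{\tau}\big)$ for every $\tau>7/8$.

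The third and decisive step is the estimation of the covariances $\mathrm{Cov}(Y_N,Y_{N'})$ for even $N<N'$, for which I would use the mixing structure of the system $\{\chi\{d\,|\,R_n\}\}$ recorded in Section~\ref{s8}. Writing $R_{N'^2}=R_{N^2}+R'$ with $R'$ an independent Rademacher walk of length $N'^2-N^2$, the joint probability $\P\{N\,|\,R_{N^2},\,N'\,|\,R_{N'^2}\}$ factorizes up to the deviation of $\P\{N'\,|\,a+R'\}$ from its uniform value, a deviation that Theorem~\ref{radem.t2.1}, applied to the walk $R'$, controls. The covariances become negligible once $R'$ is long enough to equidistribute modulo $N'$, i.e.\ once the multiplicative ratio $N'/N$ exceeds a fixed constant, so that only a multiplicatively bounded window of pairs contributes. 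Summing these bounds yields a control on $\mathrm{Var}\,S(n)$ (and, if a sharper exponent is needed, on a fourth moment of $S(n)-\E S(n)$), and it is precisely the growth rate of this moment that fixes the threshold $7/8$. I expect this covariance summation to be the main obstacle: one has to handle simultaneously the arithmetic of the pair $(N,N')$, the residue-class dependence of $\P\{N'\,|\,a+R'\}$ on $a$, and the passage between the correlated and decorrelated regimes.

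The final step is routine once the moment bound is in hand: Chebyshev's inequality and the Borel--Cantelli lemma along a geometrically spaced subsequence $n_j$ (so that $\log n_j$ grows linearly in $j$) give $S(n_j)=c\log n_j+\mathcal{O}\big((\log n_j)^{\tau}\big)$ almost surely, and the monotonicity of $n\mapsto S(n)$ interpolates over the gaps between consecutive $n_j$ at no additional cost. Inverting $S(N_k)=k-1$ then delivers $\log N_k=\frac{k}{s}+\mathcal{O}(k^{\tau})$ almost surely for every $\tau>7/8$, as claimed.
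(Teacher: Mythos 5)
Your skeleton (success probability, counting function $S(n)$, covariances, a.s.\ concentration, inversion of $S(N_k)=k$) is the same as the paper's, and your first step is right: Theorem \ref{radem.t2.1} with $\d=N$, $M=N^2$ gives $\P\{N|R_{N^2}\}=s/N+\mathcal{O}(\log^{5/2}N/N^{3})$. But the two steps you defer or call routine are precisely where the theorem lives, and both have genuine gaps. On the covariances: your claim that they ``become negligible once $N'/N$ exceeds a fixed constant, so that only a multiplicatively bounded window of pairs contributes'' is not the right picture, and no exponent can be extracted from it. Since the increment walk $R'$ has length $N'^2-N^2\le N'^2$, it never equidistributes modulo $N'$ in the strong sense: the deviation of $\P\{N'|a+R'\}$ from $1/N'$ carries theta-type terms of constant size, with phases depending on the residue of $a$, and what must be exploited is the cancellation over $a$, i.e.\ a joint Fourier/resonance analysis of the frequencies $j/N$ and $h/N'$ (as in \eqref{6.4} for the Bernoulli case). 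The paper's Lemma \ref{NkRnl2} does this and obtains, off the near-diagonal, the bound $C_\varepsilon\{(\log M)^{5/2}(\log N)^{1/2}/(M^2N)+(\log N)^{3/4+\varepsilon}/N^2\}$; summing the second term over the $\asymp N$ admissible $M<N$ and then over $N\le n$ gives a block variance of order $(\log n)^{7/4+\varepsilon}$, \emph{not} $\log n$. The threshold $7/8=\tfrac12\cdot\tfrac74$ is the square root of exactly this growth; without these quantitative estimates (the step the paper itself calls ``considerably more delicate'') your plan has no mechanism producing $7/8$, as you concede.

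Second, even granting $\mathrm{Var}\,S(n)=\mathcal{O}((\log n)^{7/4+\varepsilon})$, your final step fails quantitatively. Chebyshev along geometric $n_j$ (so $\log n_j=j$) gives $\P\{|S(n_j)-\E S(n_j)|>j^{\tau}\}\le C_\varepsilon j^{7/4+\varepsilon-2\tau}$, summable only for $\tau>11/8$. Taking $\log n_j=j^{\beta}$ instead, Borel--Cantelli needs $\beta(2\tau-\tfrac74-\varepsilon)>1$ while your monotonicity interpolation needs the gap $\log n_{j+1}-\log n_j\asymp\beta j^{\beta-1}$ to stay below $(\log n_j)^{\tau}=j^{\beta\tau}$, i.e.\ $\beta\le 1/(1-\tau)$; the two constraints are compatible only for $\tau>11/12$. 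Passing to $2p$-th moments (which would themselves require new correlation estimates) only moves the threshold to $7/8+\tfrac{1}{8(2p+1)}$, never to $7/8$. The paper closes this with a different device: Sprind\v{z}uk's almost sure convergence criterion (Lemma \ref{NkRnl4}), a G\'al--Koksma/Rademacher--Menshov-type maximal inequality that converts the block-variance hypothesis \eqref{RNRM.2.50} directly into $S(n)=\E S(n)+\mathcal{O}(M_n^{1/2}\log^{a}M_n)$ a.s., i.e.\ an error $(\log n)^{7/8+\varepsilon}$ with only polylogarithmic loss. That maximal-inequality ingredient (equivalently, a dyadic chaining refinement of your interpolation) is indispensable to reach every $\tau>7/8$, and it is absent from your proposal.
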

 
  The result extends to Bernoulli sequences $\b= \{\b_i , i\ge 1\}$. Write $B_N  = \b_1+\ldots +\b_N$, $N=1,2,\ldots$ Since $
\e_i\buildrel{{\mathcal D}}\over{=}2\b_i-1 $, Theorem \ref{NkRn} gives the same estimate for the sequence 
 $$ M_1 =1, \qquad M_k =\inf\{\, M>M_{k-1}  \ :\ M\  \textstyle{even \  and} \quad M | \,2B_{M^2} \, \},\qquad(k>1) $$
 
The proof is intricate and  long.  We only give a sketch of the main steps. First for $N$ even,  one has the asymptotic estimate
\begin{equation}N\P\big\{ N | R_{N^2}\big\} = s + {\mathcal O}\Big( { \log^{5/2} N  \over N^2}\Big) . 
\end{equation}

  Next  it is   necessary to have at disposal asymptotic estimates of  
  %$\P\big\{ N | R_{N^2},  M | R_{M^2}\big\}$, and more precisely 
  the
correlation 
$$\P \{ N | R_{N^2},  M | R_{M^2} \} - \P \{  N |
R_{N^2} \}\P \{  M | R_{M^2} \}.$$
This is a considerably more   delicate task.
The next
statement is the crucial step towards the proof of Theorem \ref{NkRn}.
\begin{lemma}[\cite{W6}] \label{NkRnl2}
There exists an absolute constant
$C$, and for every $\e >0$, a constant $C_\e$ depending on $\e$ only, such that for any even positive  integers
$N\ge M$ large enough,
\vskip 3pt a) if $\  M\le N\le M+  N/(\log N)^{1/2}$ 
 \begin{align*} \Big|\P\big\{ N | R_{N^2},  M | R_{M^2}\big\}- \P\big\{  N |
R_{N^2}\big\}\P\big\{  M | R_{M^2}\big\}\Big|\le C\,{(\log N)^{1/2}\over N } {(\log M)^{1/2}\over M}
.\end{align*}
\vskip 3pt  b) if $  N>  M+  N/(\log N)^{1/2}$ 
\begin{align*}  \Big|\P\big\{ N | R_{N^2},  M | R_{M^2}\big\}-& \P\big\{  N |
R_{N^2}\big\}\P\big\{  M | R_{M^2}\big\}\Big|
\cr &\le C_\e\,\Big\{  {(\log M)^{5/2}(\log N)^{1/2}\over M^2N}+ {1\over N^2}(\log
N)^{3/4\, \,+\e} \Big\}.
\end{align*}
\end{lemma}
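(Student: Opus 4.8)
The plan is to estimate the correlation by expanding both probabilities via the exact Fourier formula \eqref{basic.i}, applied to $R_{N^2}$ and $R_{M^2}$, and then tracking how far the joint law factorises. First I would write
\begin{equation*}
\P\big\{ N | R_{N^2},\, M | R_{M^2}\big\}=\frac{1}{NM}\sum_{j=0}^{N-1}\sum_{h=0}^{M-1}\E\Big[e^{2i\pi(\frac{j}{N}R_{N^2}+\frac{h}{M}R_{M^2})}\Big],
\end{equation*}
and, because $R_{M^2}$ is a sub-sum of $R_{N^2}$ when $M\le N$, split $R_{N^2}=R_{M^2}+(R_{N^2}-R_{M^2})$ so that the two increments are independent. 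The diagonal terms $j=0$ or $h=0$ reassemble into the product of marginals, so $\Delta$ is governed by the off-diagonal sum $\sum_{1\le j<N}\sum_{1\le h<M}$ of products $\cos^{N^2-M^2}(\pi j/N)$ times a factor coupling the two frequencies through the common increment on $[0,M^2]$; this is the analogue of formula \eqref{6.4}, specialised to the lengths $N^2$ and $M^2$. The subtraction of the product of marginals exactly cancels the leading rank-one part, leaving a genuinely bilinear cosine expression that I would bound termwise.

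The key technical inputs are the two regimes of the single-divisor estimate of Theorem \ref{radem.t2.1}: for $\delta\gtrsim\sqrt{M/\log M}$ one replaces $\P\{\delta|R_{M^2}\}$ by $\Theta_i(\delta,M^2)/\delta$ with remainder $O(\log^{5/2}M/M^3)$ (here the exponent improves since the walk length is $M^2$), while for smaller $\delta$ the explicit cosine tails are summed against the Gaussian-type majorant $\cos^{n}(\pi j/d)\le e^{-2nj^2/d^2}$ coming from the bound $\sin x\ge (2/\pi)x$ used in Proposition \ref{Lemma44}. Thus I would majorise each inner double sum by a product of two theta-type tails $\sum_{j\ge1}e^{-2N^2j^2/N^2}$ and $\sum_{h\ge1}e^{-2M^2h^2/M^2}$, each of which is $O(1)$, and extract the extra decay from the "gap" factor $\cos^{N^2-M^2}(\pi j/N)\le e^{-2(N^2-M^2)j^2/N^2}$. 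The case split in the lemma is dictated precisely by whether $N^2-M^2$ is large enough to give this decay: in case (a), $N-M\le N/\sqrt{\log N}$ means $N^2-M^2$ may be as small as $N^2/\sqrt{\log N}$, which is still enough to produce the factor $(\log N)^{1/2}(\log M)^{1/2}/(NM)$ after summing, whereas in case (b) the larger gap yields the stronger, asymmetric bound with the $M^{-2}$ and $N^{-2}$ weights.

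Concretely, in case (a) I would not try to exploit any cancellation between frequencies and simply bound $|\Delta|$ by the product of the two individual fluctuation bounds $\big|\P\{N|R_{N^2}\}-\tfrac{2+\cdots}{N}\big|$ — that is, the near-diagonal terms contribute at the size of the single-divisor remainders, giving the claimed $C(\log N)^{1/2}(\log M)^{1/2}/(NM)$. In case (b) the honest gain comes from the gap factor: I would combine the sharp theta-remainder $O(\log^{5/2}M/M^{?})$ for the $M$-sum with the decay $e^{-c(N^2-M^2)/N^2}$ controlling the $j$-sum, and then optimise, which produces the two competing terms $(\log M)^{5/2}(\log N)^{1/2}/(M^2N)$ and $N^{-2}(\log N)^{3/4+\e}$. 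The main obstacle I anticipate is the careful bookkeeping in the off-diagonal double sum: after subtracting the marginals one must verify that the surviving terms really carry \emph{both} the $M$-remainder decay and the gap decay simultaneously, rather than only one of them, and the emergence of the slightly unusual exponent $3/4+\e$ (and the threshold $\tau>7/8$ in the theorem) strongly suggests a delicate splitting of the $j$-range at some intermediate scale, balancing the number of frequencies $j$ against the per-frequency decay $e^{-2(N^2-M^2)j^2/N^2}$; getting that balance sharp, uniformly in $\e$, is where the real work lies.
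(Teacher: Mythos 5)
Your overall strategy — expanding both indicators by the exact Fourier formula, splitting $R_{N^2}=R_{M^2}+(R_{N^2}-R_{M^2})$ by independence, observing that the $j=0$ and $h=0$ rows cancel against the product of marginals so that only the bilinear sum over $1\le j<N$, $1\le h<M$ survives, and then seeking termwise decay from a Gaussian majorant times the gap factor $\cos^{N^2-M^2}(2\pi j/N)$ — is the natural one, and it is consistent with the machinery this paper actually displays (the correlation formula \eqref{6.4} for the Bernoulli walk and Theorem \ref{radem.t2.1}). Be aware, though, that the paper gives no proof of this lemma at all: it is imported from \cite{W6} with the explicit remark that the proof is intricate and long, so there is no short argument you are reconstructing, and your proposal defers to "real work" exactly the part that constitutes that proof.

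Two steps of your plan fail as written. First, in case a) you propose to "simply bound $|\Delta|$ by the product of the two individual fluctuation bounds". A covariance $\P\{A\cap B\}-\P\{A\}\P\{B\}$ is not majorized by a product of marginal errors $|\P\{A\}-a|\cdot|\P\{B\}-b|$ for any reference values; no inequality of that shape exists. Moreover, in the near-diagonal regime the dominant terms of the bilinear sum are the \emph{resonant} pairs $(j,h)$ with $\frac{j}{N}+\frac{h}{M}$ close to an integer (take $h=M-h'$ and $j\approx h'$): for these, the bracket $\cos^{M^2}2\pi(\frac{j}{N}+\frac{h}{M})-\cos^{M^2}(2\pi\frac{j}{N})\cos^{M^2}(2\pi\frac{h}{M})$ is of order $1$, and the only decay available is the gap factor $\exp\{-c\,h'^2(N^2-M^2)/N^2\}$; summing over $h'$ yields a contribution of order $\frac{1}{NM}\big(\frac{N}{N-M}\big)^{1/2}$. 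One can confirm this is real, not an artifact, by conditioning: for $N=M+2$ the increment $R_{N^2}-R_{M^2}$ has spread $\asymp\sqrt{M}\ll N$, so $\P\{N|R_{N^2},\,M|R_{M^2}\}\asymp M^{-3/2}$, which dwarfs both the product of marginals and any bound of the form $\log N/(NM)$. So the near-diagonal case cannot be dispatched by multiplying single-divisor remainders; it is precisely where the delicate frequency bookkeeping lives. Second, you import the majorant $\cos^n(\pi j/d)\le e^{-2nj^2/d^2}$ from Proposition \ref{Lemma44}, but that bound concerns the Bernoulli characteristic function; the Rademacher one is $\cos 2\pi t$, whose modulus equals $1$ at $t=1/2$, so there is no decay whatsoever for $j$ near $N/2$ or $h$ near $M/2$. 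These frequencies are exactly why the lemma (and Theorem \ref{radem.t2.1}) assumes $N,M$ even: the terms at $j=N/2$ and $h=M/2$ cancel identically by parity (since $M^2$, $N^2$ are even), and the terms near them must be symmetrized before any Gaussian bound is legitimate. Your sketch never performs this reduction, and without it the termwise estimates you invoke are false on a substantial part of the frequency range.
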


The next tool is Lemma 10 p.\,45  in Sprind\v zuk \cite{Sp}, a typical criterion of almost sure convergence, providing  in addition an estimate of the  remainder term. %This kind of criterion is of course particularly relevant in the study of divisibility in random walks.

\begin{lemma} \label{NkRnl4} 
Let $ \{f_l, l\ge 1\}$ be a sequence
 of non negative random variables, and let $\p=\{\p_l, l\ge 1\}$, $m=\{m_l, l\ge 1\}$ be two sequences of
non negative reals such that 
\begin{equation}\label{RNRM.2.49}
 0\le \p_l\le m_l \le 1\qquad (l\ge 1).
\end{equation}
 %\eqno(2.49)$$
Write 
$$ M_n =\sum_{1\le l\le n}  m_l.$$

\noi  Assume that $M_n\uparrow\infty$ and that the following condition is satisfied 
\begin{equation}\label{RNRM.2.50}\E\  \big| \sum_{i\le l \le j}(f_l-\p_l)\big|^2 \,\le
C\sum_{i\le l \le j} m_l ,
\qquad  0\le i\le
 j<\infty. 
 \end{equation}
 %\eqno(2.50) $$
    Then, for every $a>3/2$,   
\begin{equation}\label{RNRM.2.51} \sum_{1\le l\le n}  f_l \ \buildrel{a.s.}\over{=} \sum_{1\le l\le n}  \p_l +{\mathcal O}\Bigl( M_n^{1/2}  {\log}^{ a}
M_n \Big). 
\end{equation}
%\eqno(2.51)$$ 
 \end{lemma}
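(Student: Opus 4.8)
The statement is the classical G\'al--Koksma strong law, in the quantitative form used by Sprind\v zuk, and I would prove it by the Rademacher--Menshov maximal inequality combined with a dyadic decomposition of the \emph{mass} scale $M_n$ and the Borel--Cantelli lemma. Set $S_n = \sum_{1\le l\le n}(f_l-\varphi_l)$ and $S_0=0$. The only input is the second moment hypothesis \eqref{RNRM.2.50}, which I would first record in the additive form $\E\,|S_n-S_m|^2 \le C(M_n-M_m)$ for all $0\le m\le n$; note that the non-negativity of the $f_l$ plays no essential role here, everything rests on this variance bound. The goal is to show $S_n = {\mathcal O}\big(M_n^{1/2}\,\log^a M_n\big)$ almost surely for every $a>3/2$.

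First I would set up the blocking. Since $0\le m_l\le 1$ and $M_n\uparrow\infty$, the mass-doubling times $n_j=\min\{n:\, M_n\ge 2^j\}$ are well defined and satisfy $2^j\le M_{n_j}< 2^j+1$, whence $M_{n_j}-M_{n_{j-1}}\asymp 2^j$. On each block $(n_{j-1},n_j]$ the heart of the argument is the Rademacher--Menshov maximal inequality: decomposing the index range dyadically and applying Cauchy--Schwarz level by level, the variance bound yields
\begin{equation*}
\E\,\max_{n_{j-1}<n\le n_j}\big|S_n-S_{n_{j-1}}\big|^2 \,\le\, C\,\big(\log(n_j-n_{j-1})\big)^2\,(M_{n_j}-M_{n_{j-1}}) \,\le\, C\,j^2\,2^j.
\end{equation*}

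Then Chebyshev's inequality applied with threshold $2^{j/2}j^a$ gives $\P\{\max_{n_{j-1}<n\le n_j}|S_n-S_{n_{j-1}}|>2^{j/2}j^a\}\le C\,j^{2}/j^{2a}$, and this is the exact point where the hypothesis enters: the series $\sum_j j^{2-2a}$ converges precisely when $2a-2>1$, that is $a>3/2$. By Borel--Cantelli the within-block maxima are bounded by $2^{j/2}j^a$ for all large $j$, almost surely. Summing the resulting geometric-type bound over blocks, $\sup_{n\le n_J}|S_n|\le \sum_{j\le J}2^{j/2}j^a\le C\,2^{J/2}J^a$ (with the finitely many early blocks absorbed into an a.s.\ finite constant), and rewriting $2^{J}\asymp M_n$, $J\asymp\log_2 M_n$ for $n$ in the $J$-th block produces the announced rate. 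The main obstacle is the maximal inequality together with the bookkeeping that reconciles the count-based logarithm produced by the Rademacher--Menshov decomposition with the mass-based $\log M_n$ appearing in the conclusion; it is this $(\log)^2$ loss, weighed against the summability threshold in Borel--Cantelli, that pins down the critical exponent $3/2$.
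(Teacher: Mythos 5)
The paper offers no proof of this lemma at all: it is quoted verbatim as Lemma 10, p.\,45 of Sprind\v zuk \cite{Sp}, so your proposal has to be measured against the classical argument, whose skeleton (dyadic blocking at mass-doubling times, a maximal inequality inside each block, Chebyshev with threshold $2^{j/2}j^a$, summability of $j^{2-2a}$ forcing $a>3/2$, Borel--Cantelli, summation over blocks) you have reproduced correctly. However, there is a genuine gap at the maximal-inequality step, precisely at the point you defer to ``bookkeeping''. Writing $S_n=\sum_{l\le n}(f_l-\varphi_l)$, the Rademacher--Menshov argument applied to the \emph{index range} of the block $(n_{j-1},n_j]$ gives
\begin{equation*}
\E \max_{n_{j-1}<n\le n_j}\big|S_n-S_{n_{j-1}}\big|^2 \,\le\, C\,\big(\log (n_j-n_{j-1})\big)^2\,\big(M_{n_j}-M_{n_{j-1}}\big),
\end{equation*}
with a \emph{count-based} logarithm, and this is not $O(j^2)$ in general: the hypotheses allow $m_l=1/l$, for which $M_n\sim\log n$, hence $n_j\approx e^{2^j}$ and $\log(n_j-n_{j-1})\approx 2^j\gg j$. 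Your bound $Cj^2 2^j$ then becomes $C2^{3j}$, the Chebyshev probabilities at threshold $2^{j/2}j^a$ are nowhere near summable, and the block argument collapses.

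The gap cannot be closed within your stated framework, because your parenthetical claim that ``the non-negativity of the $f_l$ plays no essential role'' is false: for signed summands satisfying only the variance bound \eqref{RNRM.2.50}, the count-based logarithm is genuinely necessary (this is the content of Menshov's sharpness examples for orthogonal series), so the conclusion with $\log M_n$ in place of $\log n$ would simply be false without non-negativity. The repair --- which is what Sprind\v zuk's proof actually does --- is to perform the dyadic decomposition \emph{in mass} rather than in index: subdivide the block into $2^r$ subintervals of mass $\approx 2^{j-r}$, $r=0,1,\ldots,R$, stopping at $R\approx j$ when the finest mass is $\approx 1$. An arbitrary $n$ in the block then lies in some finest subinterval $(a,b]$, and it is here that $f_l\ge 0$ and $0\le \varphi_l\le m_l$ are used, to control the leftover \emph{deterministically}: since $\sum_{a<l\le n}f_l\le\sum_{a<l\le b}f_l$ and $0\le\sum_{a<l\le n}\varphi_l\le\sum_{a<l\le b}m_l$, one gets $|S_n-S_a|\le |S_b-S_a|+\sum_{a<l\le b}m_l\le |S_b-S_a|+O(1)$, where $S_b-S_a$ is an endpoint difference already belonging to the dyadic family. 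This legitimately converts the count-logarithm into the mass-logarithm $j$ and yields $\E\max^2\le Cj^2 2^j$; the rest of your argument (threshold, $a>3/2$, Borel--Cantelli, summation over blocks and conversion $2^J\asymp M_n$) then goes through as written.
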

\subsection{Primality.}

\begin{theorem}[\cite{W5a}]\label{P-Rn} There exist two absolute constants $C_1$ and  $C_2$ such that for any integer $n\ge 3$,
$$ {  C_1  \over    \log
n }\le \P\{R_n\ \hbox{is prime} \}\le { C_2  \over    \log
n }. $$   
In fact, for every $\e>0$,
$${4    \over  \sqrt{ 2e\pi  }\log n}-{C _\e  \over  n^{  
    1/2-\e }}\le \P\{R_n\ \hbox{is prime} \}\le   \sqrt{{2
\over     \pi }  }\Big(\sum_{k=1}^{\infty}
{2^{ k }    }e^{-   2^{2k-1  } }\Big)  {1
\over \log     n  }+{C'_\e  \over  n^{  
    1/2-\e }},$$
where the constants $C_\e$, $C_\e '$ depend on $\e$ only.\end{theorem}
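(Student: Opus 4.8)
The plan is to route the primality probability through the value distribution of $R_n$ and reduce it to a Gaussian sum over primes that the \emph{ordinary} prime number theorem can handle. First I would record that $R_n\equiv n\pmod 2$ and $R_n=2B_n-n$, so the De Moivre--Laplace local limit theorem (Theorem \ref{moivre}), transported from $B_n$ to $R_n$, furnishes a numerical constant $C_0$ with
$$\sup_{k\in\Z}\Big|\P\{R_n=k\}-\sqrt{\tfrac{2}{\pi n}}\,e^{-k^2/(2n)}\Big|\le \frac{C_0}{n^{3/2}}.$$
Writing $\P\{R_n\ \hbox{prime}\}=\sum_{p}\P\{R_n=p\}$, the sum running over primes $p$ of the parity of $n$, I would truncate at $|p|\le\sqrt{bn\log n}$: the tail is controlled by $\P\{|R_n|>\sqrt{bn\log n}\}=\mathcal O(n^{-A})$ for any prescribed $A$ (taking $b$ large), which follows from the concentration inequality Lemma \ref{di.1} applied to $B_n$ via $|R_n|=|2B_n-n|$. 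On the retained range there are $\mathcal O(\sqrt{n\log n})$ primes, so the accumulated local-limit error is $\mathcal O(\sqrt{n\log n}\cdot n^{-3/2})=\mathcal O(n^{-1/2}\sqrt{\log n})$, which is absorbed into the announced remainder $n^{-1/2+\e}$.

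This reduces matters to estimating $\sqrt{2/(\pi n)}\sum_{p}e^{-p^2/(2n)}$. The decisive structural point --- and what separates the Rademacher case from the Bernoulli case --- is that $R_n$ is centered at $0$, so the Gaussian weight $e^{-p^2/(2n)}$ is of order $1$ for \emph{every} prime up to size $\sqrt n$ and decays only beyond it. Hence the sum effectively counts all primes in an initial segment $[2,\mathcal O(\sqrt n)]$, and one may invoke $\pi(x)\sim x/\log x$ over initial segments rather than the short-interval PNT around $n/2$ that obstructs the estimation of $\P\{B_n\ \hbox{prime}\}$ (cf.\ the discussion following Theorem \ref{P-Bn}). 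Since the dominant primes have size $\asymp\sqrt n$, where $\log p\sim\frac12\log n$, the factor $1/\log n$ is produced, giving $\P\{R_n\ \hbox{prime}\}\asymp 1/\log n$ unconditionally.

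For the explicit lower bound I would discard everything but the central block $0<p\le\sqrt n$, on which $e^{-p^2/(2n)}\ge e^{-1/2}$, and count these primes by PNT, $\pi(\sqrt n)\sim\frac{2\sqrt n}{\log n}$; this yields $\sqrt{2/(\pi n)}\,e^{-1/2}\cdot\frac{2\sqrt n}{\log n}=\frac{4}{\sqrt{2e\pi}\,\log n}$, matching the stated constant, with the error $n^{-1/2+\e}$ coming from the local-limit and tail steps together with the PNT error term. For the upper bound I would decompose the prime range dyadically into blocks $p\in(2^{k-1}\sqrt n,2^k\sqrt n]$, bound the number of primes in each by a Chebyshev estimate $\ll 2^k\sqrt n/\log n$ and the Gaussian weight by its value at the left endpoint $e^{-2^{2k-1}}$; summing the resulting rapidly convergent series reproduces $\sqrt{2/\pi}\big(\sum_{k\ge1}2^k e^{-2^{2k-1}}\big)/\log n$.

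The main obstacle I anticipate is arithmetic bookkeeping rather than analysis: passing from the weighted prime sum to a clean $1/\log n$ with these precise constants requires summation by parts against $\pi(x)$ with the PNT error kept uniform across the whole window $[2,\sqrt{bn\log n}]$, and a careful dyadic tail so its contribution is genuinely negligible, not merely bounded. A secondary point is the parity constraint: since $R_n$ inherits the parity of $n$, the sum runs over primes $p\equiv n\pmod 2$, so that the odd case carries the main term above, while the even case (where only $p=2$ can contribute) must be treated separately; the estimate is to be read with the parity of $n$ in mind.
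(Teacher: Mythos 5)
Your proposal follows essentially the same route as the paper's own proof: a local limit (Gaussian) approximation for $R_n$, truncation of the prime sum (the paper truncates at $a\sqrt{2n\log\log n}$ via an exponential square-moment bound where you truncate at $\sqrt{bn\log n}$ via concentration, a purely cosmetic difference), the central block $|p|\le\sqrt n$ with weight at least $e^{-1/2}$ and Chebyshev/PNT counting to produce the lower-bound constant $4/\sqrt{2e\pi}$, and dyadic blocks $J_k(n)$ with $\pi(2x)-\pi(x)\le x/\log x$ and Gaussian decay summed into the series $\sum_k 2^k e^{-2^{2k-1}}$ for the upper bound. The only substantive point you add is the explicit treatment of the parity constraint on $R_n$, which the paper leaves implicit.
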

%\vskip 5 pt 
The constants obtained are certainly not the best, and the  limit  $\displaystyle{\lim_{n\to \infty}} (\log n)\,\P\{R_n\, \hbox{prime} \}$ may  exists.

\begin{proof}
Put  
$$I_k(n) =[-2^{ k}\sqrt n, 2^{ k}\sqrt n], \qq k=0,1,\ldots $$
next 
\begin{equation} \label{jkn}
  J_k(n)=  \begin{cases}  {\mathcal P}\cap  I_0(n) & \qq \hbox{if $k=0$},\cr         {\mathcal P}\cap\big(I_k(n) \backslash I_{k-1}(n) \big)  & \qq \hbox{if $k\ge 1 $},\end{cases}
\end{equation} 
and let  $k_n= \max\{ k: 2^{k-1}\le
a \sqrt{ 2\log \log n} \}$, where $a>0$ will be  chosen later on. 
Then 
$$\P\{R_n\ \hbox{ prime} \}=\sum_{k=0}^\infty \P\{R_n\in  J_k(n) \}=\sum_{k=0}^{k_n} \P\{R_n\in  J_k(n) \}+\rho_n . $$
%Recall that   $G(x) =e^{x^2}-1$ and  $(L^G(\P),\|.\|_{G,\P})$ is the associated Orlicz  space. Let  $g$ denote a standard Gaussian  random variable. We have $$\|{R_n\over\sqrt n} \|_{G,\P}
%=\big({2\over \pi}\big)^{1/2}\|{R_n\over \sqrt n} \big({\pi\over 2}\big)^{1/2}\|_{G,\P}\le \big({2\over \pi}\big)^{1/2}\|g\|_{G,\P}= \big({2\over \pi}\big)^{1/2}2 \big({2\over 3}\big)^{1/2}=   {4 \over \sqrt {3\pi} }:=c.$$
There exists a numerical constant $c$ ($c={4 \over \sqrt {3\pi} }$ is suitable) such that if $a>c\sqrt 2$, 
\begin{eqnarray*} \P\{  |R_n|> a\sqrt{2 n\log \log n}\}
&=&\P\{  |{R_n\over \sqrt n }|> a\sqrt{2  \log \log n}\}
\cr  & \le &e^{ -(2a^2/c^2) \log \log
n}\E e^{ {|R_n|^2/(
 c^2 n )}}  
 \cr  & \le & {2\over (\log n)^{2a^2/c^2}} .
 \end{eqnarray*} 
Thus
$$\rho_n\le \P\{|R_n|\ge a\sqrt{2n\log\log n}\} \ll {1\over  \log n  }. $$

Besides, 
$$\P\{R_n\in  J_k(n) \}= \P\{R_n\in  J_k(n) \}- {1\over  \sqrt{ 2\pi n}} \sum_{p\in  J_k(n)}e^{-  {p^2\over  2 n } }+{1\over  \sqrt{ 2\pi n}} \sum_{p\in 
J_k(n)}e^{-  {p^2\over  2 n } }.$$
By means of Theorem \ref{radem.t2.1}, 
$$\sum_{k=0}^{k_n}\Big|\P\{R_n\in  J_k(n) \} - {1\over  \sqrt{ 2\pi n}} \sum_{p\in  J_k(n)}e^{-  {p^2\over  2 n } }\Big|  \le   {C_\e\over n^{  
   3/2-\e }} \sum_{k=0}^{k_n}\#(
J_k(n)) \le C_\e   n^{  
   -1/2+\e }.$$
%$$\Big|\P\{R_n\in  J_0(n) \} - {1\over  \sqrt{ 2\pi n}} \sum_{p\in  J_0(n)}e^{-  {p^2\over  2 n } }\Big|  \le C_\e {\#( J_0(n))\over n^{  
%   3/2-\e }}\le C_\e  {\pi (n) \over n^{  
%   3/2-\e }}\le C_\e   n^{  
%   -1+\e } .$$
 
The following estimates are classical: (\cite{RS}, Th.\,2 and Cor.\,1, p.\,69, and  \cite{DR}, Lemma 5, p.\,439)
\begin{eqnarray*}x/\log x \le \pi(x)\le x/(\log x-3/2), & &\qq \hbox{for any $x\ge 67$,}   \cr 
 \pi(2x)-\pi(x)\le {x\over \log x}, &   & 
 \qq\hbox{for any $x>
1$}.
\end{eqnarray*}
Then $$  {4    \over  \sqrt{ 2e\pi  }\log n}\le{2\pi(\sqrt n)\over  \sqrt{ 2e\pi n}}\le {1\over  \sqrt{ 2\pi n}} \sum_{p\in  J_0(n)}e^{-  {p^2\over  2 n
} }\le   {2\pi(\sqrt n)\over  \sqrt{ 2\pi n}}\le   {4   \over ( \log n-3 ) \sqrt{ 2\pi
 }}.$$
And
\begin{eqnarray*}{1\over  \sqrt{ 2\pi n}}\sum_{k=1}^{k_n}  e^{-  {p^2\over  2 n } }\  \le \ {2\over  \sqrt{ 2\pi n}}\sum_{k=1}^{k_n} {2^{ k  }\sqrt
n\over \log (2^{ k } \sqrt n)}e^{-   2^{2k-1  } } \  \le \  \sqrt{{2
\over     \pi }  }\Big(\sum_{k=1}^{\infty}
{2^{ k }    }e^{-   2^{2k-1  } }\Big)  {1
\over \log     n  }
 .
 \end{eqnarray*}
We deduce
$$ {4    \over  \sqrt{ 2e\pi  }\log n}\le{1\over  \sqrt{ 2\pi n}}\sum_{k=0}^{k_n}  e^{-  {p^2\over  2 n } }\le  \sqrt{{2
\over     \pi }  }\Big(\sum_{k=1}^{\infty}
{2^{ k }    }e^{-   2^{2k-1  } }\Big)  {1
\over \log     n  }.$$
From there, it follows immediately that
$${4    \over  \sqrt{ 2e\pi  }\log n}-{C _\e  \over  n^{  
    1/2-\e }}\le \P\{R_n\ \hbox{ prime} \}\le   \sqrt{{2
\over     \pi }  }\Big(\sum_{k=1}^{\infty}
{2^{ k }    }e^{-   2^{2k-1  } }\Big)  {1
\over \log     n  }+{C'_\e  \over  n^{  
    1/2-\e }}.$$
Whence the claimed result.
\end{proof}
 %The study of $\P\{B_n\, \hbox{prime} \}$ is much more complicated. A bound of the type $\P\{B_n\ \hbox{ prime} \}\ge {C\over \log n}$ would imply

\bigskip\par
\bigskip\par

%Other results are also proved in \cite{W5a}. 
The following Theorem is complementing the above.

\begin{theorem}[\cite{W5a}]\label{P-RnRm} Let $m>n$ be two odd positive integers. Then for some absolute constant $C$, and every $\e>0$,
$$\P\{R_n\ {\rm and } \ R_m\ \hbox{are prime} \}\le  {C\over \sqrt{ m-n } \,\log n} +C_\e n^{-1/2+\e} + {C \log\log^+    (m-n  )\over (\log
n)^2}     .$$\end{theorem}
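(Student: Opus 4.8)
The plan is to estimate $\P\{R_n\ \text{prime},\ R_m\ \text{prime}\}$ by exploiting the same structural decomposition used to prove the single-variable primality estimate in Theorem \ref{P-Rn}, now applied jointly to the pair $(R_n,R_m)$. Since $R_n$ and $R_m$ are both odd sums and $m>n$, I would write $R_m = R_n + (R_m-R_n)$, where $R_m-R_n$ is an independent Rademacher sum over $m-n$ steps. The event that both are prime forces both to lie near their typical scales (of order $\sqrt n$ and $\sqrt m$ respectively), so after truncating to these windows via the exponential concentration bound used in the proof of Theorem \ref{P-Rn}, the probability becomes a sum over pairs of primes $(p,q)$ with $p\in J_k(n)$, $q$ in an analogous window for $R_m$.

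First I would apply the local limit estimates of Theorem \ref{radem.t2.1} simultaneously: conditioning on $R_n=p$, the conditional law of $R_m-R_n$ gives a second local-limit density of order $(m-n)^{-1/2}$. The joint probability $\P\{R_n=p,\ R_m=q\}$ then factorizes approximately as $\P\{R_n=p\}\,\P\{R_{m-n}=q-p\}$, each controlled by the Gaussian local limit theorem in the Rademacher case. Summing over primes $p$ of size $\asymp\sqrt n$ and $q$ of size $\asymp\sqrt m$ and invoking the Brun–Titchmarsh-type prime-counting bounds $\pi(2x)-\pi(x)\le x/\log x$ already cited in the proof of Theorem \ref{P-Rn}, the number of available $p$ contributes a factor $\sqrt n/\log n$ against the normalization $1/\sqrt n$, yielding the $1/\log n$; the constraint that $q-p$ be prime and lie in a window of width $\asymp\sqrt{m-n}$ contributes the extra $1/\sqrt{m-n}$ factor, explaining the main term $C/(\sqrt{m-n}\,\log n)$.

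The error terms would then be assembled from three sources, matching the three terms in the statement. The $C_\e n^{-1/2+\e}$ term comes directly from the $\mathcal O(\log^{5/2}M\,M^{-3/2})$ remainder in Theorem \ref{radem.t2.1} summed against the cardinalities $\#J_k(n)=\mathcal O(2^k\sqrt n/\log n)$, exactly as in the single-variable proof. The tail term where $R_n$ or $R_m$ escapes its concentration window is absorbed into the main $1/\log n$ estimate through the sub-Gaussian bound $\P\{|R_n|>a\sqrt{2n\log\log n}\}\ll (\log n)^{-2a^2/c^2}$. The remaining term $C\log\log^+(m-n)/(\log n)^2$ is the genuinely new contribution: it arises from the regime where $p$ and $q$ are comparable in size, forcing $q-p$ to be small, so that one must sum $1/\log(q-p)$ over a range producing an iterated-logarithm factor rather than a clean $\sqrt{m-n}$ gain.

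The main obstacle will be the bookkeeping in the overlap regime: when $m-n$ is not large compared to $n$, the second prime $q$ is not well-separated from $p$, the independent increment $R_m-R_n$ is short, and the naive factorization over disjoint prime windows breaks down. Controlling the contribution of pairs with $|q-p|$ small—where the prime-counting estimate degrades and the local limit density for $R_{m-n}$ is at its coarsest—is what generates the $\log\log^+(m-n)$ factor and requires a careful dyadic decomposition of the separation $q-p$ together with summation of $1/(\log(q-p)\,\log n)$ across scales. Everything else is a routine, if lengthy, transcription of the machinery already developed for Theorem \ref{P-Rn}.
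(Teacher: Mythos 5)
There is a genuine gap. Your skeleton (condition on the increment $v=R_m-R_n$, factorize $\P\{R_n=p,\,R_m=p+v\}=\P\{R_n=p\}\P\{R_{m-n}=v\}$ by independence, truncate with the sub-Gaussian bound, replace $\P\{R_n=p\}$ by the Gaussian density with error $C_\e n^{-1/2+\e}$) is exactly the paper's, but the arithmetic core is missing and two of your three terms are misattributed. First, the main term $C/(\sqrt{m-n}\,\log n)$ does not come from ``the constraint that $q-p$ be prime'' --- the difference $q-p$ is never required to be prime. It comes from the single diagonal atom $v=0$: there one has $\P\{R_n \hbox{ and } R_m \hbox{ prime},\, R_m=R_n\}\le \P\{R_n\hbox{ prime}\}\P\{R_{m-n}=0\}$, and $\P\{R_{m-n}=0\}=2^{-(m-n)}\binom{m-n}{(m-n)/2}\asymp (m-n)^{-1/2}$, while $\P\{R_n\hbox{ prime}\}\le C/\log n$ by Theorem \ref{P-Rn}.

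Second, and more seriously, for $v\neq 0$ you must count integers $p\asymp\sqrt n$ such that \emph{both} $p$ and $p+v$ are prime, and the only tool you invoke is Brun--Titchmarsh/Chebyshev-type counting ($\pi(2x)-\pi(x)\le x/\log x$), which sees only one primality condition and gives density $1/\log n$. Summing that over $v$ against $\P\{R_{m-n}=v\}$ yields merely $\mathcal O(1/\log n)$, which swamps all three terms of the statement and reduces your bound to the trivial consequence of Theorem \ref{P-Rn}. The paper's proof instead uses Siebert's sieve estimate for prime pairs with fixed difference,
\begin{equation*}
\sum_{p\le x,\ p+h\in\mathcal P}1\ \le\ 8\,\frac{x}{(\log x)^2}\,\frac{h}{\varphi(h)}\qquad (h\ \hbox{even}),
\end{equation*}
which supplies the crucial $(\log n)^{-2}$ density together with the singular-series factor $|v|/\varphi(|v|)$. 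The third term then arises not from a ``small gap'' regime and a dyadic sum of $1/\log(q-p)$, as you propose, but from averaging this factor over the increment: by Rosser--Schoenfeld, $|v|/\varphi(|v|)\le C\log\log|v|$, hence
\begin{equation*}
\frac{C}{(\log n)^2}\ \E\,\frac{|R_{m-n}|}{\varphi(|R_{m-n}|)}\,{\bf 1}_{\{|R_{m-n}|\ge 1\}}\ \le\ \frac{C\,\log\log^+(m-n)}{(\log n)^2}.
\end{equation*}
Without a two-condition sieve input of Siebert/Brun type, no amount of bookkeeping over gap scales can produce the $(\log n)^{-2}$ in the statement, so the proposal as written cannot be completed.
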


\begin{proof}  Let  $m>n$ be two odd positive integers. We have
\begin{eqnarray}\label{P-RnRm.1}\P\{R_n\ {\rm and} \ R_m\hbox{ prime} \}&=&\P\{R_n\ {\rm and} \ R_n +(R_m-R_n)\ \hbox{prime} \}
\cr &=& \sum_{|p|\le  n}\P\{R_n=p \}\Big(\sum_{|v|\le m-n\atop
p+v\in {\mathcal P}} \P\{R_{m-n}=v \}\Big)
\cr &=&\sum_{|v|\le m-n}\Big(\sum_{|p|\le  n\atop
p+v\in {\mathcal P}}\P\{R_n=p \}\Big)\P\{R_{m-n}=v \}.
\end{eqnarray} 
%Since $m-n$ is even, only the case $v$ even may contribute. 
The summation term corresponding to $v=0$ equals to
\begin{eqnarray}\label{v=0} \sum_{|p|\le  n\atop
p \in {\mathcal P}}\P\{R_n=p \} \P\{R_{m-n}=0 \}& =&\P\{R_n  \, \hbox{prime}\} \P\{R_{m-n}=0 \} 
\cr &\le& {C\over \sqrt{ m-n } }\,\P\{R_n  \, \hbox{prime}\}   \,\le \,  {C\over \sqrt{ m-n } \,\log n}
\end{eqnarray} 
since $\P\{R_{m-n}=0 \}=2^{-(m-n)}{{m-n}\choose{{ m-n\over2}}}\sim{2\over \sqrt{2\pi(m-n)}}$.
\vskip 2 pt We next write for $|v|\le m-n$,
$$ \sum_{|p|\le  n\atop
p+v\in {\mathcal P}}\Big|\P\{R_n=p \}-{1\over  \sqrt{ 2\pi n}}   e^{-  {p^2\over  2 n } } \Big|\le C_\e n^{1-3/2+\e}= C_\e n^{-1/2+\e}.$$
Further,
$$  {1\over  \sqrt{ 2\pi n}} \sum_{|p|\le  n\atop
p+v\in {\mathcal P}}  e^{-  {p^2\over  2 n } } = {1\over  \sqrt{ 2\pi n}} \sum_{k=0}^\infty \sum_{p\in J_k(n)\atop
p+v\in {\mathcal P}} e^{-  {p^2\over  2 n } },$$
where $J_k(n)$ are defined in \eqref{jkn}. Recall  Siebert's estimate \cite{Si}: let $a\neq0$, $b\neq 0$ be integers with $(a,b)=1$, $2|ab$. Then for $x>1$,
 \begin{equation}\label{Siebert.est.}
\sum_{p\le x \atop   ap+b\in {\mathcal P} }\le 16\o{x \over (\log x)^2}  \prod_{p|ab\atop p>2}\, \Big({p-1\over p-2}\Big), \qq \o= \prod_{p>2}\Big(1-\frac{1}{(p-1)^2}\Big) .
\end{equation}
   Daboussi and   Rivat's observation  \cite{DR}, p.\,440, that
$$\frac{p-1}{p-2}= \Big(1-\frac{1}{p}\Big)^{-1}  \Big(1-\frac{1}{(p-1)^2}\Big)^{-1} $$   
and when $h$ is even
$$\prod_{p|h\atop p\ge 3}\Big(1-\frac{1}{p}\Big) = 2\frac{\p(h)}{h}, 
$$ 
%where   $\p $  is Euler's function, 
imply 
that for $x>1$ and  any positive even number $h$,
\begin{equation}\label{Siebert.est}
\sum_{p\le x \atop    p+h\in {\mathcal P} }1\le 8\prod_{p\atop (p,h)=1}\Big(1-\frac{1}{(p-1)^2}\Big){ x\over (\log x)^2} {h\over \p(h)} \le 8\,{ x\over (\log x)^2} {h\over \p(h)} .\end{equation}

In    \eqref{P-RnRm.1}, the  sums are indexed over   pairs $(p,p+v)$ with $|p|\le n$, $|v|\le m-n$.   By symmetry, %To treat the case when they have different signs, 
it     suffices to restrict to the case $p>0$, $v<0$, namely $(p,p-|v|)$.  
%The case $v =-2p$ provides the pair $(p,-p)$.
%, as there is no problem in applying   \eqref{Siebert.est} if $p$ and $v$ have same sign. 
\vskip 4 pt Consider three sub-cases: (a) $|v|< p$, (b) $|v|\ge 2p$, (c) $p\le  |v|<2p$.  
\begin{eqnarray*}
{\rm (a)}& &
%\hbox{Let $q=p-|v|$. Then $(q,q+|v|)$ satisfies} 
\hbox{$(p-|v|,p)$ with  $1\le p-|v|\le n$, $|v|\le m-n$.}
\cr {\rm (b)} & & \hbox{%$|v|-p=p+h$ with   
$(p, |v|- p)=(p,p+|v|-2p)$  with $1\le p\le n$, ${0\le |v|-2p\le m-n}.^{(*)}$}
   \cr {\rm (c)} & &
    %\hbox{Let $q=|v|-p$, $h=2p-|v|$.  Then $(q,q+h)$ satisfies}
    \hbox{$(|v|-p, p)=(|v|-p,|v|-p+2p-|v|)$ with $1\le |v|-p\le 2p-p=p\le n$,}     \cr & & \hbox{
      $1\le 2p-|v|\le 2|v|-|v|=|v|\le m-n$.}
     %  $0<|v|-p\le p\le |v|$.
\end{eqnarray*}
 ${}^*$(The case $2p-|v|=0$ in (b) is treated in  \eqref{v=0}.) %\vskip 2pt 
%(a)\quad We write $q=p-|v|\le n$, $q+|v|=p$. Let $(q,q+|v|)$ and notice that $1\le q\le n$, $|v|\le m-n$.
%\vskip 2pt
%(b)\quad Write   $|v|-p=p+h$ with $h=|v|-2p$. Let 
%$(p,p+h)$ and note that $p\le n$, $0<h\le m-n$.
%\vskip 2pt
%(c)\quad We have $0<|v|-p\le p\le |v|$. Let $(q,q+h)$ with $q=|v|-p$, $h=2p-|v|$, and note that $h\le 2|v|-|v|=|v|\le m-n$, $q\le 2p-p=p\le n$. 
%\vskip 2 pt
  Thus  the pairs $(p-|v|,p)$ are pairs $(\a, \a+h)$ of positive integers   all satisfying  $\a\le n$, $h\le m-n$. So that estimate \eqref{Siebert.est} remains applicable to this case, up to a multiplicative factor 4.%D'autre part
%$$  {|v|\over \p(|v|)}\le e^\gamma \log\log |v| + {2,50637\over \log\log |v|}\le C\log\log |v|.$$
%D'o\`u,
%$$\#\{ p\le x : p+v\in {\cal P} \}\le C{ x\over (\log x)^2} \log\log |v|.$$
\vskip 2pt

It follows that  $$\#\{ p\in J_k(n) : p+v\in {\mathcal P} \}\le \#\{ |p|\le 2^k\sqrt n : p+v\in {\mathcal P} \}\le C{ 2^k\sqrt n\over (\log 2^k\sqrt n)^2}
{|v|\over \p(|v|)}. $$ 
Hence,
$${1\over  \sqrt{ 2\pi n}} \sum_{k=0}^\infty \sum_{p\in J_k(n)\atop
p+v\in {\mathcal P}} e^{-  {p^2\over  2 n } }\le {C\over (\log n)^2}{|v|\over \p(|v|)} \sum_{k=0}^\infty 2^k e^{-2^{2k-3}}\le {C\over (\log
n)^2}{|v|\over \p(|v|)}.$$
 Consequently
$$\sum_{|p|\le  n\atop
p+v\in {\mathcal P}} \P\{R_n=p \}\le C_\e n^{-1/2+\e} + {C\over (\log
n)^2}{|v|\over \p(|v|)}.$$
By reporting,
$$\P\{R_n\ {\rm and} \ R_m\hbox{ prime} \}  =\sum_{|v|\le m-n}\Big(\sum_{|p|\le  n\atop
p+v\in {\mathcal P}}\P\{R_n=p \}\Big)\P\{R_{m-n}=v \} $$
$$ \le {C\over \sqrt{ m-n } \,\log n} +C_\e n^{-1/2+\e} + {C\over (\log
n)^2}\sum_{0<|v|\le m-n} {|v|\over \p(|v|)}\P\{R_{m-n}=v \} .$$
$$ \le {C\over \sqrt{ m-n } \,\log n} +C_\e n^{-1/2+\e} + {C\over (\log
n)^2}\ \E \, {|R_{m-n}|\over \p(|R_{m-n}|)} {\bf 1}_{\{|R_{m-n}|\ge 1\}} .$$
We have obtained 
\begin{eqnarray*}\P\{R_n\ {\rm and} \ R_m\hbox{ prime} \}&\le & {C\over \sqrt{ m-n } \,\log n} +C_\e n^{-1/2+\e} 
\cr & &\qq \quad + {C\over (\log
n)^2}\ \E \, {|R_{m-n}|\over \p(|R_{m-n}|)} {\bf 1}_{\{|R_{m-n}|\ge 1\}}.
\end{eqnarray*}
We have the following estimates concerning $\p$ (Rosser and Schoenberg \cite{RS},  inequality 3.40 and 3.41), for $h\ge 3$
\begin{eqnarray*}&& \sum_{k=1}^n {k\over \p(k)}={\mathcal O}(n)  ,
\cr & & {|v|\over \p(|v|)}\le e^\gamma \log\log |v| + {2,50637\over \log\log |v|}\le C\log\log |v|.
 \end{eqnarray*}
We deduce
$$\P\{R_n\ {\rm and} \ R_m\hbox{ prime} \}\le  {C\over \sqrt{ m-n } \,\log n} +C_\e n^{-1/2+\e} + {C \log\log^+    (m-n  )\over (\log
n)^2}     .$$
\end{proof}

\bigskip\par
 
\subsection{Primality mod($k$).}

Theorem \ref{P-Rn} can be extended to a   larger class of random variables under moderate integrability assumption (only the upper bound part is concerned).  Indeed, by using the Brun-Titchmarsh  theorem and the local limit theorem, a similar result is valid under a stronger form,   the constraint made on $k$ below, is weak but the bound obtained, although sharp, should admit  improvments (maybe the $\log\log$ term can be dropped).

\begin{theorem}[\cite{W5a}]\label{P-Sn} Let $X$ be a symmetric random variable such that   
 $\E X =0=\E X ^3$,  $\E X ^2=1$ and $\E |X| ^{3+\e }<\infty$  for some $\e >0$. Let $X_i$, $i\ge 1$ be independent copies of $X$ and for each positive integer $n$, $S_n= X_1+\ldots + X_n$. Let $k$ be some positive integer and  $\ell$ be an  odd integer such that
   $(\ell,k)=1$ and $
\sqrt{ n  \log\log n}/k\to
\infty$. Then for  every $B>1$,   
$$ \P\Big\{S_n \  \hbox{is prime},\,  S_n\equiv \ell \ ({\rm mod}\, k)\Big\}={\mathcal O}_B\Big\{   {\sqrt{   
\log\log n}\over \varphi(k)\log    n   } +  {  1\over (\log n)^{B} }\Big\}.
$$ 
\end{theorem}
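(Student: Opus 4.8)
The plan is to reduce the event to a weighted sum over the primes in the residue class $\ell\pmod k$, to replace each atom $\P\{S_n=p\}$ by the Gaussian supplied by the local limit theorem, and then to feed the resulting prime sum into the Brun--Titchmarsh inequality. Since a prime is positive and the admissible values are exactly the primes $\equiv\ell\pmod k$, I would first write
\[
\P\{S_n\ \text{prime},\ S_n\equiv\ell\ (\mathrm{mod}\,k)\}=\sum_{\substack{p\in\mathcal P\\ p\equiv\ell\,(k)}}\P\{S_n=p\},
\]
and then split the range of $p$ at two scales, $T=a\sqrt{2n\log\log n}$ and $T'=\sqrt n\,(\log n)^{A}$, with $a,A$ chosen large in terms of $B$ and $\e$.

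For $p>T'$ I would bound the contribution by $\P\{|S_n|>T'\}$; Rosenthal's inequality and the hypothesis $\E|X|^{3+\e}<\infty$ give $\E|S_n|^{3+\e}=O(n^{(3+\e)/2})$, so Markov yields $\P\{|S_n|>T'\}=O\big((\log n)^{-A(3+\e)}\big)\le(\log n)^{-B}$ as soon as $A(3+\e)\ge B$. For $p\le T'$ I would replace $\P\{S_n=p\}$ by $g(p):=\tfrac1{\sqrt{2\pi n}}e^{-p^2/2n}$. Here the symmetry of $X$ (equivalently $\E X=\E X^3=0$) kills the leading Edgeworth term, so the lattice local limit theorem gives $\big|\P\{S_n=m\}-g(m)\big|\le C_\e\,n^{-1/2-\delta}$ uniformly in $m\in\Z$ for some $\delta=\delta(\e)>0$; summing this error over the $O\big(\sqrt n(\log n)^A\big)$ integers $m\le T'$ costs only $O\big((\log n)^A n^{-\delta}\big)$, which is negligible. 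It then remains to estimate $\sum_{p\le T',\,p\equiv\ell(k)}g(p)$, split at $T$: on $(T,T']$ the Gaussian weight is at most $g(T)=\tfrac1{\sqrt{2\pi n}}(\log n)^{-a^2}$, and after multiplying by the crude count $\le T'$ this block is $O\big((\log n)^{-B}\big)$ provided $a^2\ge A+B$.

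The main block is $[0,T]$, where I would bound each atom by the maximal weight $\tfrac1{\sqrt{2\pi n}}$ and count the primes by Brun--Titchmarsh: for $(\ell,k)=1$ one has $\#\{p\le T:\ p\equiv\ell\,(k)\}\le 2T/\big(\varphi(k)\log(T/k)\big)$, so this block is at most
\[
\frac{1}{\sqrt{2\pi n}}\cdot\frac{2T}{\varphi(k)\log(T/k)}=\frac{2a}{\sqrt\pi}\,\frac{\sqrt{\log\log n}}{\varphi(k)\log(T/k)} .
\]
The hypothesis $\sqrt{n\log\log n}/k\to\infty$ guarantees $T/k\to\infty$, so Brun--Titchmarsh applies; since $T\asymp\sqrt n$ the factor $T/\sqrt n\asymp\sqrt{\log\log n}$ is exactly the source of the $\sqrt{\log\log n}$ in the statement, and when $\log k=o(\log n)$ (e.g.\ $k\le n^{1/2-\delta}$) one has $\log(T/k)\asymp\log n$, producing the announced main term $O\big(\sqrt{\log\log n}/(\varphi(k)\log n)\big)$. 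Collecting the three regions gives the stated estimate.

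The hard part is not any single step but the \emph{simultaneous, uniform} control in $n$, $k$ and $\ell$. Two points are delicate. First, one needs a local limit theorem whose remainder is genuinely $o(n^{-1/2})$ — this is precisely where $\E X^3=0$ is used, through the vanishing of the first Edgeworth polynomial — so that the per-atom error survives summation over a window of length $\asymp\sqrt n(\log n)^A$. Second, one must reconcile the Brun--Titchmarsh denominator $\log(T/k)$ with the target $\log n$: the weak hypothesis $k=o(\sqrt{n\log\log n})$ only forces $\log(T/k)\to\infty$, and obtaining the clean $\log n$ uniformly, as well as removing the parasitic $\sqrt{\log\log n}$, is the genuine obstacle and is exactly where, as remarked after the statement, improvements should be possible.
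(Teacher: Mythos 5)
Your proof is correct, and its skeleton coincides with the paper's: approximate the atoms $\P\{S_n=p\}$ by the Gaussian via a local limit theorem whose remainder is $o(n^{-1/2})$ --- exactly what the vanishing of $\E X^3$ (forced by symmetry) buys, the paper invoking Ibragimov's characterization (Proposition \ref{llt.indep.ibra.g.moments}) with $m=1$, which in fact gives the stronger per-atom error $O(n^{-1-\delta/2})$ --- then bound the Gaussian trivially by $(2\pi n)^{-1/2}$ on a window of width $\asymp\sqrt{n\log\log n}$ and count the primes $\equiv\ell\ (\mathrm{mod}\ k)$ there by Brun--Titchmarsh. Where you genuinely differ is the tail. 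The paper disposes of $|S_n|>\sqrt{2\alpha(1+b)n\log\log n}$ in one stroke using a quantitative LIL-type maximal inequality of Robbins and Siegmund, which yields the bound $O\big((\log n)^{-b}(\log\log n)^{-1/2}\big)$ directly at the natural scale; you instead interpose the intermediate scale $T'=\sqrt n(\log n)^A$, exploit the Gaussian decay $g(T)=(2\pi n)^{-1/2}(\log n)^{-a^2}$ on $(T,T']$, and finish beyond $T'$ with Rosenthal plus Markov, which uses nothing beyond the hypothesis $\E|X|^{3+\varepsilon}<\infty$. Both routes are valid: yours is self-contained and elementary, at the cost of a three-region split and the parameter bookkeeping $a^2\ge A+B$, $A(3+\varepsilon)\ge B$; the paper's is shorter but imports a ready-made almost-sure-type result. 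Finally, the delicate point you flag --- that Brun--Titchmarsh produces the denominator $\log(T/k)$, and that the hypothesis $\sqrt{n\log\log n}/k\to\infty$ alone does not force $\log(T/k)\asymp\log n$ --- is not a defect of your argument relative to the paper: the paper's own proof passes from $\log\big(\sqrt{2\alpha(1+b)n\log\log n}/k\big)$ to $\log n$ in \eqref{P-Sn(3.8)} without comment, so your proof inherits exactly the same silent restriction on $k$ (e.g.\ $\log k=o(\log n)$), and you are right that this is where the statement, as formulated, is loose.
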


The proof uses the following local limit theorem due to Ibragimov, which characterizes the speed of convergence in the i.i.d. case. 
 
\begin{proposition}[\cite{P},  p.\,216]\label{llt.indep.ibra.g.moments} Let  $\{X_k, k\ge 1\}$ be a sequence of centered i.i.d. $\Z$-valued random variables, with common distribution function $F$, and unit variance. 
 Put $$r_n= \sup_{k}\Big|  \sqrt n \P\{S_n=k\}-{1\over  \sqrt{ 2\pi}}e^{-  {k^2\over  2 n } }\Big|  $$

Then   $ r_n ={\mathcal O}\big(n^{-(m+\d)/2}\big) $ 
for some integer  $m\ge 0$  and a real $\d$,  $\d\in ]0,1[$, if and only if  $$\sup\big\{h>0; \exists a\in
\Z:
\P\{X_1\in a+h\Z\}=1\big\}= 1,$$  and the two following conditions are realized:
\smallskip\par {\rm (i)} All moments of  $X_1$ up to the order  $m+2$ included exist and coincide with those of the standard normal law.  
\smallskip\par {\rm (ii)}
$$\int_{|x|\ge v}|x|^{m+2} F(dx)={\mathcal O}\big(v^{-\d}\big) \qq  \qq v\to \infty. $$
\end{proposition}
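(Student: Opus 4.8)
The plan is to argue entirely through the characteristic function $\psi(t)=\E\,e^{itX_1}=\int e^{itx}\,F(\dd x)$ and the Fourier inversion formula for $\Z$-valued walks, which for integers $k$ reads
$$\P\{S_n=k\}=\frac1{2\pi}\int_{-\pi}^{\pi}\psi(t)^n e^{-itk}\,\dd t.$$
After the scaling $t=s/\sqrt n$ and the Gaussian representation $\frac1{\sqrt{2\pi}}e^{-k^2/(2n)}=\frac1{2\pi}\int_\R e^{-s^2/2}e^{-isk/\sqrt n}\,\dd s$, taking absolute values and the supremum over $k$ collapses the whole problem to the single integral estimate
$$r_n\le\frac1{2\pi}\int_\R\Big|\psi(s/\sqrt n)^n\,\mathbf{1}_{\{|s|\le\pi\sqrt n\}}-e^{-s^2/2}\Big|\,\dd s ,$$
which I would then split into a central part $|s|\le\e\sqrt n$ and a peripheral part $\e\sqrt n\le|s|\le\pi\sqrt n$ (plus the trivially exponentially small Gaussian tail $|s|>\pi\sqrt n$).

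For the sufficiency direction I would first translate hypotheses (i) and (ii) into analytic statements about $\psi$ near the origin. Matching the first $m+2$ moments with the standard normal is equivalent to the vanishing of the cumulants $\k_3=\cdots=\k_{m+2}=0$, while the tail bound (ii) is precisely the Fourier-side assertion that the derivative $\psi^{(m+2)}$ is H\"older of order $\d$ at $0$. Combined, and since the degree-$(m+2)$ Taylor polynomials of $\psi$ and of $e^{-t^2/2}$ then coincide, these give the refined local expansion $\psi(t)=e^{-t^2/2}\big(1+\mathcal O(|t|^{m+2+\d})\big)$ on a fixed neighbourhood of $0$; equivalently $\log\psi(t)=-t^2/2+\mathcal O(|t|^{m+2+\d})$.

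It then remains to bound the two regions. On the central part I would combine this expansion with the elementary decay $|\psi(t)|\le e^{-ct^2}$ for $|t|\le\e$ (itself a consequence of $\psi(t)=1-t^2/2+o(t^2)$): writing $\psi(s/\sqrt n)^n=e^{-s^2/2}e^{h}$ with $h=\mathcal O\big(n^{-(m+\d)/2}|s|^{m+2+\d}\big)$, the elementary inequality $|e^{h}-1|\le|h|\max(1,e^{\mathrm{Re}\,h})$ together with $|\psi(s/\sqrt n)^n|\le e^{-cs^2}$ yields the integrable bound $\mathcal O\big(n^{-(m+\d)/2}|s|^{m+2+\d}(e^{-s^2/2}+e^{-cs^2})\big)$, so integration in $s$ produces exactly the rate $\mathcal O(n^{-(m+\d)/2})$. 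On the peripheral part the span hypothesis $\sup\{h>0:\exists a\in\Z,\ \P\{X_1\in a+h\Z\}=1\}=1$ is exactly what forces $q:=\sup_{\e\le|t|\le\pi}|\psi(t)|<1$, whence $|\psi(s/\sqrt n)^n|\le q^n$ and this contribution is $\mathcal O(\sqrt n\,q^n)$, exponentially negligible. The necessity of the span condition is in turn immediate: were the maximal span some $h\ge2$, the centred walk $S_n$ would be confined to a single residue class modulo $h$, forcing $\P\{S_n=k\}=0$ for a positive proportion of integers $k$ near $0$ where the Gaussian is bounded below, so that $r_n$ could not even tend to $0$.

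The hard part, and the step I would treat most carefully, is the necessity of (i) and (ii) from the assumed rate $r_n=\mathcal O(n^{-(m+\d)/2})$. Here the Fourier argument runs in reverse: one recovers $\psi(t/\sqrt n)^n$ as the finite Fourier transform of the sequence $\{\sqrt n\,\P\{S_n=k\}\}_k$, and the hypothesis on $r_n$ forces this transform to agree with $e^{-t^2/2}$ up to the stated order, uniformly on compacta. A comparison argument then propagates the information to $\psi$ itself near $0$: any nonvanishing cumulant $\k_j$ with $3\le j\le m+2$ would, through $n\log\psi(t/\sqrt n)=-t^2/2+\k_j(it)^j/(j!\,n^{(j-2)/2})+\cdots$, produce a leading error of order $n^{-(j-2)/2}\ge n^{-m/2}$, strictly larger than $n^{-(m+\d)/2}$, a contradiction; this yields (i). Once (i) holds, the residual error is governed by the remainder of $\psi(t)$ minus its degree-$(m+2)$ Taylor polynomial, and the assumed rate pins this remainder at the exact order $|t|^{m+2+\d}$, i.e. forces $\psi^{(m+2)}$ to be $\d$-H\"older at $0$; by the Abelian--Tauberian correspondence between the smoothness of a Fourier transform and the decay of truncated moments this is equivalent to (ii). Making this converse passage quantitative and two-sided, rather than merely qualitative, is the main technical obstacle, and I would lean on the one-dimensional Fourier estimates of Esseen and the cumulant expansion as recorded in Petrov \cite{P} to carry it through.
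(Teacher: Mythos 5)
The paper contains no proof of this proposition: it is Ibragimov's theorem, quoted verbatim from Petrov \cite{P}, p.\,216, and used as a black box in the proof of Theorem \ref{P-Sn}. So your attempt can only be judged on its own merits. The sufficiency half of your sketch is the standard argument and is essentially sound: lattice Fourier inversion gives $r_n\le\frac1{2\pi}\int_{\R}\big|\psi(s/\sqrt n)^n\mathbf{1}_{\{|s|\le\pi\sqrt n\}}-e^{-s^2/2}\big|\,ds$; conditions (i)--(ii) do yield $\log\psi(t)=-t^2/2+\mathcal O(|t|^{m+2+\delta})$ near $0$ (split the moment remainder at $|x|=1/|t|$ and integrate the tail bound, using $\delta<1$); $|\psi(t)|\le e^{-ct^2}$ near $0$ and $\sup_{\varepsilon\le|t|\le\pi}|\psi(t)|<1$ (from span $1$) handle the central and peripheral ranges; and your necessity argument for the span condition is correct as stated.

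The necessity of (i) and (ii), however, is where your proposal has genuine gaps rather than merely compressed details. First, the step ``one recovers $\psi(t/\sqrt n)^n$ as the Fourier transform of $\{\sqrt n\,\P\{S_n=k\}\}_k$ and the hypothesis on $r_n$ forces agreement with $e^{-t^2/2}$ up to the stated order'' does not go through as written: $\psi(t)^n=\sum_k\P\{S_n=k\}e^{itk}$ is an infinite sum, and the sup bound controls each summand only by $r_n/\sqrt n$. To sum it you need tail control on $S_n$, and since a priori only the second moment is finite, Chebyshev gives $\sum_{|k|>K}\P\{S_n=k\}\le n/K^2$; optimizing $K$ then transfers a strictly degraded rate (of order $n^{-\delta/3}$ when $m=0$, say) to the characteristic function, which is too weak for your cumulant comparison. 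Second, that cumulant comparison presupposes that $\kappa_3,\dots,\kappa_{m+2}$ exist, i.e.\ that moments up to order $m+2$ are finite -- but this finiteness is itself part of the conclusion and is the heart of Ibragimov's necessity proof; it must be extracted from the rate (via a Tauberian argument on $1-\mathrm{Re}\,\psi\ge 0$, symmetrization, and monotonicity of truncated moments), not assumed. Third, your claimed equivalence ``(ii) is precisely $\delta$-H\"older regularity of $\psi^{(m+2)}$ at $0$'' is only an implication from (ii) to the Fourier-side remainder; the converse, which is the direction you need, is exactly the nontrivial Tauberian passage, and invoking an unspecified ``Abelian--Tauberian correspondence'' leaves it unproved. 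In short: the sufficiency half is a correct outline, the necessity half is a plan rather than a proof, and the paper itself sidesteps the entire issue by citation.
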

\begin{proof}[Proof of Theorem \ref{P-Sn}]
%The moments of $g$ satisfy
  % $ \E |g|^{2\ell}= {(2\ell)!\over 2^\ell \ell!} $,     $ \E |g|^{2\ell+1}=0 $,   
 %$\ell=0,1, \ldots$, it follows that 
% if  $X=\e$, Theorem \ref{llt.indep.ibra.g.moments} does apply with the only possible choice  $m=1$,   $\d\approx1$, since   $\E \e^4=1 $, whereas   $\E g^4=3$. And we have,
%\begin{equation}\label{}  \sup_{k}\Big|  \sqrt n \P\{R_n=k\}-{1\over  \sqrt{ 2\pi}}e^{-  {k^2\over  2 n } }\Big|   ={\mathcal O}_\e\big(n^{-{ 
%  1+\e}}\big).
 % \end{equation}
  %\eqno(3.1) 
% Now let $\{X_k, k\ge 1\}$ be  a sequence   of independent  integer-valued centered random variables such that $\E X  =\E X ^3=0$ and $\E X ^2=1$, and for some $0<\d<1$,
%$$\int_{|x|\ge v}|x|^{ 2} F(dx)={\mathcal O}\big(v^{-\d}\big) \qq {\rm as} \qq v\to \infty  .$$

  As there exists   $\e>0$ such that $\E |X|^{3+\e}<\infty $, $X$ satisfies the requirements of  Proposition \ref{llt.indep.ibra.g.moments} with 
$m=1$ and
 $0<\d<1$, $\d= \d(\e)$. Then
\begin{align*}
%\label{P-Sn(3.3)} 
\sup_{k}\big|  \sqrt n \P\{S_n=k\}-{1\over  \sqrt{ 2\pi}}e^{-  {k^2\over  2 n } }\big| ={\mathcal O}\big(n^{-(1+\d)/2}\big),\end{align*}
%\eqno(3.2)  $$
and
\begin{align}\label{P-Sn(3.3).} \big| \sum_{k\in J}\P\{S_n=k\}-{1\over  \sqrt{ 2\pi n}}\sum_{k\in J}
e^{-  {k^2\over  2 n } }\big|\le C  {\#\{J\}\over   n^{1+(\d/2)}}.
\end{align}
%\eqno(3.3)  $$

Besides, by means of a result of  Robbins and Siegmund (\cite{P}, p.\,314), if $\a>1$, $b>1$ are two arbitrary but fixed reals,
there exists a constant $C$ depending on   $\a$, $b$ and $X$, an integer  $n(\a)$ depending on  $\a$ only, such that for any $n\ge
n(\a)$,
\begin{align}\label{P-Sn(3.4)} \P\Big\{ \exists m\ge n : S_m>\sqrt{2\a(1+b)m  \log\log m}\Big\}\le C (\log n)^{-b} (\log\log n)^{-1/2}.
\end{align}
% \eqno(3.4) $${Sh.We1}visiblf
As  $X$ is symmetric, $\P\{|S_n|>u\}\le 2\P\{ S_n >u\}$; and thus it follows from  \eqref{P-Sn(3.4)} that 
\begin{align}\label{P-Sn(3.5)} \P\Big\{  |S_n|>\sqrt{2\a(1+b)n  \log\log n}\Big\}\le 2C (\log n)^{-b} (\log\log n)^{-1/2}.\end{align}
% \eqno(3.5) $$
 Let  $k$, $\ell$
be coprime integers. Put
$$J=\big\{2\le |p|\le \sqrt{2\a(1+b)n  \log\log n}: p\equiv \ell \ ({\rm mod}\,
k)\big\} .$$  
Using \eqref{P-Sn(3.3).} and \eqref{P-Sn(3.5)} we have
\begin{align}\label{P-Sn(3.6)} \bigg| \P\Big\{S_n \ \hbox{ prime},\,  S_n\equiv \ell \ ({\rm mod}\, k)\Big\}&- \sqrt{2\over       \pi n }\sum_{j\in J}e^ {-  { j^2
/  2n  }  }\bigg| \cr &\le  \  {C\#\{J\}\over   n^{1+(\d/2)}}+  {2C\over (\log n)^{ b} (\log\log n)^{ 1/2}}.
\end{align}
%\eqno(3.6)
Whence
\begin{align}\label{P-Sn(3.7)} \P\Big\{S_n \ \hbox{ prime},\, S_n\equiv \ell \ ({\rm mod}\, k)\Big\}\le\   {\#(J)\over      \sqrt{ \pi n} }   + C' (\log n)^{-b}
(\log\log n)^{-1/2}.\end{align}
%\eqno(3.7) $$
Using the notation  $\pi(x;\ell, k):= \#\big\{p\le x: p\equiv \ell \ ({\rm mod}\, k)\big\} $, we have
 $$    \#\big\{2\le  p \le
\sqrt{2\a(1+b)n  \log\log n}:  p\equiv \ell \ ({\rm mod}\, k) \big\}  =  \pi(\sqrt{2\a(1+b)n  \log\log n};\ell, k)  . $$ 
By means of  the Brun-Titchmarsh theorem (see for instance \cite[p.\,83]{Te1}), 
$$\pi(x+y;\ell, k)-\pi(x;\ell, k) \le (2+o(1)) {y\over \varphi(k)\log (y/k)},
$$
 as $y/k$ tends to infinity, where  $\varphi$ is Euler's totient function. \vskip 2 pt 
 
 Letting $x=2$ and $y=\sqrt{2\a(1+b)n  \log\log n} -2$,  we have in particular, as   $
\sqrt{ n  \log\log n}/k\to
\infty$, 
\begin{eqnarray}\label{P-Sn(3.8)} {\pi(\sqrt{2\a(1+b)n  \log\log n};\ell, k)\over \sqrt
  n}&\le& { (2+o(1)) \over \sqrt
  n}  {\sqrt{2\a(1+b)n  \log\log n}\over \varphi(k)\log  (\sqrt{2\a(1+b)n  \log\log n}/k)   }\cr &\le  &C''   {\sqrt{   
\log\log n}\over \varphi(k)\log    n   }.
\end{eqnarray}
%\eqno(3.8) 

By combining  \eqref{P-Sn(3.8)} with \eqref{P-Sn(3.7)}, we deduce that for all  $n$ large enough, if  $
\sqrt{ n  \log\log n}/k\to
\infty$ 
 $$ \P\Big\{S_n \  \hbox{ prime},\,  S_n\equiv \ell \ ({\rm mod}\, k)\Big\}\le  C''   {\sqrt{   
\log\log n}\over \varphi(k)\log    n   } + C'{  1\over (\log n)^{ b}(\log\log n)^{ 1/2} }.
%\eqno(3.9)
 $$

Consequently, for any   $B>1$, there exists a constant $C(B)$, such that, as $n$ is large and  $
\sqrt{ n  \log\log n}/k\to
\infty$ 
$$ \P\Big\{S_n \  \hbox{ prime},\,  S_n\equiv \ell \ ({\rm mod}\, k)\Big\}\le C(B)\Big\{   {\sqrt{   
\log\log n}\over \varphi(k)\log    n   } +  {  1\over (\log n)^{B} }\Big\}.
%\eqno(3.10)
 $$
 \end{proof}

\section{Divisibility and primality in the Cram\'er random walk}\label{s9}
%Let $\mathcal P=\{p_i,i\ge 1\}$ denote   the sequence of consecutive prime numbers. 
Cram\'er's probabilistic model basically consists with  a sequence of independent random variables $\xi_i$, defined for
$i\ge 3$ by
\begin{equation}\label{C.xin}\P\{\xi_i= 1\}= \frac1{\log i}, \qq \quad \P\{\xi_i= 0\}= 1-\frac1{\log i}.
\end{equation}
% Let   $\mathcal C=\{ j\ge 1 \!: \xi_j=1\}$. Let also $S_n =\sum_{i=1}^n \xi_i$, $n\ge 1$ and note that obviously $S_x=\#\big\{ \nu\in \mathcal C \,:\,\nu\le x\big\}$  for all positive integers $x$. 
Let   $m_n=\E S_n=\sum_{j=3}^n \frac1{\log j}$, $B_n={\rm Var }S_n= \sum_{j=3}^n \frac1{\log j}(1-\frac1{\log j})$. 
Cram\'er's well-known twin prime conjecture, which seems largely accepted,   does not however assert that there are any primes in  the \lq primes\rq\,  of the model,  which are  the instants  of jump   
of the random walk  $\{S_n , n\ge 1\}$,  recursively defined  by 
  \begin{equation}\label{C.Pn} P_1= \inf\{ n\ge 3: \xi_n=1\},\qq   P_{\nu +1} =\inf\{ n>P_\nu: \xi_n=1\}  \qq \quad \nu\ge 1  .
\end{equation}
 We refer   to Granville \cite{G},   Pintz \cite{P}  and Montgomery and Soundararajan \cite{MS}.    Our recent paper Weber \cite{W8} offers a     probabilistic complementary study, in particular     of   the \lq primes\rq~ $P_\nu$      and of the primality properties of  $\{S_n, n\ge 1\}$.
%Before doing the proof, 
Recall    
%a few standard facts and  some new ones  concerning this model,  first,  
that  the standard limit theorems
%  (strong law of large numbers, central limit theorem, law of the iterated logarithm, local limit theorem) 
are fulfilled,  
   %   are satisfied by the naturally associated random walk  $\{S_n , n\ge 1\}$, 
   and  the    invariance principle   holds, derived from Sakhanenko's. The law of the iterated logarithm implies that 
\begin{equation}\label{cramer.pnt}
 \#\big\{ \nu\in \mathcal C : \nu\le x\big\} = \int_2^x \frac{\dd t}{\log t} + \mathcal O\big( \sqrt{x\log\log x}\big) ,
\end{equation}
with probability one.          Cram\'er's  easier prediction on  the order of magnitude of the remainder   in the \lq   prime number theorem\rq, cannot be separated from the fact that this one should  also be sensitive to the subsequence on which it is observed, as    shown  in \cite{W8}. Such a property  is hardly conceivable  and was nowhere emphasized.  
 In a work in progress, that  study is  transposed to other random models for  prime numbers.

%\medskip\par \vskip 5 pt
 \subsection{Divisibility}The distribution of divisors of $S_n$ turns up to share similarities with the one of Bernoulli sums. We prove the following result, which was only stated  in   \cite{W8}.
 \begin{theorem}\label{Cramer.div.Sn}
\begin{align*}   
\P\{d|S_n\}   \, = \frac{1}{d}  + \frac{1}{d}\sum_{  1\le  j\le \frac{d}{\pi}( \log n)  ( { \a \over 2 n} )^{1/2}} &  e^{ 2i \pi m_n(\frac{j}{d})  - 2\pi^2  B_n\,(\frac{j}{d})^2  }  +\mathcal O \Big(   \frac{d^3 \a^3(\log n)^4}{n^{3/2}}   \Big)
.
\end{align*}
\end{theorem}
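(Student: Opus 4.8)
The plan is to estimate $\P\{d|S_n\}$ through its character-sum (Fourier) representation, exactly as in the proof of Proposition \ref{Lemma44}, and then to replace each character value by a Gaussian through a cumulant expansion. Writing $p_i=1/\log i$ and starting from $d\,\chi(d|S_n)=\sum_{j=0}^{d-1}e^{2i\pi (j/d)S_n}$, taking expectations and using independence of the $\xi_i$ gives
\[
\P\{d|S_n\}=\frac1d\sum_{j=0}^{d-1}\varphi_n\!\big(\tfrac{j}{d}\big),\qquad \varphi_n(t)=\E\,e^{2i\pi tS_n}=\prod_{i=3}^n\big((1-p_i)+p_ie^{2i\pi t}\big).
\]
The term $j=0$ equals $\varphi_n(0)=1$ and supplies the leading $1/d$. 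Since $S_n$ is integer valued, $\varphi_n(1-t)=\overline{\varphi_n(t)}$, so the remaining residues organize by smallest absolute value about $0$; this is the discrete analogue of the Theta sum and is what the displayed Gaussian sum is meant to capture (the analogue of the Poisson-summation identity \eqref{poisson.comp.theta.llt} in the Bernoulli case).

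Next I would produce the termwise approximation $\varphi_n(j/d)\approx e^{2i\pi m_n(j/d)-2\pi^2 B_n(j/d)^2}$. Taking logarithms, $\log\varphi_n(t)=\sum_{i=3}^n\log\big(1+p_i(e^{2i\pi t}-1)\big)$, and a Taylor (cumulant) expansion at $t=0$ yields the linear term $2i\pi t\sum_i p_i=2i\pi m_n t$, the quadratic term $-2\pi^2 t^2\sum_i p_i(1-p_i)=-2\pi^2 B_n t^2$, and a remainder whose leading part is the cubic cumulant $\tfrac16 C_n(2i\pi t)^3$ with $C_n=\sum_{i=3}^n p_i(1-p_i)(1-2p_i)\asymp n/\log n$. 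Equivalently, writing $f_i(t)=(1-p_i)+p_ie^{2i\pi t}$ and $g_i(t)=e^{2i\pi p_i t-2\pi^2 p_i(1-p_i)t^2}$, one checks that $f_i$ and $g_i$ agree through order $t^2$, so the product-telescoping bound $|\varphi_n(t)-\prod_i g_i(t)|\le\sum_i|f_i(t)-g_i(t)|\le C\,m_n|t|^3$ holds on the relevant range (both $|f_i|,|g_i|\le1$), with $\prod_i g_i(t)=e^{2i\pi m_n t-2\pi^2 B_n t^2}$. Hence, for $|j|/d$ up to the truncation scale, $\varphi_n(j/d)=e^{2i\pi m_n(j/d)-2\pi^2 B_n(j/d)^2}+\mathcal{O}\!\big(m_n(j/d)^3\big)$.

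For the tail I would use the modulus decay. From $|f_i(t)|^2=1-4p_i(1-p_i)\sin^2(\pi t)$ and $1-x\le e^{-x}$, together with $\sin x\ge(2/\pi)x$,
\[
|\varphi_n(t)|\le \exp\big(-2B_n\sin^2(\pi t)\big)\le \exp\big(-2\pi^2 B_n t^2\big)\qquad (|t|\le\tfrac12).
\]
The cutoff $T=\frac{d}{\pi}(\log n)(\a/2n)^{1/2}$ is exactly the level at which $2\pi^2 B_n(T/d)^2=\a\log n$ (recall $B_n\sim m_n\sim n/\log n$), so each discarded character value is $\le n^{-\a}$ and the discarded block sums to a negligible Gaussian tail $\lesssim n^{-\a}$. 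Collecting the retained terms produces the stated main sum $\frac1d\sum_{1\le j\le T}e^{2i\pi m_n(j/d)-2\pi^2 B_n(j/d)^2}$, while the accumulated cubic remainders over the $\mathcal{O}(T)$ retained residues, together with the $\sin^2(\pi t)$-versus-$(\pi t)^2$ and higher-cumulant corrections and the tail, are to be absorbed into $\mathcal{O}\!\big(d^3\a^3(\log n)^4 n^{-3/2}\big)$ after substituting the value of $T$ and $B_n\asymp n/\log n$.

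The main obstacle is precisely the uniform control and summation of the remainder across the retained range $1\le j\le T$ so as to land the error at the claimed order with its exact $d$- and $\a$-dependence: the cubic cumulant $C_n\asymp n/\log n$ is the dominant correction, and because it enters as an odd, oscillating factor $(\ell/d)^3\sin(2\pi m_n\ell/d)$ weighted by the Gaussian, a naive termwise bound only yields $\mathcal{O}(\a^2(\log n)^3/n)$; extracting the sharper stated bound requires carefully tracking this oscillatory sum together with the fourth- and higher-order cumulants and the $\sin$-expansion error. One must simultaneously fix the truncation level (governed by $\a$) so that the discarded Theta-tail $n^{-\a}$ stays below this error, which forces $\a$ not too small. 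Everything else is standard local-limit input for $S_n$, available here since the usual and local limit theorems hold for the Cram\'er walk.
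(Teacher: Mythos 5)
Your overall route is the same as the paper's: the character-sum identity $\P\{d|S_n\}=\frac1d\sum_{j=0}^{d-1}\Phi_n(j/d)$, a per-factor Gaussian approximation of $\Phi_n$ with cubic error $\mathcal{O}(m_n|t|^3)$, truncation at $T=\frac{d}{\pi}(\log n)(\alpha/2n)^{1/2}$, and a Gaussian tail estimate of order $n^{-\alpha}$. (Your tail treatment, using only the modulus bound $|\Phi_n(t)|\le e^{-2B_n\sin^2\pi t}$, is actually cleaner than the paper's, which formally carries the cubic expansion into the tail range.) The one methodological difference is that you approximate $\Phi_n$ \emph{additively}, via the telescoping bound $|\prod_i f_i-\prod_i g_i|\le\sum_i|f_i-g_i|\le C\,m_n|t|^3$, whereas the paper (Lemma \ref{lfp1}, Proposition \ref{Cramer.Phin}) keeps the error \emph{in the exponent}: $\Phi_n(t)=e^{2i\pi m_n t-2\pi^2B_nt^2+E_n(t)}$ with $|E_n(t)|\le 12\,m_n(\pi|t|)^3$. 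That difference is exactly where your argument develops a genuine gap.

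The gap is the final error summation, and you flag it yourself: your additive bound carries no Gaussian damping, so summing $C\,m_n(j/d)^3$ over $1\le j\le T$ gives only $\mathcal{O}(\alpha^2(\log n)^3/n)$, and you then assert that reaching the stated error requires exploiting oscillation of the cubic term and controlling fourth and higher cumulants. That is not the resolution, and as written your proof does not close. The missing observation is that each retained term's discrepancy should inherit the factor $e^{-2\pi^2B_n(j/d)^2}$. In the multiplicative form, on the retained range $|E_n(j/d)|\le C\,m_n(T/d)^3=\mathcal{O}\big(\alpha^{3/2}(\log n)^2n^{-1/2}\big)=o(1)$, hence $e^{E_n}-1=\mathcal{O}(E_n)$ and
\[
\Big|\Phi_n\big(\tfrac jd\big)-e^{2i\pi m_n(\frac jd)-2\pi^2B_n(\frac jd)^2}\Big|\;\le\; C\,e^{-2\pi^2B_n(\frac jd)^2}\,m_n\big(\tfrac jd\big)^3 .
\]
Then the completely crude bound
\[
\sum_{1\le j\le T}e^{-2\pi^2B_n(\frac jd)^2}\,m_n\big(\tfrac jd\big)^3
\;\le\;\Big(\sum_{1\le j\le T}e^{-2\pi^2B_n(\frac jd)^2}\Big)\Big(m_n\sum_{1\le j\le T}\big(\tfrac jd\big)^3\Big)
\;\le\; T\cdot C\,\frac{d\,\alpha^2(\log n)^3}{n}\;\le\; C\,\frac{d^{2}\alpha^{5/2}(\log n)^4}{n^{3/2}},
\]
which is already inside the claimed $\mathcal{O}\big(d^3\alpha^3(\log n)^4n^{-3/2}\big)$; this is precisely what the paper does, with no oscillatory cancellation and no cumulants beyond the third (the bound on $E_n$ in Lemma \ref{lfp1} absorbs all higher-order terms). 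Your telescoping can be repaired to the same effect by writing $\prod_if_i-\prod_ig_i=\sum_i\big(\prod_{k<i}f_k\big)(f_i-g_i)\big(\prod_{k>i}g_k\big)$ and keeping the moduli of the spectator factors, which are themselves Gaussian-decaying; either way, the oscillatory and higher-cumulant analysis you propose is unnecessary for the stated bound.
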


  Let $\p_k(t)$ be the characteristic function of $\xi_k$, $\p_k(t)= \E e^{2i\pi \xi_k t}$. Let also 
$$\Phi_n (t)=\E e^{2i\pi tS_n}= \prod_{k=1}^n\p_k(t) ,  $$
be the characteristic function of $S_n$.  We  prove the following estimate with explicit constants.
\begin{proposition}\label{Cramer.Phin} We have
$|\Phi_n(t)|\le \exp\big\{- 2B_n\sin^2\pi t  \big\} $.
Further
\begin{equation*}
  \Phi_n (t)= e^{ 2i \pi tm_n -  2   B_n  (\pi t)^2  + E_n(t)  },
    \qq \quad {\text with}\qquad |E_n(t)| \,\le\,  12\,m_n\, (\pi  |t|)^3.
\end{equation*}
In particular,  
\begin{equation*}
  \E e^{ iy\, \frac{S_n-m_n}{\sqrt {B_n}} }= e^{    - \frac{y^2}{2}    + E_n(y)  }, \qq \quad {\text and}\qquad |E_n(y)| \, \le  2 \, \frac{\sqrt {\log n}\,|y|^3}{\sqrt {n}}   .
 \end{equation*}

 \end{proposition}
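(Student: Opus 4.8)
The plan is to work directly with the characteristic functions $\varphi_k(t)=\E e^{2i\pi\xi_k t}=1-p_k+p_ke^{2i\pi t}$, where $p_k=1/\log k$, and then pass to the product $\Phi_n=\prod_k\varphi_k$. First I would record the elementary identity $|\varphi_k(t)|^2=1-4p_k(1-p_k)\sin^2\pi t$, obtained by expanding $\varphi_k\overline{\varphi_k}$ and using $\cos2\pi t=1-2\sin^2\pi t$. Since $\log(1-x)\le -x$, multiplying over $3\le k\le n$ gives $|\Phi_n(t)|^2=\prod_k|\varphi_k(t)|^2\le\exp\{-4\sin^2\pi t\sum_k p_k(1-p_k)\}=\exp\{-4B_n\sin^2\pi t\}$, whence the first claim $|\Phi_n(t)|\le\exp\{-2B_n\sin^2\pi t\}$. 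The same identity also yields the uniform lower bound $|\varphi_k(t)|\ge|\cos\pi t|$ (because $p_k(1-p_k)\le 1/4$), which is the key fact that makes the logarithm harmless away from $t=1/2$.

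For the exponential representation I would set $g_k(t)=\log\varphi_k(t)$, taking the branch determined by $g_k(0)=0$; this is single-valued for $t$ in a fixed neighbourhood of the origin, where $\varphi_k$ stays near the positive real axis. A second-order Taylor expansion at $t=0$ gives $g_k'(0)=2i\pi p_k$ and $g_k''(0)=-4\pi^2p_k(1-p_k)$, so that $g_k(t)=2i\pi p_kt-2\pi^2 p_k(1-p_k)t^2+R_k(t)$ with integral remainder $R_k(t)=\tfrac12\int_0^t(t-s)^2g_k'''(s)\,ds$. Summing over $k$ and using $m_n=\sum_k p_k$, $B_n=\sum_k p_k(1-p_k)$ produces exactly $\log\Phi_n(t)=2i\pi m_nt-2B_n(\pi t)^2+E_n(t)$ with $E_n(t)=\sum_kR_k(t)$. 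To bound the remainder I would use $|\varphi_k^{(j)}(s)|=(2\pi)^jp_k$ for $j\ge1$ together with $|\varphi_k(s)|\ge|\cos\pi s|$ in the standard formula $g_k'''=\varphi_k'''/\varphi_k-3\varphi_k''\varphi_k'/\varphi_k^2+2(\varphi_k')^3/\varphi_k^3$; for $s$ small this gives $|g_k'''(s)|=O\big((2\pi)^3p_k\big)$, hence $|R_k(t)|\le 12\,p_k(\pi|t|)^3$ with room to spare, and summing over $k$ yields $|E_n(t)|\le 12\,m_n(\pi|t|)^3$.

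Finally I would specialise to the normalised variable by the substitution $t=y/(2\pi\sqrt{B_n})$. Then $2i\pi tm_n=iy\,m_n/\sqrt{B_n}$ and $2B_n(\pi t)^2=y^2/2$, so multiplying $\Phi_n(t)$ by $e^{-iy\,m_n/\sqrt{B_n}}$ cancels the mean and leaves $\E\exp\{iy(S_n-m_n)/\sqrt{B_n}\}=\exp\{-y^2/2+E_n(t)\}$. The remainder becomes $|E_n(t)|\le 12\,m_n(\pi|t|)^3=\tfrac{3}{2}\,m_n|y|^3/B_n^{3/2}$, and it remains to bound $m_n/B_n^{3/2}$. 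Here I would invoke $m_n\sim B_n\sim n/\log n$ (indeed $B_n/m_n\to1$, since $\sum_{j}(\log j)^{-2}=o(\sum_j(\log j)^{-1})$), giving $m_n/B_n^{3/2}=B_n^{-1/2}\,(m_n/B_n)\sim\sqrt{\log n}/\sqrt n$; the constant $2$ then absorbs the factor $\tfrac32$ and the lower-order corrections for $n$ large, yielding $|E_n(y)|\le 2\sqrt{\log n}\,|y|^3/\sqrt n$.

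The main obstacle is the uniform control of the cubic remainder $R_k$: the third derivative of $\log\varphi_k$ blows up where $|\varphi_k|$ is small, which happens near $t=1/2$ precisely for the few indices with $p_k\approx1/2$ (here $3\le k<e^2$). This is exactly why the representation and the bound $|E_n(t)|\le12m_n(\pi|t|)^3$ must be read on a neighbourhood of $t=0$; the saving grace is that the lower bound $|\varphi_k(t)|\ge|\cos\pi t|$ is uniform in $k$, so the only restriction is on $t$, and after the scaling $t=y/(2\pi\sqrt{B_n})\to0$ this is automatically met in the regime $|y|=O(\sqrt{B_n})$. A secondary, purely bookkeeping difficulty is pinning the explicit constant $2$ in the last display, which requires effective upper and lower bounds on $m_n$ and $B_n$ rather than the $\sim$ relations.
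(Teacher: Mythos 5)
Your argument is correct and reaches all three displays, but the middle step (the exponential representation with cubic error) goes by a genuinely different route from the paper's. The paper's key tool, Lemma \ref{lfp1}, never differentiates a logarithm: writing $\varphi_k(t)=1+u$ with $u=p_k(e^{2i\pi t}-1)$ (your $p_k=1/\log k$ is the paper's $q$), it invokes the scalar expansion $\log(1+u)=u-\frac{u^2}{2}+O(|u|^3)$ for $|u|\le 2/3$, and then converts $u-\frac{u^2}{2}$ into $2i\pi p_k t-2\pi^2 p_k(1-p_k)t^2$ plus a cubic error purely algebraically, via the auxiliary function $A(t)=e^{2i\pi t}-1-2i\pi t+\frac{(2\pi t)^2}{2}$ and the identity $(e^{2i\pi t}-1)^2+(2\pi t)^2=A(2t)-2A(t)$, with $|A(t)|\le\frac{(2\pi|t|)^3}{6}$ coming from the elementary bound on $|e^{ix}-1-ix+\frac{x^2}{2}|$; the hypothesis there is the explicit condition $p_k|\sin\pi t|\le 1/3$ (a restriction on $t$ alone, since $p_k\le 1/\log 3$), and the constant $12$ falls out mechanically from $\frac83 p_k+\frac43 p_k^2+8p_k^3\le 12 p_k$. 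You instead run the classical cumulant argument: Taylor's formula with integral remainder for $g_k=\log\varphi_k$, with $g_k'''$ controlled through the uniform lower bound $|\varphi_k(t)|\ge|\cos\pi t|$. This is conceptually cleaner and visibly uniform in $k$ (your lower bound plays exactly the role of the paper's condition $p_k|\sin\pi t|\le 1/3$), but it costs you branch-of-logarithm bookkeeping and a constant that must be chased by hand: your ``room to spare'' is real but not lavish, since with $|\cos\pi s|\ge 0.9$ and $p_k\le 1/\log 3$ one gets $|g_k'''(s)|\le (2\pi)^3 p_k\,c$ with $c\approx 7$, hence $|R_k(t)|\le\frac{4c}{3}p_k(\pi|t|)^3\approx 9\,p_k(\pi|t|)^3\le 12\,p_k(\pi|t|)^3$. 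The two proofs are otherwise identical: your part on $|\Phi_n(t)|$ is the paper's computation $|\varphi_k(t)|^2=1-4p_k(1-p_k)\sin^2\pi t$ followed by $1-x\le e^{-x}$, and your rescaling $t=y/(2\pi\sqrt{B_n})$ with $\frac32 m_n B_n^{-3/2}|y|^3\le 2\sqrt{\log n}\,|y|^3/\sqrt n$ is verbatim the paper's conclusion. You also correctly flag a caveat that is equally present, though only implicit, in the paper: the representation and the bound on $E_n$ are established only for $t$ in a fixed neighbourhood of $0$ (the paper's $p_k|\sin\pi t|\le 1/3$), that is, for $|y|=O(\sqrt{B_n})$ after scaling, and the final constant $2$ uses effective versions of $m_n\sim B_n\sim n/\log n$.
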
 
The proof crucially uses the Lemma below. 
  
  \begin{lemma}\label{lfp1} Let $m  $ be a positive real and $p$ be a real such that $0<p<1$. Let $\b$ be a random variable defined by ${\mathbb P}\{ \b=0\}= p$, $
{\mathbb P}\{
\b=m\}= 1-p=q$.  Let $\p(t) ={\mathbb E\,} e^{2i\pi t \b}$. Then
 we have the following estimates,

\smallskip
\hspace*{1.2mm}{\rm (i)} For all real $t$,
 $|{\varphi}(t)|\le \exp\big\{- 2pq\sin^2\pi t m\big\} $

\smallskip
{\rm (ii)} If $q|\sin \pi t m|\le 1/3$, then
 \begin{eqnarray*}  \p(t)&=& e^{  q      2i\pi mt   - 2qp (  \pi mt)^2 +E}
 , 
\end{eqnarray*}
with  
$|E|\, \le \,   12 q (\pi m|t|)^3$.
    \end{lemma}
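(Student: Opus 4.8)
For part (i) the plan is to compute the modulus of $\varphi$ by hand. Since $\varphi(t)=p+qe^{2i\pi mt}$, expanding gives $|\varphi(t)|^2=p^2+q^2+2pq\cos(2\pi mt)=1-2pq(1-\cos 2\pi mt)=1-4pq\sin^2(\pi mt)$, where I use $p+q=1$. I would then apply the elementary inequality $1-x\le e^{-x}$, valid for all real $x$, with $x=4pq\sin^2(\pi mt)$, and take square roots; this produces $|\varphi(t)|\le e^{-2pq\sin^2\pi mt}$, which is exactly (i).

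For (ii), write $u=\pi mt$ and $w=\varphi(t)-1=q(e^{2iu}-1)$, so that $|w|=2q|\sin u|$. The hypothesis $q|\sin u|\le 1/3$ forces $|w|\le 2/3<1$, which is the decisive structural gain: it keeps $1+w=\varphi$ inside the disc of radius $2/3$ about $1$, well away from the branch cut, so the principal logarithm $\log(1+w)$ is well defined and represented by its Mercator series. I would take the exponent in the statement to be this principal log, so that $E:=\log\varphi-2iqu+2pqu^2$, and the whole task reduces to proving $|E|\le 12q|u|^3$. I then split on the size of $u$. For $|u|\ge 1$ a crude estimate is enough: $|\log(1+w)|\le |w|/(1-|w|)\le 3|w|=6q|\sin u|\le 6q|u|$, whence $|E|\le 6q|u|+2q|u|+2qu^2\le 10q|u|^3$, since $|u|,u^2\le |u|^3$ in this range.

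The delicate regime is $|u|<1$, where I would pass through the \emph{centered} characteristic function via the exact factorization $\varphi=e^{2iqu}g$ with $g=pe^{-2iqu}+qe^{2ipu}$ (legitimate because $p+q=1$); thus $g$ is the characteristic function of $\beta-qm$ and has no linear term. Writing $g=1+\gamma$, a short computation shows that the two linear parts cancel and that $\gamma+2pqu^2=pR_1+qR_2$ collects only the cubic-and-higher remainders of the two exponentials. Bounding each by the imaginary-argument Taylor estimate $|e^{i\vartheta}-1-i\vartheta+\vartheta^2/2|\le |\vartheta|^3/6$ gives $|\gamma+2pqu^2|\le \tfrac{4}{3}pq(p^2+q^2)|u|^3\le \tfrac{4}{3}q|u|^3$, while $|\log(1+\gamma)-\gamma|\le |\gamma|^2\le 4p^2q^2u^4\le q|u|^3$ for $|u|<1$ (using $|\gamma|\le 2pqu^2\le 1/2$ and $pq\le 1/4$). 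Adding the pieces yields $|E|\le \tfrac{7}{3}q|u|^3$, comfortably below $12q|u|^3$; a continuity remark (the exponents $\log(1+w)$ and $2iqu+\log(1+\gamma)$ agree at $u=0$ and differ by a continuous integer multiple of $2\pi i$, hence coincide on the interval around $0$) confirms that it is the same $E$ being estimated in both regimes.

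The main obstacle is genuinely the logarithm: when $p=q=1/2$ the characteristic function $\varphi$ vanishes at $u=\pi/2$, so $\log\varphi$ has a true singularity and one cannot naively Taylor-expand $\log\varphi$ along the real axis nor bound a third derivative uniformly. The hypothesis $q|\sin u|\le 1/3$ is precisely what keeps $\varphi$ near $1$ and makes both the principal logarithm and its power series legitimate; the rest is careful bookkeeping of constants, where the identities $p+q=1$, $pq\le 1/4$ and $p^2+q^2\le 1$ are used to carry a factor $q$ (rather than a bare constant) through every remainder term.
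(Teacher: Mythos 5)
Your proof is correct, and part (i) is exactly the paper's argument: compute $|\varphi(t)|^2=1-4pq\sin^2\pi mt$ and apply $1-x\le e^{-x}$. For part (ii), however, you take a genuinely different route. The paper works in a single regime: it sets $w=q(e^{2i\pi mt}-1)$ (its $u$), observes that the hypothesis gives $|w|\le 2/3$, expands $\log(1+w)=w-\frac{w^2}{2}+R$ with $|R|\le|w|^3$, and then converts $w$ and $w^2$ into powers of $2i\pi mt$ by means of the cubic Taylor remainder $A(t)=e^{2i\pi mt}-1-2i\pi mt+\frac{(2\pi mt)^2}{2}$ together with the algebraic identity $(e^{2i\pi mt}-1)^2=A(2t)-2A(t)-(2\pi mt)^2$, the remainders being controlled by the standard bound $|e^{ix}-\sum_{k=0}^{n}\frac{(ix)^k}{k!}|\le\frac{|x|^{n+1}}{(n+1)!}$. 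You instead factor out the mean first, $\varphi=e^{2iqu}(pe^{-2iqu}+qe^{2ipu})$ with $u=\pi mt$, so that the linear term is extracted exactly and the correction $\gamma$ is already quadratic ($|\gamma|\le 2pqu^2$); only a first-order expansion of $\log(1+\gamma)$ is then needed. The price is a dichotomy — for $|u|\ge 1$ a crude triangle-inequality bound using $|u|\le|u|^3$, for $|u|<1$ the centered expansion, where $|\gamma|\le 2pqu^2\le 1/2$ is what legitimizes the logarithm series — plus the branch-matching step, which you correctly settle by continuity along the segment from $0$ to $u$ (indeed $\varphi=p+qe^{2iu'}$ stays off $(-\infty,0]$ for $|u'|<1$, since its imaginary part vanishes there only at $u'=0$). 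What each approach buys: the paper's is one uniform computation with no case split and no discussion of logarithm branches, because the smallness it exploits is that of $w$, which the hypothesis controls no matter how large $\pi m|t|$ is; yours is the standard centering device of local limit theorem proofs, requires one less order in the log expansion, and lands on the slightly better constant $10$ in place of $12$, at the cost of the two regimes and the branch verification.
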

      For   sake of completeness we include the proof.
   \begin{proof} (i) One verifies that $|\p(t)|^2=1- 4 pq \sin^2\pi mt.$ As moreover $1-\t\le e^{-\t}$ if  ${\vartheta} \ge 0$, we obtain $|\p(t)|^2\le e^{-4 pq \sin^2\pi mt}$. 

\vskip 3 pt  (ii) Let  $|u|\le u_0<1$. From the series expansion of $\log(1+u)$,  it follows that
$$\log(1+u)=u-\frac{u^2}{2}+R, \qq \qquad|R|\le |u|^3\,\sum_{j=0}^\infty \frac{|\theta|^j}{3+j}\le \frac{|u|^3}{3(1- u_0 )}.$$
 Then $ 1+u= \exp\{ u -\frac{u^2}{2} +B\}$,  with $|B|\le C_0\,|u|^3$ 
and  $C_0=\frac{1}{3(1- u_0 )}$.

 Writing that  $\p(t)= 1 +
q\big( e^{2i\pi mt} -1\big)=1 + u$,
  where $|u|= 2q|\sin\pi mt|\le  2q\big(1\wedge   \pi m|t| \big) $, we obtain 
  \begin{equation}\label{phi.u} \p(t)= 1+u=e^{q ( e^{2i\pi mt} -1 ) }\, e^{  -\frac{u^2}{2} +B}, \qq 
\quad |B|\le 8C_0\,q^3 |\sin\pi mt|^3.
\end{equation}
   
   In order to   estimate $u^2$, we let  $A(t)= e^{2i\pi mt} -1-  2i\pi mt+ \frac{(2 \pi mt)^2 }{2}$, and    write  $ (  e^{2i\pi mt} -1 )^2$ under the form 
\begin{align*} 
   &\   A(2t)-2A( t) -\big\{  -1-  4i\pi mt+ 8 ( \pi mt)^2\big\} +2\big\{ -1-  2i\pi mt+ \frac{(2 \pi mt)^2 }{2}\big\}+1
\cr &=
   A(2t)-2A( t) -     ( 2\pi mt)^2.
\end{align*}
Then
$ ( e^{2i\pi mt} -1 )^2+     ( 2\pi mt)^2 =    A(2t)-2A( t)$,
 and so   
  $\frac{u^2}{2}=-  2q^2(  \pi mt)^2+  \frac{q^2}{2}(A(2t)-2A( t)).$ 
 
\vskip 3 pt
Let $u_0=\frac{2}{3}$ so that $C_0=1$. We assumed $q|\sin \pi t m|\le 1/3$, thus $|u|\le 2/3$. We consequently  get with \eqref{phi.u}, 
\begin{eqnarray}\label{phi.bound} \p(t)&=&  e^{q ( e^{2i\pi mt} -1 )  -\frac{u^2}{2} +B }\, =\, 
e^{q ( e^{2i\pi mt} -1 ) +    2q^2(  \pi mt)^2-  \frac{q^2}{2}(A(2t)-2A( t))+B}
 %\cr &=&e^{q A(t) + q (    2i\pi mt- \frac{(2 \pi mt)^2 }{2})+    2q^2(  \pi mt)^2-  \frac{q^2}{2}(A(2t)-2A( t))+B}
\cr &=&e^{  q      2i\pi mt   - 2qp (  \pi mt)^2 +H+B}
, 
\end{eqnarray}
with $H=q A(t) -    \frac{q^2}{2}(A(2t)-2A( t))$. By using 
 the   estimate (Lemma 4.14 in Kallenberg \cite{Ka})
 %(Lemma 4.14 in Kallenberg \cite{Ka}), 
 \begin{equation}\label{expit}\Big|e^{ix} -\sum_{k=0}^n \frac{(ix)^k}{k!}\Big|\,\le\, \frac{2|x|^n}{n!}\wedge \frac{ |x|^{n+1}}{(n+1)!},
 \end{equation}
valid for any $x\in\R$ and $n\in \Z_+$, and    letting $\d(x)=x^2\big(1\wedge   \frac{|x|}{6} \big)$, we get 
\begin{eqnarray}\label{At.est}
 |A(t)|&\le&
 \d(2\pi m|t|)
.\end{eqnarray}
Using the rough bound  $\d(x)\le  \frac{|x|^3}{6} $, and   $|B|
\le\,8q^3\big(1  \wedge \pi m | t|\big)^3$, we get 
 \begin{equation}\label{HB.bound} |H|+|B|\, \le \, 2q|A(t)|+\frac{q^2}{2}|A(2t)| +|B|\, \le \, \big(\frac83 q+\frac43q^2+8q^3\big)(\pi m|t|)^3
\, \le \,  12 q (\pi m|t|)^3.
\end{equation}
 We conclude by inserting estimate  \eqref{HB.bound} into \eqref{phi.bound}.  \end{proof}
 
 \begin{proof}[Proof of Proposition \ref{Cramer.Phin}]
   
We apply Lemma \ref{lfp1}. Here we have $m=1$, $q=\frac{1}{\log k}$. As $|{\varphi}_k(t)|\le \exp\big\{- 2(1-\frac{1}{\log k})(\frac{1}{\log k})\sin^2\pi t  \big\}$, we get 
 $$|\Phi_n(t)|\le \exp\big\{- 2\sum_{k=3}^n(1-\frac{1}{\log k})(\frac{1}{\log k})\sin^2\pi t  \big\}.$$
Further condition $q|\sin \pi t m|\le 1/3$ in Lemma \ref{lfp1}  reduces for $\p_k(t)$ to 
$ \frac{1}{\log k}|\sin \pi t  |\le 1/3$.  Thus 
\begin{equation}
  \label{chf.xik}
  \p_k (t)= e^{ 2i \pi    ( \frac{1}{\log k}) t - 2\pi^2  (1-\frac{1}{\log k})(\frac{1}{\log k} ) \,t^2  + C_k(t)  },
\end{equation}
where
 $|C_k(t)|
\le\, \frac{12  (\pi  |t|)^3}{\log k}$. 
  Recalling that $m_n =\sum_{j=3}^n \frac1{\log j}$, $B_n= \sum_{j=3}^n \frac1{\log j}(1-\frac1{\log j})$, it follows that 
\begin{equation}
  \label{chf.sn}
\E e^{2i\pi tS_n}=  \Phi_n (t)= e^{ 2i \pi tm_n -  2\pi^2  B_n  t^2  + D_n(t)  },
\end{equation}
and
$|D_n(t)| \,\le\,  12\,m_n\, (\pi  |t|)^3  $. 
In particular,  
 \begin{equation}
  \label{chf.sn1}
 \E e^{ iy\, \frac{S_n-m_n}{\sqrt {B_n}} }= e^{    - \frac{y^2}{2}    + E_n(y)  },
 \end{equation}
with  $|E_n(y)| \,\le\,  \frac{3m_n}{2} \, (\frac{y}{\sqrt {B_n}})^3\,\le\,  2 \, \frac{\sqrt {\log n}|y|^3}{\sqrt {n}}  $. 
  This achieves the proof.
\end{proof}

\begin{proof}[Proof of Theorem \ref{Cramer.div.Sn}]  We note that 
\begin{eqnarray}\label{div.Sn}\P\{ d|S_n\}
&=& \E \Big(\frac{1}{d}  \sum_{j=0}^{d-1} 
e^{2i\pi S_n \frac{j}{d}}\Big) \ = \ 
\frac{1}{d}  +\frac{1}{d}  \sum_{j=1}^{d-1} \Phi_n \big(  \frac{j}{d}\big)
\cr &= &\frac{1}{d}+ \frac{1}{d}  \sum_{j=1}^{d-1} 
e^{ 2i \pi m_n(\frac{j}{d})  - 2\pi^2  B_n\,(\frac{j}{d})^2+\mathcal O (m_n(\frac{j}{d})^3 ) } 
.\end{eqnarray}

The    elliptic Theta function $$  \Theta (d,m_n,B)  =  \sum_{\ell\in \Z} e^{im\pi{\ell\over   d }-{B\pi^2\ell^2\over 2 d^2}}, 
 $$
 appears in the above estimate of $\P\{d|S_n\}$. 
Here $m=m_n$, $B=B_n$ and we have $m_n\sim B_n\sim \frac{n}{\log n}$. 
\vskip 3 pt 
 Let 
 $\a>\a'>3/2$. Let
 $$\p_n= ( \log n) \big( { \a \over 2 n}\big)^{1/2},
\qquad\qquad \tau_n= {\sin\p_n/2\over
\p_n /2}.
%\leqno(2.4)
$$ 
We assume
$n$ sufficiently large, say $n\ge
n_0$, for $\tau_n$ to be greater than $(\a^\prime/\a)^{1/2}$.  Consider two sectors
  $$A_n = ]0,\p_n[,  \qq\qq A'_n=[ \p_n, {\pi\over 2} [.$$ 
 %$$\sum_{ j\ge 1\, :\, { \pi j\over d}\in A_n}=\sum_{1\le  j\le \frac{d}{\pi}( \log n)  ( { \a \over 2 n} )^{1/2}}$$

 We write 
 \begin{align*}   \sum_{j=1}^{d-1} 
&e^{ 2i \pi m_n(\frac{j}{d})  - 2\pi^2  B_n\, (\frac{j}{d})^2 +\mathcal O (m_n(\frac{j}{d})^3 ) }\cr &= \Big( \sum_{ j\ge 1\, :\, { \pi j\over d}\in A_n}+\sum_{ j\ge 1\, :\, { \pi j\over d}\in A'_n}
\Big)
e^{ 2i \pi m_n(\frac{j}{d})  - 2\pi^2  B_n\,(\frac{j}{d})^2+\mathcal O (m_n(\frac{j}{d})^3 ) } . 
\end{align*}  

  Concerning the second sub-sum,
the inequality $\p_n
\le {\pi j\over d}<{\pi \over 2}$ implies that  
\begin{eqnarray*}  \sum_{ j\ge 1\, :\, { \pi j\over d}\in A'_n} e^{ - 2\pi^2  B_n\,(\frac{j}{d})^2}&\le&
\sum_{ j\ge 1\, :\, { \pi j\over d}\in A'_n} e^{ - 2c   \frac{n}{\log n}\,(\frac{\pi j}{d})^2}
\cr &\le&\sum_{ j\ge 1\, :\,{ \pi j\over d}\in A'_n} e^{ -  c\a   \,  \log n       }\le
dn^{-  \a'  }.
\end{eqnarray*}
Therefore 
\begin{align*}  \Big| \sum_{ j\ge 1: { \pi j\over d}\in A'_n}  e^{ 2i \pi m_n(\frac{j}{d})  - 2\pi^2  B_n\,(\frac{j}{d})^2+\mathcal O (m_n(\frac{j}{d})^3 ) }\Big|
  &\le    \sum_{ j\ge 1:{ \pi j\over d}\in A'_n}  e^{  
 - 2\pi^2  B_n\,(\frac{j}{d})^2}\big(1   + \mathcal O ( m_n\,d ) \big)\cr &\le 
Cm_nd \sum_{ j\ge 1:{ \pi j\over d}\in A'_n} e^{ - 2c   \frac{n}{\log n}\,(\frac{\pi j}{d})^2}
\cr &\le Cm_nd\sum_{ j\ge 1: { \pi j\over d}\in A'_n} e^{ -  c\a   \,  \log n       }
\cr &\le C\big( \frac{n}{\log n}\big)d^2
 n^{-  \a'  }\, \le \, C d^2\,n^{1-  \a'  }.
\end{align*}
Now, concerning the first sub-sum, 
%as $\{j\ge 1: { \pi j\over d}\in A_n\}=\{j:1\le  j\le \frac{d}{\pi}( \log n)  ( { \a \over 2 n} )^{1/2}\}$,
\begin{eqnarray*}& &  \sum_{ j\ge 1\, :\, { \pi j\over d}\in A_n}  e^{ 2i \pi m_n(\frac{j}{d})  -  2\pi^2  B_n\,(\frac{j}{d})^2+\mathcal O (m_n(\frac{j}{d})^3 ) } 
\cr &= &\sum_{  1\le  j\le \frac{d}{\pi}( \log n)  ( { \a \over 2 n} )^{1/2}}e^{ 2i \pi m_n(\frac{j}{d})  - 2\pi^2  B_n\,(\frac{j}{d})^2+\mathcal O (m_n(\frac{j}{d})^3 ) }
\cr &=& \sum_{  1\le  j\le \frac{d}{\pi}( \log n)  ( { \a \over 2 n} )^{1/2}}e^{ 2i \pi m_n(\frac{j}{d})  - 2\pi^2  B_n\,(\frac{j}{d})^2  }\big(\,1+ \mathcal O (m_n(\frac{j}{d})^3 )\,\big)
%\cr &= \sum_{  1\le  j\le \frac{d}{\pi}( \log n)  ( { \a \over 2 n} )^{1/2}}e^{ 2i \pi m_n(\frac{j}{d})  - 2\pi^2  B_n\,(\frac{j}{d})^2  }\Big(1+m_n \mathcal O ( \sum_{  1\le  j\le \frac{d}{\pi}( \log n)  ( { \a \over 2 n} )^{1/2}}(\frac{j}{d})^3 )\Big)
.
\end{eqnarray*}
We have 
\begin{equation*} m_n  \sum_{  1\le  j\le \frac{d}{\pi}( \log n)  ( { \a \over 2 n} )^{1/2}}(\frac{j}{d})^3\,\le\, C\, \frac{d^2 \a^2(\log n)^3}{n} .
\end{equation*}
Thus
\begin{eqnarray*}&&\sum_{  1\le  j\le \frac{d}{\pi}( \log n)  ( { \a \over 2 n} )^{1/2}}
 e^{ 2i \pi m_n(\frac{j}{d})  - 2\pi^2  B_n\,(\frac{j}{d})^2  }\big(1+ \mathcal O (m_n(\frac{j}{d})^3 )\big) 
\cr &=&\sum_{  1\le  j\le \frac{d}{\pi}( \log n)  ( { \a \over 2 n} )^{1/2}}e^{ 2i \pi m_n(\frac{j}{d})  - 2\pi^2  B_n\,(\frac{j}{d})^2  }  
\cr &&\quad +\mathcal O \Big( \sum_{  1\le  j\le \frac{d}{\pi}( \log n)  ( { \a \over 2 n} )^{1/2}}e^{    - 2\pi^2  B_n\,(\frac{j}{d})^2  }  \Big)\Big( m_n  \sum_{  1\le  j\le \frac{d}{\pi}( \log n)  ( { \a \over 2 n} )^{1/2}}(\frac{j}{d})^3 \Big)
\cr &=&\sum_{  1\le  j\le \frac{d}{\pi}( \log n)  ( { \a \over 2 n} )^{1/2}}e^{ 2i \pi m_n(\frac{j}{d})  - 2\pi^2  B_n\,(\frac{j}{d})^2  }  
\cr &&\quad +\mathcal O \Big(   \frac{d^2 \a^2(\log n)^3}{n} \sum_{  1\le  j\le \frac{d}{\pi}( \log n)  ( { \a \over 2 n} )^{1/2}}e^{    - 2\pi^2  B_n\,(\frac{j}{d})^2  }  \Big)
\end{eqnarray*}
Whence, 
\begin{align*}  \sum_{ j\ge 1\, :\, { \pi j\over d}\in A_n} &e^{ 2i \pi m_n(\frac{j}{d})  -  2\pi^2  B_n\,(\frac{j}{d})^2+\mathcal O (m_n(\frac{j}{d})^3 ) } 
\cr &=\sum_{  1\le  j\le \frac{d}{\pi}( \log n)  ( { \a \over 2 n} )^{1/2}}e^{ 2i \pi m_n(\frac{j}{d})  - 2\pi^2  B_n\,(\frac{j}{d})^2  }  
  +\mathcal O \Big(   \frac{d^3 \a^3(\log n)^4}{n^{3/2}}   \Big) .
\end{align*}
Combining both estimates gives
\begin{align*}   
\P\{d|S_n\} \,=\, \frac{1}{d}  + \frac{1}{d}\sum_{  1\le  j\le \frac{d}{\pi}( \log n)  ( { \a \over 2 n} )^{1/2}} &  e^{ 2i \pi m_n(\frac{j}{d})  - 2\pi^2  B_n\,(\frac{j}{d})^2  }  +\mathcal O(d^2 n^{-\a'})
 \cr &\qq +\mathcal O \Big(   \frac{d^3 \a^3(\log n)^4}{n^{3/2}}   \Big)
 \cr \, = \frac{1}{d}  + \frac{1}{d}\sum_{  1\le  j\le \frac{d}{\pi}( \log n)  ( { \a \over 2 n} )^{1/2}} &  e^{ 2i \pi m_n(\frac{j}{d})  - 2\pi^2  B_n\,(\frac{j}{d})^2  }  +\mathcal O \Big(   \frac{d^3 \a^3(\log n)^4}{n^{3/2}}   \Big)
.
\end{align*}
This achieves the proof.
 \end{proof}

Now  as $\sum_{j\ge d/2} e^{-A\frac{j^2}{2d^2}}\le e^{-cA }$, $c$ an absolute constant, for all $d\ge 2$ and $A\ge 2$, we have
\begin{align*}   
 \sum_{  j> \frac{d}{\pi}( \log n)  ( { \a \over 2 n} )^{1/2}}    e^{    - 2\pi^2  B_n\,(\frac{j}{d})^2  }& \le  \sum_{  j> \frac{d}{\pi}( \log n)  ( { \a \over 2 n} )^{1/2}}    e^{    - A(\frac{n}{\log n})(\frac{j}{d})^2  } 
 \cr &\le e^{    - A'(\frac{n}{\log n})(\frac{1}{d})^2(d( \log n))^2  ( { \a \over 2 n} ) )  } \cr & \le e^{    - A''\a \log n } =n^{- A''\a}.
\end{align*}
 
 We therefore also have
\begin{corollary}
\begin{eqnarray*}\Big|\P\{d|S_n\}-\frac{\Theta (d,m_n,B_n)}{d}\Big|\le C\ \frac{d^3 \a^3(\log n)^4}{n^{3/2}}  .
\end{eqnarray*}
\end{corollary}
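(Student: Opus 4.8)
The plan is to read the corollary off Theorem~\ref{Cramer.div.Sn} by identifying the elliptic theta series $\Theta(d,m_n,B_n)=\sum_{\ell\in\Z}e^{2i\pi m_n(\ell/d)-2\pi^2B_n(\ell/d)^2}$ as the completion to all integer frequencies of the finite sum appearing there, and then discarding the completion error by a Gaussian tail bound. Write $J:=\frac{d}{\pi}(\log n)\big(\frac{\a}{2n}\big)^{1/2}$ for the truncation index. First I would symmetrize: since $\Phi_n(-t)=\overline{\Phi_n(t)}$ for the characteristic function of the integer-valued walk $S_n$ (Proposition~\ref{Cramer.Phin}), the negative frequency $\ell=-j$ contributes the conjugate of the frequency $\ell=j$, so that the one-sided oscillatory sum of Theorem~\ref{Cramer.div.Sn}, together with the leading $\frac1d$, is exactly $\frac1d\sum_{|\ell|\le J}e^{2i\pi m_n(\ell/d)-2\pi^2B_n(\ell/d)^2}$ (the $\ell=0$ term giving the $\frac1d$). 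Consequently
\begin{equation*}
\frac{\Theta(d,m_n,B_n)}{d}-\frac1d\sum_{|\ell|\le J}e^{2i\pi m_n(\ell/d)-2\pi^2B_n(\ell/d)^2}=\frac1d\sum_{|\ell|>J}e^{2i\pi m_n(\ell/d)-2\pi^2B_n(\ell/d)^2},
\end{equation*}
and the whole matter reduces to estimating this tail.

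Next I would bound the tail by its modulus, $\frac{2}{d}\sum_{j>J}e^{-2\pi^2B_n(j/d)^2}$. The level $J$ has been calibrated so that $2\pi^2B_n(J/d)^2\asymp\a\log n$ (recall $m_n\sim B_n\sim n/\log n$), so the estimate displayed immediately before the statement of the corollary, $\sum_{j>J}e^{-2\pi^2B_n(j/d)^2}\le n^{-A''\a}$, applies verbatim and gives a tail of order $n^{-A''\a}$. This is the same elementary Gaussian summation already used to control the outer frequency sector $A'_n$ in the proof of Theorem~\ref{Cramer.div.Sn}, now applied to the frequencies beyond $J$ rather than inside that sector.

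Finally I would absorb the tail into the remainder. Since $\a>3/2$ and $J$ is calibrated so that the exponent $A''\a$ exceeds $3/2$, one has $n^{-A''\a}=\mathcal{O}(n^{-3/2})$; because $d\ge 2$ this is dominated by $\frac{d^3\a^3(\log n)^4}{n^{3/2}}$ and merges into the error term already carried by Theorem~\ref{Cramer.div.Sn}, exactly as the contribution $\mathcal{O}(d^2n^{-\a'})$ was absorbed there. The corollary follows. The single delicate point—and the closest thing to an obstacle—is the symmetrization: I must check that the one-sided truncated sum of Theorem~\ref{Cramer.div.Sn}, read together with its conjugate through $\Phi_n(-t)=\overline{\Phi_n(t)}$, genuinely reconstitutes the \emph{bilateral} theta series $\sum_{\ell\in\Z}$, so that no negative-frequency term is silently dropped; once this is secured the surviving estimates are entirely routine.
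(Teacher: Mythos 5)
Your proposal is correct and coincides with the paper's own proof: the paper obtains the corollary from Theorem \ref{Cramer.div.Sn} in a single step, using precisely the Gaussian tail estimate $\sum_{j>\frac{d}{\pi}(\log n)({\a\over 2n})^{1/2}}e^{-2\pi^2 B_n(j/d)^2}\le n^{-A''\a}$ displayed immediately before the statement to complete the truncated sum to the full theta series, this exponentially small tail being absorbed into the error $\mathcal O\big(d^3\a^3(\log n)^4 n^{-3/2}\big)$. The symmetrization issue you flag (the one-sided sum in the theorem versus the bilateral series $\Theta(d,m_n,B_n)$) is genuine, but the paper's proof passes over it in silence, so on that point your treatment is if anything the more careful of the two.
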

 %\begin{eqnarray*}  \sum_{ j\ge 1\, :\, { \pi j\over d}\in A_n} e^{ - 2\pi^2  B_n\,(\frac{j}{d})^2}&\le& \sum_{ 1\le j  < d ( \log n)  ( { \a \over 2 \pi^2 n} )^{1/2} } e^{ - 2\pi^2  B_n\,(\frac{j}{d})^2} .
%\end{eqnarray*}

%\begin{eqnarray}\label{ } 
%  \sum_{ j\ge 1\, :\, { \pi j\over d}\in A_n}  \Phi_n \big(  \frac{j}{d}\big)
%&= & \sum_{ j\ge 1\, :\, { \pi j\over d}\in A_n}  
%e^{ 2i \pi m_n \frac{j}{d}  - 2\pi^2  B_n\,(\frac{j}{d})^2}\Big(1   + \mathcal O\big( m_n\,\sum_{ j\ge 1\, :\, { \pi j\over d}\in A_n} (\frac{j}{d})^3\big)  \Big) 
%\cr &= &   \sum_{j=0}^{d-1} e^{ 2i \pi m_n 
%\frac{j}{d}  - 2\pi^2  B_n\,(\frac{j}{d})^2}\big(1   + \mathcal O ( m_n\,d ) \big)\end{eqnarray}
We end  the paper with recent results related to   primality in the Cram\'er model.

\subsection{Quasiprime and prime property.}
We   obtained in   \cite{W8} -without assuming RH- a  sharp estimate  of $\P \{S_n\ \text{prime}\, \}$, for almost all $n$, namely for all $n$, $n\to\infty$ through a set $\mathcal S$  of natural density $1$. 
%We record the following result.  
\begin{theorem}\label{Snprime.M} 
{\rm (i)} For any constant $b>1/2$, 
\begin{equation}\label{Snprime.i}   \P \{S_n\ \text{prime}\, \}
\, =\, \frac{ 1 }{ \sqrt{2\pi  B_n  } }\,\int_{m_n-\sqrt{ 2bB_n\log n}}^{m_n+\sqrt{ 2bB_n\log n}}
 e^{- \frac{(t- m_n)^2}{    2  B_n   } } \, \dd \pi(t)  
    + \mathcal O\Big( \frac{(\log n)^{3/2 }}{\sqrt n} \Big),
  \end{equation}
as $n\to\infty$, where   $\pi(.)$ is the prime counting function.
  \vskip 3 pt {\rm (ii)} There exists a set of integers $\mathcal S$ of density 1, such that  \begin{equation}\label{abel.base,}  
 \P \{S_n\ \text{prime}\, \}
\, =\, \frac{ (1+ o( 1) )}{ \sqrt{2\pi  B_n  } } 
\int_{m_n-\sqrt{ 2bB_n\log n}}^{m_n+\sqrt{ 2bB_n\log n}}
  \,  e^{- \frac{(t- m_n)^2}{    2  B_n   } }\, \dd \pi(t),  \end{equation} 
as $n\to \infty$,  $n\in \mathcal S$. Further,
   \begin{equation}\label{abel.base,,}  
\liminf_{  \mathcal S\ni n\to\infty}\ (\log n)\P \{S_n\ \hbox{prime}  \}
\ge  \frac{1  }{  \sqrt{2\pi e}\, } .
   \end{equation}
 \end{theorem}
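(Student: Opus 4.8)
The plan is to combine the sharp local limit theorem carried by Proposition \ref{Cramer.Phin} with the exact expansion $\P\{S_n\ \text{prime}\}=\sum_{p\in\mathcal P}\P\{S_n=p\}$ to obtain (i), and then to feed in an almost-all prime number theorem in short intervals to pass from the resulting smoothed prime count to (ii) and \eqref{abel.base,,}. First I would turn Proposition \ref{Cramer.Phin} into the pointwise estimate
\[
\sup_{k}\Big|\P\{S_n=k\}-\frac{1}{\sqrt{2\pi B_n}}\,e^{-\frac{(k-m_n)^2}{2B_n}}\Big|=\mathcal O\Big(\frac{m_n}{B_n^{2}}\Big)=\mathcal O\Big(\frac{\log n}{n}\Big),
\]
via the inversion $\P\{S_n=k\}=\int_{-1/2}^{1/2}\Phi_n(t)e^{-2\pi i kt}\,dt$. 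Near $t=0$ one replaces $\Phi_n(t)$ by $e^{2i\pi tm_n-2\pi^2B_nt^{2}}$; the cubic remainder $|D_n(t)|\le 12\,m_n(\pi|t|)^{3}$ of Proposition \ref{Cramer.Phin}, after multiplication by the Gaussian and integration, contributes $\mathcal O(m_nB_n^{-2})$, while on $m_n^{-1/3}\lesssim|t|\le 1/2$ the bound $|\Phi_n(t)|\le e^{-2B_n\sin^2\pi t}$ makes both $\Phi_n$ and the Gaussian super-polynomially small, since $B_nm_n^{-2/3}\asymp(n/\log n)^{1/3}\to\infty$; extending the Gaussian integral to all of $\R$ costs only $e^{-cB_n}$.

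Second I would split the prime sum at the radius $R=\sqrt{2bB_n\log n}$. For the tail $|p-m_n|>R$ I would \emph{not} sum the pointwise estimate (that would be far too lossy) but instead bound $\P\{|S_n-m_n|>R\}$ directly by the concentration inequality of Lemma \ref{di.1}, applied with $\mu=m_n$ and $\epsilon=R/m_n\to0$: since $R^{2}/m_n=(1+o(1))\,2b\log n$, both tails are $\le n^{-b(1+o(1))}=o(n^{-1/2})$ because $b>1/2$. For the central part $|p-m_n|\le R$ the number of primes is $\le C R/\log R\asymp\sqrt n/\log n$ by Brun--Titchmarsh, so replacing each $\P\{S_n=p\}$ by its Gaussian value costs $\mathcal O(\sqrt n/\log n\cdot\log n/n)=\mathcal O(n^{-1/2})$. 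Recognizing $\sum_{m_n-R\le p\le m_n+R}\frac{1}{\sqrt{2\pi B_n}}e^{-(p-m_n)^2/(2B_n)}$ as the Stieltjes integral against $d\pi(t)$ yields assertion (i) with remainder $\mathcal O(n^{-1/2})$, hence $\mathcal O((\log n)^{3/2}/\sqrt n)$.

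Third, for (ii) and \eqref{abel.base,,} I would bound the main term $T_n:=\frac{1}{\sqrt{2\pi B_n}}\int_{m_n-R}^{m_n+R}e^{-(t-m_n)^2/(2B_n)}\,d\pi(t)$ from below by restricting to $[m_n,m_n+\sqrt{B_n}]$, where $e^{-(t-m_n)^2/(2B_n)}\ge e^{-1/2}$, so that
\[
T_n\ge\frac{e^{-1/2}}{\sqrt{2\pi B_n}}\big(\pi(m_n+\sqrt{B_n})-\pi(m_n)\big).
\]
Since $\sqrt{B_n}\asymp\sqrt{m_n}$, this is a short interval of length $\asymp x^{1/2}$ about $x=m_n$; by an almost-all prime number theorem in short intervals of Selberg type one has, unconditionally, $\pi(x+\sqrt{x})-\pi(x)\sim\sqrt{x}/\log x$ for $x$ outside an exceptional set of density $0$. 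Transferring this through the increasing reparametrization $n\mapsto m_n$, whose local point density $\approx\log n\asymp\log x$ is a slowly varying weight (so density-$0$ sets pull back to density-$0$ sets of integers), produces a set $\mathcal S$ of density $1$ on which $\pi(m_n+\sqrt{B_n})-\pi(m_n)\sim\sqrt{B_n}/\log m_n$, whence $T_n\ge(1+o(1))\frac{1}{\sqrt{2\pi e}\,\log m_n}$. As $\log m_n=(1+o(1))\log n$ this gives $\liminf_{\mathcal S\ni n}(\log n)T_n\ge 1/\sqrt{2\pi e}$; combined with the remainder of (i) being $o(T_n)$ (because $T_n\gtrsim1/\log n\gg(\log n)^{3/2}/\sqrt n$), it yields both $\P\{S_n\ \text{prime}\}=(1+o(1))T_n$ on $\mathcal S$ and the lower bound \eqref{abel.base,,}.

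The decisive obstacle is this density-$1$ step. The interval $[m_n,m_n+\sqrt{B_n}]$ has length of order $\sqrt{m_n}$, so a pointwise (every-$n$) guarantee that it carries $\sim\sqrt{m_n}/\log m_n$ primes is hopeless unconditionally — it would force a prime in every $[x,x+\sqrt{x}]$. The only admissible input is the almost-all Selberg short-interval estimate, which is precisely why the conclusion is confined to a density-$1$ set and why the exceptional-set transfer under $n\mapsto m_n$ must be done with care; by contrast the local-limit analysis of the first two steps is routine.
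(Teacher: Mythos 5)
Your treatment of part (i) is correct and is essentially the paper's own argument: split the prime sum at radius $\sqrt{2bB_n\log n}$, kill the tail by an exponential concentration bound (you invoke Lemma \ref{di.1}; the paper uses Petrov's Kolmogorov-type inequality — both give $O(n^{-b'})$ for any $1/2<b'<b$), replace $\P\{S_n=p\}$ in the central window by its Gaussian value via the local limit theorem, and rewrite the sum as a Stieltjes integral against $\dd\pi(t)$. Your bookkeeping (LLT error $O(\log n/n)$ derived from Proposition \ref{Cramer.Phin}, Brun--Titchmarsh for the prime count) is if anything slightly sharper than the paper's, which uses the error $(\log n)^{3/2}/n$ and the trivial length bound for the count. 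Likewise your one-sided window $[m_n,m_n+\sqrt{B_n}]$ with weight $e^{-1/2}$ is a clean way to produce the constant $1/\sqrt{2\pi e}$, equivalent to the paper's choice $\Phi(x)=\sqrt{2bx}$ with $b$ near $1/2$.

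The genuine gap is in part (ii), at exactly the step you call decisive, and your stated justification for it is wrong. Selberg's theorem supplies only a Lebesgue-density-$0$ exceptional set $E\subset\R$ with no structure whatsoever, and your transfer principle — "the local point density of $\{m_n\}$ is slowly varying, so density-$0$ sets of reals pull back to density-$0$ sets of integers" — is false: a set of reals can have measure zero and still contain \emph{every} point $m_n$ (the countable set $\{m_n:n\ge3\}$ is itself null). No bound on the measure of $E$, combined with the spacing $m_n-m_{n-1}=1/\log n$, controls $\#\{n\le N:m_n\in E\}$. What is actually needed — and what the paper proves — is a stability statement for the property itself: if $\pi(x+\Phi(x))-\pi(x)\ge(1-\delta)\Phi(x)/\log x$ holds at $x$, then it holds with $1-2\delta$ at every $y$ with $|y-x|\le C$, because the two prime counts differ by $O_C(1)$ while the main term $\sqrt{x}/\log x\to\infty$ (this is \eqref{pi.phi.d.a}). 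Only after the good set is thus shown to contain a full interval of fixed length around each good point does the $o(1)$ spacing of the $m_n$ yield a density-$1$ set of integers. Moreover, this gives, for each fixed $\delta$, a $\delta$-dependent density-$1$ set with constant $1-2\delta$; to obtain a \emph{single} density-$1$ set carrying the asymptotics \eqref{abel.base,} and the exact constant in \eqref{abel.base,,}, the paper performs an explicit diagonalization (the sets $\mathcal T_j$ and thresholds $X_j$), since a countable intersection of density-$1$ sets need not have density $1$. Both of these ingredients are absent from your proposal, so as written the passage from Selberg's theorem to the integer-indexed statement does not go through.
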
 
%The proof uses a result of  Selberg  \cite{Se}.        \vskip 8 pt
% \subsection{Quasi-primality of ${S_n}$.}    \noi   
The proof  allows one (Remark \ref{rem.Snprime.M}) to treat other cases, so we   included it. \begin{proof}[Proof of Theorem \ref{Snprime.M}.]
(i) By Lemma 7.1 p.\,240 in \cite{P}, for $0\le x\le B_n$
\begin{eqnarray*} \P \{|S_n-m_n|\ge x \}&=& \P \{S_n-m_n\ge x \}
+\P \{-(S_n-m_n)\ge x \}
\cr &\le &2\,\exp\Big\{-\frac{x^2}{2B_n}\Big(1-\frac{x}{2B_n}\Big)\Big\},
\end{eqnarray*}
noticing that   $\{-\xi_j\}_j$ also satisfies the conditions of Kolmogorov's Theorem. Let $b>b'>1/2$.
Then for all sufficiently large $n$, since $\log  B_n\sim \log n$,
\begin{equation} \P \{|S_n-m_n|\ge  \sqrt{2bB_n\log   n} \}\le
2\,  n^{-b'}
.
\end{equation}

 We have \begin{align*}   \Big|  \P \{S_n \in\mathcal P \}&  -  \P \{S_n \in \mathcal P\cap[m_n- \sqrt{ 2bB_n\log n},m_n+\sqrt{ 2bB_n\log n}] \} \Big| 
\cr &\le    \P \{|S_n-m_n|\ge \sqrt{ 2bB_n\log n} \}
\cr &\le   n^{-b'}.
   \end{align*}
Further,
\begin{eqnarray*}& & \Big|  \P \{S_n \in \mathcal P\cap[m_n-\sqrt{ 2bB_n\log n},m_n+\sqrt{ 2bB_n\log n}] \}
\cr & &-\sum_{\k\in\mathcal P\cap[m_n-\sqrt{ 2bB_n\log n},m_n+\sqrt{ 2bB_n\log n}]} \frac{ e^{- \frac{(\k- m_n)^2}{    2  B_n   } } }{ \sqrt{2\pi  B_n  } } 
\Big|
 \cr &\le & \sum_{\k\in\mathcal P\cap[m_n-\sqrt{ 2bB_n\log n},m_n+\sqrt{ 2bB_n\log n}]}\Big|\P \{S_n =\kappa \} -\frac{ e^{- \frac{(\k- m_n)^2}{    2  B_n   } } }{ \sqrt{2\pi B_n   } }     \Big|   
  \cr     & \le &    C\,\#\big\{\mathcal P\cap[m_n-\sqrt{ 2bB_n\log n},m_n+\sqrt{ 2bB_n\log n}] \big\}\cdot
 \frac{(\log n)^{3/2}}{n} 
  \cr     & \le & 
  C\, \sqrt b\ \frac{(\log n)^{3/2 }}{\sqrt n} 
.
  \end{eqnarray*}
Therefore
\begin{equation} \label{3} \Big|  \P \{S_n \in \mathcal P  \}
 -\sum_{\k\in\mathcal P\cap[m_n-\sqrt{ 2bB_n\log n},m_n+\sqrt{ 2bB_n\log n}]} \frac{ e^{- \frac{(\k- m_n)^2}{    2  B_n   } } }{ \sqrt{2\pi  B_n  } } 
\Big|
     \, \le \, 
   C\,\sqrt b\  \frac{(\log n)^{3/2 }}{\sqrt n} 
.
  \end{equation}
 By    expressing the inner sum as a Riemann-Stieltjes integral  \cite[p.\,77]{A}, we get

\begin{equation}\label{abel.base}   \P \{S_n \in \mathcal P  \}
\, =\, \int_{m_n-\sqrt{ 2bB_n\log n}}^{m_n+\sqrt{ 2bB_n\log n}}
  \,\frac{ e^{- \frac{(t- m_n)^2}{    2  B_n   } } }{ \sqrt{2\pi  B_n  } }\, \dd \pi(t)
    + \mathcal O\Big( \frac{(\log n)^{3/2 }}{\sqrt n} \Big)
.
  \end{equation}

\vskip 5 pt (ii) We note that 
  \begin{eqnarray} \label{int.min}
  \int_{m_n-\sqrt{ 2bB_n }}^{m_n+\sqrt{ 2bB_n }}
  \,\frac{ e^{- \frac{(t- m_n)^2}{    2  B_n   } } }{ \sqrt{2\pi  B_n  } } \dd \pi(t)&\ge & L \  \frac{ \pi(m_n+\sqrt{ 2bB_n })-\pi(m_n-\sqrt{ 2bB_n })}{ \sqrt{   B_n  }},
\end{eqnarray}
with $L=\frac{e^{-b }}{  \sqrt{2\pi }}$. 
\vskip 3 pt
 We use  Theorem 4 in  Selberg  \cite{Se}. Let    $\Phi(x)$ be positive and increasing and such that $\frac{\Phi(x)}{x} $  decreasing for $x>0$. Further assume that 
 \begin{eqnarray} \label{phi.ab}
{\rm (a)}\quad \lim_{x\to\infty}  \frac{\Phi(x)}{x}= 0 \qq\qq {\rm (b)}\quad\liminf_{x\to\infty}\frac{\log \Phi(x)}{\log x}>\frac{19}{77}.
\end{eqnarray}
Then there exists a   set $\mathcal S$   of positive reals of density one such that
 \begin{equation}  \label{phi.ab.enonce}\lim_{\mathcal S\ni x\to \infty}\frac{\pi(x+\Phi (x))-\pi(x)}{({\Phi(x)}/{\log x})}  =1.
\end{equation}
 Let $\Phi(x) = \sqrt{2b  x}$. Then the   requirements in \eqref{phi.ab} are fulfilled, and so \eqref{phi.ab.enonce} holds true. 
  Now let $C$ be some possibly large but fixed positive number, as well as some positive real $\d<1/2$.    
    \vskip 5 pt   By \eqref{phi.ab.enonce}, the  set of $x>0$, call it $\mathcal S_\d$, such   that 
    \begin{equation}\label{pi.phi.d}
\pi(x+\Phi(x))-\pi(x) \, \ge  \,(1-\d)\, \frac{ \Phi(x)}{\log x}.
 \end{equation}
  has    density 1. 
  Note that   $\mathcal S_{\d'}\subseteq \mathcal S_{\d''}$ if  $\d'\le \d''$.   
 \vskip 3 pt Pick  $x\in\mathcal S_\d$ and let $\D(x)= \pi(x+\Phi(x))-\pi(x)$. Note that if $|y-x|\le C$,   $\big|\Phi(y)-\Phi(x)\big|=o(1)$ for $x$   large.
 Thus  for every $y\in [x-C,x+C]$, $\big|\D(y)-\D(x)\big|\, \le C'$, and so 
$$\D(y)   \, \ge  \,(1-\d)\, \frac{ \Phi(x)}{\log x}-C'.$$
 the constant $C'$ depending on $C$ only. 
 As $\big|\frac{\Phi(x)}{\log x}- \frac{\Phi(y)}{\log y}\big|\le 
\frac{C}{2\sqrt x \log x}$, we have 
\begin{equation*}
\D(y)   \, \ge  \,(1-\d)\, \frac{ \Phi(y)}{\log y}-C' -\frac{C}{2\sqrt x \log x}.
 \end{equation*} 
  
Thus every $y\in [x-C,x+C]$ also satisfies \begin{equation}\label{pi.phi.d.a}
\D(y)   \, \ge  \,(1-2\d)\, \frac{ \Phi(y)}{\log y},
 \end{equation} 
 if  $x $ is large enough.
 
\vskip 3pt   
Let $\nu=\nu(x)$ be the  unique integer   such that $m_{\nu-1}< x\le m_{\nu}$. As $m_{\nu}-m_{\nu-1}=o(1)$,  $\nu \to \infty$,  it follows that $m_{\nu}\in [x-C,x+C]$ provided that   $x$ is large enough, in which case we have by   \eqref{pi.phi.d.a},
  \begin{equation}\label{pi.phi.d.b}
 \frac{\pi(m_{\nu}+\Phi(m_{\nu}))-\pi(m_{\nu})}{\Phi(m_{\nu})}  \ge  \frac{1-2\d}{\log m_{\nu}}.
 \end{equation} 
   Let $X\ge 1$ be a large positive integer and $\e$  a small positive real. The number $N(X)$ of intervals $]\m-1,\m]$, $\m\le X$ such that $\mathcal S_\d \cap ]\m-1,\m]\neq \emptyset$ verifies $N(X)/X\sim 1$, $X\to \infty$, since $\mathcal S_\d$ has density 1. 
 
 Given such an $\m\le X$, pick $x\in \mathcal S_\d \cap ]\m-1,\m]$. 
 We know (recalling that $m_{\nu}-m_{\nu-1}=o(1)$,  $\nu \to \infty$) that some $m_{\nu}$, $\nu=\nu(x)$ belongs to   $ ]\m-1-\e,\m+\e]$, and that \eqref{pi.phi.d.b} is satisfied.   The union of these intervals $[\m-1-\e,\m+\e ]$ is contained in $[1-\e , X+\e ]$. It follows that the number of $\nu$ such that \eqref{pi.phi.d.b} is satisfied, forms a set of density 1. 
  
 \vskip 4 pt  We now use an induction argument in order to replace $2\d$ in  \eqref{pi.phi.d.b} by a quantity $\e(\nu)$ which  tends to 0 as $\nu$ tends to infinity  along some other set of density 1, which we shall build explicitly.    Let $\mathcal T_n$ be the set   of $\nu$'s of density 1, corresponding to $\d=\frac1n$, $n\ge 3$. Let $X_3$ be large enough so   that $\#\{\mathcal T_3\cap [1, X ]\}\ge X (1-1/3)$ for all $X\ge X_3$. Next let  $X_4>X_3$ be sufficiently large  so   that $\#\{\mathcal T_4\cap [X_3, X ]\}\ge X (1-1/4)$ for all $X\ge X_4$. Like this we manufacture an increasing sequence $X_j$,   verifying for all $j\ge 3$,
  $$\#\{\mathcal T_j\cap [X_{j-1}, X ]\}\ge X (1-1/j), \qq \qq {\rm for\ all}\  X\ge X_j .$$
  The resulting set 
  $$ \mathcal T= \bigcup_{j=3}^\infty \mathcal T_j\cap [X_{j-1}, X_j ]$$
  has density 1 and further we have the inclusions 
  $$\mathcal T\cap [X_{l-1}, \infty )=\bigcup_{j=l}^\infty \mathcal T_j\cap [X_{j-1}, X_j ]\subset \mathcal T_l\cap\bigcup_{j=l}^\infty   [X_{j-1}, \infty )=\mathcal T_l\cap [X_{l-1},\infty),\qq\quad l\ge 4,$$
as   the sets $\mathcal T_j$ are decreasing with $j$ by definition.   

\vskip 3 pt We finally have  by \eqref{int.min},
  \begin{equation}\label{pi.phi.d.b.1}
 \frac{\pi(m_{\nu}+\Phi(m_{\nu}))-\pi(m_{\nu})}{\Phi(m_{\nu})}  \ge  \frac{1-\e(\nu)}{\log m_{\nu}},
 \end{equation}
along $\mathcal T$, for some sequence of reals $\e(\nu)\downarrow 0$ as $\nu\to \infty$.
 
\vskip 3 pt Therefore 
  \begin{eqnarray}\label{base} 
  \int_{m_\nu-\sqrt{ 2bB_\nu }}^{m_\nu+\sqrt{ 2bB_\nu }}
  \,\frac{ e^{- \frac{(t- m_\nu)^2}{    2  B_\nu   } } }{ \sqrt{2\pi  B_\nu  } } \dd \pi(t)&\ge & L \  \frac{ \pi(m_\nu+\sqrt{ 2bB_\nu })-\pi(m_\nu-\sqrt{ 2bB_\nu })}{ \sqrt{   B_\nu  }}\cr & \ge &  L\, \frac{1-\e(\nu)}{\log m_{\nu}},
\end{eqnarray}
for all $\nu \in\mathcal T$, recalling that $L=\frac{e^{-b }}{  \sqrt{2\pi }}$.

Also  \begin{equation}\label{abel.base,proof}  
\liminf_{  \mathcal T\ni \nu\to\infty} (\log \nu)\P \{S_\nu\ \hbox{prime}  \}
\ge  \frac{1  }{ \sqrt{2\pi e}\, } .
 \end{equation}
  It further follows from \eqref{abel.base} that  
\begin{equation}\label{abel.base,,proof}  
 \P \{S_\nu\ \hbox{prime}  \}
\, =\, \big(1+ o( 1)\big)
\int_{m_\nu-\sqrt{ 2bB_\nu\log \nu}}^{m_\nu+\sqrt{ 2bB_\nu\log \nu}}
  \,\frac{ e^{- \frac{(t- m_\nu)^2}{    2  B_\nu   } } }{ \sqrt{2\pi  B_\nu  } }\, \dd \pi(t),
  \end{equation}
   all $\nu\in \mathcal T$. This completes the proof of Theorem \ref{Snprime.M}.
   \end{proof}
\begin{remark}\label{rem.Snprime.M}\rm 
The interested reader will have  observed that the property $m_\nu-m_{\nu-1}= o(1)$ is not fully used, in fact it suffices that  $m_\nu-m_{\nu-1}= \mathcal O(1)$, selecting at the beginning of the proof $C$ sufficiently large.  Thus the proof of Theorem \ref{Snprime.M} also adapt to the Bernoulli case. In particular there exists a set of integers $\mathcal U$ of density 1 such that
\begin{equation}\label{bernoulli.d1}  
\liminf_{  \mathcal U\ni \nu\to\infty} (\log \nu)\P \{B_\nu\ \hbox{prime}  \}
>0 .
 \end{equation}
\end{remark}
\vskip 4  pt

  Let $\Pi_z=\prod_{p\le z}p$. According to Pintz \cite{Pi}, an integer $m$ is   $z$-quasiprime, if $(m, \Pi_z)=1$. Let $S'_n= \sum_{j= 8}^n \xi_j$, $n\ge 8$. Note that the introduction of $S'_n$ in place of $S_n$ is not affecting Cram\'er's conjecture. The probability that $S'_n$   be $z$-quasiprime is   for all $n$ large enough is studied in \cite{W8}, Theorem 2.6.
   The proof is based on a randomization argument.
     
 \begin{theorem}\label{cramer.quasi.prime.P}
 We have for any $0<\eta<1$, and all $n$ large enough  and      $ \zeta_0\le  \zeta\le \exp\big\{ \frac{c\log n}{\log\log n}\big\}$, 
\begin{eqnarray*}  \P\big\{  S'_n\hbox{\ $\zeta$-quasiprime}\big\}   \,\ge  \, (1-\eta)   \frac{ e^{-\gamma} }{  \log
\zeta }. \end{eqnarray*}
 where $\gamma$ is  Euler's constant and $c$ is a positive constant.\end{theorem}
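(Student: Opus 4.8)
The plan is to read the event $\{S'_n\ \zeta\text{-quasiprime}\}$ as $\{P^-(S'_n)>\zeta\}$ and to treat $S'_n$ as a random integer whose law is, by the local limit theorem, a discretised Gaussian centred at $m_n$ with width $\sqrt{B_n}$. Concretely I would start from
\[
\P\{S'_n\ \zeta\text{-quasiprime}\}=\sum_{N\,:\,P^-(N)>\zeta}\P\{S'_n=N\}\,\ge\,\sum_{\substack{P^-(N)>\zeta\\|N-m_n|\le\sqrt{2bB_n\log n}}}\P\{S'_n=N\},
\]
the restriction to a window only lowering a sum of nonnegative terms. Proposition~\ref{Cramer.Phin} applies equally to $S'_n=\sum_{j=8}^n\xi_j$ (a bounded perturbation of $S_n$) and furnishes the characteristic-function control needed for a local limit theorem with remainder, giving $\sup_N|\P\{S'_n=N\}-(2\pi B_n)^{-1/2}e^{-(N-m_n)^2/(2B_n)}|\ll (\log n)^{3/2}/n$, exactly the bound used in the proof of Theorem~\ref{Snprime.M}. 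Summed over the $\asymp\sqrt n$ integers of the window this contributes an error $O((\log n)^{3/2}/\sqrt n)=o(e^{-\gamma}/\log\zeta)$, negligible against the target. Hence it suffices to bound from below the Gaussian-weighted count
\[
\frac{1}{\sqrt{2\pi B_n}}\sum_{N\,:\,P^-(N)>\zeta}e^{-(N-m_n)^2/(2B_n)}.
\]

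The heart of the argument — the randomization, i.e.\ integrating a sieve density against the law of $S'_n$ — is to show this weighted count of $\zeta$-rough integers equals $(1+o(1))\,e^{-\gamma}/\log\zeta$. I would partition the line into consecutive blocks $I_k$ of a common intermediate length $\ell$, chosen so that $\log\ell/\log\zeta\to\infty$ while $\ell=o(\sqrt{B_n})$; the value $\ell=n^{1/4}$ works. On each block the Fundamental Lemma of sieve theory (the lower-bound sieve already suffices) gives $\#\{N\in I_k:P^-(N)>\zeta\}=(1+o(1))\,\ell\prod_{p\le\zeta}(1-1/p)$, taking sieving level $D=n^{1/8}$ so that the remainder $\sum_{d\le D,\,d\mid\Pi_\zeta}O(1)\le D=o(\ell/\log\zeta)$ while the relative main-term error is $e^{-s\log s}\to0$ with $s=\log D/\log\zeta\to\infty$. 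Mertens' theorem turns the product into $e^{-\gamma}/\log\zeta$. Since $\ell=o(\sqrt{B_n})$ the Gaussian is essentially constant across each block, so
\[
\frac{1}{\sqrt{2\pi B_n}}\sum_{N:P^-(N)>\zeta}e^{-(N-m_n)^2/(2B_n)}
=(1+o(1))\frac{e^{-\gamma}}{\log\zeta}\cdot\frac{1}{\sqrt{2\pi B_n}}\sum_k \ell\,e^{-(c_k-m_n)^2/(2B_n)},
\]
$c_k$ being the centre of $I_k$; the last factor is a Riemann sum converging to $(2\pi B_n)^{-1/2}\int e^{-(t-m_n)^2/(2B_n)}\dd t=1$. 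Collecting the estimates yields $\P\{S'_n\ \zeta\text{-quasiprime}\}\ge(1-\eta)e^{-\gamma}/\log\zeta$ for every $n$ large enough.

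I expect the main obstacle to be the short-interval sieve step and, above all, the tension between two competing scales. The sieve needs blocks long enough that $\log\ell/\log\zeta\to\infty$, so that the density of $\zeta$-rough integers genuinely stabilises at $e^{-\gamma}/\log\zeta$, whereas the local limit theorem controls $S'_n$ only on the scale $\sqrt{B_n}\sim\sqrt{n/\log n}$, forcing $\ell=o(\sqrt{B_n})$. Both can be met simultaneously precisely because the hypothesis $\zeta\le\exp\{c\log n/\log\log n\}$ keeps $\log\zeta$ of order at most $\log n/\log\log n$, leaving room for an $\ell$ such as $n^{1/4}$ with $\log\ell/\log\zeta\to\infty$; for $\zeta$ substantially larger the density of rough integers would not concentrate on any window the Gaussian can resolve, and the method — like the companion Bernoulli estimate of Theorem~\ref{P-Bn}, valid in the same range — would break down. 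A secondary point requiring care is that only the lower sieve bound and the bulk of the Gaussian mass are needed, so I would retain the positive contributions and discard each error with its correct sign, checking that the accumulated $o(1)$ is made smaller than any prescribed $\eta$ once $n$ is large.
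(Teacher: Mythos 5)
Your proposal cannot be matched against an in-paper argument, because the paper contains no proof of Theorem \ref{cramer.quasi.prime.P}: the result is recorded from \cite{W8} (Theorem 2.6) with only the remark that the proof there "is based on a randomization argument". Judged on its own merits, your route is correct, and it is a genuinely self-contained alternative. Your scale bookkeeping checks out at every point: with $\ell=n^{1/4}$ and $D=n^{1/8}$ one has $s=\log D/\log\zeta\ge(\log\log n)/(8c)\to\infty$ precisely because $\log\zeta\le c\log n/\log\log n$, so the Fundamental Lemma plus Mertens gives each block at least $(1-o(1))\,\ell\,e^{-\gamma}/\log\zeta - O(D)$ rough integers, uniformly in $\zeta$; the per-block remainder contributes $O(D/\ell)=O(n^{-1/8})=o(1/\log\zeta)$ after the Riemann-sum normalization; the Gaussian varies by a factor $1+o(1)$ across a block since $\ell\sqrt{\log n/B_n}\asymp\log n/n^{1/4}\to 0$; and the accumulated local-limit error $O((\log n)^{3/2}/\sqrt n)$ is negligible against $e^{-\gamma}/\log\zeta\ge e^{-\gamma}\log\log n/(c\log n)$. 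This differs from what the paper's own machinery would suggest: in the Bernoulli case the analogous Theorem \ref{P-Bn} is obtained by feeding the Theta-function estimate for $\P\{d|B_n\}$ (Theorem \ref{estPdlBn}) into an Eratosthenes/M\"obius identity (as in Lemma \ref{l.eta(n;y)B} and Proposition \ref{p.eta(n;y)B}), and the natural transposition to the Cram\'er walk would feed Theorem \ref{Cramer.div.Sn} into such an identity; that route produces explicit Theta-type main terms but requires divisor estimates that are uniform over the whole sifting range, whereas yours needs only a local limit theorem plus a short-interval lower-bound sieve and delivers the one-sided inequality directly, with every error discarded in the favourable direction.

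The single input you take on faith is the bound $\sup_N\big|\P\{S'_n=N\}-(2\pi B_n)^{-1/2}e^{-(N-m_n)^2/(2B_n)}\big|\ll(\log n)^{3/2}/n$. This is a fair assumption rather than a gap: it follows from the characteristic-function estimates of Proposition \ref{Cramer.Phin} by standard Fourier inversion, it is exactly the bound the paper itself invokes (without in-paper proof) for $S_n$ in the proof of Theorem \ref{Snprime.M}, and it transfers to $S'_n$ since dropping finitely many summands changes $m_n$ and $B_n$ only by $O(1)$. If you wanted a fully self-contained write-up you would spell out that inversion step; everything else in your argument is complete.
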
 

\vskip 3 pt  
 \subsection{Primality of ${P_n}$.} 
 %In the   next Theorem, we test which infinite sequences of primes  are ultimately avoided  by   the \lq primes\rq~  $ P_\nu$, with probability 1. 
We also showed in \cite{W8} that when the \lq primes\rq~  $ P_\nu$   are observed   along    moderately growing subsequences,  then with probability 1, they  ultimately avoid       any given infinite set of primes satisfying a reasonable  tail's  condition.  We also test which infinite sequences of primes  are ultimately avoided  by   the \lq primes\rq~  $ P_\nu$, with probability 1. 

\begin{theorem}\label{mathfrak}  Let $\mathcal K$ be an increasing sequence of naturals such that  
the series  $\sum_{k\in \mathcal K}  k^{-\b} $ converges for some   $\b\in ]0, \frac12[$.
Let $\mathfrak P$  be an increasing sequence of primes such that for some  $b>1$,
 $$\sup_{k\in\mathcal K} \frac{\#\{\mathfrak P\cap [k,  b k] \}}{  k^{\frac12-\b}}<\infty. $$
Let also  $\{\D_k,k\ge 1\}$ be the instants of jump of the Bernoulli sequence $ \{B_k,k\ge 1\}$.
%  defined in \eqref{delta.Delta}.
Then \begin{eqnarray*}
   \P\big\{   \D_k\notin\mathfrak P, \quad k\in \mathcal K\ \hbox{ultimately}\big\}=1.
    \end{eqnarray*}
Further, 
\begin{eqnarray*}
   \P\big\{   P_\nu\notin\mathfrak P, \quad \nu\in \mathcal K\ \hbox{ultimately}\big\}=1.
    \end{eqnarray*}
Moreover (case $\b=1/2$), let  $\mathfrak P$ be such that 
$\sum_{p\in \mathfrak P , \, p>y} p^{-1/2} = \mathcal O \big(y^{-1/2}\big)$, and $\mathcal K$ be   such that
  $\sum_{k\in \mathcal K}  k^{-1/2}<\infty$.
Then 
\begin{eqnarray*}
   \P\big\{   P_\nu\notin\mathfrak P, \quad \nu\in \mathcal K\ \hbox{ultimately}\big\}=1.
    \end{eqnarray*}
    \end{theorem}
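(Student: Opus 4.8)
The plan is to treat each of the three assertions by the first Borel--Cantelli lemma. Since $\{\D_k\notin\mathfrak P\ \text{ultimately}\}$ is the complement of $\limsup_{k\in\mathcal K}\{\D_k\in\mathfrak P\}$, it suffices to prove $\sum_{k\in\mathcal K}\P\{\D_k\in\mathfrak P\}<\infty$, and likewise $\sum_{\nu\in\mathcal K}\P\{P_\nu\in\mathfrak P\}<\infty$ for the two statements on $P_\nu$. In each case I would bound the relevant point probabilities by a local limit theorem, use a concentration inequality to localise the jump time in a short window, and then count the elements of $\mathfrak P$ in that window by means of the arithmetic hypothesis.

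For the Bernoulli jumps I would exploit the identities $\{\D_k\le m\}=\{B_m\ge k\}$ and $\P\{\D_k=m\}=\tfrac12\,\P\{B_{m-1}=k-1\}$, which reduce everything to binomial sums. De Moivre--Laplace (Theorem \ref{moivre}, in the form \eqref{binomial.llt.1}) gives $\P\{\D_k=m\}\le C/\sqrt k$ uniformly for $m$ in the relevant range, while Lemma \ref{di.1} applied to $B_m$ through the above identity yields $\P\{|\D_k-2k|>C\sqrt{k\log k}\}\le k^{-A}$ for $A$ as large as desired once $C$ is large. Writing $W_k=[\,2k-C\sqrt{k\log k},\,2k+C\sqrt{k\log k}\,]$, which lies inside $[k,bk]$ for any $b>2$ and all large $k$, I would split
\[
\P\{\D_k\in\mathfrak P\}\le \P\{\D_k\notin W_k\}+\sum_{p\in\mathfrak P\cap W_k}\P\{\D_k=p\}\le k^{-A}+\frac{C}{\sqrt k}\,\#\{\mathfrak P\cap[k,bk]\}.
\]
The hypothesis $\#\{\mathfrak P\cap[k,bk]\}=O(k^{1/2-\b})$ then gives $\P\{\D_k\in\mathfrak P\}=O(k^{-\b})$, and $\sum_{k\in\mathcal K}k^{-\b}<\infty$ closes this case.

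For the Cramér jumps I would run the same scheme with $\P\{P_\nu=m\}=\frac{1}{\log m}\,\P\{S_{m-1}=\nu-1\}$. The local limit theorem for $S_n$ furnished by Proposition \ref{Cramer.Phin} gives $\P\{S_{m-1}=\nu-1\}\le C\sqrt{\log m}/\sqrt m$, hence $\P\{P_\nu=m\}\le C/\sqrt{m\log m}$; the Bernstein-type bound used in the proof of Theorem \ref{Snprime.M}, applied through $\{P_\nu\le m\}=\{S_m\ge\nu\}$, localises $P_\nu$ in a window $W'_\nu$ around $M(\nu)$, where $m_{M(\nu)}=\nu$ and $M(\nu)\sim\nu\log\nu$ by \eqref{cramer.pnt}. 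Counting $\mathfrak P\cap W'_\nu$ by the arithmetic hypothesis at this scale produces $\P\{P_\nu\in\mathfrak P\}=O\big(\nu^{-\b}(\log\nu)^{-1/2-\b}\big)$, which is summable over $\mathcal K$. In the endpoint case $\b=1/2$ the window count is replaced by the global tail condition: bounding the Gaussian factor by $1$ and summing $p^{-1/2}$ over $p\in\mathfrak P$ beyond $\asymp\nu\log\nu$ gives, via $\sum_{p\in\mathfrak P,\,p>y}p^{-1/2}=\mathcal O(y^{-1/2})$, the estimate $\P\{P_\nu\in\mathfrak P\}\le C/(\sqrt\nu\,\log\nu)$, and $\sum_{\nu\in\mathcal K}\nu^{-1/2}<\infty$ finishes it.

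The delicate point is the Cramér case. Unlike the Bernoulli walk, whose jump time $\D_k$ sits at $2k$ and is matched directly by the hypothesis on $[k,bk]$, the jump $P_\nu$ concentrates near $\nu\log\nu$, so one must align the count of $\mathfrak P$ with that scale; I expect the main effort to lie in controlling the Cramér local limit theorem and the upper- and lower-tail concentration of $S_m$ with errors small enough to survive summation, and in checking that $W'_\nu$ can be taken of width $o(M(\nu))$ (so that the counting hypothesis is applicable on it) while still capturing all but a summable amount of the mass of $P_\nu$.
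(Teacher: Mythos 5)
The paper itself does not prove Theorem \ref{mathfrak} (it is quoted from \cite{W8}), so your argument must stand on its own. Its skeleton is the natural one: first Borel--Cantelli, the identities $\{\D_k\le m\}=\{B_m\ge k\}$, $\P\{\D_k=m\}=\tfrac12\P\{B_{m-1}=k-1\}$ and $\P\{P_\nu=m\}=\tfrac1{\log m}\P\{S_{m-1}=\nu-1\}$, a local limit bound for the point probabilities, Chernoff-type localisation of the jump time, and a count of $\mathfrak P$ in the localisation window. Your treatment of the third assertion ($\b=1/2$) is correct: there the hypothesis $\sum_{p\in\mathfrak P,\,p>y}p^{-1/2}=\mathcal O(y^{-1/2})$ is a \emph{global} tail condition, so it can legitimately be applied at $y\asymp\nu\log\nu$, and combined with $\P\{P_\nu=p\}\le C/\sqrt{p\log p}$ and an exponential bound for $\P\{P_\nu<\tfrac12\nu\log\nu\}$ it gives $\P\{P_\nu\in\mathfrak P\}=\mathcal O\big(1/(\sqrt\nu\,\log\nu)\big)$, which is summable over $\mathcal K$.

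The gap lies in how you invoke the counting hypothesis in the first two assertions: that hypothesis is anchored at the points of $\mathcal K$, i.e.\ it bounds $\#\{\mathfrak P\cap[k,bk]\}$ only for $k\in\mathcal K$ and only for the one value $b>1$ it supplies. For the Bernoulli walk your window $W_k$ is centred at $2k$, and your own phrase ``which lies inside $[k,bk]$ for any $b>2$'' concedes the problem: the theorem only grants \emph{some} $b>1$, and if that $b$ is $\le 2$ the interval $[k,bk]$ never reaches $2k$, so you have no bound whatsoever on $\#\{\mathfrak P\cap W_k\}$ (nothing in the hypothesis constrains $\mathfrak P$ near $2k$ when, say, $\mathcal K=\{3^j\}$, since then no interval $[k',bk']$ with $k'\in\mathcal K$, $b<2$, contains $2\cdot 3^j$). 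For the Cram\'er walk the mismatch is worse and cannot be repaired by enlarging $b$: $P_\nu$ concentrates near $\nu\log\nu$, while the hypothesis counts $\mathfrak P$ only on $[\nu,b\nu]$; these scales differ by the unbounded factor $\log\nu$, so there is no ``arithmetic hypothesis at this scale'' to apply, and your claimed bound $\P\{P_\nu\in\mathfrak P\}=\mathcal O\big(\nu^{-\b}(\log\nu)^{-1/2-\b}\big)$ silently presupposes a count $\mathcal O\big((\nu\log\nu)^{1/2-\b}\big)$ of $\mathfrak P$ in a window around $\nu\log\nu$ that the stated hypothesis does not provide. The check you defer to --- that $W'_\nu$ can be taken of width $o(M(\nu))$ --- addresses the wrong feature: the obstruction is the \emph{location} of the window, not its width. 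To close the argument you must either read the hypothesis as controlling $\mathfrak P$ at the scale of the jumps (near $2k$, respectively near $\nu\log\nu$, e.g.\ a supremum over all $k$ rather than $k\in\mathcal K$), and say so explicitly, or find a mechanism --- absent from your sketch --- that transports the count from $[\nu,b\nu]$ to the window where the jump actually lives.
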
 
%The study made in  \cite{W8} is   at least   partly transposable to other models for the prime numbers.

  \section{Concluding Remarks.}\label{s10}
At the period we were interested in studying the divisors properties of the Bernoulli random walk, the topic, the study of arithmetical properties of binomial random walks was not much attracting attention, and we  worked   isolated.
%, to say the least, and we have worked isolated. %A common critic raised was that such study can be deduced from the uniform model, and so  does not bring anything new. 
\vskip 2 pt
The study of $\E f(B_n)$ for instance is quite tractable because there are exact formulas for $\P\{B_n=k\}$ or $\P\{d|B_n=k\}$,  $\P\{P^-(B_n)>\zeta \}$, $\P\{  B_n \,{\rm prime} \}$, \ldots, and standard tools from 
%(such as summation by parts, 
Fourier analysis    can be used to obtain results on $\E f(B_n)$ from standard number theoretic results; but this was not much taken into account. The used methods are intrinsic to the Bernoulli model. See   Remark \ref{unif.est.theta.alpha.rem}. %We don't know for instance any other estimate from the type given in Theorem \ref{estPdlBn}. 
%Quite importantly also, there is, 
As  indicated in subsection \ref{sub.bin.way.back}, there is a way back to the non random setting along Pascal matrices; 
%No doubt that this apparently overlooked 
%   thus      should draw more attention. 
 %It was ignored that 
thus a study of the Bernoulli case transfers to the  arithmetical one using inverse of Pascal matrices.  We found Call and Velleman's result only recently.
%\vskip 2 pt 
%Recall that any arithmetical function $f(n)$ can be represented in the form
%$$ f(n) =\sum_{d|n} \Phi(d), \qq \qquad \Phi(n)= \sum_{d|n}\mu\Big(\frac{n}{d}\Big) f(d).$$
 For    any arithmetical function $f(n)$
 %by Proposition \ref{way.back}, 
 we have the reverse formula
 $$ f(i)=\sum_{j=1}^i  (-1)^{i-j}2^j{{i-1}\choose{j-1}}\E f(B_j),\qq\quad i\ge 2.
$$ 
%$$\E f(n) =\sum_{d|n} \P\{d|B_n\}\,\Phi(d).$$
Now the research in this area has received a recent growing interest.
%, 
%considered, its interest was nearly called into questions around the author. 
We steadily found the subject attractive because of the combined interaction 
%between
 of Arithmetic and Probability  which can just be promising, and are convinced  that much remains to discover, and to do. The topic has large potential of development.
 %Now the topic has received a recent growing interest. This is a satisfaction as we were always convinced about its large potential of development, also 
 %convinced about the fact that much remains to discover, and to do. 
 %We refer to the survey of Fernandez and Fernandez \cite{FF}. 
\vskip 2 pt
Random walks are also an important tool of investigation  in Analytic  Number Theory.  This topic is the object of the recent work Weber \cite{W15}.     
  \appendix
   
%   \label{appendix} 

 %%%%%%
%%%%%%
\section{Erd\"os-R\'enyi's model}\label{s5.a}
%%%%%%%%
%%%%%%%%

    Erd\"os and R\'enyi \cite{ER} have   investigated models for integers involving  infinite sequences of  independent   random
variables $\b_n$, with $\P\{\b_n=0\} =1-\P\{\b= 1\}=\varrho_n
$. They thorougly studied
 their additive properties in relation with classical problems of   additive representation  of integers such as Waring problem. See
Halberstam and Roth \cite{HR}, see also Landreau \cite{L}.   

\section{Kubilius' model} The Bernoulli model  only involves  one
probability space: the infinite product of probability spaces      $(\{0,1\}^\N, \mathcal B(\{0,1\}^\N),
\m^\N )$, where  $\m= \frac{1}{2}(\d_{0}+\d_{1})$,  $\d_x$ denoting the Dirac mass at point $x$. By construction,  the Kubilius model  relies upon all
probability spaces
$$\hbox{$(\{ 1,2, \ldots ,n \}, \m_n)$}$$
  $\m_n$ being  the normalized counting measure.
This is an obstacle for the application of general criteria of  almost sure convergence, typically G\'al-Koksma, Stechkin or M\'oricz theorems. 
   However,
     a bridge exists in Kubilius theory with some standard infinite  product of probability  spaces, namely with some inherent random walk built up from
independent non identically distributed random variables. 
%This can be presented as follows.

  Let
$\{Y_p, p\ge 1\}$ be a sequence of independent binomial random variables such that
$\P\{Y_p=1\}=1/p$ and
$\P\{Y_p=0\}=1-1/p$. We can view $Y_p$ as modelling whether   an integer taken at random is divisible by $p$ or not. Let 
$$ T_n=\sum_{p\le n}  Y_p.  $$
By Mertens estimate
 $s_n^2=\E (T_n-\E T_n)^2= \sum_{p\le n} {1\over p}-{1\over p^2}=\log\log n +\mathcal O(1)$.  
 The sequence $\{T_n, n\ge 1\}$ is known to asymptotically behave as the truncated prime divisor function
$$\o(m,t)=\#\{ p\le t : p|m\} ,  $$
  at least when $t$ is not too close to $m$.  More precisely, let
$$\o_r(m)=\big( \o(m,1), \ldots, \o(m,r)\big), $$
where $r$ is some integer with $2\le r\le x$, and put $u={\log x\over \log r}$. The bridge mentionned before is given by the following sharp estimate.
\begin{theorem} \label{km}  Given $c<1$ arbitrary, we have uniformly in
$x,r
$ and
$Q\subset \Z^r$,
$$  \frac{\#\{ m\le x: \o_r(m)\in Q\}}{ x}=\P\big\{(T_1, \ldots, T_r)\in Q\big\}+ \mathcal O\big( x^{-c}+ 
e^{-u\log u}\big).
$$
\end{theorem}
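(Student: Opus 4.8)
The plan is to express the left-hand counting function as a sum of sifted quantities, estimate each by the fundamental lemma of the combinatorial sieve, and check that the main terms reassemble into the distribution of $(T_1,\dots,T_r)$. The first observation is that the vector $\o_r(m)=(\o(m,1),\dots,\o(m,r))$ is completely determined by the set $S(m)=\{p\le r:p\mid m\}$ of small prime divisors of $m$, and likewise $(T_1,\dots,T_r)$ is determined by $\{p\le r:Y_p=1\}$. Writing $\mathcal P_r=\{p\le r\}$ and, for $S\subseteq\mathcal P_r$, denoting by $\mathbf v(S)$ the resulting vector of partial counts, we have
\begin{equation*}
\#\{m\le x:\o_r(m)\in Q\}=\sum_{\substack{S\subseteq\mathcal P_r\\ \mathbf v(S)\in Q}}\#\{m\le x:S(m)=S\},
\end{equation*}
while, by independence of the $Y_p$,
\begin{equation*}
\P\{(T_1,\dots,T_r)\in Q\}=\sum_{\substack{S\subseteq\mathcal P_r\\ \mathbf v(S)\in Q}}\ \prod_{p\in S}\frac1p\prod_{p\le r,\,p\notin S}\Big(1-\frac1p\Big).
\end{equation*}
Thus it suffices to compare the two summands pattern by pattern and to control the accumulated error uniformly in $Q$.

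Next I would turn each inner count into a sieve problem. Setting $d=\prod_{p\in S}p$ (a squarefree divisor of $\Pi_r=\prod_{p\le r}p$), the condition $S(m)=S$ means $d\mid m$ together with $(m/d,\Pi_r/d)=1$, so that
\begin{equation*}
\#\{m\le x:S(m)=S\}=\#\Big\{m'\le x/d:\ (m',\Pi_r/d)=1\Big\}.
\end{equation*}
The fundamental lemma of the one-dimensional (Brun, or Rosser--Iwaniec) sieve, applied with sifting range $r$ and level governed by $u=\log x/\log r$, yields
\begin{equation*}
\#\{m\le x:S(m)=S\}=\frac{x}{d}\prod_{\substack{p\le r\\ p\nmid d}}\Big(1-\frac1p\Big)\big\{1+O(e^{-u\log u})\big\}+R_d,
\end{equation*}
where the main term is exactly $x$ times the corresponding model probability, and $R_d$ is the sieve remainder. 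I would then sum over all admissible $S$: the main terms reconstruct $x\,\P\{(T_1,\dots,T_r)\in Q\}$ with a relative error $O(e^{-u\log u})$, and the remainders contribute the term $O(x^{\,1-c})$ after optimizing the sieve level.

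The hard part will be the uniform control of the error, and this is exactly where one must resist the temptation of a bare Legendre (inclusion--exclusion) expansion: sieving out all primes up to $r$ by M\"obius inversion would produce a remainder of size $2^{\pi(r)}$ per pattern, which is catastrophic once $r$ is a positive power of $x$. The combinatorial sieve's truncated inclusion--exclusion is what keeps the remainder $R_d$ under control while preserving the $e^{-u\log u}$ approximation of the main term; the delicate bookkeeping is then to sum $|R_d|$ (together with the relative errors) over the exponentially many patterns $S$ and to show, uniformly in $Q\subseteq\Z^r$, that the total is absorbed into $O\big(x(x^{-c}+e^{-u\log u})\big)$. One also has to verify that patterns $S$ with $d$ large---where the fundamental lemma degrades---contribute negligibly, which follows because integers below $x$ possessing many prescribed small prime factors are correspondingly sparse. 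Dividing through by $x$ then gives the asserted density estimate.
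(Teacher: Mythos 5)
You should first know that the paper itself does not prove Theorem \ref{km}: it is Kubilius' fundamental lemma, recorded in the appendix with a pointer to Elliott \cite{E}, pp.\,119--122, and Tenenbaum \cite{Te}. Your proposal follows exactly the classical route of those references: reducing both $\omega_r(m)$ and $(T_1,\dots,T_r)$ to the pattern $S$ of prime divisors below $r$, the identity $\#\{m\le x: S(m)=S\}=\#\{m'\le x/d:(m',\Pi_r/d)=1\}$ with $d=\prod_{p\in S}p$, one application of the fundamental lemma of the combinatorial sieve per pattern (whose main term $\tfrac{x}{d}\prod_{p\le r,\,p\nmid d}(1-\tfrac1p)$ is indeed $x$ times the model probability, $d$ being squarefree), and the correct observation that a bare Legendre expansion would be fatal. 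The skeleton is sound and it is the right strategy.

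The gap is that the part you defer as ``delicate bookkeeping'' is precisely where the content of the theorem lies, and your sketch never fixes the parameters that make it work. (i) You must choose a sieve level $y$ and a pattern cutoff $D_0$, keep only patterns with $d\le D_0$, and check that the accumulated sieve remainders, $O(y)$ per pattern and hence $O(D_0y)$ in total, are $O(x^{1-c})$; a choice such as $D_0=y=x^{(1-c)/2}$ does this, but nothing in your text pins it down, and without it the uniformity in $Q$ over exponentially many patterns --- the whole point of the statement --- is unsupported. (ii) The discarded patterns with $d>D_0$ must be shown negligible \emph{in both measures}: on the counting side this needs the Rankin-type smooth-divisor bound $\sum_{d>D_0,\ d\mid \Pi_r} 1/d \ll e^{-c'u'\log u'}$ with $u'=\log D_0/\log r$, which you assert in one sentence (``correspondingly sparse'') but do not prove; on the model side one needs the analogous tail bound for $\sum_{d>D_0}\prod_{p\mid d}p^{-1}\prod_{p\le r,\,p\nmid d}(1-\tfrac1p)$, which you never mention at all. (iii) The relative errors $O(e^{-s\log s})$ sum acceptably only because the sieve main terms sum to at most $x$; this is easy but must be said, and since $s=\log y/\log r$ is a constant multiple of $u$, the exponent one actually obtains is $e^{-c''u\log u}$, which should be reconciled with the stated error term. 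With (i)--(iii) supplied your argument becomes a complete proof along the lines of the literature the paper cites; as written, it is a correct plan rather than a proof.
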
 
   We refer   to Elliott \cite{E} p.119-122 and   Tenenbaum \cite{Te} where a thorough study is presented.
 \vskip 2pt  The error term $e^{-u\log u}$  
naturally    brings  restrictions in   applications   to asymptotic problems.  To make it small,
it requires     if
$r=r(x)$ that
$  r(x)=\mathcal O _\e (x^\e)$   for all $\e>0$.   This amounts to truncate  the prime divisor function $\o(m )$  at level
$\mathcal O _\e (x^\e)$, which is satisfactory as long as $m\ll x$. However, these integers have a negligible contribution on the size
of    the left-term in Theorem  \ref{km}.   The  model is therefore mostly adapted to the analysis of the distribution of   small prime divisors
of an integer (see notably \cite{E} p.122). 
 %%%%%%%%%%%%%%%%%%%%%%%%%%%%%%%%%%%%%%%%%%%%%%%%%%%%%%%%%%%%%%%%%%%%%%
 \section{Proof  of Theorem \ref{romult1}.}   
 
  The following lemma  is   classical.  
\begin{lemma} \label{romult}Let $f\in \Z(X)$ and put $\varrho_f(d)=\#\big\{0\le y<d: d|f(y)\big\}$. Then $\varrho_f$ is a
multiplicative function.
\end{lemma}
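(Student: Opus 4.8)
The plan is to reduce the multiplicativity of $\varrho_f$ to the Chinese Remainder Theorem, exploiting the fact that $f$ has integer coefficients. Fix coprime positive integers $d_1,d_2$ and set $d=d_1d_2$. First I would record the elementary but essential observation that, since $f\in\Z(X)$, the value $f(y)\bmod m$ depends only on the residue class of $y$ modulo $m$; consequently, for any $y$, the divisibility $d\mid f(y)$ is equivalent to the simultaneous conditions $d_1\mid f(y)$ and $d_2\mid f(y)$, because $(d_1,d_2)=1$.

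Next I would invoke the ring isomorphism $\Z/d\Z\cong \Z/d_1\Z\times\Z/d_2\Z$ furnished by the Chinese Remainder Theorem. Under this isomorphism a residue class $y\bmod d$ corresponds bijectively to a pair $(y_1,y_2)$ with $y_1=y\bmod d_1$ and $y_2=y\bmod d_2$. The key step is then to check that this bijection restricts to a bijection between the solution set
$$
\big\{\,0\le y<d:\ d\mid f(y)\,\big\}
$$
and the product of the two solution sets
$$
\big\{\,0\le y_1<d_1:\ d_1\mid f(y_1)\,\big\}\times\big\{\,0\le y_2<d_2:\ d_2\mid f(y_2)\,\big\}.
$$
This is precisely where the integer-coefficient property is used: the condition $d_i\mid f(y)$ depends only on $y\bmod d_i=y_i$, so it can be read off the $i$-th coordinate alone, and the two coordinate conditions are independent. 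Counting the elements on both sides yields $\varrho_f(d)=\varrho_f(d_1)\,\varrho_f(d_2)$, which is the desired multiplicativity.

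There is no serious obstacle here; the argument is entirely standard and the only point requiring a word of care is the reduction-modulo-$m$ compatibility of a polynomial with integer coefficients, together with the verification that the CRT bijection genuinely matches the solution sets coordinatewise. In the application to Theorem \ref{romult1} this lemma will be applied to $f(y)=y^2+ky$, reducing the computation of $\varrho_k(D)$ to the prime-power case $D=p^{v_p(D)}$, which is then handled by a direct local analysis.
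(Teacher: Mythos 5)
Your proof is correct: the reduction-compatibility of integer polynomials, the coprimality splitting of the condition $d\mid f(y)$, and the CRT bijection between $\Z/d\Z$ and $\Z/d_1\Z\times\Z/d_2\Z$ do combine to give exactly the product decomposition of solution sets you describe, hence $\varrho_f(d_1d_2)=\varrho_f(d_1)\varrho_f(d_2)$. It is worth knowing, though, that the paper deliberately declines this route: in a comment preceding its proof it notes that one could \lq\lq expedite\rq\rq\ matters by saying the lemma \lq\lq is proved using the Chinese Remainder Theorem\rq\rq, but, because the paper also addresses probabilists not acquainted with arithmetical arcana, it gives a fully unpacked, self-contained argument instead. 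Concretely, the paper cites only the existence statement for simultaneous congruences (Th.\,59 of Hardy--Wright), constructs the correspondence $(y_1,y_2)\mapsto y$ explicitly, verifies by direct expansion that $f(y_1+\ell d_1)=f(y_1)+d_1F$ with $F\in\Z$ (your reduction-compatibility observation, done by hand), and then establishes the two inequalities $\varrho_f(d_1)\varrho_f(d_2)\le\varrho_f(d)$ and $\varrho_f(d)\le\varrho_f(d_1)\varrho_f(d_2)$ separately, each through an injectivity check on the explicit maps. Mathematically the two proofs have identical content; yours is the standard compressed form and buys brevity, while the paper's is the same bijection with every step made elementary and buys accessibility to readers without a number-theoretic background. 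Your closing remark about applying the lemma to $f(y)=y^2+ky$ to reduce the computation of $\varrho_k(D)$ to prime powers matches the paper's use of the lemma exactly.
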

 
%\begin{comment}
\vskip 4 pt 
\noi   {\it Comment:}  One way to \lq expedite\rq \, the proof is to say that it is \lq\lq well-known and trivial\rq\rq, or to say that \lq\lq it is proved using the Chinese Remainder Theorem\rq\rq. We have here a proof ready and  are of mind we can't get out of it so. Further the present paper also address to probabilists, who are not necessarily    acquainted  with  arithmetical arcana.   
%\end{comment}   \vskip 3 pt 
 We provide a simple and complete proof. We couldn't find it in the existing literature,  where however,  different but sometime sketchy proofs are  available.
%A proof is given in Appendix for the sake of completeness. 
  \begin{proof}
  %[\bf Proof of Lemma \ref{romult}]
  Write $f(x)= a_0+a_1x+\ldots + a_nx^n$, $a_j\in \Z$, $0\le j\le n$. Let $d=d_1d_2$ with $(d_1,d_2)=1$. We first
show that
$\varrho_f(d_1)\varrho_f(d_2)\le
\varrho_f(d )$. Let
$(y_1, y_2)$ be such that $0\le y_i<d_i$ and   $d_i|f(y_i)$, $i=1,2$. 
\vskip 3 pt
Now as $(d_1,d_2)=1$,   there exists an 
%unique
 integer $y$, $0\le y<d$ such that (\cite[Th.\,59]{HW}),
\begin{equation}\label{equiv}\hbox{$y\equiv y_i\  {\rm mod}(d_i)$,\qq$i=1,2$.}
\end{equation}
% (\cite{HW}, Th. 59) ,
% \bibitem{HW} G. H. Hardy et E. M. Wright,      \emph{An introduction to the theory of numbers}, fifth edition, Oxford at the Clarendon Press, Oxford, (1979).
Writing that  $y=y_1+\ell d_1$ we have
 \begin{eqnarray*}f(y)&=& a_0+a_1(y_1+\ell d_1)+\ldots + a_n(y_1+\ell d_1)^n \cr & =&a_0+(a_1 y_1+d_1 A_1  ) +\ldots + (a_n y_1^n +  d_1A_n)=
f(y_1)+ d_1 F ,
\end{eqnarray*}
where $A_1, \ldots, A_n$, $F $ are integers. As   $d_1|f(y_1)$ by assumption, it follows that   $d_1|f(y)$. Writing now that  $y=y_2+m d_2$, we similarly get  $d_2|f(y)$, whence $d|f(y)$. 
\vskip 3 pt
Now let $(y'_1, y'_2)$ be such that
$$\hbox{$0\le y'_i<d_i$ and   
$d_i|f(y'_i)$, \qq $i=1,2$.}$$ 

Let also $y'$ integer, be 
%the   unique  integer
  satisfying 
$$\hbox{$0\le y'<d$, \qq $y'\equiv
y'_i\  {\rm mod}(d_i)$,\quad $i=1,2$,}$$ 
   thereby implying that  $d|f(y')$ by the above argument. 
\vskip 3 pt
 
 We have the implication: $y=y'\Rightarrow (y_1, y_2)=(y'_1, y'_2)$. Indeed as $y=y'$,  we have
$$y= y_1+\ell d_1=y_2+k d_2=y'_1+\ell' d_1=y'_2+k' d_2.$$
And so $y_1-y'_1=  (\ell'-\ell) d_1$. Since $0\le y_1, y'_1<d_1$, this implies $y_1=y'_1$. Similarly we get $y_2=y'_2$, so that $(y_1,
y_2)=(y'_1, y'_2)$. 
Therefore $\varrho_f(d_1)\varrho_f(d_2)\le
\varrho_f(d )$. 

\vskip 3 pt Conversely, let $0\le y<d$ be such that $d|f(y)$. Let $y_1, y_2$, $0\le y_i<d_i$ be such that  $y_i\equiv y \, {\rm
mod}(d_i)$, $i=1,2$. Then, in the same fashion
\begin{eqnarray*}f(y_1)&=& a_0+a_1(y +\ell d_1)+\ldots + a_n(y +\ell d_1)^n \cr &=&a_0+(a_1 y +d_1 B_1  ) +\ldots + (a_n y ^n +  d_1B_n)\cr & =&
f(y )+ d_1 G.
\end{eqnarray*}
Thus $d_1|f(y_1)$ and  similarly $d_2|f(y_2)$. 

\vskip 2 pt Now let   $0\le y'<d$ be such that $d|f(y')$, and let $(y'_1, y'_2)$ be the corresponding
pair of integers. We shall prove the implication $  (y_1, y_2)=(y'_1, y'_2) \Rightarrow y=y'$. 

Write $y_1=y+\ell d_1$,
$y'_1=y'+\ell' d_1$. If $y_1=y'_1$, then $y-y'=(\ell'-\ell)d_1 $ so that $d_1|y-y'$. Similarly $y_2=y'_2$ implies $d_2|y-y'$. Thus
$d |y-y'$. As $0\le y,y'<d$ this implies that $y=y'$. Hence the requested implication.
% $  (y_1, y_2)=(y'_1, y'_2) \Rightarrow y=y'$. 
We deduce  that
$ 
\varrho_f(d )\le \varrho_f(d_1)\varrho_f(d_2) $. The proof is now complete. 
\end{proof}

\vskip 7 pt
%\noindent {\bf Acknowledgments} 
%\vskip 3 pt  I thank  Profs. Dubickas, Laurin\v cikas,   and M\"uller for helpful comments on  this problem, and Prof.  McKee for   useful remarks on the papers \cite{McK1}, \cite{McK}.  

%   \section{Proof.}\label{s1}    
   %The proof is based on the lemma below.

    Before passing to the proof of Theorem \ref{romult1}, recall some related results. In McKee \cite{McK},   counting functions   associated with   quadratic forms $y^2 + by+c$ are investigated, under the restriction $\D= b^2 -4c<0$. As explained in section 2,  the number of solutions to the  congruence
 $$y^2 + by+c\equiv 0 \ {\rm (mod}\, d{\rm )}, \quad 0\le y<d,$$
 equals the number of solutions to the  congruence
 $y^2 \equiv 0 \ {\rm (mod}\, 4d{\rm )}$, $b\le y<b+2d$.  The condition $\D <0$ is not satisfied in the case we consider.   
   \vskip 2 pt
   The case of irreducible polynomials is mainly adressed in the literature and we couldn't find anything corresponding to the solved problem   in this work.    That question was already considered in Weber \cite{W4}.  For results concerning summatory functions associated with counting functions of the number of divisors of irreducible polynomials, we refer to Broughan \cite{Br}, Hooley \cite{Ho}, McKee \cite{McK1} and Scourfield \cite{S}, among  other contributors.  
 % We note that we can also obviously write $\varrho_f(d)$ 
 %in Lemma \ref{romult}
% as  $\varrho_f(d)=\#\big\{1\le y\le d: d|f(y)\big\}$. 
\begin{proof}[\bf Proof of Theorem \ref{romult1}] 
The proof is  long and technical because  of many sub-cases entering into consideration, which is natural when studying this kind of question, according to sayings of patented number theorists colleagues. It can be used to treat similar questions. First consider the   case  $k>0$, which is the
main case. Recall that
\begin{eqnarray*}
%\label{rho}
\varrho_k(D)=\#\big\{1\le y\le  D: \  D| y^2+ky \big\} ,\qq \quad k=0,1,\ldots
\end{eqnarray*} 
The function $\varrho_k(D)$ being multiplicative,
it suffices to compute $\varrho_k(p^r)$. Observe to begin that 
$$\varrho_k(p^r)=2, \quad r=1,2,\ldots\qq\quad \hbox{if $p\not| k$}. $$ 
 Indeed, if $(y,p)=1$, then $p^r|y+k$ and there is only one solution given by
$y= \tilde k$ or $y=p^r-\tilde k$ where $\tilde k$ is the residue class of $k$ modulo $p^r$. If $y=p^s Y$, $(Y,p)=1$, $1\le s<r$, then $p^r| y(y+k)\Leftrightarrow p^{r-s} |(p^sY+k)$. And so $p|k$,
which was excluded. There is thus no solution of this kind. Finally, it remains one extra solution $y=p^r$. Thus $\varrho_k(p^r)=2$.
\vskip 5pt     We now assume that $p|k$. We can range the solutions $y$ of the equation $p^r|y(y+k)$ in disjoint classes of type $y=p^sY$,
with
$(Y,p)=1$. When
$r=1,2$ or
$3$, there is a direct simple computation.  
\smallskip\par   {\it Case $r=1$.}  We have $\varrho_k(p)=\#\big\{1\le y\le p: \,  p| y(y+k)  \big\}$. 
 Since $p |k$, it follows that  $p| y(y+k)\Leftrightarrow p|y^2$. As $y\le p$, the prime divisors of $y$ are less or equal to $p$ and    so are the prime divisors of $y^2$. There is thus only one solution: $y=p$.
Whence $    \varrho_k(p)=1$.
 \smallskip\par    {\it Case $r=2$.}   We have $\varrho_k(p^2)=\#\big\{1\le y\le p^2: \,  p^2| y(y+k)  \big\}
$. As $p\| k$, 
   If $(y,p)=1$, then $p^2| y(y+k)\Leftrightarrow p^2|y+k$ and so $p|y$, which is
impossible. There is no solution of this type. 
 If $y=pY$, with  $(Y,p)=1$, then  $p^2| y(y+k)\Leftrightarrow p|(pY+k)$, which  is satisfied. 
The number of solutions is $\#\{ Y\le p:
(Y,p)=1\}=p-1$. The case $y=p^2Y$, with  $(Y,p)=1$ coincides with $y=p^2$.
 Therefore,  
 $    \varrho_k(p^2)=   p-1+1=p$.
\smallskip\par {\it Case $r=3$.}   We have $\varrho_k(p^3)=\#\big\{1\le y\le p^3: \  p^3| y(y+k)  \big\}. 
$
     If $(y,p)=1$, then $p^3| y(y+k)\Leftrightarrow p^3|y+k$ and so $p|y$, which is
impossible. There is no solution of this type. If $y=pY$, with  $(Y,p)=1$, then  $p^3| y(y+k)\Leftrightarrow p^2|(pY+k)$. If
$v_p(k)\ge 2$, this implies that $p|Y$, which is impossible, and so there is no solution of this type in that case. If $v_p(k)=1$, write
$k=p K$, we get $p|Y+ K$.  The number of solutions is
$\#\{ Y\le p^2 :Y\equiv -K\, {\rm mod}(p)\}=p $.  If $y=p^2Y$, with  $(Y,p)=1$, then  $p^3| y(y+k)\Leftrightarrow p |(p^2Y+k)$, which is
always realized. The number of solutions is $\#\{ Y\le p: (Y,p)=1\}=p-1$. 
  The extra case is
$y=p^3 $.
 Therefore   
\begin{equation}    \varrho_k(p^3)=\begin{cases} 
                                   2p\quad &{\rm if}\ \quad v_p(k)=1
 \cr
                                   p  \quad &{\rm if}\ \quad v_p(k)\ge 2.   
            \end{cases}
\end{equation}
An additional   sub-case thus appears. 
Summarizing
\begin{equation}\varrho_k(p^r)=\begin{cases}  
 1\ &{\rm if} \    r=1 , 
 \cr
 p\ &{\rm if}\  r=2 ,   
  \cr
                                   2p  \quad &{\rm if}\   r=3 , \   v_p(k)=1 . 
\cr
                                   p  \ &{\rm if}\   r=3 , \   v_p(k)\ge 2 . 
\end{cases}  
\end{equation}

 \medskip {\it Case $r\ge 4$.} Put 
 $r' =\lfloor{r\over 2}\rfloor$.  Then
$r' 
\ge 2
$. We have
$\varrho_k(p^r)=\#\big\{1\le y\le p^r:
\  p^r| y(y+k) 
\big\}. 
$  
  If $(y,p)=1$, then $p^r| y(y+k)\Leftrightarrow p^r|y+k$ and so $p|y$, which is
excluded and there is no solution. 

Apart from the trivial solution $y=p^r$, the other possible solutions are of type $y=p^s
Y$,  
$(Y,p)=1$,
$1\le s<r$; and we shall distinguish three cases:   
$$\hbox{(i) $r'<s<r$, \qq (ii) $s=r'$,\qq (iii) $1\le s< r'$.}$$
 
\noi (i) Since $r'<s<r$, then $ r/2 \le  s $, and so $1\le r-s\le s$.  Further 
$p^r| y(y+k)$ means $p^{r-s}|  Y(p^sY+k)$ or
$p^{r-s}|  p^sY +k  $,   which is possible if and only if
$p^{r-s}|  k
$, namely $ r-s \le v_p(k)
$.  Thus 
$$\max(r'+1,r-v_p(k)) \le s<r.$$
 We have $Y\le p^{r-s}$, $(Y,p^{r-s})=1$. Their number
  is   $\phi(p^{r-s})$ where $\phi$ is Euler's function, and since $\phi(p^{r-s})=p^{r-s}(1-{1\over p}) $, 
the corresponding number of solutions is 
\begin{eqnarray*} \sum_{\max(r'+1,r-v_p(k)) \le s\le r -1 } p^{r-s}(1-{1\over p})   &=&   (1-{1\over p})\sum_{1\le  v  
\le (r-r'-1)\wedge v_p(k)}  
p^{v}
\cr &  =&  p\, {p^{(\lfloor {r-1\over 2}\rfloor )\wedge v_p(k)}-1\over p-1}(1-{1\over p}) 
\cr &  =&  
p^{\lfloor {r-1\over 2}\rfloor\wedge v_p(k)} -1   . 
\end{eqnarray*}
   (ii)      We consider   solutions of type    $y=p^{r'}Y$, $(Y,p)=1$. 
\smallskip\par  ---  If $r$ is odd, $r=2r'+1$, then  $p^{2r'+1}| p^{ r' }Y(p^{ r' }Y+k)$ means
$p^{ r'+1}|   (p^{ r' }Y+k)$. So $p^{ r'}|k$ and thereby $v_p(k)\ge r'$.
  If $v_p(k)< r'$, there is   no solution.  If $v_p(k)>  r'$, it implies that  $p|Y$ which is impossible and   there is   again no  
solution. 
 The remainding case $v_p(k)=  r'$ is the only one providing solutions. Write $k=p^{ r' }K $, $(K,p)=1$, then $p|Y+K$. Since
$(K,p)=1$, the solutions are the numbers
$Y$ such that
$1\le Y\le  p^{ r' +1}$ and 
$Y\equiv -K\, {\rm mod}(p)$. Let $1\le \kappa<p$ be such that $K  \equiv \kappa\, {\rm mod}(p)$.
The number of solutions is
\begin{equation} \#\big\{Y\le  p^{ r' +1} : Y\equiv -\kappa\  {\rm mod}(p)\big\} =\#\big\{ - \kappa  +jp: 1\le j\le p^{ r' }\big\}= p^{
r'  }.
\end{equation}
   --- If $r$ is even, $r=2r'$, then  $p^{2r' }| p^{ r' }Y(p^{ r' }Y+k)$ reduces to $p^{ r' }|  (p^{ r' }Y+k)$, so $p^{
r'}|k$. If $v_p(k)< r'$, there is no solution.  If $v_p(k)\ge   r'$,  write $k=p^{v_p(k) }K$, $(K,p)=1$, this is always realized and   the
number of solutions is
$$\#\big\{Y\le  p^{ r' } :  (Y,p)=1\big\} =\phi(p^{ r' })=p^{ r' }(1-{1\over p}) .$$ 
 \par \noi (iii) We consider the last type of  solutions: $y=p^sY$, $(Y,p)=1$,    $1\le s< 
r'$.  Notice first,  since $s<r'$ that  $s<r/2$, and so $r-s>r/2>s$.
As
$p^r| y(y+k)$ means 
 $p^{r-s}|   Yp^s+k $, we deduce that $p^s|k$, namely $ s\le v_p(k) $. 
 If $  v_p(k)<s $, there is no solution.  
  If $   v_p(k)>s $, then $p|Y$ which is impossible.  

If $s= v_p(k) $, which requires $v_p(k)<r'$, write $k=p^{v_p(k)}K$, $(K,p)=1$. Then
we get the equation $p^{r-2v_p(k)}|   Y +K $; so  $Y  
\equiv -K \, {\rm mod}(p^{r-2v_p(k)})$.  
  Let $1\le \kappa<p^{r-2v_p(k)}$ be such that $K  \equiv \kappa\, {\rm mod}(p^{r-2v_p(k)})$.
The number of solutions is
\begin{eqnarray*} \#\big\{Y\le p^{r-v_p(k)}:    Y   &\equiv& -\k \  {\rm mod}(p^{r-2v_p(k)})\big\}
\cr &=&\#\big\{  -\kappa +jp^{r-2v_p(k)}:  1\le j\le p^{v_p(k)}\big\}
\cr &=&p^{v_p(k)}.
\end{eqnarray*}
  \vskip 7pt
 Summarizing the case $r\ge 4$, if $r$ is odd, $r=2r'+1$ 
\begin{equation}     \varrho_k(p^r) = \begin{cases}   2p^{v_p(k)} & \quad\hbox{\rm if   $1\le v_p(k)\le r'$},
   \cr 
        p^{r'} & \quad\hbox{\rm if   $v_p(k)> r'$}.
   \end{cases}
\end{equation}
And if $r$ is even, $r=2r'$  
\begin{equation}       \varrho_k(p^r) =  \begin{cases}    
   2p^{v_p(k)} & \quad\hbox{\rm if  $1\le v_p(k)< r'$},
   \cr
 p^{r'} & \quad\hbox{\rm if   $v_p(k)\ge r'$} .
\end{cases}
\end{equation}
This remains true if $r=1,2,3$. Observe that ($v_p(k)< r'$, $r$ even) or ($v_p(k)\le  r'$, $r$ odd) are equivalent to $v_p(k)<{r\over
2}$.  
\smallskip\par Therefore, for
all $r\ge 2$ 
 
\begin{equation}        \varrho_k(p^r) =  \begin{cases}    2  & \quad {\rm if}  \   p\not| k  ,
   \cr  2p^{  v_p(k)} & \quad\hbox{\rm if}\  1\le v_p(k)<{r\over
2} ,
   \cr 
        p^{\lfloor{r\over
2}\rfloor}   \ & \quad\hbox{\rm if} \  v_p(k)\ge {r\over
2} .
     \end{cases}
\end{equation}

Consequently \begin{equation} \label{valro}\varrho_k(D)= \prod_{p|D  }\varrho_k(p^{  v_p(D) })=  \prod_{ v_p(k)<{v_p(D)/
2}}(2p^{v_p(k)})\cdot\prod_{   v_p(k)\ge {v_p(D)/
2} }  p^{\lfloor{v_p(D)\over
2}\rfloor} .
\end{equation} 
Recall that 
$$  D_{1/2} \,=\,\prod_{p|D} p^{\lfloor {v_p(D)\over 2}\rfloor }.$$

We consider again several cases:
\vskip 3 pt 
--- If $p$ is such that $v_p(D)=1$, then it produces in the writing of $\varrho_k(D)$ only a factor 1, since $v_p(k)<1/2$, so $v_p(k)=0$   and $\lfloor{v_p(D)\over
2}\rfloor=0$. 
\vskip 2 pt 
--- Now let $p$ be such that $v_p(D)\ge 2$, and compare $v_p(D)/2$ with $\lfloor{v_p(D)\over
2}\rfloor$. If $v_p(D)$ is even, both quantities are equal. If $v_p(D)$ is odd, $v_p(D)= 2z+1$, then 
$v_p(D)/2=z + 1/2$,  
$\lfloor{v_p(D)\over
2}\rfloor= z$. 
\vskip 2 pt
Thus in the latter case:
\vskip 2 pt
 
a) If $v_p(k)<{v_p(D)/2}$, namely $v_p(k)\le z$, we get the factor $2p^{v_p(k)}= 2p^{\min(v_p(k),z)}=2p^{  v_p((k,D_{1/2})) }$. If $v_p(k)\ge{v_p(D)/2}$, namely $v_p(k)> z$, we have the factor $p^z=p^{\min(v_p(k),z)}=p^{  v_p((k,D_{1/2})) }$.

\vskip 2 pt
 
b) In the case $v_p(D)$ is even, if $v_p(k)<{v_p(D)/2}$, namely $v_p(k)< z$, we have the factor $2p^{v_p(k)}=2p^{\min(v_p(k),z)}=2p^{  v_p((k,D_{1/2})) }$. If $v_p(k)\ge {v_p(D)/2}= z$, we have the factor $p^{z}=p^{\min(v_p(k),z)}=p^{  v_p((k,D_{1/2})) }$.

\vskip 2 pt 

  Therefore the  contribution of 
 a prime $p$ such that $v_p(D)\ge 2$, is   either $2p^{  v_p((k,D_{1/2})) }$ if $p$ is part of the first product, or $p^{  v_p((k,D_{1/2})) }$ if $p$ is part of the second. 
 
 A prime number $p$ is part of the first product, if and only if the inequation $1\le v_p(k)<v_p(D)/2$ is fulfilled.
 %, which is equivalent to saying that $p|k$, and $p^3|D$. 
 %As $p^3|D\Leftrightarrow p^2|p^{\lfloor {v_p(D)\over 2}\rfloor }$, 
 We 
therefore get the condensed form, 
%\begin{equation} \label{valro1}\varrho_k(D) \,=\, 2^{\#\{p\,:\, p|k\,,\, p^3|D \}}\, (k, D_{1/2}).
%\end{equation} 
\begin{eqnarray} \label{valro1}\varrho_k(D)= \prod_{p|D  }\varrho_k(p^{  v_p(D) })&= & \prod_{ v_p(k)<{v_p(D)/
2}}(2p^{v_p(k)})\cdot\prod_{   v_p(k)\ge {v_p(D)/
2} }  p^{\lfloor{v_p(D)\over
2}\rfloor}\cr  &=&2^{\#\{p\,:\, 1\le v_p(k)<v_p(D)/2 \}}\,\prod_{p} p^  {\min (v_p(k),v_p(D)/2)} 
\cr  &=&2^{\#\{p\,:\, 1\le v_p(k)<v_p(D)/2 \}}\, (k, D_{1/2}).
\end{eqnarray}

\vskip 5 pt Finally,   consider the case  $k=0$, namely       $\varrho_0(p^r)=\#\big\{1\le y\le p^r: \  p^r| y^2   \big\}  $. Notice that 
$\varrho_0(p)=\#\big\{1\le y\le
p: \  p| y   \big\}=1 $. Let
$r>1$, and write   $y=p^sY$, $(Y,p)=1$ and $1\le s\le r$. If $s=r$, there is the trivial unique solution $y=p^r$. If $1\le s<r$,
 $p^r| y^2\Rightarrow 2s\ge r$, in which case, the number of solutions is  
$$\#\big\{Y\le p^{r- s}: (Y,p)=1\big\}=\phi(p^{r- s})=p^{r- s}(1-{1\over p}).  $$
 Consequently
 $\varrho_0(p^r)= 1+\sum_{r/2\le s< r}p^{r- s}(1-{1\over p})$ . 
If $r$ is even, $r=2r'$
$$\varrho_0(p^r)= 1+\sum_{\s =1}^{r' } p^{\s}(1-{1\over p})= 1+p\sum_{u =0}^{r'-1 } p^{u}(1-{1\over p})= 1+p\Big({p^{r' }-1\over p-1}\Big)
 ( {p-1\over p})=p^{r' } ,$$
whereas if $r$ is odd, $r=2r'+1$,
 $\varrho_0(p^r)= 1+\sum_{\s =1}^{r'} p^{\s}(1-{1\over p})=p^{r' }$.
Thus 
$$\varrho_0(p^r)=p^{\lfloor {r\over 2}\rfloor }. $$
It follows that 
$$\varrho_0(D)=\prod_{p|D} p^{\lfloor {v_p(D)\over 2}\rfloor }\, =\, D_{1/2}.  $$
The proof is now complete. 
\end{proof}

 \vskip 7 pt
 \noindent {\bf Acknowledgments} 
 \vskip 3 pt  I    thank  Sanying Shi   for  exchanges     around divisibility  in Bernoulli random walk.

 %%%%%%%%%%%%%%%%%%%%%%%%%%%%%%%%%%%%%%%%%%%%%%%%%%%%%%%%%%%%%%%%%%%%%%%%%%%%%%

 %%%%%%%%%%%%%%%%%%%%%%%%%%%%%%%%%%%%%%%%%%%%%%%%%%%%%%%%%%%%%%%%%%%%%%%%%%%%%%
 
 \end{document}